\DeclareMathOperator{\Id}{Id}
\DeclareMathOperator{\tr}{Tr}
\DeclareMathOperator{\pr}{pr}
\DeclareMathOperator{\SO}{SO}
\DeclareMathOperator{\GL}{GL}
\DeclareMathOperator{\Rm}{Rm}
\DeclareMathOperator{\RR}{\mathbf{R}}
\DeclareMathOperator{\Ric}{Ric}
\DeclareMathOperator{\vol}{vol}
\newcommand{\R}{\mathbb R}
\newcommand{\Z}{\mathbb Z}
\newcommand{\N}{\mathbb N}
\newcommand{\diff}{\text{\rm d}}
\newcommand{\dd}{\text{\rm d}}
\newcommand{\del}{\partial}
\newcommand{\dvol}{\mathrm{dvol}}
\theoremstyle{plain}
	\newtheorem{theorem}{Theorem}
	\newtheorem{proposition}[theorem]{Proposition}
	\newtheorem{lemma}[theorem]{Lemma}
	\newtheorem{corollary}[theorem]{Corollary}
	\newtheorem{conjecture}[theorem]{Conjecture}
\theoremstyle{definition}
	\newtheorem{definition}[theorem]{Definition}
	\newtheorem{remark}[theorem]{Remark}
\theoremstyle{plain}
	\newtheorem*{theorem*}{Theorem}
	\newtheorem*{proposition*}{Proposition}
	\newtheorem*{lemma*}{Lemma}
	\newtheorem*{corollary*}{Corollary}
	\newtheorem*{conjecture*}{Conjecture}
\theoremstyle{definition}
	\newtheorem*{definition*}{Definition}
	\newtheorem*{remark*}{Remark}
	\newtheorem*{remarks*}{Remarks}
\numberwithin{theorem}{section}
\title{Hypersymplectic 4-manifolds, the $G_2$-Laplacian flow and extension assuming bounded scalar curvature}
\author{Joel Fine and Chengjian Yao\\ Universit\'e libre de Bruxelles}
\date{}                     
\begin{document}

\maketitle

\begin{abstract}
A hypersymplectic structure on a 4-manifold $X$ is a triple $\underline{\omega}$ of symplectic forms which at every point span a maximal positive-definite subspace of $\Lambda^2$ for the wedge product. This article is motivated by a conjecture of Donaldson: when $X$ is compact $\underline{\omega}$ can be deformed through cohomologous hypersymplectic structures to a hyperk\"ahler triple. We approach this via a link with $G_2$-geometry. A hypersymplectic structure $\underline\omega$ on a compact manifold $X$ defines a natural $G_2$-structure $\phi$ on $X \times \mathbb{T}^3$ which has vanishing torsion precisely when $\underline{\omega}$ is a hyperk\"ahler triple. We study the $G_2$-Laplacian flow starting from $\phi$, which we interpret as a flow of hypersymplectic structures. Our main result is that the flow extends as long as the scalar curvature of the corresponding $G_2$-structure remains bounded.  An application of our result is a lower bound for the maximal existence time of the flow, in terms of weak bounds on the initial data (and with no assumption that scalar curvature is bounded along the flow). 
\end{abstract}

\section{Introduction}\label{intro}

Let $X$ be an oriented 4-manifold. A \emph{hypersymplectic structure} on $X$ is a triple $\underline{\omega} = (\omega_1, \omega_2, \omega_3)$ of closed 2-forms which at every point span a maximal positive-definite subspace of $\Lambda^2$ for the wedge product. In particular, each $\omega_i$ is symplectic. Hypersymplectic structures were introduced by Donaldson \cite{D09} and they play an important role in the programme he has introduced to study the adiabatic limit of $G_2$-manifolds \cite{Donaldson}. A special case of this definition appears in earlier work of Geiges \cite{Geiges} which studies both pairs and triples of symplectic forms with $\omega_i \wedge \omega_j =0$ for $i \neq j$. Hypersymplectic structures have also appeared in the work of Madsen \cite{madsen} and Madsen--Swann \cite{madsen-swann} on reductions of metrics with special holonomy. 

The simplest example of a hypersymplectic structure is the triple of K\"ahler forms of a hyperk\"ahler metric. The main motivation for this article is a conjecture of Donaldson that on a compact 4-manifold and up to isotopy this is the \emph{only} example.

\begin{conjecture}[Donaldson \cite{D09}]\label{skd-conjecture}
Let $(X, \underline{\omega})$ be a compact 4-manifold with a hypersymplectic structure. Suppose moreover that $\int \omega_i\wedge \omega_j = \delta_{ij}$. Then there exists a deformation of $\underline{\omega}$ through cohomologous hypersymplectic structures to a hyperk\"ahler triple.
\end{conjecture}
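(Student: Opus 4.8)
The plan is to prove Donaldson's conjecture via the $G_2$-Laplacian flow, exploiting the link recalled above. From the compact hypersymplectic $4$-manifold $(X,\underline\omega)$ one builds the closed $7$-manifold $Y = X \times \mathbb{T}^3$ with the natural $\mathbb{T}^3$-invariant $G_2$-structure $\phi_0$ determined by $\underline\omega$, torsion-free precisely when $\underline\omega$ is hyperk\"ahler. Run the Laplacian flow $\partial_t \phi = \Delta_\phi \phi$ from $\phi(0)=\phi_0$: short-time existence and uniqueness hold by the work of Bryant and Xu, the flow keeps $\phi$ closed and fixes $[\phi_0]\in H^3(Y)$, and since the flow is natural under diffeomorphisms and $\phi_0$ is $\mathbb{T}^3$-invariant, the solution stays $\mathbb{T}^3$-invariant. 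It therefore descends to a flow $\underline\omega(t)$ of hypersymplectic structures on $X$; since $[\phi(t)]$ is constant so is each $[\omega_i(t)]$, whence the flow never leaves a fixed cohomology class and, after the normalisation in the statement, $\int_X\omega_i(t)\wedge\omega_j(t)=\delta_{ij}$ throughout. Thus it suffices to show the flow exists for all $t\ge 0$ and converges, modulo diffeomorphisms isotopic to the identity, to a limit $G_2$-structure, which is then necessarily torsion-free and yields a hyperk\"ahler triple cohomologous to $\underline\omega$ — the curve $\underline\omega(t)$, $t\in[0,\infty)$, being the desired deformation.

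The first step is long-time existence, where the extension theorem of this paper reduces matters to an a priori bound on the scalar curvature $\Scal(\phi(t))$ along the flow. The Laplacian flow increases Hitchin's volume functional $V(t)=\int_Y \phi(t)\wedge *\phi(t)$; restricted to $\mathbb{T}^3$-invariant closed cohomologous $3$-forms this becomes a functional on $X$ built from the hypersymplectic volume density of \cite{D09}, which one wants to bound above so that $V$ converges and $\int_0^\infty \|\Delta_\phi\phi\|_{L^2}^2\,dt<\infty$. The hope is then to upgrade this integral control to a pointwise scalar-curvature bound by a maximum principle and Moser iteration applied to the parabolic equation satisfied by the hypersymplectic volume (equivalently by the full torsion tensor), exploiting that in the dimensionally reduced picture the governing PDE on the $4$-manifold $X$ is of Monge--Amp\`ere type, hence amenable to the techniques familiar from the Calabi conjecture.

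The second, and harder, step is convergence. Granting the scalar-curvature bound one excludes finite-time singularities by the extension theorem; for $t\to\infty$ one must exclude collapse and curvature concentration. The tools would be an injectivity-radius lower bound adapted to the $\mathbb{T}^3$-invariant geometry (so that no circle factor of $\mathbb{T}^3$ shrinks and $X$ itself does not collapse), Cheeger--Gromov compactness applied to sequences $\phi(t_k)$ and to their rescalings, and a \L ojasiewicz--Simon inequality for Hitchin's functional near the set of torsion-free $G_2$-structures to promote subconvergence to genuine convergence and rule out oscillation; alternatively, once $\underline\omega(t)$ is $C^\infty$-close to a hyperk\"ahler triple for large $t$ one finishes by an implicit-function-theorem argument deforming it the rest of the way within its cohomology class. (As a consistency check, the limiting hyperk\"ahler metric forces $X$ to be diffeomorphic to a torus or a K3 surface, by the Kodaira classification.)

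I expect the main obstacle to be precisely the unconditional scalar-curvature bound together with the exclusion of infinite-time collapse. At present there is no mechanism forcing an arbitrary compact hypersymplectic structure to develop controlled geometry under the Laplacian flow; even the weaker, quantitative existence-time lower bound obtained in this paper rests on the delicate conditional extension criterion above, and passing from that conditional statement to the unconditional regularity and convergence needed here is the heart of the conjecture. A possible route around a direct bound is to run instead a modified flow, or to combine the flow with an ansatz adapted to the Kodaira type of $X$, but in all cases the analytic core — ruling out collapse and concentration — remains.
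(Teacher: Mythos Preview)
This statement is labelled a \emph{Conjecture} in the paper and is not proved there; it is the motivating open problem. So there is no ``paper's own proof'' to compare against. What the paper actually establishes are partial results towards the conjecture along exactly the strategy you outline: it shows that the $G_2$-Laplacian flow on $X\times\mathbb{T}^3$ descends to a flow of cohomologous hypersymplectic structures on $X$ (your first paragraph), and its main theorem is the conditional extension criterion you invoke, namely that the flow continues as long as the scalar curvature (equivalently $|\mathbf{T}|$) stays bounded.

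Your proposal is therefore not a proof but a program, and you are candid about this. The genuine gaps you identify are precisely the ones the paper leaves open: there is no mechanism in the paper, or currently in the literature, giving an unconditional a~priori bound on $|\mathbf{T}|$ along the flow, nor any argument ruling out infinite-time collapse or establishing convergence. Your suggested tools (Moser iteration on a Monge--Amp\`ere-type equation, \L ojasiewicz--Simon near torsion-free structures) are plausible directions but none is carried out here or elsewhere; in particular the ``hope'' of upgrading the monotone volume functional to a pointwise torsion bound has no supporting argument. So the honest summary is: your strategy matches the paper's, the paper supplies one key ingredient (the extension theorem), and the remaining steps you list are exactly why Conjecture~\ref{skd-conjecture} is still a conjecture.
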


(Note that one can always apply a constant linear transformation to a given hypersymplectic structure to ensure $\int \omega_i \wedge \omega_j = \delta_{ij}$.)

Donaldson's conjecture can be seen as a special case of an important folklore conjecture in 4-dimensional symplectic geometry: if $(X,\omega)$ is a compact symplectic 4-manifold with $c_1=0$ and $b_+=3$, then there is a compatible complex structure on $X$ making it into a 
hyperk\"ahler manifold. Now given a hypersymplectic structure $\underline{\omega}$ on $X$, we have $c_1(X,\omega_1)=0$ and $b_+=3$. To see this, consider the conformal structure on $X$ for which $\Lambda^+ = \left\langle \omega_i \right\rangle$; this determines an almost complex structure  compatible with $\omega_1$ and whose canonical bundle is isomorphic to the sub-bundle of $\Lambda^+$ which is orthogonal to $\omega_1$. The form $\omega_2$, say, projects to give a nowhere vanishing section of the canonical bundle, showing that $c_1=0$. Since the $\omega_i$ are independent (at every point even), it follows that $b_+ \geq 3$ and a theorem of Bauer then shows that in fact $b_+ = 3$ \cite[Corollary~1.2]{Bauer}. Assuming Conjecture~\ref{skd-conjecture}, Moser's theorem then proves the folklore conjecture for $(X,\omega_1)$. 


This article investigates a geometric flow which is designed to deform a given hypersymplectic structure towards a hyperk\"ahler one. The flow in question is a dimensional reduction of the $G_2$-Laplacian flow, which we now describe. We begin with a rapid overview of $G_2$-structures, to fix notation. Let $M$ be a 7-manifold and $\phi$ a 3-form on $M$. There is a symmetric bilinear form $B_\phi$ on $TM$ with values in $\Lambda^7$ given by
\[
B_\phi(u,v) = \frac{1}{6} \iota_u\phi \wedge \iota_v \phi \wedge \phi
\]
When $B_\phi$ is definite $\phi$ is called a \emph{$G_2$-structure}. In this case there is a unique Riemannian metric $g_\phi$ and orientation with the property that $g_\phi \otimes \dvol_{g_\phi} = B_\phi$.  If $\nabla \phi = 0$ (where $\nabla$ is the Levi-Civita connection of $g_\phi$) then $g_\phi$ has holonomy contained in $G_2$. When this happens we say $\phi$ defines a \emph{$G_2$-metric}.

A $G_2$-structure is called \emph{closed} when $\diff\phi = 0$. A central question is to decide, given a closed $G_2$-structure $\phi$, whether or not there exists a genuine $G_2$-metric in the cohomology class $[\phi]$. To this end, Hitchin \cite{Hitchin} studied the total volume functional $\mathcal{V}(\phi) = \int \dvol_{g_\phi}$ restricted to $[\phi]$. He proved that the critical points are precisely those $\phi$ defining $G_2$-metrics. A natural approach is then to consider the gradient flow of $\mathcal{V}$. This gives the evolution equation
\begin{equation}
\del_t \phi = \Delta_{\phi} \phi
\label{G2-flow}
\end{equation}
where $\Delta_{\phi}$ is the Hodge Laplacian of $g_\phi$. This flow, called the \emph{$G_2$-Laplacian flow}, was also independently introduced by Bryant in \cite{Bryant}. Bryant--Xu proved short-time existence and uniqueness of the flow on compact manifolds in \cite{BX}. 

We now relate this to hypersymplectic structrures (following Donaldson's article \cite{Donaldson}). Given a hypersymplectic structure $\underline{\omega}$ on $X$ we consider the following $3$-form $\phi$ on the 7-manifold $M = X \times \mathbb{T}^3$. We use angular coordinates $t^1, t^2, t^3$ on $\mathbb{T}^3 = S^1 \times S^1 \times S^1$ and define
\begin{equation}
\phi = \diff t^{123} - \diff t^1 \wedge \omega_1 - \diff t^2 \wedge \omega_2 - \diff t^3 \wedge \omega_3
\label{phi-from-omega}
\end{equation}
One checks that $\phi$ is a closed $G_2$-structure precisely because $\underline{\omega}$ is hypersymplectic (see Lemma~\ref{G2-from-hypersymplectic} below). The $G_2$-Laplacian flow for $\phi$ descends to a flow of hypersymplectic structures on $X$ which we describe next. 

As we have already mentioned, a hypersymplectic structure $\underline{\omega}$ determines a conformal structure on $X$, for which $\Lambda^+ = \left\langle \omega_i \right\rangle$. Given any choice of volume form $\mu$, we obtain a Riemannian metric in this conformal class and using this we can define the $3\times 3$ symmetric matrix valued function
\[
Q_{ij} 
=
\frac{\omega_i \wedge \omega_j}{2\mu}
\]
There is a unique volume form $\mu$ for which $\det Q=1$ and, with this convention, $\underline{\omega}$ defines a Riemannian metric $g_{\underline{\omega}}$ on $X$. The triple $\underline{\omega}$ is a hyperk\"ahler triple precisely when $Q = \Id$ and when $Q$ is constant $g_{\underline{\omega}}$ is hyperk\"ahler with $\underline{\omega}$ a constant  linear combination of a hyperk\"ahler triple. The matrix $Q$ and metric $g_{\underline{\omega}}$ are related to the 7-dimensional metric $g_\phi$ via
\[
g_\phi = g_{\underline{\omega}} \oplus Q_{ij} \diff t^i \diff t^j
\]
with respect to the natural splitting $TM \cong TX \oplus T(\mathbb{T}^3)$. (These claims are all proved in \S\ref{g-phi-g-omega}.)

With this in hand we can now describe the $G_2$-Laplacian flow on $X \times \mathbb{T}^3$ in terms of $\underline{\omega}$. If the flow begins with initial condition $\phi$ of the form \eqref{phi-from-omega}, then $\phi(t)$ is of the same form for all~$t$ (Lemma~\ref{G2-flow-descends} below) and corresponds to a flow of hypersymplectic structures which evolve according to the equation
\begin{equation}
\del_t \underline{\omega} = \diff \left( Q\, \diff^* ( Q^{-1}\underline{\omega} )  \right)
\label{hypersymplectic-flow}
\end{equation}
(The notation used here is that if $S_{ij}$ is a $3\times 3$-matrix and $\underline{\alpha}$ is a triple of forms, then $S \underline{\alpha}$ denotes the triple $(S\underline\alpha)_i = S_{ij} \alpha_j$.) We call \eqref{hypersymplectic-flow} the \emph{hypersymplectic flow} and it is the focus of this article. 

To put our main result in context, we first recall what is known about the $G_2$-Laplacian flow in general. The \emph{torsion} of a closed $G_2$-structure $\phi$ is the 2-form $T = -\frac{1}{2} \diff^* \phi$, which vanishes precisely when $\phi$ determines a genuine $G_2$-metric. Lotay--Wei proved an extension theorem \cite[Thmeroem~1.6]{Lotay--Wei}, based on Bernstein--Bando--Shi estimates involving both torsion and curvature, more precisely using the quantity 
\[
\Lambda(\phi) = \sup_M \left(  | \Rm(g_\phi)^2 | + | \nabla T|^2 \right)^{1/2}
\]
Lotay--Wei's extension theorem is:
\begin{theorem}[Lotay--Wei \cite{Lotay--Wei}]
Let $\phi(t)$ be a solution of the $G_2$-Laplacian flow \eqref{G2-flow} on a compact 7-manifold and time interval $t \in [0,s)$ with $s < \infty$. If $|\Delta_\phi\phi|$ is bounded on $[0,s)$ then the flow extends beyond $t=s$, to the time interval $[0,s+\epsilon)$ for some $\epsilon >0$. 
\end{theorem}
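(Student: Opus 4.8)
I would first translate the statement into torsion language. Since $\phi$ is closed, $\Delta_\phi\phi=\diff\diff^*\phi=-2\diff T$, where $T=-\tfrac12\diff^*\phi$ is the torsion. On the other hand, Bryant's identity for closed $G_2$-structures writes $\Delta_\phi\phi=i_\phi(h)$ for a symmetric $2$-tensor $h$ with $h=-\Ric_{g_\phi}+Q(T)$, $Q$ a universal quadratic expression, so that the Laplacian flow induces the Ricci-flow-type equation $\del_t g_\phi=2h$. The linear map $i_\phi$ is a pointwise injection $S^2T^*M\hookrightarrow\Lambda^3T^*M$ determined by $\phi$ (hence, after normalising to a $G_2$-frame, a fixed map), so $|\Delta_\phi\phi|_{g_\phi}$ is comparable to $|h|_{g_\phi}$ with a dimensional constant, and its trace satisfies $\tr_{g_\phi}h=c\,|T|^2_{g_\phi}$ for a positive universal $c$ (this is the identity behind the monotonicity of Hitchin's volume, and is consistent with $\Scal_{g_\phi}=-|T|^2$). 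Therefore $|h|\ge\tfrac{1}{\sqrt7}|\tr_{g_\phi}h|=\tfrac{c}{\sqrt7}|T|^2$, and the hypothesis $|\Delta_\phi\phi|\le\Lambda_0$ on $[0,s)$ forces $|T|^2\le C\Lambda_0$ there, whence $|\Ric_{g_\phi}|\le|h|+C|T|^2\le C'\Lambda_0$ as well. Since $\del_t g_\phi=2h$ with $|h|$ bounded on a finite interval, all the metrics $g_\phi(t)$ are uniformly equivalent; and since $\del_t\phi=\Delta_\phi\phi$ is bounded, $\phi(t)$ converges uniformly as $t\to s$ to a limiting positive $3$-form $\phi(s)$.

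The substance is then to promote these $C^0$, scalar-curvature and Ricci bounds to a bound on $\Lambda(\phi(t))$ on $[\epsilon,s)$: once that is in hand, the Bernstein--Bando--Shi estimates underlying the theorem give uniform bounds on all $\nabla^k\phi$ near $t=s$, hence $C^\infty$-convergence $\phi(t)\to\phi(s)$ to a smooth closed $G_2$-structure, and restarting Bryant--Xu short-time existence from $\phi(s)$ extends the flow to $[0,s+\epsilon)$. I would obtain the $\Lambda$-bound by contradiction and parabolic point-picking. If $\Lambda(\phi(t))$ were unbounded on $[0,s)$, choose $t_i\nearrow s$ and $x_i\in M$ with $\Lambda_i:=(|\Rm|^2+|\nabla T|^2)^{1/2}(x_i,t_i)\to\infty$ almost maximal on a backward parabolic neighbourhood, and rescale by $\phi\rightsquigarrow\lambda_i^3\phi$, $t\rightsquigarrow\lambda_i^2(t-t_i)$ with $\lambda_i=\Lambda_i^{1/2}$. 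The rescaled flows satisfy $(|\Rm|^2+|\nabla T|^2)^{1/2}\le2$ on larger and larger neighbourhoods and $=1$ at the basepoint, while $|\widetilde T_i|^2=\lambda_i^{-2}|T|^2\to0$ and $|\widetilde{\Ric}_i|=\lambda_i^{-2}|\Ric|\to0$ because $|T|$ and $|\Ric|$ stayed bounded. Passing to a pointed limit via the Cheeger--Gromov-type compactness for the Laplacian flow (as developed by Lotay--Wei) yields a complete ancient solution which is torsion-free --- hence static and Ricci-flat with holonomy in $G_2$ --- with bounded curvature and $|\Rm_\infty|(x_\infty,0)=1$, so non-flat.

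The heart of the matter, and the step I expect to be the main obstacle, is to rule out such a limit: this is precisely the $G_2$-analogue of showing that a Ricci flow with bounded Ricci curvature cannot develop a finite-time singularity, and needs genuine input beyond soft arguments. I would use a $\kappa$-non-collapsing property for the Laplacian flow (again available from the Lotay--Wei machinery, or via an entropy monotonicity) to make the blow-up limit non-collapsed, and then invoke a rigidity statement for complete, non-collapsed, bounded-curvature Ricci-flat $G_2$-metrics arising as interior blow-ups, forcing $\Rm_\infty\equiv0$ and contradicting $|\Rm_\infty|(x_\infty,0)=1$. An alternative, more computational route avoiding the blow-up would be to derive coupled evolution inequalities of the schematic form $\del_t|\Rm|^2\le\Delta|\Rm|^2-2|\nabla\Rm|^2+c\,|\Rm|^3+\dots$ and $\del_t|\nabla T|^2\le\Delta|\nabla T|^2-2|\nabla^2T|^2+c(|\Rm|+|T|^2)|\nabla T|^2+\dots$ and close them by a De Giorgi--Nash--Moser iteration using the now-bounded $|T|$ and $|\Ric|$; either way the difficulty is the same, namely that the cubic-in-curvature terms are not obviously tamed by bounded torsion and Ricci alone, and it is exactly here that the special structure of closed $G_2$-geometry (the identity $\Scal_{g_\phi}=-|T|^2$ and the precise shape of $h$) has to be exploited.
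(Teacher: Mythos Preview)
This theorem is not proved in the paper at all: it is stated in the introduction as a result of Lotay--Wei and attributed to \cite{Lotay--Wei} (their Theorem~1.6), so there is no ``paper's own proof'' to compare your proposal against. The paper \emph{uses} the companion Lotay--Wei extension criterion (blow-up of $\Lambda(\phi)$ at a finite-time singularity) as a black box in \S\ref{proof-of-main-result}, but nowhere attempts to reprove it.

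That said, your sketch is worth a brief comment on its own merits. The first paragraph --- that a bound on $|\Delta_\phi\phi|$ forces bounds on $|T|$ and on $|\Ric_{g_\phi}|$, and hence uniform equivalence of the metrics --- is correct and is indeed the starting point of Lotay--Wei's own argument. The blow-up set-up in your second paragraph is also standard. The genuine gap is exactly where you flag it: you have a complete, non-collapsed, Ricci-flat $7$-manifold with bounded curvature and $|\Rm|=1$ at a point, and you need to show it is flat. You write ``invoke a rigidity statement'' but do not say which one; in seven dimensions there is no analogue of Kronheimer's ALE classification, and a generic non-collapsed Ricci-flat limit is certainly not forced to be flat. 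Sesum's proof in the Ricci-flow case works because Perelman's monotonicity forces the blow-up limit to be a \emph{gradient shrinking soliton}, and a Ricci-flat shrinker is a Gaussian, hence flat. Your sketch does not establish any soliton structure on the limit, and without it the contradiction does not close. If you want to complete this line you must either import a Perelman-type entropy for the Laplacian flow that yields a soliton limit, or follow Lotay--Wei's actual argument; consult \cite{Lotay--Wei} directly for how they handle this step.
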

This is, of course, the $G_2$ analogue of Sesum's theorem that Ricci flow exists as long as the Ricci curvature is bounded \cite{Sesum}. The main result of this article is that for the hypersymplectic flow, one can replace $|\Delta_\phi \phi|$ by the a~priori weaker quantity $|T|$:
\begin{theorem}\label{main-theorem}
Let $\underline{\omega}(t)$ be a solution of the hypersymplectic flow \eqref{hypersymplectic-flow} on a compact 4-manifold $X$ and time interval $t \in [0, s)$ with $s < \infty$. If $| T |$ is bounded on $[0,s)$ then the flow extends beyond $t=s$ to the time interval $[0,s+\epsilon)$ for some $\epsilon >0$. 
\end{theorem}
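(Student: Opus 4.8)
The plan is to show that a bound on $|T|$ along the hypersymplectic flow forces a bound on $\Lambda(\phi)$ — equivalently on the full curvature tensor $\Rm(g_\phi)$ and on $\nabla T$ — whereupon Lotay--Wei's extension theorem applies directly. The key structural point is that in the hypersymplectic setting the geometry of $M = X \times \mathbb{T}^3$ is controlled by the $3\times 3$ matrix $Q$ (with $\det Q = 1$) and the metric $g_{\underline\omega}$ on the four-manifold; the torsion $T = -\tfrac12 \dd^*\phi$ is built from $\dd Q$ and the first derivatives of $\underline{\omega}$, so that $|T|$ bounded should be thought of as a $C^1$-type control on $Q$ and $\underline{\omega}$ (modulo the metric dependence). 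The curvature of $g_\phi$ splits, via $g_\phi = g_{\underline\omega} \oplus Q_{ij}\,\dd t^i \dd t^j$, into the curvature of $g_{\underline\omega}$ on the $X$-factor plus terms that are polynomial in $Q$, $Q^{-1}$ and up to two derivatives of $Q$; so a curvature bound reduces to bounding $\Rm(g_{\underml\omega})$ together with $|\nabla^2 Q|$, $|\nabla Q|$ and the eigenvalues of $Q$ away from $0$ and $\infty$.

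First I would derive the evolution equations, under the hypersymplectic flow, for the naturally associated quantities: $Q$ itself, the four-dimensional metric $g_{\underline\omega}$, the torsion $T$, and then $|\Rm(g_\phi)|^2$ and $|\nabla T|^2$. This is where one exploits the special (cohomologically frozen, dimensionally reduced) structure: because $\del_t\underline\omega = \dd(Q\,\dd^*(Q^{-1}\underline\omega))$ is exact, the cohomology classes $[\omega_i]$ are fixed, and the flow is — up to the diffeomorphism/gauge subtleties inherent in Laplacian flow — parabolic in $Q$ and in $g_{\underline\omega}$. I expect an inequality of the schematic shape
\[
\left( \del_t - \Delta \right) |\Rm|^2 \;\le\; -\,|\nabla \Rm|^2 \;+\; C\,\big( |\Rm|^3 + |\Rm|^2\,P(|T|) + \text{lower order} \big),
\]
and similarly for $|\nabla T|^2$, where $P$ is a polynomial and the lower-order terms involve only $|T|$ and the (bounded) eigenvalue data of $Q$. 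The point of assuming $|T|$ bounded is that it controls precisely those coupling terms, so the only genuinely dangerous term is the cubic $|\Rm|^3$ — exactly as in Sesum's theorem and Lotay--Wei.

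To close the argument I would run a blow-up / point-picking contradiction. If the flow does not extend past $s$ while $|T|$ stays bounded, then by Lotay--Wei $\Lambda(\phi)$ must blow up; choose a sequence of points and times realizing (a definite fraction of) the supremum of $|\Rm|$, rescale parabolically, and extract a limit. The rescaling sends $|T|$ (which has the dimensions of $\dd^*\phi$, one derivative) to zero, so the limit is a complete ancient solution of the $G_2$-Laplacian flow with vanishing torsion, i.e. a torsion-free $G_2$-structure, hence Ricci-flat; moreover it is of the special product type $X_\infty \times \mathbb{T}^3$ (or $X_\infty \times \R^3$ after unwrapping), so it descends to a complete hyperk\"ahler four-manifold, which is in particular a static (non-flat) limit with $|\Rm_\infty|(p_\infty)=1$. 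To reach a contradiction one needs either a pseudolocality / no-local-collapsing input or an $\epsilon$-regularity statement for hyperk\"ahler (equivalently anti-self-dual Einstein) four-manifolds that rules out a nontrivial bounded-curvature limit here; this is the step I expect to be the main obstacle, since one must ensure the limit is non-collapsed and that the four-dimensional asymptotics are incompatible with curvature concentrating at a single point under the frozen-cohomology constraint $[\omega_i(t)] = [\omega_i(0)]$. An alternative, possibly cleaner, route that avoids a full compactness theory is to prove directly a doubling-time estimate: show $\sup_M |\Rm| \le 2\sup_M|\Rm|(0)$ persists on an interval whose length depends only on $\sup|\Rm|(0)$ and the $|T|$-bound, by a continuity argument on the differential inequality above together with local higher-derivative (Shi-type) estimates for the $G_2$-Laplacian flow; this bounds $|\Rm|$ uniformly up to time $s$, then bootstrap gives bounds on $|\nabla T|$ and hence on $\Lambda(\phi)$, and Lotay--Wei finishes. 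I would pursue the doubling-time approach as the primary line and keep the blow-up argument as backup.
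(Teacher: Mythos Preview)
Your primary line --- the doubling-time estimate --- has a genuine gap. The schematic inequality you write down,
\[
(\del_t - \Delta)|\Rm|^2 \leq -|\nabla\Rm|^2 + C\big(|\Rm|^3 + |\Rm|^2 P(|T|) + \cdots\big),
\]
is correct in shape, but the cubic term $|\Rm|^3$ cannot be absorbed using only a bound on $|T|$. A doubling-time argument from this inequality would give an existence interval of length $\sim 1/\sup|\Rm|(0)$, which is exactly the Lotay--Wei estimate and gives nothing new as $t\to s$ if $|\Rm|$ is blowing up. To kill the cubic term by the maximum principle you would need a bound on the full Ricci tensor (as in Sesum), not just on $|T|^2 = -\mathbf{R}$; the paper itself stresses that extension under a scalar-curvature bound is \emph{not} known for Ricci flow, so no purely local parabolic argument of this kind is going to work here.

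Your backup blow-up argument is the right architecture and is what the paper actually does, but you are missing the central ingredient. After rescaling, the limit is indeed a complete non-flat hyperk\"ahler $4$-manifold with Euclidean volume growth (noncollapsing comes from Gao Chen's generalisation of Perelman's theorem, applicable because the $|T|$-bound forces $\del_t g + 2\Ric$ to be bounded via the maximum-principle bounds on $\tr Q$ and $|\dd Q|_Q$). But Euclidean volume growth alone does not rule out such a limit --- ALE gravitational instantons exist. The crux, which occupies the technical heart of the paper (\S\ref{L2-curvature}), is a uniform bound on the scale-invariant energy $\int_X |\Rm|^2$ along the flow, proved by a Simon-style system of coupled integral inequalities for $\int|\mathbf{Ric}|^2$, $\int|\mathbf{Ric}|^2/(\mathbf{R}+\beta)$, $\int|\mathbf{Ric}|^2|\dd Q|^2_Q$ and $\int|\dd Q|^2_Q$, together with Chern--Gauss--Bonnet in dimension~$4$. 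This energy bound passes to the limit and forces $(X^\infty,g^\infty)$ to be an ALE gravitational instanton. The contradiction is then topological, not analytic: by Kronheimer's classification the limit contains a holomorphic $(-2)$-sphere $S$; pulling back under the Cheeger--Gromov diffeomorphisms gives $\omega_1$-symplectic spheres $f_k(S)\subset X$ whose classes lie in the integral lattice $H_2(X,\Z)$, have self-intersection $-2$, and satisfy $\langle[\omega_i],[f_k(S)]\rangle\to 0$, which forces $[f_k(S)]$ into a finite set while $\langle[\omega_1],[f_k(S)]\rangle$ must be both positive and tending to zero. None of the alternatives you suggest ($\epsilon$-regularity, pseudolocality, ``asymptotics incompatible with concentration'') will produce this contradiction without the finite-energy input.
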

A calculation of Bryant \cite{Bryant} says that the scalar curvature of a closed $G_2$-structure $\phi$ is $R(g_\phi) = - |T|^2$. Our result says that given a hypersymplectic structure $\underline{\omega}(0)$, the $G_2$-Laplacian flow on $X \times \mathbb{T}^3$ starting from $\phi(0)$ given by \eqref{phi-from-omega} can be continued for as long as the \emph{scalar} curvature of the metric $g_{\phi(t)}$ remains bounded. It is interesting to note that this is much stronger than what is currently known for the Ricci flow (cf.~the works \cite{EMT, W1, W2, CW, Z} for the study of Ricci flow under bounded scalar curvature). 

A consequence of Theorem~\ref{main-theorem} is a lower bound for the existence time of a hypersymplectic flow, purely in terms of a $C^1$ bound on $Q$ at $t=0$. We do not assume here that the flow has bounded torsion. We give a loose statement of the result here, see Theorem~\ref{lower-bound-existence-time} for the optimal statement.

\begin{theorem}\label{existence-time}
Let $K>0$. There exists a constant $\epsilon>0$ depending only on $K$, such that whenever $\underline{\omega}(0)$ is a hypersymplectic structure with $\| Q\|_{C^1} \leq K$ then the hypersymplectic flow $\underline{\omega}(t)$ starting at $\underline{\omega}(0)$ exists for all $t \in [0,\epsilon]$. 
\end{theorem}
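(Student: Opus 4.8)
The plan is to deduce Theorem~\ref{existence-time} from Theorem~\ref{main-theorem} by a contradiction/continuity argument, using the fact that a $C^1$ bound on $Q$ at $t=0$ controls the torsion $|T|$ of $\phi(0)$, and that such bounds cannot deteriorate arbitrarily fast. First I would note that, by Bryant's identity $R(g_\phi) = -|T|^2$ together with the expression $g_\phi = g_{\underline\omega} \oplus Q_{ij}\diff t^i\diff t^j$, the torsion of the $G_2$-structure $\phi$ built from $\underline\omega$ is a universal algebraic expression in $Q$ and its first derivatives (measured in the metric $g_{\underline\omega}$, which is itself determined pointwise by $\underline\omega$ via $\det Q = 1$). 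Hence a $C^1$ bound $\|Q\|_{C^1}\le K$ at $t=0$ yields an a~priori bound $|T(0)|\le C(K)$. The real content is to show that this bound persists, in a quantitative way, for a definite amount of time depending only on $K$: once we know $|T|$ stays bounded on $[0,\epsilon]$ we invoke Theorem~\ref{main-theorem} to conclude the flow does not become singular there.

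The key steps, in order, would be: (i) establish the algebraic dictionary above, so that $|T|$, $|\Rm(g_\phi)|$, $|\nabla T|$ etc.\ along the hypersymplectic flow are controlled by $Q$ and finitely many of its $g_{\underline\omega}$-covariant derivatives; (ii) use the evolution equation \eqref{hypersymplectic-flow}, which is quasi-parabolic in $\underline\omega$ (equivalently a parabolic-type flow for $Q$ coupled to the underlying geometry), to derive a differential inequality of the form $\del_t \|Q\|_{C^1} \le F(\|Q\|_{C^2})$ or, better, to run a short-time a~priori estimate showing that $\|Q(t)\|_{C^1}$ and the higher norms needed to apply the Lotay--Wei / Bryant--Xu machinery remain bounded by a constant $C(K)$ on a time interval $[0,\tau(K)]$; (iii) on that interval conclude $|T(t)| \le C(K)$, so by Theorem~\ref{main-theorem} the flow extends past $\tau(K)$, and set $\epsilon = \tau(K)$. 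A clean way to package step~(ii) is a standard continuity argument: let $[0,\epsilon^*)$ be the maximal interval on which $\|Q(t)\|_{C^1} \le 2K$; on this interval the flow is uniformly parabolic with bounds depending only on $K$, so parabolic Schauder/interior estimates upgrade the $C^1$ bound to $C^{1,\alpha}$ and then to $C^\infty$ bounds that improve $2K$ back to, say, $\tfrac{3}{2}K$ after a controlled time, forcing $\epsilon^* \ge \tau(K)$.

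The main obstacle I expect is step~(ii): making the short-time estimates genuinely \emph{quantitative} and depending \emph{only} on the $C^1$-norm of $Q$ at time zero, rather than on higher-order initial data. The subtlety is that \eqref{hypersymplectic-flow} is degenerate/quasilinear and its parabolicity constants involve $Q$ and $Q^{-1}$; one must check that $\det Q = 1$ plus $\|Q\|_{C^1}\le 2K$ really does bound $Q$ away from degeneracy (it does, since $Q$ is symmetric positive-definite with unit determinant, so its eigenvalues are squeezed once the trace is bounded), and that the coefficients depend on $\underline\omega$ only through $Q$ in a scale-invariant way compatible with the flow's parabolic rescaling. One then needs a smoothing estimate converting the rough $t=0$ data into interior bounds on all the derivatives appearing in $\Lambda(\phi)$; this is where one must be careful that no constant secretly depends on $\|Q(0)\|_{C^2}$. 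Once this is set up, the remaining arguments — the algebraic dictionary of step~(i) and the invocation of Theorem~\ref{main-theorem} in step~(iii) — are routine. I would also remark that the optimal statement (Theorem~\ref{lower-bound-existence-time}) presumably tracks the precise dependence $\epsilon \sim c/K^{2}$ or similar coming from the parabolic rescaling $\underline\omega \mapsto \lambda^2\underline\omega$, $t \mapsto \lambda^2 t$, under which $Q$ is invariant and $|T|$ scales like $\lambda^{-1}$.
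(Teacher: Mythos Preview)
Your overall architecture is correct and matches the paper: bound $|\mathbf{T}|$ in terms of first-order data on $Q$, show this bound persists for a time depending only on $K$, then invoke Theorem~\ref{main-theorem}. Step~(i) is exactly right---the paper uses $|\mathbf{T}|^2 \leq \tfrac{3}{2}|\diff Q|^2_Q$ (Lemma~\ref{torsion-bounds}), so a $C^1$ bound on $Q$ controls torsion. Step~(iii) is also correct.

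The gap is in step~(ii), and it is precisely the one you flagged yourself. The Schauder-plus-continuity mechanism you sketch does not close: under the bootstrap assumption $\|Q(t)\|_{C^1}\le 2K$ the equation is uniformly parabolic, so interior estimates do give $C^\infty$ bounds for $t>0$, but these bounds blow up as $t\to 0$ (schematically $\|Q(t)\|_{C^3}\sim t^{-1}$), and there is no mechanism by which they ``improve $2K$ back to $\tfrac{3}{2}K$''. To close the loop you must show $\partial_t\|Q\|_{C^1}$ is controlled by quantities of the same order, and a generic $\partial_t\|Q\|_{C^1}\le F(\|Q\|_{C^2})$ inequality does not do this.

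What the paper does instead is apply the maximum principle to two carefully chosen scalar quantities whose heat operators have source terms involving \emph{only} first-order data. Propositions~\ref{trQ-mp} and~\ref{dQ-mp} give
\[
(\partial_t-\Delta)\tr Q \le \tfrac{5}{3}|\mathbf{T}|^2\tr Q,\qquad
(\partial_t-\Delta)|\diff Q|^2_Q \le C(\tr Q)^{21}|\mathbf{T}|^2|\diff Q|^2_Q,
\]
and since $|\mathbf{T}|^2\le\tfrac{3}{2}|\diff Q|^2_Q$ the system of sup-functions $f_1=\sup\tr Q$, $f=\sup|\diff Q|^2_Q$ closes: one obtains $(f_1^{21}f)'\le C'(f_1^{21}f)^2$, an ODE whose solution stays finite until $t=1/(C'f_1^{21}(0)f(0))$. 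This is the missing idea---the second-derivative term $\Delta$ is absorbed by the maximum principle rather than estimated. As a bonus this yields the explicit lower bound $s\ge 1/(2C K^{21}A)$ with $K=\sup\tr Q(0)$ and $A=\sup|\diff Q|^2_Q(0)$, rather different from the $c/K^2$ you guessed from scaling.
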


We now give a brief outline of the proof of Theorem~\ref{main-theorem}, the details of which occupy the rest of the paper. A key point is that when $\underline{\omega}(t)$ solves the hypersymplectic flow, the corresponding metric $g(t)$ and positive definite matrix $Q(t)$ solve a version of the coupled harmonic--Ricci flow (introduced by Reto~Buzano (n\'e M\"uller) in \cite{Muller}) with additional ``forcing'' terms involving $T$. The proof exploits this by combining ideas from Ricci flow and the harmonic map flow. 

We assume for a contradiction that the flow does not extend. We consider a parabolic rescaling of $\underline{\omega}(t)$ as $t$ approaches the maximal time and, using Lotay--Wei's estimates on $\Lambda(\phi)$, take a limit. To do this, we show that the bound on $|T|$ implies the flows are noncollapsed.  We use here a recent result of Gao Chen \cite[Theorem~1.1]{Chen} which generalises Perelman's $\kappa$-noncollapsing theorem for the Ricci flow \cite{perelman} to a metric flow $g(t)$ for which $\del_t g$ is a bounded distance from $-2\Ric$. To prove the hypotheses of Chen's theorem are satisfied, we use the maximum principle, in the style of the harmonic map flow, to control $\diff Q$, and show that bounds on torsion imply that the flow is uniformly noncollapsed, in finite time. 

The next step, and the real crux of the argument, is to show that the limit is an asymptotically locally Euclidean (ALE) gravitational instanton. This sort of conclusion is currently out of reach in Ricci flow, and this explains the big difference between what is known there and what we are able to prove for the hypersymplectic flow. There are two separate things to show: proving that the limit is hyperk\"ahler and proving that it has finite energy, i.e.~finite $L^2$-norm of curvature. (The fact that the limit is ALE follows from the uniform noncollapsing, which ensures Euclidean volume growth in the limit.) 

To achieve the first part, we first bound the $C^2$-norm of $\underline\omega$ in terms of bound of $\dd Q$ and Riemannian curvature. 
This enables us to take a limit of the rescaled hypersymplectic structures, giving a hyperk\"ahler triple on the limit. The proof that the limit has finite energy is quite delicate. We show that the bound on $|T|$ gives a bound on the energy along the hypersymplectic flow (in finite time). By scaling invariance, this translates to an energy bound in the limit. Our argument here is inspired by a result of Miles Simon \cite{Simon} which shows that a bound on scalar curvature of a compact 4-dimensional Ricci flow implies a bound on the energy in finite time. Simon's proof involves integral estimates of two different functions, the ``bad'' term of one being cancelled by the ``good'' term of the other. Here the calculations are more complicated, with additional bad terms appearing when Simon's two quantities are considered. We are able to complete the proof by a judicious choice of two additional functions which generate the required good terms. 

To complete the proof, we invoke Kronheimer's classification of ALE gravitaional instantons \cite{Kron} from which it follows that the limit contains a 2-sphere which is holomorphic for one of its hyperk\"ahler complex structures. From this, and the fact that the hyperk\"ahler triple is a scaling limit of the hypersymplectic structures, we find a contradiction using a topological argument. 
 
This article is organised as follows. In \S\ref{short-time-existence} we show short-time existence and uniqueness for the hypersymplectic flow \eqref{hypersymplectic-flow}. This follows from the analogous result for the $G_2$-Laplacian flow, together with a calculation of the corresponding flow on the 4-manifold. In \S\ref{identities} we give a series of identities relating various geometric quantities on $(X, g_{\underline{\omega}})$ and $(X\times \mathbb{T}^3, g_\phi)$. In \S\ref{evolution-equations} we derive the necessary evolution equations. Some can be deduced directly from the known equations for the $G_2$-Laplacian flow, others have purely 4-dimensional derivations. We also prove the required maximum principles here. \S\ref{L2-curvature} is the technical heart of the paper, proving that the $L^2$-norm of the curvature of  $g_{\underline{\omega}}$ is also bounded in finite time. \S\ref{proof-of-main-result}  then assembles all the parts of the proof of Theorems~\ref{main-theorem} and~\ref{existence-time}.

\subsection*{Acknowledgements}

It is a pleasure to acknowledge important discussions with Gao Chen, Xiuxiong Chen, Simon Donaldson and Jason Lotay  during the course of this work. Both authors were supported by FNRS grant MIS.F.4522.15. JF was also supported by ERC consolidator grant 646649 ``SymplecticEinstein''. Part of this work was carried out in Spring 2016 whilst CY was a Viterbi Fellow at MSRI and JF was a visitor there. We are grateful for the support of MSRI, financed in part by the NSF grant DMS-1440140. 

\section{Preliminary definitions and short time existence}
\label{short-time-existence}

The main result of this section is:

\begin{proposition}\label{short-time-existence-theorem}
Let $\underline{\omega}$ be a hypersymplectic structure on a compact 4-manifold $X$. Then there exists a unique short time solution $\underline{\omega}(t)$ to the hypersymplectic flow \eqref{hypersymplectic-flow} starting at $\underline{\omega}$.
\end{proposition}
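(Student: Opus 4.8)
The plan is to lift to the $G_2$-Laplacian flow \eqref{G2-flow} on the compact $7$-manifold $M=X\times\mathbb{T}^3$ and apply Bryant--Xu's short-time existence and uniqueness theorem \cite{BX}. For existence, form from $\underline\omega$ the closed $G_2$-structure $\phi$ of \eqref{phi-from-omega} on $M$ (Lemma~\ref{G2-from-hypersymplectic}), and let $\phi(t)$, $t\in[0,\epsilon)$, be the unique Bryant--Xu solution of \eqref{G2-flow} with $\phi(0)=\phi$. The crux is to show that $\phi(t)$ stays of the form \eqref{phi-from-omega} --- this is the content of Lemma~\ref{G2-flow-descends} --- and we would prove it by a symmetry argument. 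Since \eqref{G2-flow} is natural under diffeomorphisms (the Hodge Laplacian depends only on $g_\phi$), uniqueness forces the flow to inherit the symmetries of its initial value: first, $\mathbb{T}^3$ acts on $M$ by translations fixing $\phi(0)$, so $\phi(t)$ is $\mathbb{T}^3$-invariant for every $t$; second, the involution $\iota\colon(x,\vec t)\mapsto(x,-\vec t)$ satisfies $\iota^*\phi(0)=-\phi(0)$, and $-\psi$ solves \eqref{G2-flow} whenever $\psi$ does (because $g_{-\psi}=g_\psi$), so $\iota^*\phi(t)=-\phi(t)$. A general $\mathbb{T}^3$-invariant $3$-form on $M$ has the shape
\[
f\,\diff t^{123}+\sum_{i<j}\diff t^{i}\wedge\diff t^{j}\wedge a_{ij}+\sum_{i}\diff t^{i}\wedge\beta_{i}+\gamma
\]
with $f\in C^\infty(X)$, $a_{ij}\in\Omega^1(X)$, $\beta_i\in\Omega^2(X)$ and $\gamma\in\Omega^3(X)$; imposing $\iota^*\phi(t)=-\phi(t)$ kills exactly the $a_{ij}$ and $\gamma$ terms. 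Closedness of $\phi(t)$ (preserved along \eqref{G2-flow}) then forces $f$ to be constant on $X$ and each $\beta_i$ closed, and constancy of the cohomology class $[\phi(t)]$ (its $t$-derivative being exact) pins down $f\equiv 1$. Hence $\phi(t)=\diff t^{123}-\sum_i\diff t^i\wedge\omega_i(t)$, and since $\phi(t)$ is a $G_2$-structure, $\underline\omega(t)=(\omega_1(t),\omega_2(t),\omega_3(t))$ is hypersymplectic by Lemma~\ref{G2-from-hypersymplectic}.

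It then remains to identify the induced evolution. We would compute $\Delta_{\phi(t)}\phi(t)=\diff\diff^*\phi(t)$ in terms of the $4$-dimensional data, using the identities of \S\ref{identities} relating $g_\phi$, $g_{\underline\omega}$, the matrix $Q$ and the torsion $T=-\tfrac{1}{2}\diff^*\phi$; matching the $\diff t^i$-components of $\partial_t\phi(t)=\Delta_{\phi(t)}\phi(t)$ then yields precisely the hypersymplectic flow \eqref{hypersymplectic-flow} for $\underline\omega(t)$, the $\diff t^{123}$-component vanishing in accordance with $f\equiv1$. For uniqueness, conversely, suppose $\underline\omega_1(t)$ and $\underline\omega_2(t)$ both solve \eqref{hypersymplectic-flow} with initial value $\underline\omega$. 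On a short enough interval they remain hypersymplectic (definiteness being an open condition), so the $3$-forms $\phi_a(t):=\diff t^{123}-\sum_i\diff t^i\wedge(\omega_a)_i(t)$ are $G_2$-structures by Lemma~\ref{G2-from-hypersymplectic} and, by the computation just described, both solve \eqref{G2-flow} with common initial value $\phi$; Bryant--Xu uniqueness gives $\phi_1=\phi_2$, hence $\underline\omega_1=\underline\omega_2$.

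Granting Bryant--Xu, no further analytic difficulty remains: the real content is the reduction Lemma~\ref{G2-flow-descends}, and the two points needing care are, first, making the symmetry/uniqueness step precise --- in particular checking that $-\psi$ is an admissible solution of \eqref{G2-flow} so that Bryant--Xu uniqueness applies to it (equivalently, working on $M$ with the reversed orientation), and that $\iota$ interacts with the flow as stated --- and, second and more laboriously, verifying that the reduced flow is \emph{exactly} \eqref{hypersymplectic-flow} rather than merely of the same shape, which is the bookkeeping with $Q$, $Q^{-1}$ and the identities of \S\ref{identities}. I expect this last computation, rather than any conceptual step, to be the main obstacle.
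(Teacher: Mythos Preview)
Your proposal is correct and follows essentially the same route as the paper: lift to $X\times\mathbb{T}^3$, apply Bryant--Xu, then use $\mathbb{T}^3$-invariance together with the involution $(x,\vec t)\mapsto(x,-\vec t)$ and constancy of the cohomology class to force $\phi(t)$ back into the form \eqref{phi-from-omega}, and finally compute $\Delta_\phi\phi$ in four-dimensional terms. The paper packages the involution step slightly differently---it works directly with $\hat\phi=-\vartheta^*\phi$ and checks this is a $G_2$-structure with the \emph{same} orientation as $\phi$ (since $\vartheta$ is orientation-reversing and the minus sign compensates), which sidesteps the need to invoke Bryant--Xu on the oppositely oriented manifold---but this is exactly the care you flag in your last paragraph, and the content is the same.
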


This will follow easily from the analogous result of Bryant--Xu for the $G_2$-Laplacian flow. Along the way we make explicit the relationship between the natural metric $g_{\underline{\omega}}$ on $X$ induced by a hypersymplectic structure $\underline{\omega}$ and the metric $g_\phi$ induced on $X \times \mathbb{T}^3$ induced by the $G_2$-structure $\phi$ given by \eqref{phi-from-omega}.

\subsection{Relating $g_{\underline{\omega}}$ and $g_\phi$}\label{g-phi-g-omega}

\begin{lemma}\label{G2-from-hypersymplectic}
Given a triple $\underline{\omega}$ of  2-forms on $X$, let $\phi$ be the 3-form on $X \times \mathbb{T}^3$ defined by \eqref{phi-from-omega}. Then $\phi$ is a $G_2$-structure if and only if $\left\langle \omega_i \right\rangle \subset \Lambda^2(T^*X)$ is a maximal definite subspace for the wedge product.
\end{lemma}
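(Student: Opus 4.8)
The plan is to work pointwise, so fix $x \in X$ and let $V = T_xX$, a 4-dimensional oriented vector space, with $\omega_1,\omega_2,\omega_3 \in \Lambda^2 V^*$. The quantity to unwind is the $\Lambda^7$-valued bilinear form $B_\phi(u,v) = \tfrac16 \iota_u\phi \wedge \iota_v\phi \wedge \phi$ evaluated on $W = V \oplus \R^3$ (identifying $T(X\times\mathbb{T}^3)$ fibrewise), and the strategy is simply to compute $B_\phi$ explicitly in a basis adapted to the splitting $W = V \oplus \R^3$, using $\partial_{t^i}$ for the torus directions, and then read off exactly when the resulting symmetric form is definite.

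First I would compute the contractions. Writing $\phi = \dd t^{123} - \sum_i \dd t^i \wedge \omega_i$, one gets $\iota_{\partial_{t^i}} \phi = \tfrac12\epsilon_{ijk}\,\dd t^{jk} - \omega_i$ (with an appropriate sign convention) and, for $u \in V$, $\iota_u \phi = -\sum_i \dd t^i \wedge \iota_u\omega_i$. The second key step is to plug these into the formula for $B_\phi$ and separate the computation into the three cases: (i) both arguments in the torus directions, $B_\phi(\partial_{t^i},\partial_{t^j})$; (ii) one argument in each factor, $B_\phi(\partial_{t^i},u)$; (iii) both arguments in $V$, $B_\phi(u,v)$. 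The mixed terms in case (ii) should vanish for degree/parity reasons (there is no way to saturate all seven $\dd t$'s and $V^*$-factors), showing the metric $g_\phi$ respects the splitting — this is consistent with the claimed formula $g_\phi = g_{\underline\omega} \oplus Q_{ij}\dd t^i\dd t^j$. Case (i) gives, up to a positive normalization by the top form on $V$, the matrix $Q_{ij} \propto \omega_i\wedge\omega_j$; case (iii) gives, for $u,v \in V$, something proportional to $\sum_{i} (\iota_u\omega_i)\wedge(\iota_v\omega_i) \wedge (\text{stuff})$ which is controlled by the same data.

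The conclusion is then linear algebra: $B_\phi$ is definite on $W$ iff its restriction to the torus block is definite and its restriction to the $V$ block is definite (given that the off-diagonal block vanishes). The torus block is definite iff the Gram matrix $(\omega_i \wedge \omega_j)$ is definite with respect to a choice of volume form on $V$, i.e.\ iff $\langle\omega_i\rangle$ is a \emph{maximal} (3-dimensional) \emph{definite} subspace of $\Lambda^2 V^*$ for the wedge pairing (recall the wedge form on $\Lambda^2$ of a 4-space has signature $(3,3)$, so a definite subspace has dimension at most $3$, and dimension exactly $3$ forces the $\omega_i$ to be linearly independent). Then I would check that on the maximal-definite locus the $V$ block of $B_\phi$ is automatically positive definite — this is the one place where a small amount of genuine work is needed: one picks the standard hyperkähler model $\omega_i^{\mathrm{std}}$ on $\R^4$, verifies definiteness of the $V$ block there by direct computation, and then notes that $GL^+(4,\R)$ acts transitively on maximal positive-definite triples modulo $SO(3)$ (or argues by a connectedness/continuity argument that definiteness of the $V$ block cannot fail without the torus block degenerating first, since $B_\phi$ is definite on an open set and the two blocks are the only obstruction).

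The main obstacle I anticipate is precisely the bookkeeping of signs and normalizations in case (iii), and in particular making the reduction "definiteness of the full $B_\phi$ $\Leftrightarrow$ maximal definiteness of $\langle\omega_i\rangle$" watertight in the direction where one must rule out the $V$ block being indefinite while the torus block stays definite; the cleanest route is to exhibit one explicit hyperkähler example where everything is manifestly positive and then invoke the transitive group action on the space of maximal-definite triples, so that a single model computation suffices.
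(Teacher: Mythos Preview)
Your strategy---compute $B_\phi$ block-by-block with respect to the splitting $V\oplus\R^3$, see the off-diagonal block vanish, identify the torus block with the Gram matrix $(\omega_i\wedge\omega_j)$, and reduce the $V$-block to the standard hyperk\"ahler model---is exactly the paper's. Two points of comparison. First, your description of the $V$-block as ``$\sum_i(\iota_u\omega_i)\wedge(\iota_v\omega_i)\wedge(\text{stuff})$'' is not quite right: the actual expression is $\beta_{\underline\omega}(u,v)=\tfrac16\sum\epsilon^{ijk}\iota_u\omega_i\wedge\iota_v\omega_j\wedge\omega_k$, with the alternating tensor, not a diagonal sum. Second, for the reduction to the model, the paper gives a cleaner argument than your transitivity/continuity proposal: write $\omega_i=A_{ij}\theta_j$ where $\theta_i$ is an orthonormal-for-wedge basis of $\langle\omega_i\rangle$, and use the identity $\epsilon^{ijk}A_{ip}A_{jq}A_{kr}=\det(A)\,\epsilon^{pqr}$ to get $\beta_{\underline\omega}=\det(A)\,\beta_{\underline\theta}$. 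This replaces your soft argument with one line of algebra and avoids the circularity lurking in the continuity version (you cannot assume $B_\phi$ stays nondegenerate along a path when that is what you are proving). Your $GL^+(4,\R)$-transitivity route would work too, but the change-of-basis-in-$\langle\omega_i\rangle$ trick is more direct.
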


\begin{proof}
Let $\underline{a} = (a_1,a_2,a_3) \in \R^3$ and write $u = a_1 \del_{1} + a_2 \del_{2} + a_3 \del_{3}$, where $\del_{i}$ are the coordinate vector fields on $\mathbb{T}^3$. Then
\begin{equation}
B_\phi(u,u) 
=
\frac{1}{6}\iota_u \phi \wedge \iota_u \phi \wedge \phi
=
\frac{1}{2}\diff t^{123} \wedge \sum_{i,j=1}^3a_ia_j\omega_i \wedge\omega_j
\label{Bphi-vertical}
\end{equation}
If $\phi$ is a $G_2$-form then $B_\phi(u,u) \neq 0$ whenever $\underline{a} \neq 0$ and hence $\left\langle \omega_i \right\rangle$ is a maximal definite subspace for the wedge product. 

Conversely, if $\left\langle \omega_i \right\rangle$ is a maximal definite space for the wedge product, $B_\phi$ is definite on vectors tangent to the $\mathbb{T}^3$ factor.  Meanwhile, if $u$ is tangent to $\mathbb{T}^3$, and $v, w$ are tangent to $X$, then one checks that $B_\phi(u,v) = 0$ whilst $B_\phi(v,w) = \diff t^{123} \wedge \beta_{\underline{\omega}}(v,w)$ where 
\begin{equation}
\beta_{\underline{\omega}}(v,w)= 
\frac{1}{6} \sum_{i,j,k=1}^3
\epsilon^{ijk}\iota_v\omega_i\wedge\iota_w\omega_j\wedge\omega_k
\label{Bphi-horizontal}
\end{equation}
(Here $\epsilon^{ijk}$ is the sign of the permutation $(i,j,k)$.) We must check that $\beta_{\underline{\omega}}$ is definite. Let $\theta_i$ be a basis for $\left\langle \omega_i \right\rangle$ which diagonalises the wedge product, i.e., such that $\theta_i \wedge \theta_j = 2\delta_{ij} \mu$ for some volume form $\mu$. We can write $\omega_i = A_{ij} \theta_j$ in this basis. Then
\begin{equation}
\beta_{\underline{\omega}}(v,w)
=
\frac{1}{6} \sum_{i,j,k,p,q,r} \epsilon^{ijk} A_{ip}A_{jq} A_{kr}
\iota_v \theta_p\wedge \iota_w \theta_q \wedge \theta_r
=
\det(A) \beta_{\underline{\theta}}(v,w)
\label{omega-theta}
\end{equation}
where $\beta_{\underline{\theta}}$ is given by the same formula as $\beta_{\underline{\omega}}$ but with $\theta_i$ in place of $\omega_i$. Now the $\theta_i$ are standard; one can find a coframe $e_1, e_2, e_3, e_4$ such that $\theta_1 = e_1 \wedge e_2 + e_3 \wedge e_4$ etc. A direct calculation gives that $\beta_{\underline\theta}(v,v) = |v|^2 e_{1234}$ where $|v|^2$ is computed in the metric which makes the $e_i$ an orthonormal coframe. It follows that $\beta_{\underline{\omega}}$ is also definite, which completes the proof.
\end{proof}

\begin{definition}~
\begin{enumerate}
	\item
	Given a hypersymplectic structure $\underline{\omega}$, the above proof shows that $\beta_{\underline{\omega}}$ defines a $\Lambda^4$-valued definite bilinear form on $TX$. This defines a conformal structure on $X$ and, just as for $G_2$-structures, there is a unique volume form $\mu$ on $X$ for which the resulting metric $g_{\underline{\omega}}$ in this conformal class satisfies $g_{\underline{\omega}} \otimes \mu = \beta_{\underline{\omega}}$. We call $g_{\underline{\omega}}$ and $\mu$ the \emph{metric and volume form induced by $\underline{\omega}$}.
	\item
	Write $Q \colon X \to S^2\R^3$ for the symmetric $3\times 3$ matrix-valued function defined by
	\[
	Q_{ij} 
	= \frac{\omega_i\wedge \omega_j}{2\mu} 
	\]
	(where $\mu$ is the volume form induced by $\underline{\omega}$). $Q$ will play a central role throughout this article.
\end{enumerate}
\end{definition}

\begin{lemma}\label{sd-detQ}
The 2-forms $\omega_i$'s are self-dual with respect to the metric $g_{\underline{\omega}}$ and $\det Q =1$.
\end{lemma}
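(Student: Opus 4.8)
The plan is to argue pointwise, reusing the normal form constructed in the proof of Lemma~\ref{G2-from-hypersymplectic}. Fix $p \in X$: choose a basis $\theta_1,\theta_2,\theta_3$ of $\langle\omega_i\rangle$ diagonalising the wedge product and a coframe $e_1,\dots,e_4$ with $\theta_1 = e_1\wedge e_2 + e_3\wedge e_4$, $\theta_2 = e_1\wedge e_3 + e_4\wedge e_2$, $\theta_3 = e_1\wedge e_4 + e_2\wedge e_3$, so that $\theta_i\wedge\theta_j = 2\delta_{ij}\,e_{1234}$ and the $\theta_i$ are the standard self-dual $2$-forms of the metric $g_e$ which makes the $e_i$ an orthonormal coframe. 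Writing $\omega_i = A_{ij}\theta_j$, formula \eqref{omega-theta} gives $\beta_{\underline\omega} = \det(A)\,\beta_{\underline\theta}$ with $\beta_{\underline\theta}(v,v) = |v|^2_{g_e}\,e_{1234}$; since the $\omega_i$ are pointwise independent and $\langle\omega_i\rangle$ is a maximal \emph{positive}-definite subspace for the given orientation, $A$ is invertible with $\det A > 0$.

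First I would pin down $g_{\underline\omega}$ and $\mu$ explicitly. As $\beta_{\underline\omega}$ is a positive multiple of $\beta_{\underline\theta}$, the conformal class it determines is that of $g_e$, so $g_{\underline\omega} = \lambda\, g_e$ for some function $\lambda > 0$; in dimension four the associated volume form is then $\mu = \lambda^2\, e_{1234}$. Feeding this into the normalisation $g_{\underline\omega}\otimes\mu = \beta_{\underline\omega}$ gives $\lambda^3\,|v|^2_{g_e}\,e_{1234} = \det(A)\,|v|^2_{g_e}\,e_{1234}$, hence $\lambda = \det(A)^{1/3}$ and $\mu = \det(A)^{2/3}\, e_{1234}$.

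Self-duality is then immediate: in dimension four the Hodge star on $2$-forms is conformally invariant, so $*_{g_{\underline\omega}}$ coincides on $\Lambda^2$ with $*_{g_e}$, for which each $\theta_i$, and hence each $\omega_i = A_{ij}\theta_j$, is self-dual. For the determinant I would compute $\omega_i\wedge\omega_j = A_{ik}A_{jl}\,\theta_k\wedge\theta_l = 2(AA^T)_{ij}\, e_{1234}$, divide by $2\mu = 2\det(A)^{2/3}e_{1234}$ to get $Q = AA^T/\det(A)^{2/3}$, and conclude $\det Q = \det(AA^T)/\det(A)^2 = 1$.

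I do not expect a genuine obstacle here: the whole statement is a short piece of pointwise linear algebra. The only thing needing care is the exponent $1/3$, which appears because the pairing $g\otimes\mu$ scales like $\lambda^3$ under $g\mapsto\lambda g$ in four dimensions, together with the check that the orientation of $X$ is consistent with the choices above so that $\det(A)^{1/3}$ is unambiguous. As a by-product the explicit formula $Q = AA^T/\det(A)^{2/3}$ makes the linear algebra behind the introduction's description of $Q$ (and the characterisation $Q=\Id$ of hyperk\"ahler triples) completely transparent.
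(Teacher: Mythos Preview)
Your argument is correct and follows essentially the same route as the paper: both work pointwise with an orthonormal basis $\theta_i$ of $\langle\omega_i\rangle$, invoke \eqref{omega-theta} to reduce to the standard model, and read off self-duality and $\det Q=1$ from the linear algebra. The only difference is in bookkeeping: the paper normalises the $\theta_i$ so that $\theta_i\wedge\theta_j = 2\delta_{ij}\mu$ with $\mu$ already the induced volume form, which forces $\det A = 1$ directly and gives $Q = AA^T$; you instead take an arbitrary orthonormal frame, compute $\mu = \det(A)^{2/3}e_{1234}$ explicitly, and obtain $Q = AA^T/\det(A)^{2/3}$. Your version is slightly more explicit and has the pleasant by-product of the closed formula for $Q$; the paper's is marginally slicker. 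One small quibble: your justification that $\det A>0$ (``maximal positive-definite subspace for the given orientation'') is not quite the right reason---positive-definiteness of the span fixes that $\langle\omega_i\rangle=\Lambda^+$, not the sign of $\det A$, which is about the relative orientation of the two bases $(\omega_i)$ and $(\theta_i)$ of that $3$-plane. This is harmless (swap two $\theta_i$ if needed), and you flag the issue yourself at the end.
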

\begin{proof}
This is essentially contained in the proof of Lemma~\ref{G2-from-hypersymplectic}. Let $\theta_i$ be a basis for $\left\langle \omega_i \right\rangle$ with $\theta_i \wedge \theta_j = 2\delta_{ij} \mu$, where $\mu$ is the volume form of $g_{\underline{\omega}}$. The $\theta_i$ satisfy the pointwise conditions of a hyperk\"ahler triple (although they certainly need not be closed). We see from \eqref{omega-theta} and the discussion immediately afterwards that $g_{\underline{\omega}}$ is the resulting metric defined in a standard fashion from this ``quaternionic'' triple $\theta_i$.  It follows firstly that $\Lambda^+ = \left\langle \theta_i \right\rangle = \left\langle \omega_i \right\rangle$. Moreover, if we write $\omega_i = A_{ij} \theta_j$, then \eqref{omega-theta} implies that $\det A = 1$. Since $Q_{ij} = A_{ip}A_{pj}$ we have $\det Q = 1$ as well. 
\end{proof}

\begin{lemma}
Let $\underline{\omega}$ be a hypersymplectic structure on $X$ and $\phi$ the corresonding $G_2$-structure on $X \times \mathbb{T}^3$ given by \eqref{phi-from-omega}. Then, with respect to the natural splitting $T(X\times \mathbb{T}^3) = TX \oplus T\mathbb{T}^3$, we have
\begin{align*}
\dvol_{g_\phi} &= \mu \wedge \diff t^{123}\\
g_\phi &= g_{\underline{\omega}} \oplus Q_{ij} \diff t^i \diff t^j
\end{align*}
\end{lemma}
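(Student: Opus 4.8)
The plan is to compute $B_\phi$ directly from the definition $B_\phi(u,v) = \tfrac16 \iota_u\phi \wedge \iota_v\phi \wedge \phi$, using the block decomposition of $TM = TX \oplus T\mathbb{T}^3$ already set up in the proof of Lemma~\ref{G2-from-hypersymplectic}. That proof has done most of the work: it shows $B_\phi(u,v) = 0$ when $u$ is vertical and $v$ horizontal, it computes $B_\phi(v,w) = \diff t^{123}\wedge \beta_{\underline\omega}(v,w)$ for horizontal $v,w$, and it computes $B_\phi(u,u)$ for $u = a_i \del_i$ vertical via \eqref{Bphi-vertical}. So the first step is simply to assemble these into a statement that, in block form,
\[
B_\phi = \beta_{\underline\omega} \otimes \diff t^{123} \ \oplus\ \Big(\tfrac12 \textstyle\sum_{i,j} \omega_i\wedge\omega_j \, \diff t^i \diff t^j\Big)\wedge \diff t^{123},
\]
reading off from \eqref{Bphi-vertical} that the vertical--vertical block is the symmetric bilinear form with matrix entries $\tfrac12\,\omega_i\wedge\omega_j$ wedged with $\diff t^{123}$. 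By the definition of $Q_{ij}$ this vertical block equals $Q_{ij}\,\mu\wedge\diff t^{123}$, and by the definition of $\mu$ (namely $g_{\underline\omega}\otimes\mu = \beta_{\underline\omega}$) the horizontal block equals $g_{\underline\omega}\otimes(\mu\wedge\diff t^{123})$.

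The second step is to invoke the uniqueness clause in the definition of a $G_2$-structure: $g_\phi$ and $\dvol_{g_\phi}$ are the unique metric and volume form with $g_\phi \otimes \dvol_{g_\phi} = B_\phi$. So I want to exhibit a volume form $\Omega$ and a metric $h$ on $M$ with $h\otimes\Omega = B_\phi$ and then conclude $\dvol_{g_\phi} = \Omega$, $g_\phi = h$. Take $\Omega = \mu\wedge\diff t^{123}$ and $h = g_{\underline\omega}\oplus Q_{ij}\diff t^i\diff t^j$. From the computation above, $B_\phi = h\otimes\Omega$ block by block: the horizontal--horizontal block gives $g_{\underline\omega}\otimes\Omega$, the vertical--vertical block gives $Q_{ij}\diff t^i\diff t^j \otimes \Omega$, and the mixed block vanishes on both sides. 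The one genuine thing to check is that $\Omega$ is indeed \emph{the} volume form determined by $h$, i.e.\ that $\Omega = \dvol_h$; equivalently that $\det(Q_{ij}) = 1$, since $\dvol_h = \mu \cdot \sqrt{\det Q}\,\diff t^{123}$ and $\mu = \dvol_{g_{\underline\omega}}$. But this is exactly Lemma~\ref{sd-detQ}. Hence $h$ is a metric, $\Omega = \dvol_h$ is its volume form, $h\otimes\Omega = B_\phi$, and uniqueness forces $g_\phi = h$ and $\dvol_{g_\phi} = \Omega$, which is the claim.

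The main obstacle, such as it is, is bookkeeping rather than conceptual: one must be careful with the constants (the factors of $\tfrac16$, $\tfrac12$, and the $6$ vs.\ $2$ discrepancies between $B_\phi$, $\beta_{\underline\omega}$ and $\beta_{\underline\theta}$) and with the precise meaning of the tensor product $g\otimes\dvol$ in block form, making sure the mixed terms really do cancel and that the symmetric bilinear form extracted from the vertical part of $B_\phi$ has matrix $Q_{ij}$ with the normalisation used in the Definition. No new geometric input is needed beyond Lemmas~\ref{G2-from-hypersymplectic} and~\ref{sd-detQ}; the role of $\det Q = 1$ is precisely to make the naturally-appearing volume form $\mu\wedge\diff t^{123}$ agree with the intrinsic $\dvol_{g_\phi}$, which is what pins down the identification on the nose rather than just up to conformal factor.
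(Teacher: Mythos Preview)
Your proposal is correct and follows essentially the same approach as the paper: assemble $B_\phi$ in block form from the computations in Lemma~\ref{G2-from-hypersymplectic}, recognise it as $(g_{\underline\omega}\oplus Q_{ij}\diff t^i\diff t^j)\otimes(\mu\wedge\diff t^{123})$, and then invoke $\det Q=1$ (Lemma~\ref{sd-detQ}) to identify $\mu\wedge\diff t^{123}$ with the volume form of the candidate metric. The paper's version is simply terser, stating the formula for $B_\phi$ and citing $\det Q=1$ without spelling out the uniqueness step.
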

\begin{proof}
From the proof of Lemma~\ref{G2-from-hypersymplectic}, in particular \eqref{Bphi-horizontal} and the discussion leading up to \eqref{Bphi-vertical}, we have
\[
B_\phi 
= 
\left( g_{\underline{\omega}} + Q_{ij} \diff t^i \diff t^j \right)\otimes \mu \wedge \diff t^{123} 
\]
The result now follows from the fact that $\det Q=1$.
\end{proof}

\subsection{From the $G_2$-Laplacian flow to the hypersymplectic flow}

The goal of this section is to prove that if the $G_2$-Laplacian flow $\phi(t)$ starts from $\phi(0)$ of the form of \eqref{phi-from-omega} then it remains of the same form, thus giving rise to a flow of hypersymplectic structures on $X$. 

\begin{lemma}
Let $\phi(t)$ be the $G_2$-Laplacian flow on $X \times \mathbb{T}^3$ with initial condition $\phi(0)$ given by \eqref{phi-from-omega}, where $\underline{\omega}$ is a hypersymplectic structure. Then $\phi(t)$ is $\mathbb{T}^3$ invariant for as long as it exists.
\end{lemma}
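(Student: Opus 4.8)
The plan is to exploit the uniqueness part of Bryant--Xu's short-time existence theorem for the $G_2$-Laplacian flow, combined with the fact that the flow equation is natural under diffeomorphisms. First I would observe that $\mathbb{T}^3$ acts on $M = X \times \mathbb{T}^3$ by translations in the torus factor, and that this action preserves the initial $G_2$-structure $\phi(0)$ given by \eqref{phi-from-omega}, since $\phi(0)$ is written purely in terms of the $\mathbb{T}^3$-invariant objects $\diff t^{123}$, $\diff t^i$ and the forms $\omega_i$ pulled back from $X$.

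Next I would invoke naturality: if $F \colon M \to M$ is any diffeomorphism, then $F^*$ commutes with both $\diff$ and the Hodge star of $g_{F^*\phi} = F^*g_\phi$, hence with the Hodge Laplacian; so if $\phi(t)$ solves $\del_t \phi = \Delta_{\phi}\phi$ with initial condition $\phi(0)$, then $F^*\phi(t)$ solves the same equation with initial condition $F^*\phi(0)$. Applying this with $F$ any element of the translation action of $\mathbb{T}^3$, and using that $F^*\phi(0) = \phi(0)$, we get that both $\phi(t)$ and $F^*\phi(t)$ are $G_2$-Laplacian flows starting from $\phi(0)$ on the same (compact) manifold $M$ over the same time interval.

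Then I would apply Bryant--Xu's uniqueness statement (cited in the introduction) to conclude $F^*\phi(t) = \phi(t)$ for every $t$ in the interval of existence and every translation $F$, which is exactly the assertion that $\phi(t)$ is $\mathbb{T}^3$-invariant. One small point to address carefully is that Bryant--Xu's theorem is stated for closed $G_2$-structures on compact manifolds, so I should note that $M = X \times \mathbb{T}^3$ is compact (as $X$ is) and that $\phi(0)$ is a closed $G_2$-structure by Lemma~\ref{G2-from-hypersymplectic} together with $\diff\phi(0) = 0$, which is immediate from \eqref{phi-from-omega} and $\diff\omega_i = 0$; closedness is preserved along the flow since $\del_t[\phi] = [\Delta_\phi\phi]$ and $\Delta_\phi\phi$ is exact. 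I do not anticipate a serious obstacle here; the only thing requiring any care is making sure the uniqueness is being applied on the nose to two bona fide solutions with literally the same initial data, which the argument above arranges. If one preferred to avoid invoking uniqueness, an alternative would be to differentiate the flow equation in the torus directions and run a vector-valued maximum principle to show the torus-derivative of $\phi$ stays zero, but the uniqueness argument is cleaner.
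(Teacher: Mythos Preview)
Your proposal is correct and follows exactly the paper's approach: the paper's proof is simply ``This is immediate from the uniqueness part of Bryant--Xu's theorem: since the initial data $\phi(0)$ is $\mathbb{T}^3$ invariant, so is the ensuing flow.'' Your write-up spells out the naturality and uniqueness steps in more detail than the paper does, but the argument is the same.
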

\begin{proof}
This is immediate from the uniqueness part of Bryant--Xu's theorem \cite[Theorem~0.1]{BX}: since the initial data $\phi(0)$ is $\mathbb{T}^3$ invariant, so is the ensuing flow. 
\end{proof}

By $\mathbb{T}^3$-invariance, the 3-form $\phi(t)$ on $X \times \mathbb{T}^3$ necessarily has the shape
\[
\phi(t)
=
A \diff t^{123} 
+ B_1 \wedge \diff t^{23} + B_2 \wedge \diff t^{31} + B_3 \wedge \diff t^{12}
- \diff t^1 \wedge C_1 - \diff t^2 \wedge C_2 - \diff t^3 \wedge C_3
+ 
D
\]
where $A(t) \in \Omega^0(X)$, $B_i(t) \in \Omega^1(X)$, $C_i(t) \in \Omega^2(X)$ and $D(t) \in \Omega^3(X)$ are paths of forms on~$X$. Moreover, since $\diff \phi(t) =0$, it follows that these forms on $X$ are \emph{closed}.
\begin{lemma}
$A(t) = 1$.
\end{lemma}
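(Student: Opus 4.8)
The plan is to show that the coefficient $A(t)$ is forced to stay equal to its initial value $A(0)=1$. Reading off the $\diff t^{123}$-component of the initial condition \eqref{phi-from-omega} gives $A(0)=1$ (and also $B_i(0)=0$, $C_i(0)=\omega_i$, $D(0)=0$, which we will not need here). Since $\diff\phi(t)=0$ along the flow, $A(t)$ is a closed $0$-form on $X$, hence constant on each connected component of $X$; so it is enough to prove $\partial_t A = 0$.

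To compute $\partial_t A$ I would extract the $\diff t^{123}$-component of the right-hand side $\Delta_{\phi(t)}\phi(t)$ of the flow equation. Because $\phi(t)$ is closed we have $\Delta_{\phi(t)}\phi(t)=\diff(\diff^*_{\phi(t)}\phi(t))$. Now $\phi(t)$ is $\mathbb{T}^3$-invariant, hence so is the metric $g_{\phi(t)}$ (it is built algebraically from $\phi(t)$) and therefore so is its Hodge star; consequently $\psi:=\diff^*_{\phi(t)}\phi(t)$ is a $\mathbb{T}^3$-invariant $2$-form on $X\times\mathbb{T}^3$. Writing it as $\psi=\psi_0+\sum_i\psi_i\wedge\diff t^i+\sum_{i<j}h_{ij}\,\diff t^{ij}$ with $\psi_0\in\Omega^2(X)$, $\psi_i\in\Omega^1(X)$ and $h_{ij}\in\Omega^0(X)$, invariance means $\diff\psi$ is exterior differentiation in the $X$-directions only, so $\diff\psi=\diff\psi_0+\sum_i(\diff\psi_i)\wedge\diff t^i+\sum_{i<j}(\diff h_{ij})\wedge\diff t^{ij}$. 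Each summand involves at most two of the $\diff t^i$, so the $\diff t^{123}$-component of $\diff\psi$, and hence of $\Delta_{\phi(t)}\phi(t)$, vanishes identically. Comparing the $\diff t^{123}$-components of the two sides of $\partial_t\phi=\Delta_{\phi(t)}\phi(t)$ yields $\partial_t A=0$, and therefore $A(t)=A(0)=1$.

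I do not expect any serious obstacle: the content is just the bidegree bookkeeping in the $X$ and $\mathbb{T}^3$ directions, together with the remark that $\diff^*$ with respect to the (time-dependent but $\mathbb{T}^3$-invariant) metric $g_{\phi(t)}$ again produces a $\mathbb{T}^3$-invariant form. An essentially equivalent alternative is cohomological: the Laplacian flow preserves the de Rham class $[\phi(t)]=[\phi(0)]$, and integrating $\phi(t)$ over a slice $\{x\}\times\mathbb{T}^3$ (which only picks up the $\diff t^{123}$-term) shows at once that $A(t)$ is independent of $x$ and that its value is pinned to that of $\phi(0)$, namely $1$.
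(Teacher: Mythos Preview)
Your proof is correct. Your main argument differs from the paper's: you show directly that the $\diff t^{123}$-component of $\diff(\diff^*_{\phi(t)}\phi(t))$ vanishes by a bidegree count (a $\mathbb{T}^3$-invariant $2$-form has at most two $\diff t^i$ factors, and the exterior derivative in the $X$-directions cannot add another), whereas the paper argues cohomologically, observing that $\del_t\phi$ is exact so $\int_{\{x\}\times\mathbb{T}^3}\phi(t)=(2\pi)^3 A(t)$ is independent of $t$. The two arguments are of comparable length; yours is self-contained and pointwise, while the paper's uses only that the flow preserves the cohomology class, which is perhaps the conceptually cleaner explanation. You in fact sketch the paper's approach as your ``alternative'' at the end, so you have both arguments in hand.
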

\begin{proof}
We know that $\diff A(t) = 0$, i.e., that for every $t$, $A(t)$ is constant. Moreover,
\[
\int_{\mathbb{T}^3} \phi(t) = (2\pi)^3 A(t)
\]
Since $\del_t \phi$ is exact, this integral is independent of $t$, and so $A(t) = A(0) = 1$. 
\end{proof}

Next we consider the involution $\vartheta \colon X \times \mathbb{T}^3 \to X \times \mathbb{T}^3$ given by $\vartheta(p, t) \mapsto (p,-t)$ and write $\hat \phi(t) = - \vartheta^* \phi(t)$. 

\begin{lemma}\label{G2-flow-descends}
For all $t$, $\hat{\phi}(t) = \phi(t)$. Hence $B_i(t) = 0 = D(t)$ vanish identically, and $\phi(t)$ remains of the form \eqref{phi-from-omega} for as long as it exists, for a closed triple $\underline{\omega}(t)$ of 2-forms on $X$. 
\end{lemma}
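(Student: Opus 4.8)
The plan is to exploit the uniqueness part of Bryant--Xu's theorem once more, together with the fact that $\vartheta$ is an orientation-preserving isometry of the initial $G_2$-structure up to the sign twist built into $\hat\phi$. First I would check that $\hat\phi(0) = \phi(0)$: applying $\vartheta^*$ to \eqref{phi-from-omega} sends $\diff t^{123} \mapsto -\diff t^{123}$ and $\diff t^i \mapsto -\diff t^i$, so $\vartheta^*\phi(0) = -\diff t^{123} + \diff t^i \wedge \omega_i = -\phi(0)$, hence $\hat\phi(0) = -\vartheta^*\phi(0) = \phi(0)$. Next I would verify that $\hat\phi(t)$ is again a solution of the $G_2$-Laplacian flow. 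Since $\vartheta$ is a diffeomorphism, $\vartheta^*$ intertwines the Hodge Laplacians: $g_{\vartheta^*\psi} = \vartheta^* g_\psi$ for any $G_2$-structure $\psi$, and more importantly $g_{-\vartheta^*\psi} = g_{\vartheta^*\psi}$ because the metric depends on $\psi$ only through $B_\psi$, which is quadratic in $\psi$. Therefore $\Delta_{\hat\phi(t)} = \vartheta^*\Delta_{\phi(t)}(\vartheta^*)^{-1}$ on forms, and from $\del_t\phi = \Delta_\phi\phi$ we get $\del_t\hat\phi = -\vartheta^*\del_t\phi = -\vartheta^*\Delta_{\phi}\phi = \Delta_{\hat\phi}(-\vartheta^*\phi) = \Delta_{\hat\phi}\hat\phi$. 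So $\hat\phi(t)$ and $\phi(t)$ are both solutions of \eqref{G2-flow} with the same initial condition $\phi(0)$; by the uniqueness statement in \cite[Theorem~0.1]{BX} they agree for all $t$ where both exist, i.e.\ $\hat\phi(t) = \phi(t)$.

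It remains to read off the consequences for the components. Writing $\phi(t)$ in the $\mathbb{T}^3$-invariant shape displayed above, a direct computation of $-\vartheta^*\phi(t)$ gives
\[
\hat\phi(t) = A\,\diff t^{123} + B_1\wedge\diff t^{23} + B_2\wedge\diff t^{31} + B_3\wedge\diff t^{12} - \diff t^1\wedge C_1 - \diff t^2\wedge C_2 - \diff t^3\wedge C_3 - D,
\]
because the terms $A\,\diff t^{123}$ and $\diff t^i\wedge C_i$ each pick up two sign changes (one from the overall $-$, one from $\vartheta^*$ acting on the odd number of $\diff t$'s), the terms $B_i\wedge\diff t^{jk}$ pick up an odd total sign and hence change, while $D$ has no $\diff t$ factor and so only sees the overall minus sign. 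Comparing $\hat\phi(t) = \phi(t)$ componentwise forces $B_i(t) = -B_i(t)$ and $D(t) = -D(t)$, hence $B_i(t) \equiv 0$ and $D(t)\equiv 0$. Thus $\phi(t) = \diff t^{123} - \diff t^1\wedge C_1(t) - \diff t^2\wedge C_2(t) - \diff t^3\wedge C_3(t)$, which is exactly the form \eqref{phi-from-omega} with $\underline{\omega}(t) = (C_1(t), C_2(t), C_3(t))$; the $C_i(t)$ are closed because $\diff\phi(t) = 0$, and $\left\langle C_i(t)\right\rangle$ is a maximal definite subspace at each point because $\phi(t)$ is a $G_2$-structure, by Lemma~\ref{G2-from-hypersymplectic}. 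Hence $\underline{\omega}(t)$ is a path of hypersymplectic structures.

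I do not anticipate a serious obstacle here; the only point requiring a little care is the claim that the metric, and hence the Hodge Laplacian, is insensitive to the sign of the $G_2$-form, so that $-\vartheta^*\phi$ really does solve the same flow rather than a sign-modified one — this is what makes the symmetrisation trick legitimate. Everything else is bookkeeping with the $\mathbb{T}^3$-grading and an appeal to the already-invoked uniqueness of Bryant--Xu.
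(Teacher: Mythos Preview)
Your approach is exactly the paper's: show that $\hat\phi(t)$ is again a solution of the $G_2$-Laplacian flow with $\hat\phi(0)=\phi(0)$, then invoke Bryant--Xu uniqueness, and finally read off $B_i=D=0$ from the parity under $\vartheta$. Two points need correcting, though neither is fatal.

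First, $B_\phi(u,v) = \tfrac{1}{6}\iota_u\phi\wedge\iota_v\phi\wedge\phi$ is \emph{cubic} in $\phi$, not quadratic, so $B_{-\phi}=-B_\phi$. Your conclusion $g_{-\vartheta^*\phi}=g_{\vartheta^*\phi}=\vartheta^*g_\phi$ is nonetheless true, but the reason is that the sign flip in $B$ is absorbed by reversing the induced orientation while the metric stays fixed. The paper packages this by observing that the minus sign in $\hat\phi=-\vartheta^*\phi$, together with $\vartheta$ being orientation-\emph{reversing}, makes $\hat\phi$ a $G_2$-structure inducing the \emph{same} orientation as $\phi$ and with $g_{\hat\phi}=\vartheta^*g_\phi$. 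Since the Hodge Laplacian depends only on the metric, the rest of your chain $\Delta_{\hat\phi}\hat\phi=-\vartheta^*\Delta_\phi\phi=\partial_t\hat\phi$ is correct.

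Second, your displayed expression for $\hat\phi(t)$ has a sign slip on the $B_i$ terms: $\vartheta^*$ fixes $B_i\wedge\diff t^{jk}$ (two $\diff t$ factors), and the overall minus then gives $-B_i\wedge\diff t^{jk}$. Your prose (``odd total sign'') and the conclusion $B_i=-B_i$ are right; only the display disagrees with them.
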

\begin{proof}
We will prove that $\hat{\phi}(t)$ solves the $G_2$-Laplacian flow. Since $\hat{\phi}(0) = \phi(0)$ the Lemma then follows from the uniqueness part of Bryant--Xu's theorem. The minus sign in the definition $\hat{\phi} = - \vartheta^*\phi$, together with the fact that $\vartheta$ is orientation \emph{reversing}, means that $\hat\phi$ is a $G_2$-structure inducing the same orientation as $\phi$. One checks that $g_{\hat{\phi}} = \vartheta^*g_\phi$. Now given any metric $g$ and any diffeomorphism $\psi$, the Hodge--Laplacians of $g$ and $\psi^*g$ are related by $\Delta_{\psi^*g}(\psi^*\alpha) = \psi^*(\Delta_g \alpha)$. It follows that $\hat{\phi}$ solves the $G_2$-Laplacian flow:
\[
\Delta_{\hat{\phi}} \hat{\phi}
=
- \Delta_{\vartheta^*g_{\phi}}\left( \vartheta^*\phi \right)
=
- \vartheta^*\left( \Delta_\phi  \phi \right) 
=
-\vartheta^* \del_t \phi
=
\del_t \hat{\phi}
\qedhere
\]
\end{proof}

It remains to derive the evolution equation \eqref{hypersymplectic-flow} for the hypersymplectic flow. This will complete the proof of Proposition~\ref{short-time-existence-theorem}.

\begin{lemma}
Let $\underline{\omega}$ be a hypersymplectic structure on $X$ and $\phi$ the associated $G_2$-structure on $X \times \mathbb{T}^3$ defined in \eqref{phi-from-omega}. Then
\[
\Delta_\phi \phi
=
-
\sum_{i,p,q} \diff t^i \wedge \diff (Q_{ip}d^*_4(Q^{pq}\omega_q)
\]
where $\diff^*_4$ is the adjoint of $\diff$ on $(X, g_{\underline{\omega}})$ and $Q^{pq}$ is the inverse matrix to $Q_{pq}$. It follows that if $\underline{\omega}(t)$ is the flow of hypersymplectic structures corresponding to the $G_2$-Laplacian flow $\phi(t)$ on $X \times \mathbb{T}^3$, with $\phi(t)$ and $\underline{\omega}(t)$ related by \eqref{phi-from-omega}, then
\[
\del_t \,\underline{\omega} = \diff \left( Q\, \diff^* ( Q^{-1}\underline{\omega} )  \right)
\]
\end{lemma}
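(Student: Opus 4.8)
The goal is to compute $\Delta_\phi \phi = \dd \dd^* \phi + \dd^* \dd \phi$; since $\phi$ is closed this is just $\dd \dd^* \phi$, so the whole computation reduces to finding $\dd^* \phi$ and then applying $\dd$. I would work with the product metric $g_\phi = g_{\underline\omega} \oplus Q_{ij}\dd t^i \dd t^j$ on $X \times \mathbb{T}^3$ established in the previous lemma, exploiting the fact that everything is $\mathbb{T}^3$-invariant so that the Hodge star and codifferential on the 7-manifold can be expressed in terms of operations on $(X, g_{\underline\omega})$ together with the matrix $Q$.

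\medskip

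First I would set up the Hodge star on $X \times \mathbb{T}^3$. Because the metric splits and the volume form is $\dvol_{g_\phi} = \mu \wedge \dd t^{123}$, a form of the shape $\alpha \wedge \dd t^I$ with $\alpha \in \Omega^k(X)$ and $I$ a multi-index has Hodge dual expressible via $*_4 \alpha$ on $X$ together with the action of $Q^{-1}$ on the $\dd t$'s — concretely, since the $\dd t^i$ are an orthonormal-up-to-$Q$ coframe on the torus, $*(\dd t^i) $ and its analogues pick up factors of $Q^{ij}$ and $\det Q = 1$. Then I would apply this to each term of $\phi = \dd t^{123} - \sum_i \dd t^i \wedge \omega_i$. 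The first term $\dd t^{123}$ is dual (up to sign) to $\mu$, which is closed and annihilated by $\dd^*$ after one more star, so it will not contribute. The key term is $\dd t^i \wedge \omega_i$: its Hodge dual will be a 4-form of the form (something)$\wedge \dd t^{jk}$ where the "something" is built from $*_4$ applied to $\omega_i$ contracted against $Q$. Using that each $\omega_q$ is self-dual for $g_{\underline\omega}$ (Lemma~\ref{sd-detQ}), so $*_4 \omega_q = \omega_q$, the internal 4-dimensional Hodge stars simplify and one is left with the matrix $Q^{-1}$ appearing: I expect $*\phi$ to contain a term like $\tfrac12 \sum Q^{pq}\,\omega_p \wedge \dd t^q$-type expressions (plus the $\mu$ term), so that $\dd^* \phi = \pm * \dd * \phi$ produces exactly $\sum_i \dd t^i \wedge \dd^*_4(Q^{pq}\omega_q)$ after differentiating and starring back, with the outer $Q_{ip}$ coming from the final Hodge star converting a torus index back.

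\medskip

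Concretely the steps are: (i) write down $*$ on $\mathbb{T}^3$-invariant forms in terms of $*_4$, $Q$, $Q^{-1}$, using $\det Q = 1$; (ii) compute $*\phi$ term by term, using self-duality of the $\omega_i$ to kill the internal stars; (iii) compute $\dd(*\phi)$, noting $\dd\omega_i = 0$ so only derivatives of $Q$ and of $*_4$-type terms survive, and that the $\mu$-term drops out; (iv) apply $*$ again to land on $\Delta_\phi\phi = \dd\dd^*\phi = -\sum_i \dd t^i \wedge \dd(Q_{ip}\dd^*_4(Q^{pq}\omega_q))$, being careful with the sign conventions $\dd^* = (-1)^{\ast}* \dd *$ in dimension $7$ on forms of the relevant degree, and with the sign $T = -\tfrac12 \dd^*\phi$. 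Finally, reading off the $\dd t^i$-component and comparing with \eqref{phi-from-omega}, $\del_t\phi = \Delta_\phi\phi$ becomes $-\dd t^i \wedge \del_t\omega_i = -\dd t^i \wedge \dd(Q_{ip}\dd^*_4(Q^{pq}\omega_q))$, i.e.\ $\del_t\underline\omega = \dd(Q\,\dd^*(Q^{-1}\underline\omega))$ in the matrix notation of \eqref{hypersymplectic-flow}.

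\medskip

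\textbf{Main obstacle.} The conceptual content is light, but the bookkeeping is the real hazard: getting the Hodge star on the non-product-normalized torus directions right (the $Q^{ij}$ factors and where $\det Q = 1$ is used), and tracking the signs through two applications of $*$ in dimension seven on forms of mixed horizontal/vertical degree. I would guard against errors by checking the formula in the hyperkähler case $Q = \Id$, where it must reduce to $\Delta_\phi\phi = -\sum_i \dd t^i \wedge \dd\dd^*_4\omega_i$ and the flow to $\del_t\omega_i = \dd\dd^*_4\omega_i = \Delta_4\omega_i$ (using $\dd\omega_i=0$), consistent with the stationary solutions being exactly the closed anti-self-dual — here self-dual — triples, i.e.\ hyperkähler ones.
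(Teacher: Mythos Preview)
Your plan is correct and follows essentially the same route as the paper: compute $*_7\phi$, differentiate, apply $*_7$ again to obtain $\dd^*\phi$, then take $\dd$. The paper organises the bookkeeping by explicitly splitting the seven-dimensional Hodge star as $*_7 = *_3 \otimes *_4$ (on suitably decomposed forms), using the formulas $*_3\,\dd t^i = Q^{ij}\,\widehat{\dd t}^j$ and $*_3\,\widehat{\dd t}^i = Q_{ij}\,\dd t^j$ on the torus factor; this is exactly the mechanism by which the $Q^{-1}$ appears inside and the $Q$ appears outside in the final expression, and makes the sign and index tracking cleaner than working case-by-case on mixed-degree forms. Your sanity check at $Q=\Id$ is fine, though note that in that case $\dd^*_4\omega_i=0$ automatically (closed self-dual forms are coclosed), so the vanishing of the right-hand side is immediate rather than a nontrivial check.
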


\begin{proof}
We write $*_3,*_4$ and $*_7$ for the Hodge stars associated to the metrics $Q_{ij}\diff t^i \otimes \diff t^j$ on $\mathbb{T}^3$, $g_{\underline{\omega}}$ on $X$ and $g_\phi$ on $X \times \mathbb{T}^3$ respectively. We write $\widehat{\diff t}^1 = \diff t^{23}$ etc. Then $*_3 \diff t^i= Q^{ij} \widehat{\diff t}^j$ and $*_3 \widehat{\diff t}^i = Q_{ij} \diff t^j$. From this we have
\[
*_7 \phi
	=
		\mu - \sum *_7\left( \diff t^i \wedge \omega_i \right)
	=
		\mu - \sum_i *_3 \diff t^i \wedge *_4 \omega_i
	=
		\mu - \sum_i \widehat{\diff t}^j \wedge Q^{ij}\omega_i 
\]
Now $\diff^*_7 \phi = - *_7 \diff *_7 \phi$ which is
\begin{equation}
\diff^*_7 \phi
	=
		*_7 \left( 
		\sum_{i,j} \widehat{\diff t}^j \wedge \diff (Q^{ij}\omega_i) 
		\right)
	=
		\sum_{i,j,k} Q_{jk} \diff t^k \wedge *_4 \diff (Q^{ij}\omega_i)
\label{dstar-phi}
\end{equation}
and hence $\Delta_\phi \phi = \diff \diff^*_7 \phi$ is given by
\[
\Delta_\phi \phi
	=
		\sum_{i,j,k} \diff t^k \wedge \diff\left( 
		Q_{jk} *_4 \diff *_4 (Q^{ij} \omega_i) 
		\right)
	=
		- \sum_{i,p,q}
		\diff t^i \wedge \diff (Q_{ip}\diff^*_4 (Q^{pq}\omega_q))
\]
as claimed (where in the last line we have used the fact that $Q$ is symmetric to reorder the indices).
\end{proof}

\section{Identities and inequalities}
\label{identities}

In this section we derive various identities relating the geometries of $g_\phi$ and $g_{\underline{\omega}}$. We will then use these identities to show how geometric quantities can be controlled by $Q$ and its derivatives. At this stage we consider a single hypersymplectic structure, i.e., not evolving with time. 

\subsection{Notation}

It will be convenient to think of $Q$ as a map from $X$ into the space $\mathcal{P}$ of all positive-definite symmetric $3\times 3$-matrices, considered with its non-positively curved symmetric metric. Explicitly, identifying $T_Q \mathcal{P}$ with the space of all symmetric matrices, the Riemannian metric on $\mathcal{P}$ is given by
\[
\left\langle A,B \right\rangle_Q = \tr(Q^{-1}AQ^{-1}B)
\]
We write $\hat{\nabla}$ for the Levi-Civita connection of $\mathcal{P}$. Explicitly, if we identify a pair $A, B$ of symmetric matrices with a pair of vector fields on $\mathcal{P}$, then $\hat{\nabla}_{A}B$ evaluated at $Q$ corresponds to the symmetric matrix $-\frac{1}{2} AQ^{-1}B - \frac{1}{2}B Q^{-1}A$. To see that this is indeed the Levi-Civita connection it suffices to note that it preserves the metric and that it is symmetric in $A$ and $B$, which is equivalent to being tosrion free. 

We use the same notation $\hat{\nabla}$ for the induced connection on tensors over $X$ with values in $Q^*T\mathcal{P}$ obtained from the Levi-Civita connections on $(X, g_{\underline{\omega}})$ and $\mathcal{P}$. So, for example, the Hessian of $Q$ is given by $\hat{\nabla} \diff Q$. We can relate this to the Hessian of the individual components $Q_{ij}$ of $Q$ via
\begin{equation}\label{Hessians-relation}
(\hat{\nabla} \diff Q)_{ij}
=
\nabla (\diff Q_{ij}) 
- \frac{1}{2}Q^{pq} \left( 
\diff Q_{ip} \otimes \diff Q_{qj} + \diff Q_{qj}\otimes \diff Q_{ip} 
\right)
\end{equation}
This follows from the above formula for the Levi-Civita connection of $\mathcal{P}$.

We write $\hat{\Delta}Q =  \tr_{g_{\underline{\omega}}} \hat{\nabla}\diff Q$ for the Laplacian of the map $Q \colon X \to \mathcal{P}$ of Riemannian manifolds, where we use the metric $g_{\underline{\omega}}$ on $X$ and the symmetric metric on $\mathcal{P}$. This is the Laplacian which appears in the harmonic map equation (sometimes called the ``tension field''). Again, the ``hat'' notation is to differentiate this from the Laplacian of $Q$ thought of simply as a map to the affine space of all matrices, i.e., the ordinary Laplacian taken component-by-component, $(\Delta Q)_{ij} = \Delta (Q_{ij})$. The two Laplacians are related by 
\[
(\hat{\Delta} Q )_{ij}
= 
\Delta(Q_{ij}) - Q^{pq}\left\langle \diff Q_{ip}, \diff Q_{qj} \right\rangle. 
\]

We will frequently move back and forth between the 4-dimensional hypersymplectic structure and the corresponding 7-dimensional $G_2$-structure. We will use bold symbols for those quantities associated to the 7-dimensional Riemannian manifold $(X \times \mathbb{T}^3, g_{\phi})$ and normal symbols for those associated to the 4-dimensional Riemannian manifold $(X, g_{\underline{\omega}})$. So, for example, $\mathbf{R}$ denotes the scalar curvature of~$g_{\phi}$ whilst $R$ denotes the scalar curvature of $g_{\underline{\omega}}$. To remain consistent with this convention, we denote the torsion 2-form of $\phi$ by $\mathbf{T}$. 

Finally, when working in abstract index notation, we will use Roman indices $i,j,k, \ldots$ to refer to the $\mathbb{T}^3$ directions and Greek indices $\alpha, \beta, \gamma, \ldots$ to refer to the $X$ directions. So, for example, the 7-dimensional and 4-dimensional metrics are related by the equations $\boldsymbol{g}_{ij} = Q_{ij}$, $\boldsymbol{g}_{\alpha \beta} = g_{\alpha \beta}$ and $\boldsymbol{g}_{i\alpha} = 0$.

\subsection{The curvature tensors in 4 and 7 dimensions}

In this subsection we explain how the curvature tensor of $g_{\phi}$ on $X \times \mathbb{T}^3$ is made up of that of $g_{\underline{\omega}}$ on $X$ and terms involving $Q$ and its first and second derivatives. 

The first step is to compute the Christoffel symbols, $\boldsymbol{\Gamma}$ of $g_\phi$. The following is a simple calculation, the result of which we simply state.

\begin{lemma}
We have the following formulae for the Christoffel symbols of $g_{\phi}$:
\begin{align*}
\boldsymbol{\Gamma}^k_{ij} 
	&=
		0\\
\boldsymbol{\Gamma}^\gamma_{ij}
	&=
		- \frac{1}{2} g^{\gamma \alpha} \del_\alpha Q_{ij}\\
\boldsymbol{\Gamma}^{k}_{i\beta}
	&=
		\frac{1}{2}Q^{kl}\del_\beta Q_{il}\\
\boldsymbol{\Gamma}^k_{\alpha\beta}
	&=
		0\\
\boldsymbol{\Gamma}^\gamma_{\alpha\beta} 
	&= 
		\Gamma^{\gamma}_{\alpha\beta}
\end{align*}
\end{lemma}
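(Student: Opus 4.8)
The plan is to apply the standard coordinate formula for the Christoffel symbols of a metric, namely
\[
\boldsymbol{\Gamma}^C_{AB} = \frac{1}{2}\,\boldsymbol{g}^{CD}\left(\del_A \boldsymbol{g}_{BD} + \del_B \boldsymbol{g}_{AD} - \del_D \boldsymbol{g}_{AB}\right),
\]
to the block metric with components $\boldsymbol{g}_{\alpha\beta} = g_{\alpha\beta}$, $\boldsymbol{g}_{ij} = Q_{ij}$, $\boldsymbol{g}_{i\alpha}=0$, working in product coordinates in which the $t^i$ are the standard angular coordinates on $\mathbb{T}^3$ and the $x^\alpha$ are local coordinates on $X$. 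Two structural observations do all the work. First, both $g_{\underline{\omega}}$ and $Q$ are pulled back from $X$, so every torus derivative $\del_i$ annihilates every component of $\boldsymbol{g}$; in particular $\del_i \boldsymbol{g}_{AB}=0$ always. Second, since $\boldsymbol{g}$ is block-diagonal so is $\boldsymbol{g}^{-1}$, with $\boldsymbol{g}^{\alpha\beta}=g^{\alpha\beta}$, $\boldsymbol{g}^{ij}=Q^{ij}$ and $\boldsymbol{g}^{i\alpha}=0$; hence in the contraction over $D$ only the index $D$ of the same type (Roman or Greek) as $C$ contributes.

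With these in hand each case reduces to at most one nonvanishing term. For $\boldsymbol{\Gamma}^k_{ij}$ only $D=l$ contributes, and the three surviving terms $\tfrac12 Q^{kl}(\del_i Q_{jl}+\del_j Q_{il}-\del_l Q_{ij})$ are all torus derivatives, so the symbol vanishes. For $\boldsymbol{\Gamma}^\gamma_{ij}$ only $D=\alpha$ contributes; the terms $\del_i \boldsymbol{g}_{j\alpha}$ and $\del_j \boldsymbol{g}_{i\alpha}$ vanish because the mixed block is zero, leaving $-\tfrac12 g^{\gamma\alpha}\del_\alpha Q_{ij}$. For $\boldsymbol{\Gamma}^k_{i\beta}$ only $D=l$ contributes; one uses $\boldsymbol{g}_{\beta l}=\boldsymbol{g}_{i\beta}=0$, leaving $\tfrac12 Q^{kl}\del_\beta Q_{il}$. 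For $\boldsymbol{\Gamma}^k_{\alpha\beta}$ only $D=l$ contributes, and all three terms vanish — two because the mixed block is zero, one because $\del_l g_{\alpha\beta}=0$. For $\boldsymbol{\Gamma}^\gamma_{\alpha\beta}$ only $D=\delta$ contributes and the formula reduces verbatim to the Christoffel symbol $\Gamma^\gamma_{\alpha\beta}$ of $g_{\underline{\omega}}$. The one remaining component type, $\boldsymbol{\Gamma}^\gamma_{i\beta}$, vanishes by exactly the same two observations.

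There is no genuine obstacle here: the statement is an equality of explicit expressions and the argument is pure bookkeeping. The only point requiring care is keeping the index-type conventions straight and invoking at each step the fact that $\del_i$ kills $Q$ and $g_{\underline{\omega}}$; once that is internalised every component collapses immediately to the stated value.
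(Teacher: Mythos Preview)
Your proof is correct and is precisely the direct computation the paper has in mind; the paper itself omits all details, writing only ``The following is a simple calculation, the result of which we simply state.'' Your two structural observations (block-diagonality of $\boldsymbol{g}$ and $\boldsymbol{g}^{-1}$, and $\del_i$-independence of all metric components) are exactly what make the bookkeeping trivial, and each case checks out as you describe.
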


From here one can directly compute the components of the curvature tensor of $g_\phi$. Again, we omit the calculations and simply state the results.

\begin{lemma}\label{curvature-formulas}
The components of the curvature tensor of $g_\phi$ are given by
\begin{align*}
\mathbf{R}_{ijk}^{\phantom{ijk}l}
	&=
		\frac{1}{4} \nabla^\beta Q_{ik}Q^{lp}\nabla_\beta Q_{pj}
		-
		\frac{1}{4}\nabla^\beta Q_{jk}Q^{lp}\nabla_\beta Q_{pi}\\
\mathbf{R}_{ijk}^{\phantom{ijk}\beta}
	&=
		0\\
\mathbf{R}_{ij\alpha}^{\phantom{ij\alpha}\beta}
	&=
		\frac{1}{4} \nabla^\beta Q_{jk}Q^{kl}\nabla_{\alpha}Q_{li}
		-
		\frac{1}{4} \nabla^\beta Q_{ik}Q^{kl}\nabla_{\alpha}Q_{lj}\\
\mathbf{R}_{i\beta k}^{\phantom{i\beta k}l}
	&=
		0\\
\mathbf{R}_{j\beta k}^{\phantom{j \beta k}\gamma}
	&=
		\frac{1}{2}\hat\nabla^\gamma \nabla_\beta Q_{jk}
		+
		\frac{1}{4}\nabla_{\beta}Q_{jp}Q^{pl}\nabla^\gamma Q_{lk}\\
\mathbf{R}_{\alpha \beta \gamma}^{\phantom{\alpha \beta \gamma} k}
	&=
		0\\
\mathbf{R}_{\alpha \beta \gamma}^{\phantom{\alpha \beta \gamma}\mu}
	&=
		R_{\alpha \beta \gamma}^{\phantom{\alpha \beta \gamma}\mu}
\end{align*}
\end{lemma}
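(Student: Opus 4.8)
The plan is a direct, if lengthy, coordinate computation: feed the Christoffel symbols $\boldsymbol{\Gamma}$ of $g_\phi$ from the previous lemma into the standard formula
\[
\mathbf{R}_{abc}{}^{d}
=
\partial_a\boldsymbol{\Gamma}^{d}_{bc}-\partial_b\boldsymbol{\Gamma}^{d}_{ac}
+\boldsymbol{\Gamma}^{d}_{ae}\boldsymbol{\Gamma}^{e}_{bc}-\boldsymbol{\Gamma}^{d}_{be}\boldsymbol{\Gamma}^{e}_{ac},
\]
where $e$ runs over all seven coordinate directions, and then simplify. Two structural facts are used throughout. First, $g_\phi$, its inverse, and hence every $\boldsymbol{\Gamma}$ are independent of the torus coordinates, so any $\partial_i$ annihilates a Christoffel symbol. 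Second, besides the five symbols listed in the previous lemma the only remaining type, $\boldsymbol{\Gamma}^{\gamma}_{i\beta}$, also vanishes (by block-diagonality of $g_\phi$ together with torus-independence), so in practice very few terms survive each expansion. Since raising and lowering indices with the block-diagonal metric $g_\phi$ does not mix Roman and Greek indices, the seven components displayed in the statement determine all other components via the symmetries of the Riemann tensor, so it suffices to verify these seven.

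I would first dispatch the purely four-dimensional and the vanishing components. For $\mathbf{R}_{\alpha\beta\gamma}{}^{\mu}$ and $\mathbf{R}_{\alpha\beta\gamma}{}^{k}$ only symbols of type $\boldsymbol{\Gamma}^{\bullet}_{\alpha\beta}$ can enter, and since $\boldsymbol{\Gamma}^{k}_{\alpha\beta}=0$ the sum over the intermediate index collapses to the four-dimensional Riemann formula, giving $\mathbf{R}_{\alpha\beta\gamma}{}^{\mu}=R_{\alpha\beta\gamma}{}^{\mu}$ and $\mathbf{R}_{\alpha\beta\gamma}{}^{k}=0$. The components $\mathbf{R}_{ijk}{}^{\beta}$ and $\mathbf{R}_{i\beta k}{}^{l}$ vanish by inspection: once the $\partial_i$-terms are killed, every surviving term would have to factor through one of the vanishing Christoffels $\boldsymbol{\Gamma}^{k}_{ij}$, $\boldsymbol{\Gamma}^{k}_{\alpha\beta}$ or $\boldsymbol{\Gamma}^{\gamma}_{i\beta}$.

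The remaining three components require a little more work. For $\mathbf{R}_{ijk}{}^{l}$ and $\mathbf{R}_{ij\alpha}{}^{\beta}$ the derivative terms vanish and one is left with the two quadratic terms; in each, the intermediate index must be Greek in the first factor and Roman in the second, pairing $\boldsymbol{\Gamma}^{\gamma}_{ij}\sim\nabla Q$ with $\boldsymbol{\Gamma}^{k}_{i\beta}\sim Q^{-1}\nabla Q$, and collecting the two contributions while using the symmetry of $Q$ to reorder indices yields the stated expressions (no second derivative of $Q$ survives). The genuinely delicate case is $\mathbf{R}_{j\beta k}{}^{\gamma}$: here the term $-\partial_\beta\boldsymbol{\Gamma}^{\gamma}_{jk}$ produces $\tfrac12\partial_\beta(g^{\gamma\alpha}\partial_\alpha Q_{jk})$, the quadratic term $-\boldsymbol{\Gamma}^{\gamma}_{\beta\alpha}\boldsymbol{\Gamma}^{\alpha}_{jk}$ supplies precisely the Christoffel correction completing this to $\tfrac12\nabla_\beta\nabla^{\gamma}Q_{jk}$ (the torus indices $j,k$ being inert under the $X$-connection $\nabla$), and the other quadratic term $\boldsymbol{\Gamma}^{\gamma}_{jl}\boldsymbol{\Gamma}^{l}_{\beta k}$ contributes $-\tfrac14\nabla^{\gamma}Q_{jl}Q^{lm}\nabla_\beta Q_{km}$, so that
\[
\mathbf{R}_{j\beta k}{}^{\gamma}
=\tfrac12\nabla_\beta\nabla^{\gamma}Q_{jk}-\tfrac14 Q^{lm}\nabla^{\gamma}Q_{jl}\nabla_\beta Q_{mk}.
\]
Rewriting the right-hand side in the form stated in the lemma is then a formal manipulation using \eqref{Hessians-relation}: substituting the definition of the Hessian $\hat\nabla\diff Q$ of the map $Q\colon X\to\mathcal{P}$ and relabelling, one of the two quadratic correction terms in \eqref{Hessians-relation} is absorbed into $\tfrac12\hat\nabla^{\gamma}\nabla_\beta Q_{jk}$ and the leftover reorganises into the displayed $+\tfrac14\nabla_\beta Q_{jp}Q^{pl}\nabla^{\gamma}Q_{lk}$.

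I expect the only real obstacle to be bookkeeping: keeping the Roman/Greek split straight while expanding the quadratic $\boldsymbol{\Gamma}\boldsymbol{\Gamma}$ terms, and matching the exact numerical coefficients and index orderings after using the symmetry of $Q$ and of the Hessian $\hat\nabla\diff Q$. There is no conceptual difficulty — this is simply the curvature of a Riemannian submersion $X\times\mathbb{T}^3\to X$ with flat fibres whose metric varies over the base — but the number of index patterns and the need to recognise the map-Laplacian structure of $\mathcal{P}$ make it worth carrying out carefully. One could alternatively invoke O'Neill's submersion formulas, but since the Christoffel symbols are already available the direct substitution is the cleanest route.
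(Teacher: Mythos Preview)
Your proposal is correct and follows exactly the approach the paper takes: the paper simply states that the formulae are obtained by direct computation from the Christoffel symbols and omits the calculation entirely, so your detailed plan (including the explicit handling of $\mathbf{R}_{j\beta k}{}^{\gamma}$ and the conversion via \eqref{Hessians-relation}) is in fact considerably more than the paper provides. The bookkeeping you flag as the only obstacle is indeed all there is to it.
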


\begin{corollary}\label{all-derivs-Rm-bound}
For any $\alpha_1, \ldots, \alpha_m$ we have
\[
\boldsymbol{\nabla}_{\alpha_1} \cdots \boldsymbol{\nabla}_{\alpha_m}
\mathbf{R}_{\alpha \beta \gamma}^{\phantom{\alpha \beta \gamma}\mu}
=
\nabla_{\alpha_1} \cdots \nabla_{\alpha_m} R_{\alpha \beta \gamma}^{\phantom{\alpha \beta \gamma}\mu}
\]
and hence $|\nabla^m \Rm|^2 \leq |\boldsymbol{\nabla}^m \mathbf{Rm}|^2$.
\end{corollary}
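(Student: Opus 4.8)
The plan is to recognise that, for each $p \in \mathbb{T}^3$, the slice $X \times \{p\} \subset X \times \mathbb{T}^3$ is totally geodesic for $g_\phi$ with induced metric $g_{\underline{\omega}}$, after which everything reduces to the Gauss formula and a short induction. First I would observe that $\boldsymbol{g}_{\alpha\beta} = g_{\alpha\beta}$ identifies the induced metric on a slice with $g_{\underline{\omega}}$, and that for vector fields $V, W$ tangent to the slices the component of $\boldsymbol{\nabla}_V W$ along $\mathbb{T}^3$ is a contraction of $\boldsymbol{\Gamma}^k_{\alpha\beta}$, which vanishes by the Christoffel formulae above; the tangential component is the Levi-Civita connection of the induced metric, consistently with $\boldsymbol{\Gamma}^\gamma_{\alpha\beta} = \Gamma^\gamma_{\alpha\beta}$. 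Hence each slice is totally geodesic and $\boldsymbol{\nabla}_V W = \nabla_V W$ whenever $V$ and $W$ are slice-tangent (in particular $\boldsymbol{\nabla}_V W$ is then again slice-tangent).

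Next I would establish $\boldsymbol{\nabla}_{\alpha_1}\cdots\boldsymbol{\nabla}_{\alpha_m}\mathbf{R}_{\alpha\beta\gamma}{}^{\mu} = \nabla_{\alpha_1}\cdots\nabla_{\alpha_m} R_{\alpha\beta\gamma}{}^{\mu}$ by induction on $m$. The base case $m=0$ is the last identity of Lemma~\ref{curvature-formulas}; equivalently, for slice-tangent $V, W, Z$ the formula for curvature together with $\boldsymbol{\nabla} = \nabla$ on slice-tangent fields gives $\mathbf{Rm}(V,W)Z = \Rm(V,W)Z$. For the inductive step, I would expand $\boldsymbol{\nabla}_{V_1}\bigl(\boldsymbol{\nabla}^{m-1}\mathbf{Rm}\bigr)$ evaluated on slice-tangent vector fields $V_1, V_2, \dots$ by the product rule: each resulting term is either $V_1$ differentiating the function $\bigl(\boldsymbol{\nabla}^{m-1}\mathbf{Rm}\bigr)(V_2, \dots)$, which by the inductive hypothesis already equals its $4$-dimensional counterpart along the slice, or a contraction of $\boldsymbol{\nabla}^{m-1}\mathbf{Rm}$ against some $\boldsymbol{\nabla}_{V_1}V_i = \nabla_{V_1}V_i$, again slice-tangent, so again covered by the inductive hypothesis. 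Since $V_1$ is slice-tangent, differentiating an intrinsic function along it is intrinsic, and no Christoffel symbol carrying a $\mathbb{T}^3$ index is ever summed against; reading the result off in indices gives the claimed identity.

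Finally, for the inequality: as $g_\phi = g_{\underline{\omega}} \oplus Q_{ij}\diff t^i \diff t^j$ is block diagonal with both summands positive definite, the squared $g_\phi$-norm of any tensor on $X \times \mathbb{T}^3$ decomposes as the sum, over all ways of assigning an $X$-direction or a $\mathbb{T}^3$-direction to each index slot, of the squared norms of the resulting blocks (computed with $g_{\underline{\omega}}$ and $Q$ respectively), each of which is $\geq 0$. Applying this to $\boldsymbol{\nabla}^m\mathbf{Rm}$, the block with every slot along $X$ is exactly $\nabla^m\Rm$ with its $g_{\underline{\omega}}$-norm, by the identity just proved together with $\boldsymbol{g}_{\alpha\beta} = g_{\alpha\beta}$; discarding the other nonnegative terms yields $|\nabla^m\Rm|^2 \leq |\boldsymbol{\nabla}^m\mathbf{Rm}|^2$. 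I expect no genuine obstacle here --- the statement is connection and coordinate bookkeeping --- the only point deserving care being the induction, where one must verify that the product-rule expansion never contracts against $\boldsymbol{\Gamma}^k_{\alpha\beta}$ (nor, in the brute-force component approach, against the likewise vanishing $\boldsymbol{\Gamma}^\gamma_{i\beta}$, which is not listed above but follows immediately from block-diagonality and $\mathbb{T}^3$-invariance of $g_\phi$).
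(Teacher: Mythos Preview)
Your proof is correct and is precisely the verification the paper leaves implicit (the paper states the corollary without proof, as an immediate consequence of the Christoffel and curvature formulae in the preceding two lemmas). One small point worth tightening in the vector-field framing: since $\mathbf{Rm}$ is being treated as a $(1,3)$-tensor, $(\boldsymbol{\nabla}^{m-1}\mathbf{Rm})(V_2,\dots)$ is a vector field rather than a function, and to apply $\boldsymbol{\nabla}_{V_1}=\nabla_{V_1}$ to it you need it to be slice-tangent, i.e.\ $(\boldsymbol{\nabla}^{m-1}\mathbf{Rm})_{\alpha_2\cdots}{}^{k}=0$ --- this is true (base case $\mathbf{R}_{\alpha\beta\gamma}{}^{k}=0$ is in the lemma, and the induction uses $\boldsymbol{\Gamma}^k_{\alpha\beta}=0$ together with $\boldsymbol{\Gamma}^k_{\alpha l}$ contracting against the inductively vanishing mixed component), so you could simply fold it into the inductive hypothesis. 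Alternatively, as you yourself note at the end, the brute-force component computation using $\boldsymbol{\Gamma}^k_{\alpha\beta}=0$ and $\boldsymbol{\Gamma}^\gamma_{\alpha k}=0$ avoids this entirely and is complete as stated.
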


\begin{lemma}\label{HessianQ-bound}
\begin{equation*}
|\hat{\nabla}\diff Q|^2_Q  \leq  \frac{5}{4}\left(|\mathbf{Rm}|^2 + |\dd Q|_Q^4\right)
\end{equation*}
\end{lemma}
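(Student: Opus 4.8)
The plan is to read the Hessian $\hat\nabla\dd Q$ directly off the curvature formulas of Lemma~\ref{curvature-formulas}. Among those formulas the Hessian appears only in the component $\mathbf{R}_{j\beta k}^{\phantom{j\beta k}\gamma}$, which is expressed as $\tfrac12\hat\nabla^\gamma\nabla_\beta Q_{jk}$ plus a term quadratic in $\dd Q$. Rearranging, and regarding both sides as tensors over $X$ with one $T^*X\otimes T^*X$ slot (indices $\beta,\gamma$) and values in $3\times 3$ matrices (indices $j,k$),
\[
\tfrac12\,\hat\nabla^\gamma\nabla_\beta Q_{jk}
=
\mathbf{R}_{j\beta k}^{\phantom{j\beta k}\gamma}
-\tfrac14\,\nabla_\beta Q_{jp}\,Q^{pl}\,\nabla^\gamma Q_{lk}.
\]
Taking the norm $|\cdot|_Q$ built from $g_{\underline\omega}$ on the indices $\beta,\gamma$ and from $Q^{-1}$ on the matrix indices $j,k$ (which on symmetric matrices is the norm of $\langle\cdot,\cdot\rangle_Q$), and applying the triangle inequality, reduces matters to two estimates: a bound on the norm of that block of $\mathbf{Rm}$, and a bound on the norm of the quadratic term.

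For the curvature block the key point is a multiplicity count. Since $\boldsymbol g$ is block diagonal with $\boldsymbol g_{ij}=Q_{ij}$ and $\boldsymbol g_{\alpha\beta}=g_{\alpha\beta}$, the quantity $|\mathbf{R}_{j\beta k}^{\phantom{j\beta k}\gamma}|_Q^2$ is literally the partial sum $\sum_{j,\beta,k,\gamma}\mathbf{R}_{j\beta k\gamma}\mathbf{R}^{j\beta k\gamma}$ of $|\mathbf{Rm}|^2=\mathbf{R}_{abcd}\mathbf{R}^{abcd}$, taken over those configurations in which the first antisymmetric pair consists of one $\mathbb{T}^3$-index and one $X$-index, and likewise the second. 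Transposing each of the two antisymmetric pairs independently produces four such configurations, each contributing the same amount (the sign is irrelevant after squaring), and $|\mathbf{Rm}|^2$ is a sum of squares, so $|\mathbf{R}_{j\beta k}^{\phantom{j\beta k}\gamma}|_Q^2\le\tfrac14|\mathbf{Rm}|^2$. It is exactly this factor $\tfrac14$ — i.e.\ $|\mathbf{R}_{j\beta k}^{\phantom{j\beta k}\gamma}|_Q\le\tfrac12|\mathbf{Rm}|$ rather than $\le|\mathbf{Rm}|$ — that makes the constant in the Lemma come out to $\tfrac54$.

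For the quadratic term I would work at a point, write $\dd Q=\sum_\beta e^\beta\otimes A_\beta$ with $A_\beta$ symmetric in a $g_{\underline\omega}$-orthonormal coframe $(e^\beta)$, and conjugate: set $M^\beta:=Q^{-1/2}A_\beta Q^{-1/2}$, symmetric, with $\|M^\beta\|_F=|A_\beta|_Q$ (conjugation by $Q^{1/2}$ sends $\langle\cdot,\cdot\rangle_Q$ to the Frobenius inner product), so $|\dd Q|_Q^2=\sum_\beta\|M^\beta\|_F^2$. Since $Q^{-1/2}(A_\beta Q^{-1}A_\gamma)Q^{-1/2}=M^\beta M^\gamma$, the quadratic term has squared $|\cdot|_Q$-norm $\sum_{\beta,\gamma}\|M^\beta M^\gamma\|_F^2$, and submultiplicativity $\|M^\beta M^\gamma\|_F\le\|M^\beta\|_{\mathrm{op}}\|M^\gamma\|_F\le\|M^\beta\|_F\|M^\gamma\|_F$ gives
\[
\sum_{\beta,\gamma}\|M^\beta M^\gamma\|_F^2\ \le\ \Big(\sum_\beta\|M^\beta\|_F^2\Big)^2=\big(|\dd Q|_Q^2\big)^2,
\]
so the quadratic term has $|\cdot|_Q$-norm at most $|\dd Q|_Q^2$.

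Feeding both bounds into the triangle inequality yields $|\hat\nabla\dd Q|_Q\le|\mathbf{Rm}|+\tfrac12|\dd Q|_Q^2$, and squaring via the elementary inequality $(x+y)^2\le\tfrac54x^2+5y^2$ (which is $(\tfrac12x-2y)^2\ge0$) with $x=|\mathbf{Rm}|$, $y=\tfrac12|\dd Q|_Q^2$ gives $|\hat\nabla\dd Q|_Q^2\le\tfrac54\big(|\mathbf{Rm}|^2+|\dd Q|_Q^4\big)$; here the weight $t=\tfrac14$ is optimal, so $\tfrac54$ is precisely the constant this method produces. The only step needing an actual idea is the quadratic estimate — spotting the matrix-product structure after the $Q^{-1/2}$-conjugation and invoking submultiplicativity of the Frobenius norm; the rest is bookkeeping with the curvature identity, the one thing to watch being not to waste the multiplicity-four in the curvature norm.
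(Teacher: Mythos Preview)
Your proof is correct and follows essentially the same route as the paper: both read $\hat\nabla\dd Q$ off the identity for $\mathbf{R}_{j\beta k}^{\phantom{j\beta k}\gamma}$, exploit the multiplicity-four symmetry to get the factor $\tfrac14$ on the curvature block, and bound the quadratic $\dd Q\,Q^{-1}\dd Q$ term by $|\dd Q|_Q^2$. The only cosmetic differences are that the paper expands the square and handles the cross term via $2ab\le a^2+b^2$ (where you use the triangle inequality and then square with optimal weights), and that the paper diagonalises $Q$ pointwise to estimate the quadratic term (where your $Q^{-1/2}$-conjugation plus Frobenius submultiplicativity is the same computation in slicker packaging).
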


\begin{proof}
According to the fifth identity in Lemma \ref{curvature-formulas}, 
$$\hat\nabla^\alpha\nabla^\beta Q_{jk} = 2\mathbf{R}_{j\beta k}^{\;\;\;\;\; \;\alpha}
-\frac{1}{2} (\nabla_\beta Q Q^{-1}\nabla^\alpha Q)_{jk}$$
By diagonalizing $Q$ at a point and using normal coordinates $x_\alpha$ at this point for the metric $g_{\underline\omega}$, we have
\begin{align}
|\hat\nabla \dd Q|_Q^2 
& = (2\mathbf{R}_{j\beta k\alpha} - \frac{1}{2} (\nabla_\beta QQ^{-1}\nabla_\alpha)_{jk})
(2\RR_{p\;\; q}^{\;\; \beta \;\; \alpha} - \frac{1}{2}(\nabla^\beta QQ^{-1}\nabla^\alpha Q)_{pq})Q^{jp}Q^{kq} \nonumber\\
& =  \Big( 4 \RR_{j\beta k\alpha} \RR_{p\;\; q}^{\;\; \beta \;\; \alpha} 
- 2 \RR_{j\beta k \alpha} (\nabla^\beta QQ^{-1}\nabla_\alpha)_{pq}\Big)Q^{jp} Q^{kq}
+ \frac{1}{4} |\nabla^\alpha QQ^{-1}\nabla_\alpha Q|_Q^2\nonumber\\
&\leq |\mathbf{Rm}|^2  +
 \sum_{j,k, \alpha,\beta} 
\Big(\lambda_j^{-1}\lambda_k^{-1} \RR_{j\beta k\alpha}^2 + \lambda_j^{-1}\lambda_k^{-1} |\sum_p \nabla^\beta Q_{jp}\lambda_p^{-1}\nabla^\alpha Q_{pk}|^2 \Big)
+ \frac{1}{4} |\nabla^\alpha QQ^{-1}\nabla_\alpha Q|_Q^2\nonumber\\
& \leq \frac{5}{4} |\mathbf{Rm}|^2
 + \sum_{j,k,\alpha,\beta} \lambda_j^{-1}\lambda_k^{-1}\sum_p \lambda_p^{-1}|\nabla^\beta Q_{jp}|^2 \sum_q \lambda_q^{-1} |\nabla^\alpha Q_{qk}|^2 
 +\frac{1}{4} |\nabla^\alpha QQ^{-1}\nabla_\alpha Q|_Q^2\nonumber\\
 & = \frac{5}{4} |\mathbf{Rm}|^2 + |\dd Q|_Q^4 +\frac{1}{4} |\nabla^\alpha QQ^{-1}\nabla_\alpha Q|_Q^2\nonumber\\
 & \leq  \frac{5}{4}(|\mathbf{Rm}|^2 + |\dd Q|_Q^4)\qedhere
\end{align}
\end{proof}

%

\begin{lemma}\label{ricci-components}\label{Ricci-splitting}
The components of the Ricci tensor of $g_\phi$ are:
\begin{align*}
\mathbf{R}_{ij}
	&=
		- \frac{1}{2} (\hat\Delta Q)_{ij}\\
\mathbf{R}_{i\alpha}
	&=
		0\\
\mathbf{R}_{\alpha \beta}
	&=
		R_{\alpha \beta}
		-
		\frac{1}{4} \tr \left( 
		Q^{-1}\nabla Q \otimes Q^{-1} \nabla Q 
		\right)_{\alpha\beta}		
\end{align*}
The scalar curvature of $g_\phi$ is
\[
\mathbf{R} = R - \frac{1}{4}{|\diff Q|^2_Q}
\]
\end{lemma}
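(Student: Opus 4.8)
The plan is to obtain the three blocks of $\mathrm{Ric}(g_\phi)$, and then $\mathbf{R}$, purely by contracting the curvature components recorded in Lemma~\ref{curvature-formulas}. Writing the Ricci tensor as the trace $\mathbf{R}_{ab}=\mathbf{R}_{acb}^{\phantom{acb}c}$ over the seven-dimensional index $c$, I split every such trace into a $\mathbb{T}^3$-part (summing $c$ over the $\del_i$, raised with $Q^{ij}$) and an $X$-part (summing $c$ over $TX$, raised with $g^{\alpha\beta}$). A few of the components appearing in these traces --- e.g.\ $\mathbf{R}_{\alpha k\beta}^{\phantom{\alpha k\beta}k}$ --- are not among the seven families listed in Lemma~\ref{curvature-formulas}, but they are recovered from those that are using the algebraic symmetries $\mathbf{R}_{abcd}=\mathbf{R}_{cdab}$ and antisymmetry in each index pair, so everything reduces to the listed families.

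Two elementary facts drive all the cancellations, and I would isolate them first. The first is $\det Q\equiv 1$ (Lemma~\ref{sd-detQ}), which gives $\tr(Q^{-1}\diff Q)=\diff\log\det Q=0$ and, differentiating once more, $Q^{jk}\nabla_\alpha\nabla_\beta Q_{jk}=-\nabla_\alpha Q^{jk}\,\nabla_\beta Q_{jk}=\tr(Q^{-1}\nabla_\alpha Q\,Q^{-1}\nabla_\beta Q)$. The second is the Hessian identity \eqref{Hessians-relation}: combining it with the previous line shows that the $Q^{-1}$-trace of the harmonic-map Hessian vanishes, $Q^{jk}(\hat\nabla\diff Q)_{jk}=0$ (equivalently: since $\det Q\equiv1$, $Q$ maps into the totally geodesic submanifold $\{\det=1\}\cong\SL(3,\R)/\SO(3)\subset\mathcal{P}$, so its tension field is trace-free against $Q^{-1}$).

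With these in hand the three blocks follow quickly. For $\mathbf{R}_{\alpha\beta}$: the $X$-trace of the last line of Lemma~\ref{curvature-formulas} is exactly $R_{\alpha\beta}$, while the $\mathbb{T}^3$-trace, rewritten via the symmetries in terms of $\mathbf{R}_{j\beta k}^{\phantom{j\beta k}\gamma}$, contributes $\tfrac12 Q^{jk}(\hat\nabla\diff Q)_{jk}$ (which is $0$ by the above) plus $-\tfrac14\tr(Q^{-1}\nabla_\alpha Q\,Q^{-1}\nabla_\beta Q)$, giving the stated $\mathbf{R}_{\alpha\beta}$. For $\mathbf{R}_{ij}$: the $\mathbb{T}^3$-trace of $\mathbf{R}_{ijk}^{\phantom{ijk}l}$ has its leading term proportional to $Q^{kl}\nabla_\beta Q_{lk}=\nabla_\beta\log\det Q=0$, leaving $-\tfrac14(\nabla^\beta Q\,Q^{-1}\nabla_\beta Q)_{ij}$; the $X$-trace of $\mathbf{R}_{j\beta k}^{\phantom{j\beta k}\gamma}$, using the identity $\hat\nabla^\alpha\nabla^\beta Q_{jk}=2\mathbf{R}_{j\beta k}^{\phantom{j\beta k}\alpha}-\tfrac12(\nabla_\beta Q\,Q^{-1}\nabla^\alpha Q)_{jk}$ from the proof of Lemma~\ref{HessianQ-bound} (traced over $\alpha=\beta$), contributes $\tfrac12(\hat\Delta Q)_{ij}+\tfrac14(\nabla^\beta Q\,Q^{-1}\nabla_\beta Q)_{ij}$; the quadratic terms cancel and we are left with $-\tfrac12(\hat\Delta Q)_{ij}$. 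Finally $\mathbf{R}_{i\alpha}=0$ because every component entering that contraction ($\mathbf{R}_{ijk}^{\phantom{ijk}\beta}$, $\mathbf{R}_{i\beta k}^{\phantom{i\beta k}l}$, and --- after a symmetry move --- $\mathbf{R}_{\alpha\beta\gamma}^{\phantom{\alpha\beta\gamma}k}$) is zero in Lemma~\ref{curvature-formulas}. The scalar curvature is then just $\mathbf{R}=Q^{ij}\mathbf{R}_{ij}+g^{\alpha\beta}\mathbf{R}_{\alpha\beta}$: the first term is $-\tfrac12\tr(Q^{-1}\hat\Delta Q)=0$ by the tension-field remark, and the second is $R-\tfrac14 g^{\alpha\beta}\tr(Q^{-1}\nabla_\alpha Q\,Q^{-1}\nabla_\beta Q)=R-\tfrac14|\diff Q|_Q^2$. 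The only genuinely fiddly part is the index bookkeeping: correctly matching each Ricci contraction to the listed families via the Riemann symmetries and keeping the overall signs consistent with the paper's curvature convention. Everything else is forced by $\det Q\equiv 1$.
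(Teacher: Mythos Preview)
Your approach is essentially the paper's: contract the curvature components of Lemma~\ref{curvature-formulas} and use $\det Q\equiv 1$ to kill the $\tr(Q^{-1}\nabla Q)$ terms. The paper carries out the $\mathbf{R}_{ij}$ computation explicitly in exactly this way (writing $\mathbf{R}_{jk}=\mathbf{R}_{ijk}^{\phantom{ijk}i}+\mathbf{R}_{\alpha jk}^{\phantom{\alpha jk}\alpha}$ and noting that one term vanishes by $\tr(Q^{-1}\nabla Q)=0$ while two quadratic terms cancel) and suppresses the remaining cases.

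One small remark on the bookkeeping you flagged: with the paper's Ricci convention $\mathbf{R}_{bc}=\mathbf{R}_{abc}^{\phantom{abc}a}$, the $\mathbb{T}^3$-trace entering $\mathbf{R}_{\alpha\beta}$ is $\mathbf{R}_{i\alpha\beta}^{\phantom{i\alpha\beta}i}$, and the symmetry move to $\mathbf{R}_{j\beta k}^{\phantom{j\beta k}\gamma}$ introduces a minus sign, so both the $\tfrac12 Q^{jk}(\hat\nabla\diff Q)_{jk}$ and the $\tfrac14\tr(\ldots)$ pieces come out negative. Similarly, your stated $X$-trace contribution to $\mathbf{R}_{ij}$ should carry an overall minus to land on $-\tfrac12(\hat\Delta Q)_{ij}$. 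These are exactly the sign adjustments you anticipated; once made, everything matches. Your observation that $Q^{jk}(\hat\nabla\diff Q)_{jk}=0$ (equivalently, that the tension field is tangent to the totally geodesic $\{\det=1\}$) is a clean way to handle the scalar curvature, slightly more conceptual than a bare component computation.
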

\begin{proof}
These formulae are obtained by direct calculation from the components of the full curvature tensor. We give the first as an example and suppress the details of the other calculations.
\begin{align*}
\mathbf{R}_{jk}
	&=
		\mathbf{R}_{ijk}^{\phantom{ijk} i} 
		+ 
		\mathbf{R}_{\alpha jk}^{\phantom{\alpha jk}^\alpha}\\
	&=
		\frac{1}{4} \nabla^\alpha Q_{ik}Q^{ip}\nabla_\alpha Q_{pj}
		-
		\frac{1}{4} \nabla^\alpha Q_{jk}Q^{ip} \nabla_\alpha Q_{pi}
		-
		\frac{1}{2} \hat\nabla^\alpha \nabla_\alpha Q_{jk}
		-
		\frac{1}{4} \nabla_\alpha Q_{jp} Q^{pl} \nabla^\alpha Q_{lk}\\
	&=
		- \frac{1}{2}( \hat{\Delta} Q)_{jk}
\end{align*}
since the first and last terms in the second line cancel and the second vanishes because $\det Q = 1$, which implies that  $\tr (Q^{-1} \nabla Q)=0$. 
\end{proof}
%
%

\subsection{The Levi-Civita connection on $\Lambda^+$ and the torsion 2-form}

The triple $\omega_i$ gives a framing of the bundle $\Lambda^+ \to X$ of self-dual 2-forms. In this framing, the Levi-Civita connection of $\Lambda^+$ is given by a matrix $a_{ij}$ of 1-forms:
\begin{equation}
\nabla \omega_i = a_{ij} \otimes \omega_j
\end{equation}
In this subsection we will explain how to determine the $a_{ij}$'s in terms of the torsion 2-form $\mathbf{T}$ of the $G_2$-structure. This will be important in the following subsection, when we control $\nabla \underline{\omega}$, $\nabla^2\underline{\omega}$ and $\boldsymbol{\nabla}\mathbf{T}$. 

To begin we give a purely 4-dimensional description of $\mathbf{T}$. We let $E_i \colon \Lambda^1 \to \Lambda^1$ be the operator defined by
\begin{equation}
E_i(\alpha) = - * (\alpha \wedge \omega_i)
\label{E-definition}
\end{equation}
When the triple $\underline{\omega}$ is hyperk\"ahler, the $E_i$'s are simply the hyperk\"ahler complex structures. In general, $E_i$ is skew-adjoint and $E_i^2 = -Q_{ii}$. With this in hand, we can describe the torsion 2-form. Define a triple of $1$-forms $\underline{\tau} = (\tau_1, \tau_2, \tau_3)$ by
\begin{equation}
\tau_i = - E_k(\diff Q Q^{-1})_{ik}
\label{tau-definition}
\end{equation}
\begin{lemma}
Given a hypersymplectic structure $\underline{\omega}$ on $X$, the torsion 2-form $\mathbf{T} = - \frac{1}{2} \diff^*\phi$ of the corresponding $G_2$-structure $\phi$ on $X \times \mathbb{T}^3$, defined as in \eqref{phi-from-omega}, is 
\begin{equation}
\mathbf{T} = - \frac{1}{2} \diff t^i \wedge \tau_i
\label{T-to-tau}
\end{equation}
\end{lemma}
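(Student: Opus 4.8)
The plan is to read the formula off from the expression \eqref{dstar-phi} for $\diff^*_7\phi$ established above, combined with the definition $\mathbf{T} = -\frac{1}{2}\diff^*\phi$. The right-hand side of \eqref{dstar-phi} is already of the shape $\sum_k \diff t^k \wedge \sigma_k$, where $\sigma_k$ is an $X$-valued $1$-form (note that $\diff(Q^{ij}\omega_i)$ is a $3$-form on $X$, so $*_4$ of it is a $1$-form on $X$), so the whole content of the lemma is the identification $\sigma_k = \tau_k$. I would therefore fix $k$, look at $\sigma_k = \sum_{i,j} Q_{jk}\,*_4\diff(Q^{ij}\omega_i)$, and manipulate it into the definition \eqref{tau-definition} of $\tau_k$.

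There are exactly two algebraic inputs. First, each $\omega_i$ is closed, so $\diff(Q^{ij}\omega_i) = \diff Q^{ij}\wedge\omega_i$; applying $*_4$ and invoking the very definition \eqref{E-definition} of $E_i$, namely $*_4(\alpha\wedge\omega_i) = -E_i(\alpha)$ for a $1$-form $\alpha$ on $X$, this becomes $-E_i(\diff Q^{ij})$ (summed over $i$). Second, differentiating $Q^{-1}Q = \Id$ gives $\diff(Q^{-1}) = -Q^{-1}(\diff Q)Q^{-1}$, that is $\diff Q^{ij} = -Q^{ip}(\diff Q_{pq})Q^{qj}$; substituting this and using $\R$-linearity of each $E_i$ to pull the scalar functions $Q^{ip}, Q^{qj}$ outside, one is left with an expression built from $E_i(\diff Q_{pq})$, three factors of $Q^{-1}$, and the single factor $Q_{jk}$ coming from \eqref{dstar-phi}.

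Finally I would contract the $\mathbb{T}^3$-indices. The index $j$ occurs only in $Q_{jk}$ and in one inverse factor, through $\sum_j Q_{jk}Q^{qj} = \delta^q_k$ (using symmetry of $Q$), which collapses $\sigma_k$ to $\sum_{i,p} Q^{ip}\,E_i(\diff Q_{pk})$; rewriting this using the symmetry of $Q$ and of $\diff Q$ gives precisely $-E_l\bigl((\diff Q\,Q^{-1})_{kl}\bigr) = \tau_k$ as in \eqref{tau-definition}, hence $\mathbf{T} = -\frac{1}{2}\diff t^i\wedge\tau_i$. I do not expect a conceptual obstacle: once \eqref{dstar-phi} is granted, every step is purely formal index manipulation, and the one thing requiring genuine care is keeping the Hodge-star and codifferential sign conventions consistent across dimensions $3$, $4$ and $7$ (the product decomposition of $*_7$ relative to $g_\phi = g_{\underline{\omega}}\oplus Q_{ij}\diff t^i\diff t^j$, and $\diff^*_7 = -*_7\diff*_7$ on $3$-forms in dimension $7$), since a stray sign anywhere would propagate to the constant in front of $\diff t^i\wedge\tau_i$.
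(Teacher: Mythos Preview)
Your proposal is correct and follows exactly the paper's own proof: both start from \eqref{dstar-phi}, use $\diff\omega_i=0$ together with $*_4(\alpha\wedge\omega_i)=-E_i(\alpha)$, and then contract the $\mathbb{T}^3$-indices via $\diff(Q^{-1})=-Q^{-1}(\diff Q)Q^{-1}$ and $Q_{jk}Q^{qj}=\delta^q_k$; the paper simply compresses these same steps into a single displayed line. (One caution on your final line: the symmetry relabelling actually gives $\sum_{i,p}Q^{ip}E_i(\diff Q_{pk}) = +\sum_l E_l\bigl((\diff Q\,Q^{-1})_{kl}\bigr)$ rather than the negative, so the sign bookkeeping you flagged does indeed need care here.)
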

\begin{proof}
Beginning from \eqref{dstar-phi} we have that
\[
\diff^*\phi 
	= Q_{jk} \diff t^k \wedge *\, \diff (Q^{ij} \omega_i)
	= - \diff t^k \wedge E_{i} (\diff Q^{-1}Q)_{ik}
	=  \diff t^k \wedge \tau_k
\]
which proves \eqref{T-to-tau}. 
\end{proof}

The main result of this subsection is the following. 

\begin{proposition}\label{LC-connection-matrix}
The Levi-Civita connection matrix is given by
\begin{equation}
a_{ij} = \frac{1}{2} (\diff QQ^{-1})_{ij} + (XQ^{-1})_{ij}
\label{formula-for-a}
\end{equation}
where $X$ is the matrix of 1-forms given by 
\[
X_{ij} = \frac{1}{2} \epsilon_{ijk} Q^{kl}\tau_l
\]
\end{proposition}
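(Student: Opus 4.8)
The plan is to determine the connection matrix $a_{ij}$ by using the two structural constraints it must satisfy and then checking the proposed formula against both. First I would recall that $a$ is $\so(3)$-valued \emph{with respect to the metric $Q$ on $\Lambda^+$}: since $\nabla$ preserves the metric $g_{\underline{\omega}}$ and hence the induced inner product $Q_{ij} = \langle \omega_i, \omega_j\rangle$ (up to the pointwise normalization, i.e. $\omega_i \wedge \omega_j = 2Q_{ij}\mu$ and $\omega_i$ self-dual gives $\langle \omega_i,\omega_j\rangle = 2Q_{ij}$), differentiating $\langle \omega_i,\omega_j\rangle = 2Q_{ij}$ yields $a_{ik}Q_{kj} + a_{jk}Q_{ki} = \diff Q_{ij}$, i.e. $aQ + (aQ)^{T} = \diff Q$. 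This is one linear constraint; the antisymmetric part of $aQ$ is still free, and that is exactly the content of the matrix $X$. One checks immediately that the proposed $a = \tfrac12 \diff Q Q^{-1} + XQ^{-1}$ satisfies this: $aQ = \tfrac12\diff Q + X$, and since $X_{ij} = \tfrac12\epsilon_{ijk}Q^{kl}\tau_l$ is antisymmetric in $i,j$, the symmetric part of $aQ$ is $\tfrac12\diff Q$, as required.

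The second, and decisive, constraint comes from the torsion: $\nabla$ is the Levi-Civita connection, so it must be torsion-free, which forces a relation between $a$ and $\diff\underline{\omega}$. Concretely, I would use that each $\omega_i$ is closed: $0 = \diff\omega_i$. Writing $\nabla\omega_i = a_{ij}\otimes\omega_j$ and antisymmetrizing, $\diff\omega_i = a_{ij}\wedge\omega_j$ (using that $\omega_j$ is $\nabla$-parallel-up-to-$a$ and the standard relation between $d$ and $\nabla$ on forms). So the closedness of the $\omega_i$ becomes the system $a_{ij}\wedge\omega_j = 0$ for each $i$. Combined with the metric constraint above, this should pin down $a$ uniquely, because the map $\beta \mapsto \beta\wedge\omega_j$ from $\Lambda^1$ into $\Lambda^3 \cong \Lambda^1$ is (a multiple of) the complex-structure-type operator $E_j$ appearing in \eqref{E-definition}, hence invertible on the relevant pieces. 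This is where the torsion 2-form $\mathbf{T}$, equivalently the triple $\underline{\tau}$ via \eqref{tau-definition} and \eqref{T-to-tau}, enters: the failure of the naive guess $a = \tfrac12\diff QQ^{-1}$ to satisfy $a_{ij}\wedge\omega_j = 0$ is precisely measured by $\underline{\tau}$, and the correction term $XQ^{-1}$ is built to absorb it. So the core computation is: plug $a_{ij} = \tfrac12(\diff QQ^{-1})_{ij} + (XQ^{-1})_{ij}$ into $a_{ij}\wedge\omega_j$, use $*(\,\cdot\,\wedge\omega_j) = -E_j$ and $E_j^2 = -Q_{jj}$ (more precisely the full multiplication table of the $E_j$, which satisfy quaternion-like relations twisted by $Q$), and verify the expression vanishes after substituting the definition of $X$ in terms of $\underline{\tau}$ and of $\underline{\tau}$ in terms of $\diff QQ^{-1}$.

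Carrying this out, the key steps in order are: (1) establish $\diff\omega_i = a_{ij}\wedge\omega_j$ from torsion-freeness and closedness; (2) establish the algebraic identities for the operators $E_i$ — skew-adjointness, $E_i E_j + E_j E_i$, and $E_i E_j - E_j E_i = 2\epsilon_{ijk}Q^{kl}\text{(something)}E_l$ or whatever the precise twisted-quaternion relations are — these follow from the standard flat model $\theta_i$ together with $\omega_i = A_{ij}\theta_j$ as in the proof of Lemma~\ref{G2-from-hypersymplectic}; (3) compute $(\diff QQ^{-1})_{ij}\wedge\omega_j$ and express it via the $E_j$ and hence via $\underline{\tau}$; (4) compute $(XQ^{-1})_{ij}\wedge\omega_j = X_{ij}\wedge *^{-1}(-E_j)(\cdot)$-type terms, substitute $X_{ij} = \tfrac12\epsilon_{ijk}Q^{kl}\tau_l$, and use the commutator relations to see it cancels the term from (3); (5) conclude $a_{ij}\wedge\omega_j = 0$, and combine with the metric-compatibility check to conclude that the proposed $a$ is \emph{the} Levi-Civita connection matrix, by uniqueness. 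The main obstacle I anticipate is step (2)–(4): getting the $E_i$-algebra exactly right in the presence of the non-constant matrix $Q$ (the $E_i$ are \emph{not} a quaternion triple, only $Q^{-1/2}$-conjugate to one), and then bookkeeping the index contractions in $\epsilon_{ijk}Q^{kl}\tau_l \wedge \omega_j$ so that they collapse against the definition $\tau_i = -E_k(\diff QQ^{-1})_{ik}$. It is a finite but genuinely fiddly computation, and the temptation to lose track of which slot $Q$ versus $Q^{-1}$ sits in, and of factors of $2$ coming from $\omega_i\wedge\omega_j = 2Q_{ij}\mu$, is real; the cleanest route is probably to verify the identity at a point where $Q$ is diagonal (as in the proof of Lemma~\ref{HessianQ-bound}) and invoke naturality.
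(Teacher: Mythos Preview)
Your proposal is correct and follows essentially the same route as the paper: verify the two characterising equations $aQ + Qa^t = \diff Q$ and $E_j a_{ij} = 0$, the first by antisymmetry of $X$, the second via the $Q$-twisted quaternion relations for the $E_i$, and conclude by uniqueness. The paper organises the second verification around the clean intermediate identity $\epsilon_{ijk}E_j\tau_k = (Q^{-1}\tau)_i$ (proved directly from the twisted relations and $\det Q = 1$), which makes $E_j(XQ^{-1})_{ij} = \tfrac12\tau_i$ immediate and avoids diagonalising $Q$; you may find that a tidier bookkeeping device than the pointwise-diagonal shortcut you suggest.
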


The proof will follow a series of lemmas.

\begin{lemma}
The connection matrix $a_{ij}$ is uniquely determined by the two equations
\begin{align}
aQ + Qa^t &= \diff Q \label{LC1}\\
E_ja_{ij} &= 0 \label{LC2}
\end{align}
\end{lemma}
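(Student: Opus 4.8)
The claim is that the Levi-Civita connection 1-form matrix $a_{ij}$ (defined by $\nabla\omega_i = a_{ij}\otimes\omega_j$) is characterized by the two equations \eqref{LC1}--\eqref{LC2}. The natural strategy is to show (i) that $a$ actually satisfies these equations, and (ii) that the two equations together pin down $a$ uniquely, i.e. the linear map $a \mapsto (aQ+Qa^t,\ (E_j a_{ij})_i)$ has trivial kernel on the space of matrices of 1-forms. Since the statement only asserts uniqueness ``by the two equations'' and is a lemma en route to Proposition~\ref{LC-connection-matrix}, I would read it as: these two properties hold for $a$, and any matrix of 1-forms satisfying them equals $a$.

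First I would verify \eqref{LC1}. The forms $\omega_i$ span $\Lambda^+$ and satisfy $\omega_i\wedge\omega_j = 2Q_{ij}\mu$. Differentiating this pointwise identity and using that $\nabla$ is a metric connection (so $\nabla\mu=0$) gives $\nabla\omega_i\wedge\omega_j + \omega_i\wedge\nabla\omega_j = 2(\diff Q_{ij})\mu$; substituting $\nabla\omega_i = a_{ik}\otimes\omega_k$ and again using $\omega_k\wedge\omega_j = 2Q_{kj}\mu$ yields $a_{ik}Q_{kj} + Q_{ik}a_{jk} = \diff Q_{ij}$, which is exactly \eqref{LC1} ($aQ + Qa^t = \diff Q$, remembering $(Qa^t)_{ij} = Q_{ik}a_{jk}$). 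Second, \eqref{LC2}: the $\omega_i$ are closed, so $\diff\omega_i = 0$. Writing $\diff\omega_i$ via the connection, $\diff\omega_i$ is the antisymmetrization of $\nabla\omega_i = a_{ij}\otimes\omega_j$, i.e. $\diff\omega_i = a_{ij}\wedge\omega_j$ (as a 3-form). Applying the Hodge star in the 4-manifold and using the definition \eqref{E-definition} $E_j(\alpha) = -*(\alpha\wedge\omega_j)$ — note each $\omega_j$ is self-dual, so $a_{ij}\wedge\omega_j$ being a 3-form, $*(\sum_j a_{ij}\wedge\omega_j) = -\sum_j E_j(a_{ij})$ — we get $0 = *\diff\omega_i = -\sum_j E_j(a_{ij})$, which is \eqref{LC2}.

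For uniqueness, suppose $b_{ij}$ is another matrix of 1-forms with $bQ + Qb^t = \diff Q$ and $E_j b_{ij}=0$; set $c = a - b$. Then $cQ + Qc^t = 0$, i.e. $cQ$ is antisymmetric, so writing $cQ = Y$ with $Y^t = -Y$ we have $c = YQ^{-1}$ with $Y$ a matrix of 1-forms valued in antisymmetric $3\times 3$ matrices — a $3$-dimensional space, so $Y_{ij} = \epsilon_{ijk}y_k$ for a triple of 1-forms $\underline y$. The remaining condition $E_j c_{ij} = 0$ becomes $\sum_{j,k}\epsilon_{ijk} Q^{kl} E_j(y_l) = 0$ for each $i$; I need to show this forces $\underline y = 0$. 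Since the $E_j$ are pointwise invertible (indeed $E_j^2 = -Q_{jj}$, and more structurally the $E_j$ span, together with the identity, a quaternion-like action after normalizing via the $\theta$-basis of Lemma~\ref{sd-detQ}), this is a pointwise linear-algebra statement: the map $\underline y \mapsto \big(\sum_{j,k}\epsilon_{ijk}Q^{kl}E_j(y_l)\big)_i$ from $(\Lambda^1)^{\oplus 3}$ to $(\Lambda^1)^{\oplus 3}$ is injective. The cleanest route is to reduce to the hyperkähler model: pick the coframe $e_1,\dots,e_4$ from Lemma~\ref{G2-from-hypersymplectic}'s proof in which $\theta_i$ are the standard forms and $E_i^\theta$ are the standard quaternions $I,J,K$; express the $E_j$ and $Q$ in terms of the matrix $A$ with $\omega_i = A_{ij}\theta_j$; and check that the combined system $\{cQ+Qc^t = 0,\ E_j c_{ij}=0\}$ has only the zero solution, which in the standard model is the well-known fact that the Levi-Civita connection on $\Lambda^+$ is uniquely determined by being metric and by the closedness (equivalently co-closedness) of the $\omega_i$.

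The main obstacle is the uniqueness half — specifically the pointwise linear-algebra claim that $cQ$ antisymmetric and $E_j c_{ij}=0$ together force $c=0$. Counting dimensions ($c$ lives in $9\otimes\Lambda^1 = 36$-dimensional space; \eqref{LC1} imposes $6\otimes\Lambda^1 = 24$ conditions leaving $12$; \eqref{LC2} imposes $3\otimes\Lambda^1 = 12$ conditions) shows the count is exactly right, so one must check the two conditions are genuinely transverse, not just count. I expect the slickest argument avoids coordinates entirely: \eqref{LC1} says the putative $c$ defines a skew endomorphism-valued 1-form of $\Lambda^+$ (an element of $\Lambda^1\otimes\so(\Lambda^+)$, using that $cQ$ antisymmetric $\iff$ $c\in\so(3)$ in the $Q$-metric), while \eqref{LC2} is the statement that the associated ``torsion'' $\diff^\nabla\omega_i$ vanishes; two metric connections on $\Lambda^+$ whose induced exterior derivatives on the framing $\omega_i$ agree must coincide, by the usual Koszul-type argument — this is the real content, and I would phrase it along those lines rather than grinding the $\epsilon$-$Q$ algebra.
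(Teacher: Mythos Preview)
Your proposal is correct and ultimately lands on the same argument as the paper, though you take a more scenic route. The paper's proof is three sentences: it simply observes that \eqref{LC1} is metric-compatibility of the connection on $\Lambda^+$ (in the trivialisation $\omega_i$), that \eqref{LC2} is the torsion-free condition $a_{ij}\wedge\omega_j=0$ (since $\diff\omega_i=0$) rewritten via the Hodge star, and then invokes the standard uniqueness of the metric torsion-free connection. Your explicit derivations of \eqref{LC1} by differentiating $\omega_i\wedge\omega_j=2Q_{ij}\mu$ and of \eqref{LC2} from $\diff\omega_i=a_{ij}\wedge\omega_j=0$ are correct and flesh out exactly what the paper leaves implicit.

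For uniqueness, you first attempt a direct linear-algebra attack (writing $cQ=\epsilon_{ijk}y_k$ and trying to show the resulting $12\times12$ system is injective), correctly identify this as the only nontrivial step, and then in your final paragraph propose the conceptual argument --- that two metric connections on $\Lambda^+$ with the same induced exterior derivative on the framing must agree. That last sentence \emph{is} the paper's proof. So there is no gap, but you could have saved yourself the $\epsilon$--$Q$ excursion: the dimension count you did ($36=24+12$) together with the observation that wedging with a nondegenerate self-dual $2$-form is an isomorphism $\Lambda^1\to\Lambda^3$ already gives injectivity of $c\mapsto(c_{ij}\wedge\omega_j)_i$ restricted to the $12$-dimensional kernel $\{c:cQ\text{ skew}\}\cong\Omega^1\otimes\so(\Lambda^+)$, and this is the content of the ``standard fact'' the paper cites.
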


\begin{proof}
It is a standard fact that $\nabla$ is the unique metric-compatible torsion-free connection on $\Lambda^+$. In the trivialisation given by the closed triple $\omega_i$, being metric-compatible is equivalent to \eqref{LC1}. Meanwhile, since $\diff \omega_i=0$, the torsion-free condition says that $a_{ij} \wedge \omega_j = 0$. Applying the Hodge star, we see that this is equivalent to \eqref{LC2}.
\end{proof}

\begin{lemma}[$Q$-twisted quaternion relations]
The operators $E_i$ satisfy
\begin{align}
E_iE_j 
	&= 
		- \epsilon_{ijk} Q^{kr} E_r - Q_{ij}\label{quaternion1}\\
E_iE_jE_k 
	&= 
		-Q_{ij} E_k  - Q_{jk} E_i + Q_{ik} E_j + \epsilon_{ijk}\label{quaternion2}
\end{align}
\end{lemma}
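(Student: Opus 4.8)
The plan is to verify the two quadratic relations \eqref{quaternion1} and \eqref{quaternion2} by working pointwise in the standard frame. First I would fix a point $p \in X$, diagonalise $Q$ there, and pick a basis $\theta_i$ of $\Lambda^+$ with $\theta_i \wedge \theta_j = 2\delta_{ij}\mu$ and $\omega_i = A_{ij}\theta_j$ with $\det A = 1$ (as in Lemma~\ref{sd-detQ}); then choose an orthonormal coframe $e_1,e_2,e_3,e_4$ with $\theta_1 = e_{12}+e_{34}$, $\theta_2 = e_{13}+e_{42}$, $\theta_3 = e_{14}+e_{23}$. For the $\theta_i$ the operators $F_i(\alpha) = -*(\alpha\wedge\theta_i)$ are exactly the standard quaternionic complex structures, so $F_iF_j = -\epsilon_{ijk}F_k - \delta_{ij}$ by a direct check on the coframe $e_a$. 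Since $E_i(\alpha) = -*(\alpha\wedge\omega_i) = A_{ij}F_j$, the relations \eqref{quaternion1}--\eqref{quaternion2} for $E_i$ become identities for the matrix $A$ and the standard structure constants, using $Q_{ij} = (A^tA)_{ij}$ (equivalently $Q_{ij} = \sum_k A_{ik}A_{jk}$ in the diagonalising frame) — i.e. they follow by contracting the quaternion relations for $F$ with $A$ and rewriting sums $\sum_k A_{ik}A_{jk}$ as $Q_{ij}$ and $\sum_k \epsilon_{k??}A_{??}$ as $\epsilon_{ijk}Q^{k\cdot}$ via $\det A = 1$ (Cramer's rule: $\sum_{p,q}\epsilon^{pqr}A_{ip}A_{jq} = \epsilon_{ijk}(A^{-1})_{rk}\det A = \epsilon_{ijk}(A^{-1})_{rk}$, and $(A^{-1}A^{-t})_{rs} = Q^{rs}$).

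Concretely, for \eqref{quaternion1}: $E_iE_j = A_{ip}A_{jq}F_pF_q = A_{ip}A_{jq}(-\epsilon_{pqr}F_r - \delta_{pq})$. The second term contracts to $-\sum_p A_{ip}A_{jp} = -Q_{ij}$. For the first, write $F_r = (A^{-1})_{rs}E_s$, so $-A_{ip}A_{jq}\epsilon_{pqr}(A^{-1})_{rs}E_s = -\epsilon_{ijk}\det(A)\,(A^{-1})_{rk}(A^{-1})_{rs}E_s = -\epsilon_{ijk}Q^{ks}E_s$, using $\sum_{p,q}\epsilon^{pqr}A_{ip}A_{jq} = \epsilon_{ijk}(\text{cof }A)_{kr} = \epsilon_{ijk}\det(A)(A^{-1})_{rk}$ and $\sum_r (A^{-1})_{rk}(A^{-1})_{rs} = (A^{-1}(A^{-1})^t)_{ks} = (A^tA)^{-1}_{ks}$, wait — one must be careful: $(A^{-1})_{rk}(A^{-1})_{rs} = ((A^{-1})^t A^{-1})_{ks} = (A A^t)^{-1}_{ks}$. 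Since $Q = A^tA$ and $AA^t$ is conjugate to it, at a point where everything is diagonalised these agree; in general one checks $Q_{ij} = \sum_k A_{ik}A_{jk} = (AA^t)_{ij}$ is the correct reading of the index conventions in \eqref{omega-theta}, so $Q^{ks} = (AA^t)^{-1}_{ks}$ and the claim follows. Then \eqref{quaternion2} is obtained by multiplying \eqref{quaternion1} on the right by $E_k$ and substituting \eqref{quaternion1} again to expand $E_rE_k$ and $E_jE_k$, collecting terms with the $\epsilon$-$\epsilon$ contraction identity $\epsilon_{ijr}\epsilon_{rkl} = \delta_{ik}\delta_{jl} - \delta_{il}\delta_{jk}$ raised appropriately by $Q$; this is a purely algebraic manipulation.

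The main obstacle — really the only subtle point — is bookkeeping the raising and lowering of the $\mathbb{T}^3$-indices by $Q$ and its inverse, and making sure the index placement in $\omega_i = A_{ij}\theta_j$ versus $Q_{ij} = (AA^t)_{ij}$ is consistent throughout, since several relations involve both $Q_{ij}$ and $Q^{ij}$ in the same line. I expect this to be routine but error-prone, so I would do the whole computation once in the frame where $Q$ is the identity (where $E_i$ are literally quaternionic and the relations reduce to $E_iE_j = -\epsilon_{ijk}E_k - \delta_{ij}$, a one-line check), and then argue that both sides of \eqref{quaternion1}--\eqref{quaternion2} are tensorial in $Q$ — i.e. transform correctly under $\omega_i \mapsto A_{ij}\omega_j$, $Q \mapsto A Q A^t$ — so that the identity at $Q = \Id$ propagates to all $Q$. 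That reduces the entire proof to the standard quaternion relations plus an equivariance check, avoiding the messy direct contraction altogether.
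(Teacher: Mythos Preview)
Your direct computation is essentially the paper's proof: write $E_i$ as a linear combination of standard quaternionic operators and contract the quaternion relations with the change-of-basis matrix, using the cofactor identity $\epsilon_{pqr}A_{ip}A_{jq}A_{kr} = \epsilon_{ijk}\det A$ together with $\det A = 1$. The one refinement the paper makes is to take $A = Q^{1/2}$ rather than an arbitrary $A$ with $\omega_i = A_{ij}\theta_j$; since $Q^{1/2}$ is symmetric, $AA^t = A^tA = Q$ and the $AA^t$-versus-$A^tA$ bookkeeping that trips you up simply disappears. With that choice the contraction $\epsilon_{pqr}Q^{1/2}_{ip}Q^{1/2}_{jq} = \epsilon_{ijt}Q^{-1/2}_{tr}$ and $J_r = Q^{-1/2}_{rs}E_s$ give \eqref{quaternion1} in two lines. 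For \eqref{quaternion2} the paper, like you, just applies \eqref{quaternion1} twice; the identity it invokes to collapse the double-$\epsilon$ term is $\epsilon_{iqr}Q^{pq}Q^{rs} = \epsilon_{pst}Q_{ti}$, again a consequence of $\det Q = 1$.

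Your alternative ``check at $Q=\Id$ and propagate by equivariance'' is not something the paper does, and as stated it is not quite a proof: the transformation $\omega_i \mapsto A_{ij}\omega_j$ sends $E_i \mapsto A_{ij}E_j$ and $Q \mapsto AQA^t$, and verifying that both sides of \eqref{quaternion1}--\eqref{quaternion2} transform identically under this is exactly the cofactor-and-inverse computation you were trying to avoid. So it does not buy you anything over the direct route.
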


\begin{proof}
We begin with \eqref{quaternion1}. Write $\theta_i = Q^{-1/2}_{ij} \omega_j$. Then algebraically, the $\theta_i$ are a hyperk\"ahler triple. In other words, if we define $J_i (\alpha) = - * (\alpha \wedge \theta_i)$ then the $J_i$'s satisfy the quaternion relations $J_iJ_j = - \epsilon_{ijk} J_k - \delta_{ij}$. (The sign in front of the first term here is because we consider operators on covectors; the dual operators on tangent vectors satisfy $J_iJ_j = \epsilon_{ijk}J_k - \delta_{ij}$.) Now, since $\omega_i = Q^{1/2}_{ij} \theta_j$ we have that $E_i = Q^{1/2}_{ij}J_j$ and so
\[
E_iE_j 
	= 
		Q^{1/2}_{ip}Q^{1/2}_{jq} J_p J_q
	=
		Q^{1/2}_{ip}Q^{1/2}_{jq}(-\epsilon_{pqr}J_r - \delta_{pq})
\]
Now for any $3 \times 3$ matrix $A$ we have $\epsilon_{ijk}A_{ip}A_{jq}A_{kr} = \epsilon_{pqr} \det A$. Since $\det Q^{1/2} = 1$ we have $\epsilon_{pqr}Q^{1/2}_{ip}Q^{1/2}_{jq} = \epsilon_{ijt}Q^{-1/2}_{tr}$. This means that
\[
E_iE_j = -\epsilon_{ijt} Q^{-1/2}_{tr}J_r  - Q_{ij}
\]
Now $J_t = Q_{ts}^{-1/2} E_s$ which, after a relabelling of indices, gives \eqref{quaternion1}.

Equation \eqref{quaternion2} follows from two applications of \eqref{quaternion1} and the identity $\epsilon_{iqr}Q^{-1}_{pq}Q^{-1}_{rs} = \epsilon_{pst} Q_{ti}$, which holds since $\det Q=1$. We suppress the details.
\end{proof}

\begin{lemma}\label{Etau}
$\epsilon_{ijk} E_j \tau_k = \left( Q^{-1} \tau \right)_i$
\end{lemma}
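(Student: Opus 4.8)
The plan is to reduce the identity to a short index computation driven by the $Q$-twisted quaternion relation \eqref{quaternion1}. Introduce the matrix of $1$-forms $P_{ik} = (\diff Q\, Q^{-1})_{ik}$, so that \eqref{tau-definition} reads $\tau_k = -E_m P_{km}$. Then
\[
\epsilon_{ijk} E_j \tau_k = -\epsilon_{ijk} E_j E_m\, P_{km},
\]
and inserting $E_j E_m = -\epsilon_{jms} Q^{st} E_t - Q_{jm}$ from \eqref{quaternion1} splits the right-hand side as
\[
\epsilon_{ijk} E_j \tau_k = \epsilon_{ijk}\epsilon_{jms} Q^{st} E_t\, P_{km} + \epsilon_{ijk} Q_{jm}\, P_{km}.
\]

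I would dispose of the second term first. Since $P_{km} Q_{mj} = (\diff Q\, Q^{-1} Q)_{kj} = \diff Q_{kj}$ is symmetric in $k$ and $j$ (because $Q$ is symmetric-matrix-valued), its contraction with $\epsilon_{ijk}$ vanishes. For the first term I would apply the identity $\epsilon_{ijk}\epsilon_{jms} = \delta_{km}\delta_{is} - \delta_{ks}\delta_{im}$, turning it into $Q^{it} E_t\, P_{kk} - Q^{kt} E_t\, P_{ki}$; here $P_{kk} = \tr(\diff Q\, Q^{-1}) = \tr(Q^{-1}\diff Q)$ vanishes because $\det Q \equiv 1$. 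This leaves
\[
\epsilon_{ijk} E_j \tau_k = -Q^{kt} E_t\, P_{ki} = -Q^{kt} Q^{si} E_t\, \diff Q_{ks}.
\]

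Finally I would recognise the right-hand side as $(Q^{-1}\tau)_i$: expanding $\tau_k = -Q^{sm} E_m\, \diff Q_{ks}$ gives $(Q^{-1}\tau)_i = Q^{ik}\tau_k = -Q^{ik} Q^{sm} E_m\, \diff Q_{ks}$, and a relabelling of dummy indices — rename $t \to m$, swap the pair $k \leftrightarrow s$ — together with the symmetry of $\diff Q$ and of $Q^{-1}$ matches this with the displayed expression. I do not anticipate any genuine obstacle: the only inputs beyond routine index manipulation are \eqref{quaternion1}, the $\epsilon$--$\delta$ identity, and the vanishing of $\tr(Q^{-1}\diff Q)$ coming from $\det Q \equiv 1$. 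The single point requiring care is that $\diff Q\, Q^{-1}$ is a product of two symmetric matrices and hence \emph{not} itself symmetric, so the order of the factors must be tracked throughout.
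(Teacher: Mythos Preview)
Your proposal is correct and follows essentially the same route as the paper: apply the $Q$-twisted quaternion relation \eqref{quaternion1}, kill the $Q_{jm}$ piece by symmetry of $\diff Q$, reduce the remaining double-$\epsilon$ via the $\epsilon$--$\delta$ identity, drop the trace term using $\det Q=1$, and then identify the result with $(Q^{-1}\tau)_i$ after a harmless relabelling. The only differences are cosmetic (your shorthand $P$ and choice of dummy indices).
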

\begin{proof}
We compute:
\begin{align*}
\epsilon_{ijk} E_j \tau_k 
	&=
	- \epsilon_{ijk} E_j E_r (\diff QQ^{-1})_{kr}\\
	&=
	 - \epsilon_{ijk} \left( - \epsilon_{jrp} Q^{-1}_{pq} E_q - Q_{jr}\right)
		(\diff QQ^{-1})_{kr}\\
	&=
	  \epsilon_{ijk} \epsilon_{jrp} Q^{-1}_{pq} E_q (\diff QQ^{-1})_{kr}
\end{align*}
since the term we have dropped comes from summing $\epsilon_{ijk}$ against $Q_{jr}(\diff Q Q^{-1})_{kr} = \diff Q_{kj}$ which is symmetric in $j,k$. Now
\begin{align*}
 \epsilon_{ijk} \epsilon_{jrp} Q^{-1}_{pq} E_q (\diff QQ^{-1})_{kr}
	&=
	- (\delta_{kp}\delta_{ir} - \delta_{kr}\delta_{pi}) 
		Q^{-1}_{pq} E_q (\diff Q Q^{-1})_{kr}\\
	&=
	- Q^{-1}_{kq} E_q (\diff QQ^{-1})_{ki}
\end{align*}
where the term we have dropped involves $\tr (\diff QQ^{-1})$ which vanishes since $\det Q =1$. Continuing,
\begin{align*}
- Q^{-1}_{kq} E_q (\diff QQ^{-1})_{ki}
	&=
	- E_q (Q^{-1}\diff Q Q^{-1})_{qi}\\
	&=
	- E_q(Q^{-1} \diff Q Q^{-1})_{iq}\\
	&=
	 (Q^{-1} \tau)_i \qedhere
\end{align*}
\end{proof}

\begin{remark}
This lemma is the 4-dimensional manifestation of the fact that the torsion 2-form in $X \times \mathbb{T}^3$ lies in one part of the decomposition of $\Lambda^2$ into $G_2$-irreducible summands.
\end{remark}

\begin{proof}[Proof of Proposition~\ref{LC-connection-matrix}]
Set $X_{ij} = \frac{1}{2}\epsilon_{ijk}Q^{kl}\tau_l$. Recall that we must show that 
\[
a = \frac{1}{2} \diff Q Q^{-1} + XQ^{-1}
\]
satisfies \eqref{LC1} and \eqref{LC2}. Equation \eqref{LC1} is immediate, since $X$ is skew-symmetric. To prove \eqref{LC2}, we compute:
\begin{equation}
(XQ^{-1})_{ij}
	=
		\frac{1}{2}\epsilon_{ipk}Q^{kl}\tau_l Q^{pj}
	=
		\frac{1}{2}\epsilon_{rjl}Q_{ir} \tau_l
\label{XQinv}
\end{equation}
(using $\det Q=1$) from which it follows that
\[
E_j(XQ^{-1})_{ij}
=
\frac{1}{2}\epsilon_{rjl} Q_{ir} E_j \tau_l
=
\frac{1}{2}\tau_i
\]
by Lemma~\ref{Etau}. Now
\[
E_ja_{ij} 
= \frac{1}{2} E_j (\diff QQ^{-1})_{ij} + \frac{1}{2}\tau_i
= -\frac{1}{2} \tau_i + \frac{1}{2} \tau_i 
=0
\]
as claimed.
\end{proof}

\subsection{Bounds on the derivatives of $\underline{\omega}$ and of torsion}

The aim of this subsection is to prove bounds on $\nabla \underline{\omega}$, $\nabla^2\underline{\omega}$, $\tau$, $\nabla \tau$ and $\boldsymbol{\nabla} \mathbf{T}$ purely in terms of $Q$ and its derivatives. The various inequalities and constants we obtain here are not meant to be sharp. The key point is that at each stage the control is purely in terms of $\tr Q$, $|\diff Q|_Q$ and $|\hat{\nabla}\diff Q|_Q$.

We begin with a lemma which will allow us to pass between various matrix norms. 

\begin{lemma}\label{matrix-norms}
Let $A$ be a $3\times3$ symmetric matrix of tensors. Then
\begin{enumerate}
\item
\[
\frac{3}{(\tr Q)^{2}}|A|_Q \leq |A| \leq \frac{\tr Q}{\sqrt{3}} |A|_Q
\]
\item
\[
|AQ^{-1}|^2 \leq \frac{1}{3}(\tr Q)^3 |A|_Q^2
\]
\end{enumerate}
\end{lemma}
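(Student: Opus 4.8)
The plan is to make everything pointwise and purely linear-algebraic. Fix a point of $X$; all the norms in sight are invariant under the simultaneous change of index basis $A\mapsto O^tAO$, $Q\mapsto O^tQO$ with $O\in\Orth(3)$ — for $|\cdot|_Q$ this is the conjugation-invariance of $\tr(Q^{-1}AQ^{-1}A)$, and $|AQ^{-1}|^2=\tr(AQ^{-2}A)$ transforms the same way. So I would diagonalise and assume $Q=\operatorname{diag}(\lambda_1,\lambda_2,\lambda_3)$ with $\lambda_i>0$ and, crucially, $\lambda_1\lambda_2\lambda_3=1$ by Lemma~\ref{sd-detQ}. Writing $|A_{ij}|$ for the $g_{\underline{\omega}}$-norm of the tensor entry, one then has
\[
|A|^2=\sum_{i,j}|A_{ij}|^2,\qquad |A|_Q^2=\sum_{i,j}\frac{|A_{ij}|^2}{\lambda_i\lambda_j},\qquad |AQ^{-1}|^2=\sum_{i,j}\frac{|A_{ij}|^2}{\lambda_j^2}.
\]
Each is a nonnegatively weighted sum of the same nonnegative numbers $|A_{ij}|^2$, so every inequality in the statement reduces to a termwise comparison of weights, i.e.\ to a scalar inequality among $\lambda_1,\lambda_2,\lambda_3$ subject only to the constraint $\prod_i\lambda_i=1$.

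The scalar inequalities are then pure AM--GM. Ordering $\lambda_{\max}\ge\lambda_{\mathrm{mid}}\ge\lambda_{\min}$, one has $\lambda_{\max}^{-2}\le(\lambda_i\lambda_j)^{-1}\le\lambda_{\min}^{-2}$, hence $\lambda_{\min}|A|_Q\le|A|\le\lambda_{\max}|A|_Q$. For part (1) it remains to control the extreme eigenvalues by $\tr Q$: from $\lambda_1\lambda_2\lambda_3=1$ and AM--GM,
\[
\lambda_{\min}=\frac{1}{\lambda_{\mathrm{mid}}\lambda_{\max}}\ge\frac{4}{(\lambda_{\mathrm{mid}}+\lambda_{\max})^2}\ge\frac{4}{(\tr Q)^2}\ge\frac{3}{(\tr Q)^2},
\]
which gives the left inequality; for the right inequality $\lambda_{\max}$ is bounded by $\tr Q$ in the same elementary fashion (using $\lambda_{\max}\le\tr Q$ together with $\tr Q\ge 3\sqrt[3]{\det Q}=3$ to recover the stated constant). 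For part (2) the decisive weight ratio is $\lambda_j^{-2}/(\lambda_i\lambda_j)^{-1}=\lambda_i/\lambda_j$, whose maximum over $i,j$ is $\lambda_{\max}/\lambda_{\min}=\lambda_{\max}^2\lambda_{\mathrm{mid}}$ (again using $\det Q=1$); and $\lambda_{\max}^2\lambda_{\mathrm{mid}}\le\tfrac13(\lambda_{\max}+\lambda_{\mathrm{mid}})^3\le\tfrac13(\tr Q)^3$, since $(a+b)^3\ge 3a^2b$ for $a,b\ge0$. This yields $|AQ^{-1}|^2\le\tfrac13(\tr Q)^3|A|_Q^2$.

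There is no analytic content and no serious obstacle here: after diagonalisation the whole statement collapses to a handful of elementary inequalities among three positive reals with product $1$. The only thing needing a little care is the bookkeeping — for each displayed bound, correctly identifying the monomial in $\lambda_1,\lambda_2,\lambda_3$ that realises the worst weight ratio, and selecting the AM--GM estimate (always feeding in $\det Q=1$) that controls it by $\tr Q$. I would also pin down at the outset the precise meaning of the subscript-free norm $|\cdot|$ — namely the one obtained by contracting the matrix indices with the standard inner product on $\R^3$, equivalently evaluating $\langle\cdot,\cdot\rangle_Q$ at $Q=\Id$ — so that the two norms coincide when $Q=\Id$ and the inequalities above reduce there to the trivially true $\tfrac13|A|\le|A|\le\sqrt3\,|A|$.
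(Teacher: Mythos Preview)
Your strategy is the paper's: diagonalise $Q$ pointwise and reduce each inequality to a termwise comparison of the weights $\lambda_i^{-1}\lambda_j^{-1}$, then control the extremal weight by an elementary inequality in the $\lambda_i$ using $\det Q=1$. Your treatment of the left inequality in (1) and of (2) is correct and in fact a little cleaner than the paper's: the paper proves the left inequality by bounding $|A|_Q^2$ above by $(\lambda_1\lambda_2+\lambda_2\lambda_3+\lambda_3\lambda_1)(\lambda_1^{-1}+\lambda_2^{-1}+\lambda_3^{-1})|A|^2\le\tfrac19(\tr Q)^4|A|^2$, and proves (2) by separating diagonal and off-diagonal entries; your direct bounds $\lambda_{\min}\ge 4/(\tr Q)^2$ and $\lambda_{\max}/\lambda_{\min}=\lambda_{\max}^2\lambda_{\mathrm{mid}}\le\tfrac13(\tr Q)^3$ get there faster.

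The one place your argument is incomplete is the right inequality in (1). You obtain $|A|\le\lambda_{\max}|A|_Q\le(\tr Q)|A|_Q$ and then assert that ``$\lambda_{\max}\le\tr Q$ together with $\tr Q\ge3$'' recovers the constant $1/\sqrt3$; it does not, and no termwise argument can, because the inequality $|A|\le\tfrac{\tr Q}{\sqrt3}|A|_Q$ is actually \emph{false} as stated. Take $Q=\operatorname{diag}(\epsilon,\epsilon,\epsilon^{-2})$ with small $\epsilon>0$ and $A=\operatorname{diag}(0,0,1)$: then $|A|=1$, $|A|_Q=\epsilon^2$, and $\tfrac{\tr Q}{\sqrt3}|A|_Q\to\tfrac{1}{\sqrt3}<1$. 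The paper's own one-line justification (``since $(\tr Q)^2\ge3(\lambda_1\lambda_2+\lambda_2\lambda_3+\lambda_3\lambda_1)$, it follows that $|A|_Q^2\ge3(\tr Q)^{-2}|A|^2$'') suffers the same defect --- that hypothesis does not control the diagonal weight $\lambda_3^{-2}$. The honest bound your method delivers is $|A|\le(\tr Q)\,|A|_Q$, which is correct and entirely sufficient for the paper's downstream applications, where the lemma is only ever used to produce some polynomial-in-$\tr Q$ control. So: do not try to squeeze out the stated $1/\sqrt3$; state and prove the weaker inequality instead.
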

\begin{proof}
These are pointwise estimates and so we can assume that $Q_{ij} = \lambda_i \delta_{ij}$ is diagonal at the point of interest, with $\lambda_1 \leq \lambda_2 \leq \lambda_3$. We have
\[
|A|^2_Q = \sum_{i,j} \lambda_i^{-1}\lambda_j^{-1} |A_{ij}|^2
\]
Since $(\tr Q)^2 \geq 3(\lambda_1\lambda_2 +\lambda_2\lambda_3 + \lambda_3 \lambda_1)$, it follows that $|A|_Q^2 \geq 3 (\tr Q)^{-2}|A|^2$. Meanwhile,
\begin{align*}
|A|_Q^2
	&=
		\sum_i \frac{1}{\lambda_i^2} |A_{ii}|^2+ \sum_{j\neq k} \frac{1}{\lambda_j\lambda_k} |A_{jk}|^2\\
	&=
		\left( \frac{\lambda_2\lambda_3}{\lambda_1} |A_{11}|^2
		+ \frac{\lambda_3\lambda_1}{\lambda_2}|A_{22}|^2
		+ \frac{\lambda_1\lambda_2}{\lambda_3}|A_{33}|^2
		\right)
		+ 2\lambda_1|A_{23}|^2+ 2\lambda_2|A_{31}|^2+ 2\lambda_3|A_{12}|^2\\
	&\leq
		(\lambda_2\lambda_3+\lambda_3\lambda_1 + \lambda_1\lambda_2)
		\left( \frac{1}{\lambda_1} + \frac{1}{\lambda_2} + \frac{1}{\lambda_3} \right)\sum_{i,j}|A_{ij}|^2\\
	&\leq 
		\frac{1}{9} (\tr Q)^4 |A|^2
\end{align*}
This proves the first pair of inequalities. To prove part 2, we note that 
\begin{align*}
|AQ^{-1}|^2 
	& = 
		\sum_i \frac{1}{\lambda_i^2}|A_{ii}|^2 
		+ \sum_{j<k} \left( \frac{\lambda_j}{\lambda_k}+ \frac{\lambda_k}{\lambda_j} \right)
		\frac{1}{\lambda_j\lambda_k}|A_{jk}|^2\\
	&\leq  
		\sum_i \frac{1}{\lambda_i^2}|A_{ii}|^2 
		+ \tr Q
		\sum_{j<k} \lambda_j\lambda_k 
		\sum_{j<k}\frac{1}{\lambda_j\lambda_k}|A_{jk}|^2\\
	&\leq  
		\sum_i \frac{1}{\lambda_i^2}|A_{ii}|^2  
		+\frac{1}{3}(\tr Q)^3\sum_{j\neq k}\frac{1}{\lambda_j\lambda_k}|A_{jk}|^2\\
	&\leq 
		\frac{1}{3}(\tr Q)^3 |A|_Q^2
\qedhere
\end{align*}
\end{proof}

\begin{lemma}\label{torsion-bounds}
We have the following bounds:
\begin{enumerate}
\item
$|\underline{\tau}|^2 \leq 2 \tr Q |\mathbf{T}|^2$.
\item
$|\mathbf{T}|^2 \leq \frac{3}{2}|\diff Q|^2_Q$.
\end{enumerate}
\end{lemma}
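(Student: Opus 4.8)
The plan is to prove both inequalities by pointwise linear algebra, diagonalising $Q$ at the point of interest and using the explicit relation \eqref{T-to-tau} between the torsion $\mathbf{T}$ and the triple $\underline{\tau}$, together with the formula for the $G_2$-metric $g_\phi = g_{\underline{\omega}} \oplus Q_{ij}\,\diff t^i\diff t^j$. Fix a point $p \in X$, choose $g_{\underline\omega}$-normal coordinates on $X$, and pick the $\mathbb{T}^3$ coordinates so that $Q_{ij} = \lambda_i\delta_{ij}$ at $p$ with $\lambda_1,\lambda_2,\lambda_3 > 0$ and $\lambda_1\lambda_2\lambda_3 = 1$ (Lemma~\ref{sd-detQ}). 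The $7$-dimensional norm of a $2$-form is computed using $\boldsymbol g^{ij} = Q^{ij} = \lambda_i^{-1}\delta_{ij}$ in the torus directions and $g^{\alpha\beta}$ in the $X$ directions.

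For part (1): from \eqref{T-to-tau} we have $\mathbf{T} = -\tfrac12\sum_i \diff t^i \wedge \tau_i$, where each $\tau_i$ is a $1$-form on $X$. Since $\diff t^i$ and $\tau_j$ live in orthogonal blocks, the $2$-forms $\diff t^i \wedge \tau_i$ are mutually orthogonal in $g_\phi$ up to the diagonal cross terms, and one computes
\[
|\mathbf{T}|^2 = \frac14 \sum_{i} |\diff t^i \wedge \tau_i|^2_{g_\phi} = \frac14 \sum_i \lambda_i^{-1} |\tau_i|^2,
\]
using $|\diff t^i|^2_{g_\phi} = Q^{ii} = \lambda_i^{-1}$ and that $\tau_i$ is purely horizontal. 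On the other hand $|\underline{\tau}|^2 = \sum_i |\tau_i|^2$, computed with $g_{\underline\omega}$. Comparing, $|\underline{\tau}|^2 = \sum_i |\tau_i|^2 = \sum_i \lambda_i \cdot \lambda_i^{-1}|\tau_i|^2 \leq (\max_i \lambda_i) \sum_i \lambda_i^{-1}|\tau_i|^2 \leq \tr Q \cdot 4|\mathbf{T}|^2 / 1$; being slightly more careful, $\max_i\lambda_i \le \tr Q$, which gives $|\underline\tau|^2 \le \tr Q \cdot 4 |\mathbf T|^2$. To land the stated constant $2\tr Q$ rather than $4\tr Q$ I expect one needs to also use orthogonality more carefully, or note the factor $\tfrac12$ already in $\mathbf T$; I would recheck whether the cross-terms $\diff t^i\wedge\tau_i$ against $\diff t^j\wedge\tau_j$ contribute, and whether the normalisation of $|\cdot|$ on $2$-forms carries a combinatorial $\tfrac12$, which would supply the missing factor.

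For part (2): run the same computation in the other direction. We have $|\mathbf{T}|^2 = \tfrac14\sum_i \lambda_i^{-1}|\tau_i|^2$ and must bound this by $\tfrac32 |\diff Q|^2_Q$. Here I would substitute the definition \eqref{tau-definition}, $\tau_i = -E_k(\diff Q\, Q^{-1})_{ik}$, and use that $E_k$ is skew-adjoint with $E_k^2 = -Q_{kk}$ (so $|E_k\alpha|^2 = Q_{kk}|\alpha|^2 = \lambda_k|\alpha|^2$ at $p$), giving $|\tau_i|^2 \le \big(\sum_k |E_k(\diff Q\,Q^{-1})_{ik}|\big)^2 \le 3\sum_k \lambda_k |(\diff Q\,Q^{-1})_{ik}|^2$ by Cauchy–Schwarz. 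Then $\sum_i \lambda_i^{-1}|\tau_i|^2 \le 3\sum_{i,k}\lambda_i^{-1}\lambda_k |(\diff Q)_{ik}|^2\lambda_k^{-2} = 3\sum_{i,k}\lambda_i^{-1}\lambda_k^{-1}|(\diff Q)_{ik}|^2 = 3|\diff Q|^2_Q$, so $|\mathbf T|^2 \le \tfrac34 |\diff Q|^2_Q$; to reach the stated $\tfrac32$ one has slack to spare, so the inequality will certainly follow, perhaps with a cruder estimate on the off-diagonal structure of $E_k$.

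The main obstacle I anticipate is purely bookkeeping: getting the constants and the precise action of the $Q$-twisted operators $E_k$ on covectors right when $Q$ is not the identity, in particular tracking how $E_k$ mixes the eigenspaces of $Q$ and making sure the Cauchy–Schwarz steps are applied against the correct weights $\lambda_k$. There is no conceptual difficulty — both bounds are elementary consequences of \eqref{T-to-tau}, \eqref{tau-definition}, the relation $E_k^2 = -Q_{kk}$, and the block form of $g_\phi$ — but a careful diagonalisation argument is needed to avoid losing or gaining spurious factors of $\tr Q$ or $3$.
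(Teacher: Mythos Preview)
Your approach is exactly that of the paper: compute $|\mathbf{T}|^2$ from \eqref{T-to-tau} as a $Q^{-1}$-weighted sum of the $|\tau_i|^2$, then for part~(1) bound below by $(\tr Q)^{-1}$ times the unweighted sum, and for part~(2) diagonalise $Q$, use $|E_k\alpha|^2 = \lambda_k|\alpha|^2$ and Cauchy--Schwarz on the three-term sum defining $\tau_i$. The only discrepancy is the normalisation of the $2$-form norm: the paper uses the tensor convention (so that $|\diff t^i \wedge \tau_i|^2 = 2\,|\diff t^i|^2|\tau_i|^2$ for orthogonal factors), giving
\[
|\mathbf{T}|^2 = \tfrac{1}{2}\,Q^{ij}\langle \tau_i,\tau_j\rangle
\]
rather than your $\tfrac14$; this is precisely the ``combinatorial $\tfrac12$'' you anticipated, and with it both constants come out on the nose.
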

\begin{proof}
From \eqref{T-to-tau}, we have 
\[
|\mathbf{T}|^2 
	= 
		\frac{1}{2} Q^{ij} \left\langle \tau_i, \tau_j \right\rangle
	\geq 
		\frac{1}{2\tr Q} |\underline{\tau}|^2
\]
which proves the first inequality. For the second, the calculation is pointwise, so we assume that $Q_{ij}= \lambda_j\delta_{ij}$ with $\lambda_1\leq \lambda_2 \leq \lambda_3$. We compute, using $E_j^*E_j = - E_j^2 =  \lambda_j$, 
\[
|\tau_i|^2
	\leq
		3\sum_{j} |E_j(\diff QQ^{-1})_{ij}|^2
	= 
		3\sum_{j} \lambda_j |(\diff QQ^{-1})_{ij}|^2
	=
		3\sum_{j} \lambda^{-1}_j |\diff Q_{ij}|^2
\]
Now \eqref{T-to-tau} gives
\[
|\mathbf{T}|^2
	=
		\frac{1}{2} \sum_{i} \lambda_i^{-1} |\tau_i|^2
	\leq
		\frac{3}{2}\sum_{i,j} \lambda_i^{-1}\lambda_j^{-1} |\diff Q_{ij}|^2
	=
		\frac{3}{2} |\diff Q|^2_Q
\qedhere\]
\end{proof}

\begin{lemma}\label{nabla-omega-bound}
For any  hypersymplectic structure $\underline{\omega}$, 
\[
|\nabla \underline{\omega}|
	\leq 
		11 (\tr Q)^2 |\diff Q|_Q
\]
\end{lemma}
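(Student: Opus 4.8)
The plan is to start from the defining identity $\nabla\omega_i = a_{ij}\otimes\omega_j$, substitute the formula $a = \tfrac12\diff QQ^{-1} + XQ^{-1}$ of Proposition~\ref{LC-connection-matrix}, and estimate the outcome, using Lemma~\ref{torsion-bounds} to replace the torsion contribution by $\diff Q$. First I would note that, since the $\omega_i$ are self-dual for $g_{\underline\omega}$ with $\omega_i\wedge\omega_j = 2Q_{ij}\mu$ and $\mu = \dvol_{g_{\underline\omega}}$, their inner products are $\langle\omega_j,\omega_k\rangle = 2Q_{jk}$. Hence
\[
|\nabla\underline\omega|^2 = \sum_i|\nabla\omega_i|^2 = \sum_{i,j,k}\langle a_{ij},a_{ik}\rangle\langle\omega_j,\omega_k\rangle = 2\sum_{i,j,k}\langle a_{ij},a_{ik}\rangle\,Q_{jk},
\]
so everything reduces to estimating this quadratic expression in $a$ by $\tr Q$ and $|\diff Q|_Q$.

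To do so I would work at a fixed point and diagonalise $Q = \mathrm{diag}(\lambda_1,\lambda_2,\lambda_3)$; using $\det Q = 1$ and \eqref{XQinv}, this gives $a_{ij} = \tfrac12\lambda_j^{-1}\diff Q_{ij} + \tfrac12\epsilon_{ijk}\lambda_i\tau_k$ and $|\nabla\underline\omega|^2 = 2\sum_{i,j}\lambda_j|a_{ij}|^2$. Splitting each $|a_{ij}|^2$ into a $\diff Q$-part and a $\tau$-part via $(x+y)^2\le 2x^2+2y^2$, the $\diff Q$-part of $\sum_{i,j}\lambda_j|a_{ij}|^2$ is $\le\tfrac12\sum_{i,j}\lambda_j^{-1}|\diff Q_{ij}|^2\le\tfrac12\tr Q\,|\diff Q|_Q^2$ (using $\lambda_j\le\tr Q$ and the definition of the $Q$-norm). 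For the $\tau$-part, $\tfrac12\sum_{i\neq j}\lambda_j\lambda_i^2|\tau_{k(i,j)}|^2$, I would combine the two ordered pairs sharing a common third index $c$: their sum equals $\lambda_i\lambda_j(\lambda_i+\lambda_j)|\tau_c|^2 = \lambda_c^{-1}(\lambda_i+\lambda_j)|\tau_c|^2 \le \tr Q\cdot\lambda_c^{-1}|\tau_c|^2$, where $\lambda_i\lambda_j = \lambda_c^{-1}$ since $\det Q = 1$. Summing over $c$ and using the identity $|\mathbf{T}|^2 = \tfrac12 Q^{ij}\langle\tau_i,\tau_j\rangle$ from the proof of Lemma~\ref{torsion-bounds}, the $\tau$-part is $\le\tr Q\,|\mathbf{T}|^2\le\tfrac32\tr Q\,|\diff Q|_Q^2$ by Lemma~\ref{torsion-bounds}(2). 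Altogether $|\nabla\underline\omega|^2\le 4\tr Q\,|\diff Q|_Q^2$, i.e.\ $|\nabla\underline\omega|\le 2\sqrt{\tr Q}\,|\diff Q|_Q$; since $\tr Q\ge 3(\det Q)^{1/3} = 3$ by the AM--GM inequality, this is comfortably inside the stated bound $11(\tr Q)^2|\diff Q|_Q$.

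I do not expect a genuine obstacle here --- the proof is essentially bookkeeping --- but the one step that is not purely mechanical is that the term $XQ^{-1}$ cannot be bounded directly against $\diff Q$: its magnitude is governed by $\underline\tau$, which is assembled from $\diff Q$ through the operators $E_k$ (of operator norm $\sqrt{Q_{kk}}$), so the estimate must be routed through $|\mathbf{T}|$ via $\tau_i = -E_k(\diff QQ^{-1})_{ik}$, the relation $\mathbf{T} = -\tfrac12\diff t^i\wedge\tau_i$, and Lemma~\ref{torsion-bounds}. A coordinate-free variant is also available: bound $|a|\le\tfrac12|\diff QQ^{-1}| + |XQ^{-1}|$ and apply Lemma~\ref{matrix-norms}(2) to each summand, noting that a short computation (diagonalising $Q$) gives $|X|_Q^2 = |\mathbf{T}|^2$; this route delivers the power $(\tr Q)^2$ directly, again with a constant far below $11$, in keeping with the paper's warning that these inequalities are not meant to be sharp.
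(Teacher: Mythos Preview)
Your argument is correct and in fact yields the sharper estimate $|\nabla\underline\omega|\le 2\sqrt{\tr Q}\,|\diff Q|_Q$, well inside the claimed bound. The overall strategy coincides with the paper's: write $|\nabla\underline\omega|^2 = 2\sum Q_{jk}\langle a_{ij},a_{ik}\rangle$, split $a$ via Proposition~\ref{LC-connection-matrix} into its $\diff Q$-part and its $\tau$-part, and convert the latter to $\diff Q$ through Lemma~\ref{torsion-bounds}. The difference is in the bookkeeping. The paper avoids diagonalising $Q$: it bounds each entry $|a_{ij}|$ uniformly by $3(\tr Q)^{3/2}|\diff Q|_Q$ using the matrix-norm inequalities of Lemma~\ref{matrix-norms} and the crude estimate $|\underline\tau|^2\le 2\tr Q\,|\mathbf T|^2$, then sums, arriving at $|\nabla\underline\omega|^2\le 108(\tr Q)^4|\diff Q|_Q^2$. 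You instead diagonalise $Q$ at a point and keep track of the eigenvalue weights, which lets you exploit the identities $\lambda_i\lambda_j=\lambda_k^{-1}$ and $|\mathbf T|^2=\tfrac12\sum_c\lambda_c^{-1}|\tau_c|^2$; this cancellation is exactly what drops the exponent of $\tr Q$ from $2$ to $\tfrac12$. Your coordinate-free aside, that $|X|_Q^2=|\mathbf T|^2$, is also correct and makes the same point more invariantly. One tiny slip: in the $\diff Q$-part you write ``using $\lambda_j\le\tr Q$'' but the inequality actually uses $\lambda_i\le\tr Q$ (you are inserting a factor $\lambda_i\cdot\lambda_i^{-1}$). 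This does not affect the argument.
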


\begin{proof}
From \eqref{formula-for-a} we have
\begin{align*}
|a_{ij}|
	&\leq
		\frac{1}{2} \left|(\diff QQ^{-1})_{ij}\right|
		+
		\left|(XQ^{-1})_{ij}\right|\\
	&\leq
		\frac{1}{2\sqrt{3}}(\tr Q)^{3/2}|\diff Q|_Q
		+
		\frac{1}{2} \left| \epsilon_{pjq}Q_{ip}\tau_q\right|
\end{align*}
by Lemma~\ref{matrix-norms} for the first term, and \eqref{XQinv} for the second. For fixed $i,j$, $\epsilon_{pjq} Q_{ip} \tau_q$ is a sum of two terms of the form $Q_{ip}\tau_q$. Since $|Q_{ip}|^2 \leq \tr Q^2 \leq (\tr Q)^2$, we have $\left|\epsilon_{pjq} Q_{ip} \tau_q\right|\leq 2 \tr Q |\underline{\tau}|$. It follows that
\begin{equation}
\label{norm-a}
|a_{ij}| 
	\leq 
		\frac{1}{2\sqrt{3}} (\tr Q)^{3/2}\left| \diff Q \right|_Q + \tr Q |\underline{\tau}|
	\leq
		3(\tr Q)^{3/2} \left| \diff Q \right|_Q
\end{equation}
where we have used Lemma~\ref{torsion-bounds} which gives $|\underline{\tau}| \leq \sqrt{3} (\tr Q)^{1/2} |\diff Q|_Q$. Hence
\begin{align*}
|\nabla \underline{\omega}|^2
	& =
		\sum_{i,j,k} \left\langle a_{ij} \otimes \omega_j, a_{ik}
		\otimes \omega_k \right\rangle\\
	& =
		2\sum_{i,j,k} Q_{jk}\left\langle a_{ij},a_{ik} \right\rangle\\
	&\leq 
		 2 \tr Q \sum_{i,j} |a_{ij}|^2\\
	&\leq
		108 (\tr Q)^4 |\diff Q|^2_Q \qedhere
\end{align*}
\end{proof}

We next control $\nabla \underline{\tau}$. We begin with a lemma.

\begin{lemma}\label{nabla-E}
For all $\alpha \in \Lambda^1$, we have $(\nabla E_i)(\alpha) = a_{ij} \otimes E_j (\alpha)$.
\end{lemma}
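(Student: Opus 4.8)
The plan is to differentiate the defining relation $E_i(\alpha) = -*(\alpha \wedge \omega_i)$ directly, using the fact that the Levi-Civita connection is compatible with both the Hodge star and the wedge product. First I would note that for any 1-form $\alpha$ and any 2-form $\eta$ the expression $*(\alpha \wedge \eta)$ is built from $g_{\underline{\omega}}$ (through $*$), the wedge product, and $\alpha, \eta$; since $\nabla$ annihilates the metric and hence commutes with $*$, and since $\nabla$ is a derivation with respect to $\wedge$, we get the Leibniz rule $\nabla\big(*(\alpha\wedge\omega_i)\big) = *(\nabla\alpha \wedge \omega_i) + *(\alpha \wedge \nabla\omega_i)$.

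The key computation is then to interpret $(\nabla E_i)(\alpha)$ correctly as a tensor. By definition of the induced connection on the bundle of endomorphisms, $(\nabla_Y E_i)(\alpha) = \nabla_Y(E_i(\alpha)) - E_i(\nabla_Y \alpha)$ for a vector field $Y$. Applying the Leibniz rule above,
\begin{align*}
(\nabla_Y E_i)(\alpha)
&= -\nabla_Y\!\big(*(\alpha\wedge\omega_i)\big) + *\big((\nabla_Y\alpha)\wedge\omega_i\big)\\
&= -*\big((\nabla_Y\alpha)\wedge\omega_i\big) - *\big(\alpha \wedge (\nabla_Y\omega_i)\big) + *\big((\nabla_Y\alpha)\wedge\omega_i\big)\\
&= -*\big(\alpha\wedge(\nabla_Y\omega_i)\big).
\end{align*}
Now substitute the definition $\nabla\omega_i = a_{ij}\otimes\omega_j$, so that $\nabla_Y\omega_i = a_{ij}(Y)\,\omega_j$. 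Since $a_{ij}(Y)$ is a scalar, it pulls out of the wedge and the Hodge star, giving $(\nabla_Y E_i)(\alpha) = -a_{ij}(Y)\,*(\alpha\wedge\omega_j) = a_{ij}(Y)\, E_j(\alpha)$, which is exactly the claimed identity $(\nabla E_i)(\alpha) = a_{ij}\otimes E_j(\alpha)$.

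I don't expect a serious obstacle here; the only point requiring a little care is the bookkeeping of what ``$\nabla E_i$'' means — one must be careful that $E_i$ is a section of $\End(\Lambda^1) = \Lambda^1 \otimes \Lambda^1$ (or rather a $(1,1)$-tensor on 1-forms), and that the induced connection satisfies the product rule relating $\nabla(E_i\alpha)$, $(\nabla E_i)\alpha$ and $E_i(\nabla\alpha)$. Once that is set up, the proof is just the three-line Leibniz computation above together with the substitution of $\nabla\omega_i = a_{ij}\otimes\omega_j$. One should also remark that the identity is tensorial in $\alpha$, so it suffices to verify it on 1-forms, and that no assumption beyond $\underline{\omega}$ being hypersymplectic (so that $g_{\underline{\omega}}$ and hence $E_i$ are defined) is needed — in particular closedness of the $\omega_i$ is not used, only metric-compatibility of $\nabla$.
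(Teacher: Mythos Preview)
Your proof is correct and follows essentially the same approach as the paper: both differentiate the defining relation $E_i(\alpha) = -*(\alpha\wedge\omega_i)$ using that $\nabla$ commutes with $*$ and is a derivation on wedge products, then substitute $\nabla\omega_i = a_{ij}\otimes\omega_j$ and use the identity $(\nabla E_i)(\alpha) = \nabla(E_i\alpha) - E_i(\nabla\alpha)$. Your version is slightly more explicit (working pointwise with a vector field $Y$), but the argument is identical.
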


\begin{proof}
The Levi-Civita connection commutes with the Hodge star. From this we see
\[
\nabla (E_i (\alpha)) 
	= 
		- \nabla(* (\alpha \wedge \omega_i)) 
	= 
		- *((\nabla \alpha) \wedge \omega_i 
		+ \alpha \wedge \nabla \omega_i) 
	=
		E_i(\nabla \alpha) + a_{ij} \otimes E_j(\alpha)
\]
This gives the result in view of $\nabla E_i(\alpha) = \nabla(E_i(\alpha)) - E_i(\nabla \alpha)$.
\end{proof}

\begin{lemma}\label{nabla-tau-bound}
There is a constant $C$ such that for any hypersymplectic structure $\underline{\omega}$, 
\[
|\nabla \underline{\tau}|
	\leq
		C (\tr Q)^2 \left( |\hat{\nabla} \diff Q|_Q + (\tr Q)^{5} |\diff Q|^2_Q\right)
\]		
\end{lemma}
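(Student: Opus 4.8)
The plan is to compute $\nabla\underline{\tau}$ directly from the defining formula $\tau_i = -E_k(\diff Q Q^{-1})_{ik}$, differentiating each of the three factors $E_k$, $\diff Q$, and $Q^{-1}$ and bounding the resulting pieces in terms of the quantities already controlled, namely $\tr Q$, $|\diff Q|_Q$, and $|\hat\nabla\diff Q|_Q$. First I would apply the Leibniz rule: $\nabla\tau_i = -(\nabla E_k)(\diff QQ^{-1})_{ik} - E_k\bigl(\nabla(\diff QQ^{-1})_{ik}\bigr)$. For the first term I would invoke Lemma~\ref{nabla-E}, which says $(\nabla E_k)(\alpha) = a_{kj}\otimes E_j(\alpha)$, so this contributes a term whose norm is bounded by a constant times $|a|\cdot|\underline\tau|$; using the bound \eqref{norm-a} on $|a|$ and Lemma~\ref{torsion-bounds} to bound $|\underline\tau|$ by $|\diff Q|_Q$ (up to powers of $\tr Q$), this piece is absorbed into the $(\tr Q)^{5}|\diff Q|^2_Q$ term on the right-hand side. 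For the second term I would expand $\nabla(\diff QQ^{-1}) = (\nabla\diff Q)Q^{-1} - \diff QQ^{-1}(\nabla Q)Q^{-1}$ using $\nabla(Q^{-1}) = -Q^{-1}(\nabla Q)Q^{-1}$; the quadratic piece $\diff QQ^{-1}(\nabla Q)Q^{-1}$ again gives a term of order $|\diff Q|_Q^2$ (times powers of $\tr Q$), while the linear piece $(\nabla\diff Q)Q^{-1}$ is the one producing the genuine second-derivative term.

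The key technical point is converting $|\nabla\diff Q|$ (the ordinary, component-wise Hessian) into $|\hat\nabla\diff Q|_Q$ (the intrinsic Hessian of the map $Q\colon X\to\mathcal P$). I would use the identity \eqref{Hessians-relation} relating the two, namely $(\hat\nabla\diff Q)_{ij} = \nabla(\diff Q_{ij}) - \tfrac12 Q^{pq}(\diff Q_{ip}\otimes\diff Q_{qj} + \diff Q_{qj}\otimes\diff Q_{ip})$, which rearranges to express $\nabla\diff Q$ as $\hat\nabla\diff Q$ plus a term quadratic in $\diff Q$. Combined with Lemma~\ref{matrix-norms} to pass between the $Q$-norm and the flat norm on matrices (at the cost of powers of $\tr Q$), this gives $|\nabla\diff Q| \leq C(\tr Q)^a\bigl(|\hat\nabla\diff Q|_Q + |\diff Q|_Q^2\bigr)$ for suitable $a$. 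Then $E_k$ applied to $(\nabla\diff Q)Q^{-1}$ contributes at most $\sqrt{\tr Q}$ (since $E_j^*E_j = -E_j^2 = \lambda_j \le \tr Q$) times $|(\nabla\diff Q)Q^{-1}|$, and part~2 of Lemma~\ref{matrix-norms} handles the $Q^{-1}$ factor. Collecting all the pieces and being generous with the powers of $\tr Q$ and the constant $C$ yields the stated bound $|\nabla\underline\tau| \leq C(\tr Q)^2\bigl(|\hat\nabla\diff Q|_Q + (\tr Q)^{5}|\diff Q|^2_Q\bigr)$.

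The main obstacle is purely bookkeeping: keeping track of exactly which powers of $\tr Q$ accumulate when one repeatedly switches between the intrinsic metric $\langle\cdot,\cdot\rangle_Q$ on symmetric matrices and the naive norm, and when one estimates $|E_j(\cdot)|$. Since the statement only claims the existence of a constant $C$ and a fixed power, I would not try to optimize; the strategy is to bound each of the (boundedly many) terms produced by the Leibniz expansion by $C(\tr Q)^{N}(|\hat\nabla\diff Q|_Q + |\diff Q|_Q^2)$ with $N$ large enough, then note that on a finite-time interval $\tr Q$ will be controlled, so only the structural form of the bound matters. One minor care point is that terms like $|a|\cdot|\underline\tau|$ and the quadratic Hessian corrections are genuinely of order $|\diff Q|_Q^2$, not $|\diff Q|_Q$, which is why the second (quartic-in-$\diff Q$ after squaring) term is unavoidable in the statement; this is consistent with the pattern in Lemma~\ref{nabla-omega-bound} and Lemma~\ref{HessianQ-bound}.
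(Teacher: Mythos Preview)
Your approach is correct and essentially identical to the paper's: both differentiate $\tau_i = -E_k(\diff QQ^{-1})_{ik}$ via the Leibniz rule, bound the $(\nabla E_k)$-term using Lemma~\ref{nabla-E} and the estimate \eqref{norm-a} on $|a|$, bound the $(\nabla\diff Q)Q^{-1}$-term by converting $\nabla\diff Q$ to $\hat\nabla\diff Q$ via \eqref{Hessians-relation}, and control all matrix norms through Lemma~\ref{matrix-norms}. One small imprecision: the $(\nabla E_k)$-term is not literally bounded by $|a|\cdot|\underline\tau|$ (the index contractions differ), but rather by $|a|\sqrt{\tr Q}\,|\diff QQ^{-1}|$, which leads to the same $(\tr Q)^N|\diff Q|_Q^2$ conclusion you state.
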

\begin{proof}
By definition of $\underline{\tau}$ we have
\[
\nabla \tau_i
	=
		-(\nabla E_j)(\diff QQ^{-1})_{ij} - E_j(\nabla \diff Q Q^{-1})_{ij} + E_j(\diff Q Q^{-1}\otimes \diff Q Q^{-1})_{ij}
\]
We now bound each term separately, beginning with the first. By the formula \ref{nabla-E}
\begin{align*}
\left|(\nabla E_j) (\diff Q Q^{-1})_{ij}\right|^2
	&\leq
		9\sum_{j,k}|a_{jk}|^2 |E_k(\diff QQ^{-1})_{ij}|^2\\
	&\leq
		81 (\tr Q)^4 |\diff Q|^2_Q \sum_{j,k} |(\diff Q Q^{-1})_{ij}|^2\\
	&
		\leq 81 (\tr Q)^7 |\diff Q|^4_Q
\end{align*}
where in the second line we have used \eqref{norm-a} to bound $|a_{ij}|^2$ and the fact that $|E_k(\alpha)|^2 \leq \tr Q |\alpha|^2$, and we have written the sum explicitly since repeated indices  have disappeared. Then in the final line we applied Lemma~\ref{matrix-norms}. 

For the second term,
\begin{align*}
\left| E_j (\nabla \diff Q Q^{-1})_{ij}\right|^2
	&\leq
		3\tr Q \sum_j |(\nabla \diff Q Q^{-1})_{ij}|^2\\
	&\leq
		3 (\tr Q)^4|\nabla \diff Q|^2_Q
\end{align*}
(by Lemma~\ref{matrix-norms}). We recall equation \eqref{Hessians-relation} that 
\[
\nabla \diff Q_{ij} 
	= 
		(\hat{\nabla} \diff Q)_{ij} 
		+ 
		\frac{1}{2}Q^{pq} \left(
		\diff Q_{ip} \otimes \diff Q_{qj}
		+
		\diff Q_{qj} \otimes \diff Q_{ip}
		\right)
\]
It follows that
\[
|\nabla \diff Q - \hat{\nabla} \diff Q|^2
	\leq
	 9 \sum_{p,q}(Q^{pq})^2 \sum_{i,j,k,l} |\diff Q_{ik} \otimes \diff Q_{lj}|^2
	\leq  (\tr Q)^4 |\diff Q|^4
\]
Converting the left-hand side with Lemma~\ref{matrix-norms} gives
\[
|\nabla \diff Q - \hat{\nabla} \diff Q|^2_Q
	\leq 
		\frac{1}{81}(\tr Q)^{10} |\diff Q|^4_Q
\]
and so
\[
|\nabla \diff Q|_Q 
	\leq
		|\hat{\nabla} \diff Q|_Q + \frac{1}{9}(\tr Q)^{5} |\diff Q|^2_Q
\]
This implies that the second term in $\nabla \tau_i$ is controlled by
\[
\left| E_j (\nabla \diff Q Q^{-1})_{ij}\right|
	\leq 
		\sqrt{3}(\tr Q)^2 | \hat{\nabla} \diff Q|_Q + \frac{\sqrt{3}}{9}(\tr Q)^{7} |\diff Q|^2_Q
\]

Finally we come to the third term:
\begin{align*}
|E_j(\diff Q Q^{-1} \otimes \diff Q Q^{-1})_{ij}|^2
	&\leq
		3 \tr Q \sum_j |(\diff Q Q^{-1} \otimes \diff Q Q^{-1})_{ij}|^2\\
	&\leq 
		3 \tr Q \sum_{j,p,q} |(\diff QQ^{-1})_{ip}|^2|(\diff Q Q^{-1})_{qj}|^2\\
	&\leq
		3 \tr Q |\diff Q Q^{-1}|^4\\
	&\leq
		\frac{1}{3}( \tr Q )^{7} |\diff Q|^4_Q
\end{align*}
Putting the pieces together gives the result.
\end{proof}

\begin{lemma}\label{nabla-T-bound}
There is a constant $C$ such that for any hypersymplectic structure $\underline{\omega}$, we have
\[
|\boldsymbol{\nabla} \mathbf{T}|
	\leq
		C (\tr Q)^{8} \left( |\hat{\nabla}\diff Q|_Q + |\diff Q|^2_Q \right)
\]
\end{lemma}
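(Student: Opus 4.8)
The strategy is to reduce everything to the already-established bound on $|\nabla\underline{\tau}|$ in Lemma \ref{nabla-tau-bound}, together with the algebraic relation \eqref{T-to-tau} between $\mathbf{T}$ and $\underline{\tau}$, and to control the error coming from the fact that $\boldsymbol{\nabla}$ (the Levi-Civita connection of $g_\phi$ on $X\times\mathbb{T}^3$) differs from $\nabla$ (the Levi-Civita connection of $g_{\underline{\omega}}$ on $X$). Since $\mathbf{T} = -\tfrac12\,\diff t^i\wedge\tau_i$ and the $\diff t^i$ are parallel for $\nabla$ but \emph{not} for $\boldsymbol{\nabla}$, the point is to write $\boldsymbol{\nabla}\mathbf{T}$ in terms of $\nabla\tau_i$, $\tau_i$ themselves, and the Christoffel symbols $\boldsymbol{\Gamma}$ computed earlier.

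First I would use the formulas for $\boldsymbol{\Gamma}$ from the lemma in \S\ref{identities}: the only nonzero components relevant here are $\boldsymbol{\Gamma}^\gamma_{ij} = -\tfrac12 g^{\gamma\alpha}\partial_\alpha Q_{ij}$ and $\boldsymbol{\Gamma}^k_{i\beta} = \tfrac12 Q^{kl}\partial_\beta Q_{il}$, both of which are pointwise controlled by $|\diff Q|$ (hence, via Lemma \ref{matrix-norms}, by a power of $\tr Q$ times $|\diff Q|_Q$). Covariantly differentiating $\mathbf{T} = -\tfrac12\,\diff t^i\wedge\tau_i$ with $\boldsymbol{\nabla}$, the $X$-direction derivatives $\boldsymbol{\nabla}_\beta$ produce a term $\diff t^i\wedge\nabla_\beta\tau_i$ plus terms in which a $\boldsymbol{\Gamma}$ hits either $\diff t^i$ or $\tau_i$, i.e.\ schematically $\boldsymbol{\Gamma}\ast\underline{\tau}$; the $\mathbb{T}^3$-direction derivatives $\boldsymbol{\nabla}_k$ produce only $\boldsymbol{\Gamma}\ast\underline{\tau}$ terms since $\tau_i$ and $\diff t^i$ have no $t$-dependence. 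So $|\boldsymbol{\nabla}\mathbf{T}|$ is bounded by a constant times $(\tr Q)^{a}\big(|\nabla\underline{\tau}| + |\diff Q|\,|\underline{\tau}| + |\underline{\tau}|\big)$ for some explicit exponent $a$ coming from converting the $g_\phi$-norm on $X\times\mathbb{T}^3$ (which involves $Q$ and $Q^{-1}$ on the $\mathbb{T}^3$ legs) into the $g_{\underline{\omega}}$-norm on $X$; here I would again invoke Lemma \ref{matrix-norms} and the bound $|Q^{-1}|\le$ (power of $\tr Q$) coming from $\det Q = 1$.

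Next I would feed in the already-proved estimates: $|\underline{\tau}|\le 2\sqrt{\tr Q}\,|\mathbf{T}|\le \sqrt{3}\,(\tr Q)^{1/2}|\diff Q|_Q$ from Lemmas \ref{torsion-bounds}, \ref{nabla-tau-bound}, and $|\nabla\underline{\tau}| \le C(\tr Q)^2\big(|\hat\nabla\diff Q|_Q + (\tr Q)^5|\diff Q|_Q^2\big)$ from Lemma \ref{nabla-tau-bound}. Collecting the powers of $\tr Q$ and using the crude bound $|\diff Q|_Q \le$ (a power of $\tr Q$ times $|\diff Q|_Q$), as well as absorbing any stray $|\diff Q|_Q$ against $|\diff Q|_Q^2$ (since we only need an inequality, not a sharp one, and $\tr Q \ge \sqrt{3}$ so positive powers dominate), everything combines into $C(\tr Q)^{8}\big(|\hat\nabla\diff Q|_Q + |\diff Q|_Q^2\big)$ as claimed. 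The exponent $8$ is not meant to be optimal — it is simply a safe bound large enough to absorb all the conversions.

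The only mildly delicate point — and the place where I would be most careful — is the bookkeeping of norm conversions between $(X\times\mathbb{T}^3, g_\phi)$ and $(X, g_{\underline{\omega}})$: a tensor on $X\times\mathbb{T}^3$ with some legs in the $\mathbb{T}^3$ directions picks up factors of $Q$ or $Q^{-1}$ when its $g_\phi$-norm is compared with the naive $g_{\underline{\omega}}$-norm of its components, and one must make sure no $Q^{-1}$ factor is left uncontrolled. This is handled exactly as in the proofs of Lemmas \ref{nabla-omega-bound} and \ref{nabla-tau-bound}, using $\det Q = 1$ to bound entries of $Q^{-1}$ by products of eigenvalues of $Q$, hence by powers of $\tr Q$. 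There is no real analytic difficulty; it is a matter of carrying the constants through, which is why the statement only asserts the existence of \emph{some} constant $C$ and a generous power of $\tr Q$.
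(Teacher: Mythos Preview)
Your approach is essentially identical to the paper's: write $\boldsymbol{\nabla}\mathbf{T}$ via $\boldsymbol{\nabla}\diff t^i$ and $\boldsymbol{\nabla}\tau_i$ using the Christoffel symbols, then feed in Lemmas~\ref{matrix-norms}, \ref{torsion-bounds}, and \ref{nabla-tau-bound}. One small wobble: you cannot ``absorb any stray $|\diff Q|_Q$ against $|\diff Q|_Q^2$'' since $|\diff Q|_Q$ may be small --- but in fact no such stray term appears, because every $\boldsymbol{\Gamma}\ast\underline{\tau}$ term carries a factor of $\diff Q$, so $|\underline{\tau}|$ always enters paired with another $|\diff Q|$ and gives $|\diff Q|_Q^2$ directly (your schematic bound should read $|\nabla\underline{\tau}| + |\diff Q|\,|\underline{\tau}|$, with no lone $|\underline{\tau}|$).
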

\begin{proof}
Sicne $\mathbf{T} = -\frac{1}{2} \diff t^i \wedge \tau_i$ and since 
\begin{align*}
\boldsymbol{\nabla}\diff t^i 
	&=
		-\frac{1}{2} \diff t^j \otimes (\diff QQ^{-1})_{ji} 
		-\frac{1}{2} (\diff QQ^{-1})_{ij} \otimes \diff t^j\\
\boldsymbol{\nabla} \tau_i
	&=
		\nabla \tau_i + \frac{1}{2} \tau_i(\nabla Q_{jk}) \diff t^j \otimes \diff t^k
\end{align*}
we have that
\begin{align*}
\boldsymbol{\nabla}\mathbf{T}
	&=
		-\frac{1}{2} \boldsymbol{\nabla}(\diff t^i \otimes \tau_i - \tau_i \otimes \diff t^i)\\
	&=
		-\frac{1}{2}(\diff t^i \otimes \nabla \tau_i - \nabla \tau_i \otimes \diff t^i)\\
	&\quad\quad\quad
		+\frac{1}{4}\left(  
			(\diff QQ^{-1})_{ij}\otimes \diff t^j \otimes \tau_i
			+ \diff t^j \otimes (\diff Q Q^{-1})_{ji} \otimes \tau_i
		\right)\\
	&\quad\quad\quad\quad\quad
		-\frac{1}{4}\left(
			\tau_i \otimes (\diff Q Q^{-1})_{ij} \otimes \diff t^j
			+ \tau_i \otimes \diff t^j \otimes (\diff Q Q^{-1})_{ji}
		\right)\\
	&\quad \quad \quad \quad\quad\quad\quad
		-\frac{1}{2} \tau_i(\nabla Q_{jk})\left( 
			\diff t^i \otimes \diff t^j \otimes \diff t^k 
			- \diff t^j \otimes \diff t^k \otimes \diff t^i
		 \right)
\end{align*}
From this we deduce that
\begin{align*}
|\boldsymbol{\nabla} \mathbf{T}|
	&\leq
		\sum_i |\diff t^i| |\nabla \tau_i|
		+\sum_{i,j}|\diff t^{i}| |\tau_j| |(\diff Q Q^{-1})_{ij}|
		+ \sum_{i,j,k} |\tau_i(\nabla Q_{jk})||\diff t^i||\diff t^j||\diff t^k| \\
	&\leq 
		\tr Q |\nabla \underline{\tau}|
		+ 3 \tr Q |\underline{\tau}| |\diff QQ^{-1}|
		+ 3 (\tr Q)^3 |\underline{\tau}||\diff Q|\\
	&\leq
		C (\tr Q)^{8} \left( 
		 	|\hat{\nabla}\diff Q|_Q + |\diff Q|^2_Q
		\right)
		+ 3 (\tr Q)^{3} |\diff Q|^2_Q
		+ 3 (\tr Q)^{9/2} |\diff Q|^2_Q
\end{align*}
where in the second line we have used $|\diff t^i|^2 = Q^{ii} \leq (\tr Q)^2$ and in the third line we have used Lemmas~\ref{matrix-norms},~\ref{torsion-bounds} and~\ref{nabla-tau-bound}.
\end{proof}

\begin{lemma}\label{nabla2-omega-bound}
There is a constant $C$ such that for any hypersymplectic structure $\underline{\omega}$, 
\[
|\nabla^2 \underline{\omega}|
	\leq
		C (\tr Q)^8 \left( |\hat{\nabla} \diff Q|_Q +|\diff Q|_Q^2 \right)
\]
\end{lemma}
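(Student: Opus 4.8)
The plan is to differentiate the relation $\nabla\omega_i = a_{ij}\otimes\omega_j$ once more, so that
\[
\nabla^2\omega_i = (\nabla a_{ij})\otimes\omega_j + a_{ij}\otimes a_{jk}\otimes\omega_k,
\]
and then estimate $a$ and $\nabla a$ purely in terms of $Q$ and its derivatives. Since $|\omega_j|^2 = 2Q_{jj}\le 2\tr Q$ and \eqref{norm-a} already gives $|a_{ij}|\le 3(\tr Q)^{3/2}|\diff Q|_Q$, the quadratic term $a_{ij}\otimes a_{jk}\otimes\omega_k$ is bounded by a multiple of $(\tr Q)^{7/2}|\diff Q|_Q^2$, which is absorbed into the right-hand side. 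So the whole problem reduces to bounding $|\nabla a|$.

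To do this I would differentiate the explicit formula \eqref{formula-for-a}, $a = \tfrac12\diff QQ^{-1} + XQ^{-1}$, using $\nabla Q^{-1} = -Q^{-1}(\nabla Q)Q^{-1}$. The contribution of $\tfrac12\diff QQ^{-1}$ splits into $\tfrac12(\nabla\diff Q)Q^{-1}$ and $-\tfrac12\diff QQ^{-1}(\nabla Q)Q^{-1}$. For the first I use part~2 of Lemma~\ref{matrix-norms} to bound it by a multiple of $(\tr Q)^{3/2}|\nabla\diff Q|_Q$, and then insert the estimate
\[
|\nabla\diff Q|_Q \le |\hat\nabla\diff Q|_Q + \tfrac19(\tr Q)^5|\diff Q|_Q^2
\]
obtained in the proof of Lemma~\ref{nabla-tau-bound} from \eqref{Hessians-relation}; the second is a product of three factors each controlled, up to powers of $\tr Q$, by $|\diff Q|_Q$ (via Lemma~\ref{matrix-norms}), so it contributes a fixed power of $\tr Q$ times $|\diff Q|_Q^2$.

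For the contribution of $XQ^{-1}$ I would use the identity \eqref{XQinv}, namely $(XQ^{-1})_{ij} = \tfrac12\epsilon_{rjl}Q_{ir}\tau_l$, which is valid on all of $X$ since $\det Q\equiv1$ and hence may be differentiated: $\nabla(XQ^{-1})_{ij} = \tfrac12\epsilon_{rjl}(\nabla Q_{ir})\tau_l + \tfrac12\epsilon_{rjl}Q_{ir}(\nabla\tau_l)$. The first summand is estimated using Lemma~\ref{matrix-norms} for $\nabla Q$ and Lemma~\ref{torsion-bounds} for $\underline\tau$, giving a fixed power of $\tr Q$ times $|\diff Q|_Q^2$; the second uses $|Q_{ir}|\le\tr Q$ together with the bound on $|\nabla\underline\tau|$ from Lemma~\ref{nabla-tau-bound}, giving a multiple of $(\tr Q)^3\big(|\hat\nabla\diff Q|_Q + (\tr Q)^5|\diff Q|_Q^2\big)$. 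Collecting all the terms and retaining the largest power of $\tr Q$ that appears — which turns out to be the eighth — yields the claimed inequality.

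The argument is entirely routine; the only thing requiring care is the bookkeeping of powers of $\tr Q$ through the repeated passage between the norms $|\cdot|$ and $|\cdot|_Q$ via Lemma~\ref{matrix-norms}, and the correct handling of the Hessian-difference term through \eqref{Hessians-relation}. There is no conceptual obstacle, and the constant $C$ is not meant to be sharp.
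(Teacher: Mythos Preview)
Your proposal is correct and follows essentially the same route as the paper: differentiate $\nabla\omega_i=a_{ij}\otimes\omega_j$, split $\nabla a$ into the $\tfrac12\diff QQ^{-1}$ and $XQ^{-1}$ contributions via \eqref{formula-for-a} and \eqref{XQinv}, and then invoke Lemmas~\ref{matrix-norms}, \ref{torsion-bounds}, \ref{nabla-tau-bound} together with the $\nabla\diff Q\to\hat\nabla\diff Q$ switch from \eqref{Hessians-relation}. The only difference is organizational---the paper squares first and bounds $|\nabla^2\omega_i|^2$ term by term, whereas you estimate $|\nabla a|$ directly---but the ingredients and the bookkeeping are the same.
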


\begin{proof}
We compute, using the expression \eqref{formula-for-a} for $a_{ij}$, 
\begin{align*}
|\nabla^2 \omega_i|^2 
	&= 
		|\nabla a_{ij} \otimes \omega_j 
		+ a_{ij} \otimes a_{jk} \otimes \omega_k|^2\\
	&=
		\left| 
			\frac{1}{2} \nabla (\diff QQ^{-1})_{ij} \otimes \omega_j
			+ \frac{1}{2} \epsilon_{rjl} \nabla (Q_{ir}\tau_l) \otimes \omega_j
			+ a_{ij} \otimes a_{jk}\otimes \omega_k
		\right|^2\\
	&\leq
		\frac{3}{4} \left|
			\nabla (\diff Q Q^{-1})_{ij} \otimes \omega_j
		\right|^2
		+ \frac{3}{4} \left|
			\epsilon_{rjl}\nabla(Q_{ir} \tau_l)\otimes \omega_j
		\right|^2
		+ 3|a_{ij} \otimes a_{jk}\otimes \omega_k|^2
\end{align*}
The first term here is bounded by 
\begin{align*}
&\leq 
	\frac{3}{2}\tr Q \sum_j \left|(
		\left( 
			\nabla \diff Q) Q^{-1} - \diff Q Q^{-1}\otimes \diff Q Q^{-1}
		 \right)_{ij}
	\right|^2\\
&\leq
	 3\tr Q \sum_j | (\nabla\diff Q Q^{-1})_{ij}|^2 
	+ 3 \tr Q \sum_{j}
	|\sum_p (\diff QQ^{-1})_{ip}\otimes (\diff QQ^{-1})_{pj}|^2\\
&\leq
	 3\tr Q \sum_j | (\nabla\diff Q Q^{-1})_{ij}|^2 
	+ 9 \tr Q \sum_{j,p}
	|(\diff QQ^{-1})_{pj}|^2\sum_q |(\diff QQ^{-1})_{ip}|^2\\
&\leq
	 (\tr Q)^4 |\nabla \diff Q|^2_Q
	+ (\tr Q)^7 |\diff Q|^4_Q
\end{align*}
The second term is bounded by
\begin{align*}
&\leq
	3 \tr Q \sum_{j,l,r}|\nabla(Q_{ir}\tau_l)|^2\\
&\leq
	6 \tr Q \sum_{j,l,r} \left(  
		|\diff Q_{ir}|^2|\tau_l|^2 + (\tr Q)^2 |\nabla \tau_l|^2
	\right)\\
&\leq
	18 \tr Q |\diff Q|^2 |\underline{\tau}|^2 + 54 (\tr Q)^3 |\nabla \underline{\tau}|^2
\end{align*}
Finally, the third term is
\[
\left(\sum_{j,k} |a_{ij}||a_{jk}||\omega_k|\right)^2
	\leq
		2 \tr Q \left( \sum_{j,k} |a_{ij}||a_{jk}| \right)^2
\]
To complete the proof it suffices to combine these inequalities with those of Lemmas~\ref{torsion-bounds},~\ref{nabla-tau-bound} and~\ref{matrix-norms}, the formula~\eqref{formula-for-a} for $a_{ij}$ and the same switch from $\nabla \diff Q $ to $\hat\nabla \diff Q$ as in the proof of Lemma~\ref{nabla-T-bound}.
\end{proof}

\section{Evolution equations}
\label{evolution-equations}

In this section, we derive the evolution equations satisfied by $\tr Q$ and $|\diff Q|_Q^2$, in order to apply the maximum principle. We begin with the evolution equations satisfied by $Q$ and~$g_{\underline{\omega}}$. Lotay--Wei have computed the evolution of $g_{\phi}$ under the general $G_2$-Laplcian flow for a closed $G_2$-structure $\phi(t)$. We state their result here:

\begin{proposition}[Lotay--Wei \cite{Lotay--Wei}, equation (3.6)]
When a closed $G_2$-structure $\phi(t)$ evolves according to the $G_2$-Laplacian flow, the Riemannian metric $\boldsymbol{g}(t) = g_{\phi(t)}$ satisfies
\begin{equation}\label{full-G2-metric-flow}
\del_t \boldsymbol{g}_{ab}
	=
		-2 \mathbf{Ric}_{ab} - \frac{2}{3} |\mathbf{T}|^2\boldsymbol{g}_{ab}
		- 4\mathbf{T}_a^{\phantom{a}c}\mathbf{T}_{cb}
\end{equation}
\end{proposition}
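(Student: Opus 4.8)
This is the evolution equation due to Bryant and Lotay--Wei, and the plan is to recover it by combining a general first--variation formula for the metric $g_\phi$ with Bryant's identification of $\Delta_\phi\phi$ inside the $G_2$--decomposition of $\Omega^3(X\times\mathbb{T}^3)$. Recall that $\boldsymbol g=g_\phi$ and $\dvol_{\boldsymbol g}$ are determined pointwise and algebraically from $\phi$ through $\boldsymbol g\otimes\dvol_{\boldsymbol g}=B_\phi$. Under the $G_2$ action there is an equivariant isomorphism $\mathbf{i}_\phi$ from symmetric $2$--tensors onto $\Omega^3_1\oplus\Omega^3_{27}$ (normalised so that $\mathbf{i}_\phi(\boldsymbol g)=3\phi$), and $\Omega^3=\mathbf{i}_\phi(S^2T^*M)\oplus\Omega^3_7$. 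Differentiating the relation $\boldsymbol g\otimes\dvol_{\boldsymbol g}=B_\phi$ along a path of closed $G_2$--structures with $\del_t\phi=\gamma$, and writing the $\mathbf{i}_\phi(S^2T^*M)$--component of $\gamma$ as $\mathbf{i}_\phi(h)$ with $h$ symmetric, one finds $\del_t\boldsymbol g=2h$; the remaining $\Omega^3_7$--component of $\gamma$ (which infinitesimally is of the form $\iota_X(*\phi)$) does not alter $\boldsymbol g$ to first order. So the whole computation reduces to locating $\Delta_\phi\phi$ inside $\Omega^3_1\oplus\Omega^3_7\oplus\Omega^3_{27}$.

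Since $\phi$ is closed, $\Delta_\phi\phi=\diff\diff^*\phi$, and, as in \S\ref{identities} (see \eqref{T-to-tau} and the remark after Lemma~\ref{Etau}), $\diff^*\phi=-2\mathbf T$ lies in the $14$--dimensional summand $\Omega^2_{14}$, with $\diff(*\phi)=\diff^*\phi\wedge\phi$. First, $\Delta_\phi\phi$ has no $\Omega^3_7$--component: from $\diff\diff(*\phi)=0$ and $\diff\phi=0$ one gets $\diff(\diff^*\phi)\wedge\phi=0$, and since $\alpha\mapsto\alpha\wedge\phi$ annihilates $\Omega^3_1\oplus\Omega^3_{27}$ and is injective on $\Omega^3_7$, the $\Omega^3_7$--part of $\diff\diff^*\phi$ vanishes. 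Hence $\Delta_\phi\phi=\mathbf{i}_\phi(h)$ for a symmetric tensor $h$, and it remains to compute $h$. This is done by applying the projections onto $\Omega^3_1$ and $\Omega^3_{27}$ --- contractions of $\diff\diff^*\phi$ against $\phi$ and $*\phi$ --- then rewriting the divergence and skew parts of $\boldsymbol\nabla\mathbf T$ using the $G_2$ ``first Bianchi'' identities, and finally substituting Bryant's formula that expresses $\mathbf{Ric}(g_\phi)$ as a combination of $\boldsymbol\nabla\mathbf T$ and $\mathbf T\otimes\mathbf T$. The upshot --- Bryant's computation, reproduced in \cite[\S3]{Lotay--Wei} --- is
\[
h_{ab}=-\mathbf{Ric}_{ab}-\tfrac13|\mathbf T|^2\boldsymbol g_{ab}-2\,\mathbf{T}_a^{\phantom{a}c}\mathbf{T}_{cb},
\]
and then $\del_t\boldsymbol g=2h$ together with $\del_t\phi=\Delta_\phi\phi$ gives \eqref{full-G2-metric-flow}. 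As a sanity check, contracting \eqref{full-G2-metric-flow} with $\boldsymbol g$ and using the skew--symmetry of $\mathbf T$ together with $\mathbf R=-|\mathbf T|^2$ yields $\del_t\dvol_{\boldsymbol g}=\tfrac23|\mathbf T|^2\dvol_{\boldsymbol g}$, consistent with the monotonicity of Hitchin's volume functional along the $G_2$--Laplacian flow.

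The one genuinely laborious step is the displayed identity for $h$. Its shape is dictated by $G_2$--representation theory: once one knows $\Delta_\phi\phi\in\Omega^3_1\oplus\Omega^3_{27}$, the only symmetric $2$--tensors of the correct differential weight that can enter $h$ are multiples of $|\mathbf T|^2\boldsymbol g$, of $\mathbf{Ric}(g_\phi)$, and of $\mathbf{T}_a^{\phantom{a}c}\mathbf{T}_{cb}$, so the content lies entirely in pinning down the three numerical coefficients. Doing so requires the somewhat intricate $G_2$ Weitzenb\"ock/Bianchi bookkeeping that turns $\diff\diff^*\phi$ into curvature plus quadratic torsion terms; this is exactly the calculation carried out in \cite{Bryant} and \cite{Lotay--Wei}, and in our setting nothing beyond quoting it is needed.
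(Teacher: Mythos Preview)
The paper does not prove this proposition at all; it simply quotes it as equation~(3.6) of Lotay--Wei \cite{Lotay--Wei} and moves on. Your proposal is a correct outline of the argument as it appears in Bryant \cite{Bryant} and Lotay--Wei: the first-variation formula $\del_t\boldsymbol g=2h$ for $\del_t\phi=\mathbf{i}_\phi(h)+(\Omega^3_7\text{-part})$, the vanishing of the $\Omega^3_7$-component of $\Delta_\phi\phi$ via $(\Delta_\phi\phi)\wedge\phi=0$, and then the identification of $h$ in terms of $\mathbf{Ric}$ and $\mathbf{T}$. You are right to flag the last step as the laborious one and to defer to the original sources for it; that is exactly what the paper itself does, only more tersely.
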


Given the decomposition of $\boldsymbol{g}$ and $\mathbf{Ric}$ (see Lemma~\ref{Ricci-splitting}) this leads quickly to the 4-dimensional evolution equations. To write the equations, we first recall that $\det Q=1$, a condition that is obviously preserved under the flow. Now the subspace $\mathcal{S} = \{ Q \in \mathcal{P}: \det Q=1\}$ is totally geodesic. This implies that $\hat{\Delta}Q \in T_Q \mathcal{S}$. Write $\pr_Q \colon T_Q \mathcal{P} \to T_Q\mathcal{S}$ for the orthogonal projection. Explicitly, identifying $T_Q\mathcal{P}$ with symmetric matrices, we have $T_Q\mathcal{S} = \{A : \tr (Q^{-1}A)=0\}$ and $\pr_Q(A) = A - \frac{1}{3} \tr(Q^{-1}A)Q$. With this in hand we can state the evolution equations. They follow from direct manipulation of \eqref{full-G2-metric-flow} and so we suppress the details.

\begin{corollary}
When $\underline{\omega}(t)$ satisfies the hypersymplectic flow, $Q$ and $g_{\underline{\omega}}$ evolve according~to
\begin{align*}
\del_t Q 
	&= \hat{\Delta}Q + \pr_Q \left\langle \tau,\tau \right\rangle\\
\del_t g
	&=
		-2\Ric + \frac{1}{2} \left\langle \diff Q \otimes \diff Q \right\rangle_Q
		+ \tr (Q^{-1}\tau \otimes \tau)
		- \frac{2}{3} |\mathbf{T}|^2 g
\end{align*}
Here, $\left\langle \tau, \tau \right\rangle$ is the symmetric matrix with $(i,j)$-element $\left\langle \tau_i, \tau_j \right\rangle$, whilst
\begin{align*}
\left\langle \diff Q \otimes \diff Q \right\rangle_Q(u,v)
	&=
		Q^{ij}\nabla_u Q_{jk} Q^{kl}\nabla_v Q_{li}\\
\tr (Q^{-1}\tau\otimes \tau)(u,v)
	&=
		Q^{ij}\tau_{i}(u)\tau_j(v)
\end{align*}
\end{corollary}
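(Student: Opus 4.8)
The plan is to take the Lotay--Wei equation \eqref{full-G2-metric-flow} for $\boldsymbol{g}(t)=g_{\phi(t)}$ and extract its $\mathbb{T}^3\times\mathbb{T}^3$ block and its $X\times X$ block separately, using the decompositions of $\boldsymbol{g}$, $\mathbf{Ric}$ and $\mathbf{T}$ already established in \S\ref{identities}. Recall $\boldsymbol{g}_{ij}=Q_{ij}$, $\boldsymbol{g}_{\alpha\beta}=g_{\alpha\beta}$, $\boldsymbol{g}_{i\alpha}=0$; that by \eqref{T-to-tau} the torsion $\mathbf{T}=-\tfrac12\diff t^i\wedge\tau_i$ has only mixed components, $\mathbf{T}_{i\alpha}=-\tfrac12(\tau_i)_\alpha$; and that Lemma~\ref{ricci-components} gives $\mathbf{R}_{ij}=-\tfrac12(\hat\Delta Q)_{ij}$, $\mathbf{R}_{i\alpha}=0$, and $\mathbf{R}_{\alpha\beta}=R_{\alpha\beta}-\tfrac14\langle\diff Q\otimes\diff Q\rangle_Q(\del_\alpha,\del_\beta)$.

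For the $\mathbb{T}^3$-block I would note $\del_t\boldsymbol{g}_{ij}=\del_t Q_{ij}$ and $-2\mathbf{R}_{ij}=(\hat\Delta Q)_{ij}$. In the quadratic torsion term $-4\mathbf{T}_i{}^{c}\mathbf{T}_{cj}$ the contracted index $c$ must be an $X$-index $\beta$, since $\mathbf{T}$ has no $\mathbb{T}^3\times\mathbb{T}^3$ component, so it is raised with $\boldsymbol{g}^{\alpha\beta}=g^{\alpha\beta}$; together with the antisymmetry of $\mathbf{T}$ this gives $-4\mathbf{T}_i{}^{\beta}\mathbf{T}_{\beta j}=\langle\tau_i,\tau_j\rangle$. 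Hence $\del_t Q_{ij}=(\hat\Delta Q)_{ij}+\langle\tau_i,\tau_j\rangle-\tfrac23|\mathbf{T}|^2 Q_{ij}$. Using Lemma~\ref{torsion-bounds} in the form $Q^{ij}\langle\tau_i,\tau_j\rangle=2|\mathbf{T}|^2$, one rewrites $\langle\tau,\tau\rangle-\tfrac23|\mathbf{T}|^2 Q=\langle\tau,\tau\rangle-\tfrac13\tr(Q^{-1}\langle\tau,\tau\rangle)Q=\pr_Q\langle\tau,\tau\rangle$, which is exactly the first equation (and, as a consistency check, its right-hand side is tangent to $\mathcal{S}$, matching $\det Q\equiv1$).

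For the $X$-block, $\del_t\boldsymbol{g}_{\alpha\beta}=\del_t g_{\alpha\beta}$ and $-2\mathbf{R}_{\alpha\beta}=-2R_{\alpha\beta}+\tfrac12\langle\diff Q\otimes\diff Q\rangle_Q$. This time in $-4\mathbf{T}_\alpha{}^{c}\mathbf{T}_{c\beta}$ the contracted index must be a $\mathbb{T}^3$-index, raised with $\boldsymbol{g}^{ij}=Q^{ij}$, giving $-4\mathbf{T}_\alpha{}^{j}\mathbf{T}_{j\beta}=\tr(Q^{-1}\tau\otimes\tau)(\del_\alpha,\del_\beta)$. Adding the three contributions reproduces the second equation. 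Finally the mixed block of \eqref{full-G2-metric-flow} reads $\del_t\boldsymbol{g}_{i\alpha}=0$ identically, because $\mathbf{R}_{i\alpha}=0$, $\boldsymbol{g}_{i\alpha}=0$, and $\mathbf{T}_i{}^{c}\mathbf{T}_{c\alpha}$ vanishes as one of the two factors is always a pure ($\mathbb{T}^3\times\mathbb{T}^3$ or $X\times X$) component of $\mathbf{T}$ — consistent with the product structure of $\boldsymbol{g}$ being preserved along the flow.

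The only delicate point is the bookkeeping in the quadratic torsion terms $\mathbf{T}_a{}^{c}\mathbf{T}_{cb}$: one must keep track of which block the contracted index lives in, raise it with the correct block of $\boldsymbol{g}^{-1}$ (so $g^{-1}$ in the $\mathbb{T}^3$-block equation and $Q^{-1}$ in the $X$-block equation), and carry the sign produced by the antisymmetry of $\mathbf{T}$. Everything else is a direct substitution, together with the single identity $Q^{ij}\langle\tau_i,\tau_j\rangle=2|\mathbf{T}|^2$ needed to recognise the projection $\pr_Q\langle\tau,\tau\rangle$.
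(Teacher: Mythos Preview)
Your proposal is correct and is precisely the ``direct manipulation of \eqref{full-G2-metric-flow}'' that the paper invokes but suppresses. You have carried out the block-by-block extraction carefully, handling the signs from the antisymmetry of $\mathbf{T}$ and correctly identifying $|\mathbf{T}|^2=\tfrac12 Q^{ij}\langle\tau_i,\tau_j\rangle$ (from the proof of Lemma~\ref{torsion-bounds}) as the identity needed to recognise $\pr_Q\langle\tau,\tau\rangle$.
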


\begin{remark}
If one ignores the terms involving torsion, we have precisely the flow studied in \cite{Muller}, in which the Ricci flow for $g$ and harmonic map flow for $Q$ are coupled.
\end{remark}

We now consider the heat operator acting on $\tr Q$.

\begin{proposition}\label{trQ-mp}
Under the hypersymplectic flow,
\[
(\del_t - \Delta) \tr Q \leq \frac{5}{3} |\mathbf{T}|^2 \tr Q
\]
It follows that if $|\mathbf{T}|$ is bounded for $t \in [0,s)$ with $s < \infty$ then $\tr Q$ is also bounded for $t\in[0,s)$. 
\end{proposition}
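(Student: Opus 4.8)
The plan is to apply the heat operator $\del_t - \Delta$ (with $\Delta = \Delta_{g_{\underline{\omega}}(t)}$ the time-dependent Laplacian on functions of $X$) to $\tr Q$, using the evolution equation $\del_t Q = \hat{\Delta}Q + \pr_Q\langle\tau,\tau\rangle$ from the Corollary above, and then to feed the resulting differential inequality into the parabolic maximum principle on the compact manifold $X$.

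Since the (matrix) trace is linear, $\del_t\tr Q = \tr(\del_t Q) = \tr(\hat{\Delta}Q) + \tr(\pr_Q\langle\tau,\tau\rangle)$. For the first term I would use the relation between the tension-field Laplacian of $Q$ and its component-wise Laplacian recorded in \S\ref{identities}, namely $(\hat{\Delta}Q)_{ij} = \Delta Q_{ij} - Q^{pq}\langle\diff Q_{ip},\diff Q_{qj}\rangle$; setting $i=j$ and summing gives $\tr(\hat{\Delta}Q) = \Delta\tr Q - \sum_{i,p,q}Q^{pq}\langle\diff Q_{ip},\diff Q_{qi}\rangle$, and the subtracted sum is non-negative (diagonalise $Q$ at a point: it becomes $\sum_{i,p}\lambda_p^{-1}|\diff Q_{ip}|^2\ge 0$). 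Hence $(\del_t-\Delta)\tr Q \le \tr(\pr_Q\langle\tau,\tau\rangle)$, i.e.\ we may simply discard that favourable gradient term. Next I would evaluate the projected torsion term explicitly: using $\pr_Q(A) = A - \tfrac13\tr(Q^{-1}A)\,Q$ with $A = \langle\tau,\tau\rangle$, together with $\tr(Q^{-1}\langle\tau,\tau\rangle) = Q^{ij}\langle\tau_i,\tau_j\rangle = 2|\mathbf{T}|^2$ (which follows from \eqref{T-to-tau}; cf.\ the proof of Lemma~\ref{torsion-bounds}), one finds $\tr(\pr_Q\langle\tau,\tau\rangle) = |\underline{\tau}|^2 - \tfrac23|\mathbf{T}|^2\tr Q$. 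Invoking Lemma~\ref{torsion-bounds}(1), $|\underline{\tau}|^2\le 2\tr Q\,|\mathbf{T}|^2$, then yields $(\del_t-\Delta)\tr Q \le \tfrac43|\mathbf{T}|^2\tr Q$, which is the asserted inequality (in fact with a slightly smaller constant than $\tfrac53$).

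For the stated consequence, suppose $|\mathbf{T}|^2\le C$ on $[0,s)$; then $(\del_t-\Delta)\tr Q \le \tfrac53 C\,\tr Q$ on compact $X$, so comparing $\max_X\tr Q(\cdot,t)$ with the solution of $\dot f = \tfrac53 C f$ via the scalar maximum principle gives $\tr Q(x,t)\le \big(\max_X\tr Q(\cdot,0)\big)e^{\frac53 C t}\le \big(\max_X\tr Q(\cdot,0)\big)e^{\frac53 C s}$ for all $t\in[0,s)$, so $\tr Q$ stays bounded. I do not expect a genuine obstacle here: the only points needing care are keeping straight the two different Laplacians of $Q$ (so as to see that their discrepancy has the favourable sign) and the clean evaluation of $\tr(\pr_Q\langle\tau,\tau\rangle)$ together with the translation $|\underline{\tau}|^2\leftrightarrow|\mathbf{T}|^2\tr Q$ supplied by Lemma~\ref{torsion-bounds}; the maximum-principle step is entirely standard on a compact manifold.
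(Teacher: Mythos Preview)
Your proposal is correct and follows essentially the same route as the paper: expand $\del_t Q$ via the evolution equation, convert $\hat{\Delta}Q$ to the componentwise $\Delta Q$ using the formula from \S\ref{identities} and drop the non-negative discrepancy $\sum Q^{pq}\langle\diff Q_{ip},\diff Q_{qi}\rangle$, evaluate $\tr\bigl(\pr_Q\langle\tau,\tau\rangle\bigr)$, and then apply Lemma~\ref{torsion-bounds}(1) followed by the scalar maximum principle. Your careful bookkeeping of $\tr(Q^{-1}\langle\tau,\tau\rangle)=2|\mathbf{T}|^2$ actually yields the sharper constant $\tfrac{4}{3}$ in place of the paper's $\tfrac{5}{3}$, which is perfectly consistent with the stated inequality.
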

\begin{proof}
We have
\[
\del_t Q = \hat{\Delta} Q + \pr_Q \left\langle \tau, \tau \right\rangle
=
\Delta Q - \left\langle \diff Q, Q^{-1} \diff Q \right\rangle + \pr_Q \left\langle \tau, \tau \right\rangle
\]
It follows that
\[
(\del_t - \Delta)\tr Q
=
- \tr \left\langle \diff Q, Q^{-1} \diff Q \right\rangle 
+
|\underline{\tau}|^2 - \frac{1}{3} |\mathbf{T}|^2 \tr Q
\]
The stated inequality now follows from $\tr \left\langle \diff Q, Q^{-1} \diff Q \right\rangle \geq 0$ and the bound on $|\underline{\tau}|^2$ in Lemma~\ref{torsion-bounds}. Finally, the fact that a bound on $|\mathbf{T}|$ implies a bound on $\tr Q$ now follows from the maximum principle.
\end{proof}

Next we control the heat operator acting on $|\diff Q|^2_Q$. 

\begin{proposition}\label{dQ-mp}
There is a constant $C$ such that when $\underline{\omega}(t)$ evolves according to the hypersymplectic flow, we have
\[
(\del_t -\Delta) |\diff Q|^2_Q
	\leq
		-|\hat{\nabla}\diff Q|^2_Q
		-\frac{1}{16} |\diff Q|^4_Q
		+ C (\tr Q)^{21} |\mathbf{T}|^2 |\diff Q|^2_Q
\]
It follows that if $|\mathbf{T}|$ is bounded for $t \in [0,s)$ with $s <\infty$ then $|\diff Q|_Q$ is also bounded for $t \in [0,s)$.
\end{proposition}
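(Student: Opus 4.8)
The plan is to derive a differential inequality of the form
\[
(\del_t - \Delta)\,|\diff Q|^2_Q \leq -|\hat\nabla\diff Q|^2_Q - c\,|\diff Q|^4_Q + C(\tr Q)^{N}|\mathbf{T}|^2 |\diff Q|^2_Q
\]
by using that $Q$ satisfies the forced harmonic map heat flow $\del_t Q = \hat\Delta Q + \pr_Q\langle\tau,\tau\rangle$ while the domain metric $g$ satisfies the forced Ricci flow recorded in the Corollary. First I would compute the Bochner-type formula for $(\del_t - \Delta)|\diff Q|^2_Q$ as one does in harmonic map heat flow, but keeping track of: (i) the usual good term $-2|\hat\nabla\diff Q|^2_Q$; (ii) the curvature terms, namely a term $-2\langle \Ric, |\diff Q|^2_Q\rangle$-type contribution from the domain together with a term involving the sectional curvature $\hat R^{\mathcal P}$ of the target $\mathcal P$ applied to $\diff Q$; and (iii) the extra terms coming from differentiating the forcing terms $\pr_Q\langle\tau,\tau\rangle$ in the $Q$-equation and the forcing terms $\tfrac12\langle\diff Q\otimes\diff Q\rangle_Q + \tr(Q^{-1}\tau\otimes\tau) - \tfrac23|\mathbf{T}|^2 g$ in the $g$-equation. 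The key structural point, exactly as in Müller's coupled flow, is that the $-2\langle\Ric,\cdot\rangle$ term produced by the Bochner formula is cancelled (up to controllable error) by the term arising from the $-2\Ric$ in $\del_t g$, so that the Ricci curvature of $g$ does \emph{not} appear in the final inequality — only $|\hat\nabla\diff Q|_Q$, $|\diff Q|_Q$, $\tr Q$ and $\mathbf{T}$ do.

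The second ingredient is the sign of the target curvature term. Since $\mathcal P$ (hence the totally geodesic $\mathcal S = \{\det Q = 1\}$) is nonpositively curved, the Bochner formula contributes a term $+\langle \hat R^{\mathcal P}(\diff Q, \diff Q)\diff Q, \diff Q\rangle$ with a \emph{favourable} sign; however this alone only gives $-|\hat\nabla\diff Q|^2_Q$ plus nonpositive curvature contributions and does not by itself produce the crucial $-\tfrac{1}{16}|\diff Q|^4_Q$ term. To get that genuinely negative quartic term I would compute the curvature of $\mathcal S$ more precisely: for the symmetric space $\mathcal S$ the sectional curvature is strictly negative away from flats, and one can extract a quantitative lower bound of the form $\langle \hat R(\diff Q,\diff Q)\diff Q,\diff Q\rangle \leq -c_0 (|\diff Q|^4_Q - |\diff Q \wedge \diff Q|^2_Q)$ or similar; combined with the good term $-|\hat\nabla\diff Q|^2_Q$ absorbing cross terms via Young's inequality (and using $\det Q = 1$ so that the trace parts vanish, keeping everything in $T\mathcal S$), one isolates a definite multiple of $-|\diff Q|^4_Q$, of which I would retain $-\tfrac{1}{16}$ after spending the rest on error absorption.

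The third ingredient is bounding the error terms involving $\mathbf{T}$. Every term that contains $\tau$ or $\mathbf{T}$ — namely those coming from $\pr_Q\langle\tau,\tau\rangle$, from $\tr(Q^{-1}\tau\otimes\tau)$, and from $-\tfrac23|\mathbf{T}|^2 g$, as well as the terms produced when $\hat\nabla$ hits these forcing terms, which bring in $\nabla\underline\tau$ and $\boldsymbol\nabla\mathbf{T}$ — must be estimated using Lemmas~\ref{torsion-bounds}, \ref{nabla-tau-bound} and \ref{nabla-T-bound}. Crucially, Lemma~\ref{nabla-tau-bound} bounds $|\nabla\underline\tau|$ by $|\hat\nabla\diff Q|_Q$ plus powers of $\tr Q$ times $|\diff Q|^2_Q$: the $|\hat\nabla\diff Q|_Q$ part is absorbed by the good Bochner term (which is why I keep only $-|\hat\nabla\diff Q|^2_Q$ rather than the full $-2|\hat\nabla\diff Q|^2_Q$), and all the $|\diff Q|^2_Q$-type parts must be bundled into the term $C(\tr Q)^{21}|\mathbf{T}|^2|\diff Q|^2_Q$ using $|\diff Q|^2_Q \lesssim \tr Q\,|\mathbf{T}|^2$ from Lemma~\ref{torsion-bounds}(2)–(1) to convert surplus powers of $|\diff Q|_Q$ into $|\mathbf{T}|^2$ at the cost of more powers of $\tr Q$ (this is the source of the large exponent $21$). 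Young's inequality will be used liberally: any term of the form (const)$|\hat\nabla\diff Q|_Q \cdot |\diff Q|^2_Q \cdot(\text{stuff})$ is split between a small multiple of $|\hat\nabla\diff Q|^2_Q$ and a multiple of $|\diff Q|^4_Q$-or-$|\mathbf{T}|^2|\diff Q|^2_Q$. The main obstacle, and the step requiring real care, is precisely this bookkeeping: ensuring that after the Bochner good terms are used both to cancel $\Ric$ and to absorb all the $\hat\nabla\diff Q$ errors, there still remains a strictly negative quartic term (so that one can afford the stated $-\tfrac{1}{16}$) and that every remaining torsion-dependent error is genuinely controlled by $|\mathbf{T}|^2|\diff Q|^2_Q$ with a finite power of $\tr Q$. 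Once the differential inequality is established, the final claim follows from Proposition~\ref{trQ-mp} (which bounds $\tr Q$ in finite time given bounded $|\mathbf{T}|$) and the maximum principle applied to $f = |\diff Q|^2_Q$: on $[0,s)$ the coefficient $C(\tr Q)^{21}|\mathbf{T}|^2$ is bounded, say by $K$, so $(\del_t - \Delta)f \leq Kf$, whence $f \leq e^{Ks}\sup_X f(0)$ is bounded on $[0,s)$.
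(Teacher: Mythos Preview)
Your overall architecture (Bochner formula for a forced harmonic map heat flow, Ricci cancellation \`a la M\"uller, absorption of torsion errors via Young) is correct, but your ``second ingredient'' misidentifies the source of the crucial $-\tfrac{1}{16}|\diff Q|^4_Q$ term, and this is a genuine gap.

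The target curvature term $K_{\mathcal P} = \Rm_{\mathcal P}(\nabla_\alpha Q,\nabla_\beta Q,\nabla^\alpha Q,\nabla^\beta Q)$ cannot produce a definite $-c|\diff Q|^4_Q$: it vanishes whenever the image of $\diff Q$ lies in a flat of $\mathcal S$ (e.g.\ when $\diff Q$ has rank~$1$ as a map $TX\to T_Q\mathcal S$), and there is no mechanism involving $|\hat\nabla\diff Q|^2_Q$ that recovers a quartic term on these flat directions. In the paper the target curvature term is simply \emph{discarded} using nonpositivity. The quartic term instead comes from the evolution of the \emph{domain} metric: writing $N=\del_t g$, the time derivative of $|\diff Q|^2_Q$ contains $-\tfrac12 N^{\alpha\beta}\langle\nabla_\alpha Q,\nabla_\beta Q\rangle_Q$, and the piece $\tfrac12\langle\diff Q\otimes\diff Q\rangle_Q$ of $N$ contributes
\[
-\tfrac14\sum_{\alpha,\beta}\big|\langle\nabla^\alpha Q,\nabla^\beta Q\rangle_Q\big|^2
\;\leq\;
-\tfrac14\sum_\alpha \big|\langle\nabla^\alpha Q,\nabla^\alpha Q\rangle_Q\big|^2
\;\leq\;
-\tfrac{1}{16}|\diff Q|^4_Q,
\]
(with a further factor of $2$ from passing from $\tfrac12(\del_t-\Delta)$ to $(\del_t-\Delta)$, giving $-\tfrac18|\diff Q|^4_Q$ before error absorption). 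You listed $\tfrac12\langle\diff Q\otimes\diff Q\rangle_Q$ among the forcing terms to be bounded as errors; in fact it is the good term.

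A related slip: you propose to convert surplus powers of $|\diff Q|_Q$ into $|\mathbf T|^2$ via Lemma~\ref{torsion-bounds}, but that lemma gives $|\mathbf T|^2\leq\tfrac32|\diff Q|^2_Q$, not the reverse. The paper instead handles the only dangerous error, $C(\tr Q)^{21/2}|\mathbf T|\,|\diff Q|^3_Q$, by Young's inequality as $\epsilon|\diff Q|^4_Q + C_\epsilon(\tr Q)^{21}|\mathbf T|^2|\diff Q|^2_Q$, absorbing the first piece into the good quartic. Once the quartic term is correctly sourced from the metric evolution, the rest of your plan (including the final maximum-principle step) goes through as written.
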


\begin{proof}
A standard calculation from the theory of harmonic maps (was firstly carried out in \cite{ES}) gives
\begin{equation}
\frac{1}{2} \Delta |\diff Q|^2_Q
	=
		|\hat{\nabla}\diff Q|^2_Q
		+
		g^{\alpha \beta}\left\langle 
		\hat{\nabla}_\alpha \hat{\Delta}Q, \nabla_\beta Q
		\right\rangle_Q
		+
		R^{\alpha \beta} \left\langle 
		\nabla_\alpha Q,\nabla_\beta Q 
		\right\rangle_Q
		-
		K_{\mathcal{P}}
\label{Lap-norm-sq-dQ}
\end{equation}
where $K_{\mathcal{P}}$ is term involving the sectional curvature of $\mathcal{P}$:
\[
K_{\mathcal{P}} = \Rm_{\mathcal{P}}(
	\nabla_\alpha Q, \nabla_\beta Q, \nabla^\alpha Q, \nabla^\beta Q)
\]
Since $\mathcal{P}$ is non-positively curved $K_{\mathcal{P}} \leq 0$ and so we can safely discard this term. (The fact that $\mathcal{P} \cong \GL_+(3,\R)/\SO(3)$ is non-positively curved follows from the general theory of symmetric spaces. Alternatively one can calculate directly from the definition of the Levi-Civita connection given at the start of \S\ref{identities}.)
 
Meanwhile, writing momentarily $\del_t Q = V$ and $\del_t g_{\underline{\omega}} = N$, we have
\begin{equation}
\frac{1}{2}\del_t \left( |\diff Q|^2_Q \right)
	=
		g^{\alpha \beta}\left\langle 
		\hat{\nabla}_\alpha V, \nabla_\beta Q 
		\right\rangle_Q
		-\frac{1}{2} N^{\alpha \beta}
		\left\langle \nabla_\alpha Q, \nabla_\beta Q \right\rangle_Q
\label{dt-norm-sq-dQ}
\end{equation}
We expand the first term. To begin,
\[
\hat{\nabla} V 
	=
		\hat{\nabla}\hat{\Delta} Q
		+
		\hat{\nabla} \pr_Q\left\langle \tau,\tau \right\rangle
	=
		\hat{\nabla}\hat{\Delta} Q
		+
		\pr_Q \hat{\nabla} \left\langle \tau,\tau \right\rangle
\]
since $\mathcal{S}$ is totally geodesic. Moreover, $\nabla Q \in T_Q \mathcal{S}$ so, 
\begin{align*}
\left\langle 
\pr_Q\hat{\nabla} \left\langle \tau,\tau \right\rangle, 
\nabla Q \right\rangle_Q
	&=
		\left\langle
		 \hat{\nabla} \left\langle \tau, \tau \right\rangle , \nabla Q 
		 \right\rangle_Q\\
	&=
		\left\langle  
		\nabla \left\langle \tau, \tau \right\rangle
		- \left\langle \tau,\tau \right\rangle Q^{-1} \nabla Q,
		\nabla Q
		\right\rangle_Q
\end{align*}
This means that the first term of \eqref{dt-norm-sq-dQ} is
\begin{equation}
g^{\alpha\beta}\left\langle \hat{\nabla}_\alpha V, \nabla_\beta Q \right\rangle_Q
=
g^{\alpha \beta}
\left\langle 
\hat\nabla_\alpha \hat{\Delta} Q 
+
2 \left\langle \nabla_\alpha \tau, \tau \right\rangle 
-
\left\langle \tau,\tau \right\rangle Q^{-1} \nabla_\alpha Q
, \nabla_\beta Q\right\rangle_Q
\label{Q-term}
\end{equation}
Notice that (just as in the harmonic map flow) the first term of \eqref{Q-term} cancels the corresponding term in \eqref{Lap-norm-sq-dQ}. Meanwhile the last term of \eqref{Q-term} is nonpositive. To see this, write $M_\alpha= \left\langle \tau,\tau \right\rangle^{1/2}Q^{-1}\nabla_\alpha Q Q^{-1/2}$, then this term can be written as $-\sum_\alpha \tr (M_\alpha M_\alpha^t) \leq 0$.

The second term in \eqref{dt-norm-sq-dQ}, $-\frac{1}{2}N^{\alpha \beta} \left\langle \nabla_\alpha Q, \nabla_\beta Q \right\rangle_Q$, is
\begin{equation}		
R^{\alpha \beta} \left\langle 
\nabla_\alpha Q, \nabla_\beta Q 
\right\rangle_Q
	-
\frac{1}{4}
|\left\langle \nabla Q, \nabla Q \right\rangle_Q|^2
	-
\frac{1}{2} Q^{ij} \left\langle \tau_i(\nabla Q), \tau_j(\nabla Q) \right\rangle_Q
	+
\frac{1}{3} |\mathbf{T}|^2 |\diff Q|^2_Q
\label{metric-term}
\end{equation}
Just as for the coupled harmonic-Ricci flow, the Ricci term here cancels the corresponding term in \eqref{Lap-norm-sq-dQ}. The second term we bound as
\begin{align*}
|\left\langle \nabla Q, \nabla Q \right\rangle_Q|^2
	&=
		\sum_{\alpha,\beta}
		\left|\left\langle 
		\nabla^\alpha Q, \nabla^\beta Q 
		\right\rangle_Q\right|^2\\
	&\geq
		\sum_{\alpha}
		\left|\left\langle 
		\nabla^\alpha Q, \nabla^\alpha Q 
		\right\rangle_Q\right|^2\\
	&\geq
		\frac{1}{4} \left(  
		\sum_\alpha \left\langle 
		\nabla_\alpha Q,\nabla_\alpha Q
		\right\rangle_Q
		\right)^2\\
	&=
		\frac{1}{4} |\diff Q|^4_Q
\end{align*}
The third term in \eqref{metric-term} is nonpositive and so we can discard it. 

Putting this together, we obtain
\begin{equation}
\left( \del_t - \Delta \right)
|\diff Q|^2_Q
\leq
-2|\hat{\nabla}\diff Q|^2_Q
-
\frac{1}{8} |\diff Q|^4_Q
+
4 \left\langle 
\left\langle \nabla_\alpha \tau, \tau \right\rangle, \nabla^\alpha Q
\right\rangle_Q
+
\frac{2}{3} |\mathbf{T}|^2 |\diff Q|^2_Q
\label{intermediate-heat-inequality}
\end{equation}

It remains to control the third term in \eqref{intermediate-heat-inequality}. We have
\begin{align*}
\left\langle 
\left\langle \nabla_\alpha \tau, \tau \right\rangle, \nabla^\alpha Q
\right\rangle_Q
	&\leq
		\left|\left\langle \nabla_\alpha \tau, \tau \right\rangle\right|_Q
		\left| \nabla^\alpha Q \right|_Q\\
	&\leq
		(\tr Q)^2
		\left|\left\langle \nabla_\alpha \tau, \tau \right\rangle\right|
		\left| \nabla^\alpha Q \right|_Q\\
	&\leq
		(\tr Q)^2 |\nabla \underline{\tau}||\underline{\tau}|
		| \diff Q|_Q
\end{align*}
We now invoke the bounds on $\underline{\tau}$ and $\nabla \underline{\tau}$ derived in Lemmas~\ref{torsion-bounds} and~\ref{nabla-tau-bound}. This gives, for some absolute constant $C$,
\[
\left\langle 
\left\langle \nabla_\alpha \tau, \tau \right\rangle, \nabla^\alpha Q
\right\rangle_Q
	\leq
		2C(\tr Q)^{21/2}
		\left( | \hat{\nabla} \diff Q|_Q + |\diff Q|^2_Q \right)
		|\mathbf{T}| |\diff Q|_Q
\]
To complete the proof, we apply the ``Peter--Paul'' inequality, $2ab \leq \epsilon a^2 + \epsilon^{-1} b^2$. This gives
\begin{align*}
2C(\tr Q)^{21/2} |\hat{\nabla}\diff Q|_Q |\mathbf{T}||\diff Q|_Q
	\leq
		\frac{1}{4}|\hat{\nabla}\diff Q|^2_Q
		+
		4 C^2(\tr Q)^{21} |\mathbf{T}|^2|\diff Q|^2_Q\\
2C(\tr Q)^{21/2} |\mathbf{T}| |\diff Q|^3_Q
	\leq
		\frac{1}{64} |\diff Q|^4_Q
		+
		64C^2 (\tr Q)^{21} |\mathbf{T}|^2 |\diff Q|^2_Q
\end{align*}
This means that
\[
\left\langle 
\left\langle \nabla_\alpha \tau, \tau \right\rangle, \nabla^\alpha Q
\right\rangle_Q
	\leq
		\frac{1}{4}|\hat{\nabla}\diff Q|^2_Q
		+
		\frac{1}{64} |\diff Q|^4_Q
		+
		68 C^2 (\tr Q)^{21} |\mathbf{T}|^2 |\diff Q|^2_Q
\]
Substituting this into \eqref{intermediate-heat-inequality} completes the proof of the heat inequality. The conclusion that a bound on $|\mathbf{T}|$ yields a bound on $|\diff Q|_Q$ (in finite time) then follows from the maximum principle (given that $\tr Q$ is also bounded, by Proposition~\ref{trQ-mp}). 
\end{proof}

\section{Control of the $L^2$-norm of curvature}
\label{L2-curvature}

\subsection{Overview of the proof}

In this section we will show that a bound on $|\mathbf{T}|$ implies a bound on the energy of the metric $g_{\underline{\omega}}$, i.e., the $L^2$-norm of its curvature, at least in finite time. The inspiration for this is an article of Miles Simon \cite{Simon} which proves an analogous result for the 4-dimensional Ricci flow:

\begin{theorem}[Simon \cite{Simon}]\label{Simon-theorem}
Let $g(t)$ be a solution to Ricci flow on a compact 4-manifold and time interval $t \in [0,s)$ with $s< \infty$. Suppose that for all $t$, $|R(g(t)|\leq \beta/2$. Then there is a constant $C$ depending only on $\beta$, $s$ and the initial data such that 
\begin{enumerate}
\item
For all $t \in [0,s)$, 
\[
\int |\Rm|^2 < C
\]
\item
\[
\int_0^s \diff t \int |\Ric|^4 + |\nabla \Ric|^2 < C
\]
\end{enumerate}
\end{theorem}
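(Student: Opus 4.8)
\medskip
\noindent\emph{Proof proposal.} The plan is to deduce the statement --- following Simon \cite{Simon} --- from the Gauss--Bonnet--Chern theorem in dimension four, which reduces a bound on $\int|\Rm|^2$ to one on the $L^2$-norm of the traceless Ricci tensor $\mathring{\Ric}:=\Ric-\tfrac14 R g$, together with a Gr\"onwall argument for the evolution of $\int|\mathring{\Ric}|^2$ in which the cubic curvature reaction term is tamed by combining two integral quantities whose bad terms cancel, and a uniform Sobolev inequality along the flow.

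First I would record the elementary consequences of $|R|\le\beta/2$. Since $\del_t(\dvol)=-R\,\dvol$, we have $|\del_t\log\dvol|\le\beta/2$, so $\vol(M,g(t))$ is bounded above and below on $[0,s)$ in terms of $\vol(M,g(0))$, $\beta$ and $s$; in particular $\int R^2\,\dvol\le\tfrac{\beta^2}{4}\vol(M,g(t))\le C$. In dimension four, Gauss--Bonnet--Chern gives
\[
\int|\Rm|^2\,\dvol=32\pi^2\chi(M)+\int\bigl(4|\Ric|^2-R^2\bigr)\dvol=32\pi^2\chi(M)+4\int|\mathring{\Ric}|^2\,\dvol ,
\]
so part~(1) is equivalent to $\sup_{[0,s)}\int|\mathring{\Ric}|^2\,\dvol<\infty$, and $\int|\Ric|^2\,\dvol$ and $\int|W|^2\,\dvol$ (the latter again by Gauss--Bonnet--Chern, $W$ the Weyl tensor) are then automatically bounded. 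Next, from $\del_t R=\Delta R+2|\Ric|^2$ and $\del_t(\dvol)=-R\,\dvol$,
\[
\tfrac{\diff}{\diff t}\int R\,\dvol=\int\bigl(2|\Ric|^2-R^2\bigr)\dvol=2\int|\mathring{\Ric}|^2\,\dvol-\tfrac12\int R^2\,\dvol ,
\]
so, integrating over $[0,s)$ and using that $\int R\,\dvol$ and $\int R^2\,\dvol$ are bounded, one gets the crucial \emph{space--time} estimate $\int_0^s\!\!\int|\mathring{\Ric}|^2\,\dvol\,\diff t\le C$. Finally I would invoke the monotonicity of Perelman's $\nu$-entropy \cite{perelman} on the compact manifold $M$, which on the finite interval $[0,s)$ yields a \emph{uniform} Sobolev inequality $\|f\|_{L^4}^2\le C_S\bigl(\|\nabla f\|_{L^2}^2+\vol^{-1/2}\|f\|_{L^2}^2\bigr)$, with $C_S$ depending only on $\beta$, $s$ and the initial data; together with Kato's inequality this controls $\|\mathring{\Ric}\|_{L^4}$ by $\|\nabla\mathring{\Ric}\|_{L^2}$ and $\|\mathring{\Ric}\|_{L^2}$.

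The heart of the argument is the evolution of $E(t):=\int|\mathring{\Ric}|^2\,\dvol$ (equivalently, by the above, of $\int|\Rm|^2\,\dvol$). Using $\del_t\Ric=\Delta_L\Ric$ and the dimension-four decomposition $\Rm=W+(\text{Schouten part})$, one obtains a parabolic identity of the schematic shape
\[
\tfrac{\diff}{\diff t}E+2\int|\nabla\mathring{\Ric}|^2\,\dvol=\int\Bigl(c_1\,W*\mathring{\Ric}*\mathring{\Ric}+c_2\,\mathring{\Ric}*\mathring{\Ric}*\mathring{\Ric}+R*(\text{quadratic in }\mathring{\Ric})\Bigr)\dvol ,
\]
the essential feature being that --- unlike the evolution of $\int|W|^2$ --- no pure cubic $W*W*W$ term occurs. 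The terms with a factor of $R$ are harmless, bounded by $CE+C$. The dangerous terms are $\int|W|\,|\mathring{\Ric}|^2$ and $\int|\mathring{\Ric}|^3$; Cauchy--Schwarz, the Gauss--Bonnet--Chern bound $\int|W|^2\le C+CE$, and the uniform Sobolev inequality give $\int|W|\,|\mathring{\Ric}|^2\le C(1+E)^{1/2}\bigl(\|\nabla\mathring{\Ric}\|_{L^2}^2+E\bigr)$ and the same bound for $\int|\mathring{\Ric}|^3$. Naively this only yields $\tfrac{\diff}{\diff t}E+\bigl(2-C(1+E)^{1/2}\bigr)\int|\nabla\mathring{\Ric}|^2\le C(1+E)^{3/2}$, which closes only while $E$ is small --- and that is exactly the obstacle. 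To get round it, following Simon, one studies not $E$ alone but a combination $E+\Lambda\,\mathcal A$ with the second quantity $\mathcal A:=-\int R\,\dvol$, whose evolution $\tfrac{\diff}{\diff t}\mathcal A=-2\int|\mathring{\Ric}|^2\,\dvol+\tfrac12\int R^2\,\dvol$ supplies a \emph{good} term $-2\Lambda\int|\mathring{\Ric}|^2\,\dvol$; choosing $\Lambda$ large and reorganising the integrations by parts using the contracted second Bianchi identity $\nabla^a W_{abcd}=c(\nabla_c\mathring{\Ric}_{bd}-\nabla_d\mathring{\Ric}_{bc})+\cdots$, the bad contributions of $E'$ get absorbed by the good contribution of $\Lambda\mathcal A'$ and by the gradient term. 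The target is a genuine differential inequality
\[
\tfrac{\diff}{\diff t}\Bigl(\textstyle\int|\Rm|^2\,\dvol\Bigr)+\int|\nabla\Rm|^2\,\dvol\le\bigl(C\,k(t)+C\bigr)\int|\Rm|^2\,\dvol+C ,\qquad k(t):=\int|\mathring{\Ric}|^2\,\dvol ,
\]
with $k\in L^1([0,s))$ by the space--time estimate; Gr\"onwall's inequality on $[0,s)$ then gives $\sup_{[0,s)}\int|\Rm|^2\,\dvol\le C$, which is part~(1). I expect making this cancellation precise --- book-keeping the exact cubic reaction terms and the Bianchi integrations by parts so that everything is genuinely absorbed --- to be the main difficulty.

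For part~(2), integrate the last displayed inequality over $[0,s)$: since $\int|\Rm|^2\,\dvol$ is now bounded and $k\in L^1$, the right-hand side is integrable, whence $\int_0^s\!\!\int|\nabla\Rm|^2\,\dvol\,\diff t\le C$; the contracted second Bianchi identity and the bound on $R$ then give $\int_0^s\!\!\int|\nabla\Ric|^2\,\dvol\,\diff t\le C$. For the $L^4$ bound on $\Ric$ it suffices, $R$ being bounded, to bound $\int_0^s\!\!\int|\mathring{\Ric}|^4\,\dvol\,\diff t$, which I would obtain by feeding $\sup_{[0,s)}\|\mathring{\Ric}\|_{L^2}\le C$ and $\int_0^s\|\nabla\mathring{\Ric}\|_{L^2}^2\,\diff t\le C$ back into the uniform Sobolev inequality and the evolution equation for $|\mathring{\Ric}|^2$ --- the same estimates as in part~(1), but now run with all the relevant quantities already under control.
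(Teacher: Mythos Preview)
Your proposal has a genuine gap at the crucial step: the second quantity $\mathcal A=-\int R\,\dvol$ is not the one Simon uses, and it does not do the job you need it to. The evolution $\mathcal A'=-2\int|\mathring{\Ric}|^2+\tfrac12\int R^2$ contributes only a \emph{quadratic} good term $-2\Lambda E$, whereas the obstruction you correctly identified is that after Sobolev the bad cubic term in $E'$ becomes $C(1+E)^{1/2}\bigl(\|\nabla\mathring{\Ric}\|_{L^2}^2+E\bigr)$, with a coefficient $C(1+E)^{1/2}$ on the gradient term. No choice of $\Lambda$ absorbs that coefficient when $E$ is large, and the Bianchi identity you invoke relates $\nabla W$ to $\nabla\mathring{\Ric}$ but does nothing to the zero-order cubic $\int W*\mathring{\Ric}*\mathring{\Ric}$. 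For the same reason your ``target'' inequality $F'+\int|\nabla\Rm|^2\le (Ck+C)F+C$ with $k=E$ is not reachable from the evolution equations: the reaction term $\int\Rm*\Ric*\Ric$ simply does not factor as $k(t)$ times $F$.

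Simon's actual mechanism, as the paper sketches, is different and does not use Sobolev at all. The second quantity is $\int\frac{|\Ric|^2}{R+\beta}$, and the key computation is that its time derivative contains a good \emph{quartic} term $-\tfrac{4}{\beta^2}\int|\Ric|^4$ together with a bad $+\int|\nabla\Ric|^2$. One then handles the cubic reaction via Peter--Paul and Chern--Gauss--Bonnet: $|\Rm||\Ric|^2\le \epsilon|\Ric|^4+C_\epsilon|\Rm|^2$, and pointwise $|\Rm|^2=I+4|\Ric|^2-R^2$ with $I$ the Chern--Gauss--Bonnet density, so $\int|\Rm|^2\le 32\pi^2\chi(M)+4\int|\Ric|^2\le C+\epsilon'\int|\Ric|^4$. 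Feeding this into the evolution of $\int|\Ric|^2$ (good $-\int|\nabla\Ric|^2$, bad $+2\epsilon\int|\Ric|^4$) and combining with that of $\int\frac{|\Ric|^2}{R+\beta}$ (good $-\tfrac{3}{\beta^2}\int|\Ric|^4$ after absorbing, bad $+\int|\nabla\Ric|^2$), the two bad terms cancel the two good ones with room to spare, and one gets directly
\[
\frac{\diff}{\diff t}\int\Bigl(|\Ric|^2+\frac{|\Ric|^2}{2(R+\beta)}\Bigr)\le -\frac{1}{\beta^2}\int|\Ric|^4-\frac12\int|\nabla\Ric|^2+K.
\]
Integrating in $t$ gives both parts at once; no Gr\"onwall with an $L^1$ coefficient and no uniform Sobolev inequality are needed. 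Your preliminary observations (volume bounds, the space--time bound $\int_0^s\!\!\int|\mathring{\Ric}|^2<\infty$, and the Chern--Gauss--Bonnet reduction) are correct but turn out to be unnecessary once one has the right second quantity.
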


We begin by briefly reviewing Simon's argument, before going on to explain the additional complications which arise in our situation. There are three key ingredients:
\begin{enumerate}
\item
In dimension~4, the Chern--Gauss--Bonnet theorem says that for any compact Riemannian 4-manifold $X$, 
\begin{equation}
32\pi^2 \chi(X) = \int |\Rm|^2 - 4|\Ric|^2 + R^2
\label{CGB}
\end{equation}
\item
There is a constant $C>0$ such that along the Ricci flow, 
\begin{equation}
\frac{\diff}{\diff t} \int |\Ric|^2
	\leq
		\int -|\nabla \Ric|^2 + 2C |\Rm||\Ric|^2
\label{RF-L2-Ricci}
\end{equation}
\item
Assume that $|R|<\beta/2$ along the Ricci flow. Then there is a constant $C>0$ (depending on $\beta$) such that along the Ricci flow,
\begin{equation}
\frac{\diff}{\diff t} \int \frac{|\Ric|^2}{R+\beta}
	\leq
		\int - \frac{4}{\beta^2} |\Ric|^4 + C |\Rm||\Ric|^2 + |\nabla \Ric|^2
\label{RF-Ricc4}
\end{equation}
\end{enumerate}

From here Simon argues as follows. Write $I$ for the integrand in the Chern--Gauss--Bonnet formula \eqref{CGB}. Let $\epsilon >0$. In what follows, $K$ denotes a positive constant which may depend on $\epsilon$ and may change from line to line, but is otherwise absolute. We have
\begin{align*}
2C|\Rm||\Ric|^2
	&\leq
		\epsilon |\Ric|^4 + K|\Rm|^2\\
	&\leq
		\epsilon |\Ric|^4 + K( I + 4|\Ric|^2)\\
	&\leq
		2\epsilon |\Ric|^4 + KI + K
\end{align*}
It follows from \eqref{RF-L2-Ricci} that
\[
\frac{\diff}{\diff t}\int |\Ric|^2
\leq
\int
 -|\nabla \Ric|^2 +2\epsilon |\Ric|^4 + K 
\]
Similarly, starting with \eqref{RF-Ricc4} we do the same, this time absorbing the positive $|\Ric|^4$ term into the negative $-\frac{4}{\beta^2}|\Ric|^4$. This means that there is a constant $K$ such that
\[
\frac{\diff}{\diff t} \int \frac{|\Ric|^2}{R+\beta}
\leq
\int
-\frac{3}{\beta^2} |\Ric|^4 + |\nabla \Ric|^2 + K
\]
Now putting these two inequalities togethers, with a small enough choice of $\epsilon >0$, gives a constant $K$ such that
\[
\frac{\diff}{\diff t}\int
|\Ric|^2 + \frac{|\Ric|^2}{2(R+\beta)}
\leq
\int -\frac{1}{\beta^2} |\Ric|^4 - \frac{1}{2}|\nabla \Ric|^2 + K
\]
Integrating this over $t \in [0,s)$ proves a uniform bound on $\int |\Ric|^2$ as well as the second part of Simon's Theorem~\ref{Simon-theorem}. The Chern--Gauss--Bonnet theorem then turns the bound on $\int |\Ric|^2$ into a uniform bound on $\int |\Rm|^2$, completing the proof.

Our main result in this section is an analogue of Simon's theorem for the hypersymplectic flow:

\begin{theorem}[$L^2$-estimate of 7-dimensional Ricci curvature]\label{Ricci-L2-bound}
Let $\underline{\omega}(t)$ be a solution to the hypersymplectic flow on a compact 4-manifold $X$ and time interval $[0,s)$ with $s<\infty$. Suppose that $|\mathbf{T}|^2 < \beta/2$ at all times. Then there exists a constant $C$ which depends only on $\beta$, $s$ and the initial data such that
\begin{enumerate}
\item
For all $t \in [0,s)$, 
\[
\int_{X\times \mathbb{T}^3} |\mathbf{Ric}|^2
\leq
C
\]
\item
\[
\int_0^s \diff t
\int_{X\times \mathbb{T}^3}
\left( 
|\mathbf{Ric}|^4 + |\boldsymbol{\nabla} \mathbf{Ric}|^2
+
|\mathbf{Ric}|^2|\hat{\nabla}\diff Q|^2_Q
+
|\hat{\nabla}\diff Q|^2_Q
\right) \boldsymbol{\mu}
\leq
C
\]
\end{enumerate}
\end{theorem}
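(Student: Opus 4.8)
The plan is to run Miles Simon's scheme (Theorem~\ref{Simon-theorem}) on the \emph{seven}-dimensional metric $\boldsymbol g=g_\phi$, whose evolution \eqref{full-G2-metric-flow} is Ricci flow up to the lower-order torsion terms $-\tfrac23|\mathbf T|^2\boldsymbol g-4\,\mathbf T{\circ}\mathbf T$. Since there is no Gauss--Bonnet theorem in dimension seven, the role played in Simon's argument by \eqref{CGB} must be taken over by four-dimensional Gauss--Bonnet for $g_{\underline\omega}$, together with the identities of \S\ref{identities}, which express every $\boldsymbol g$-curvature quantity in terms of $g_{\underline\omega}$-quantities and controllable $Q$-derivatives. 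Throughout, $\tr Q$ and $|\diff Q|_Q$ are bounded on $[0,s)$ (Propositions~\ref{trQ-mp} and~\ref{dQ-mp}); since $\mathbf R=-|\mathbf T|^2$ is bounded (Bryant), so is $R=\mathbf R+\tfrac14|\diff Q|_Q^2$; and tracing \eqref{full-G2-metric-flow} shows $\boldsymbol\mu(X\times\mathbb{T}^3)$ and $\vol(X,g_{\underline\omega})$ stay between positive constants. Integrating the heat inequality of Proposition~\ref{dQ-mp} over $X\times[0,s)$ and using the uniform bound on $\int_X|\diff Q|_Q^2$ already gives $\int_0^s\!\int_{X\times\mathbb{T}^3}\bigl(|\hat\nabla\diff Q|_Q^2+|\diff Q|_Q^4\bigr)\le C$, which is two of the four integrands in part~(2).

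For the reduction to four dimensions: Lemma~\ref{ricci-components} gives $|\mathbf{Ric}|^2=\tfrac14|\hat\Delta Q|_Q^2+|\mathbf R_{\alpha\beta}|^2$ with $|\Ric|^2\le 2|\mathbf R_{\alpha\beta}|^2+C\le 2|\mathbf{Ric}|^2+C$; Lemma~\ref{curvature-formulas}, Corollary~\ref{all-derivs-Rm-bound} and Lemma~\ref{HessianQ-bound} give $|\mathbf{Rm}|^2\le C|\Rm|^2+C|\hat\nabla\diff Q|_Q^2+C$ and $|\hat\Delta Q|_Q^2\le 4|\hat\nabla\diff Q|_Q^2$. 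Four-dimensional Gauss--Bonnet then yields $\int_X|\Rm|^2\le 4\int_X|\Ric|^2+32\pi^2\chi(X)\le C\int_{X\times\mathbb{T}^3}|\mathbf{Ric}|^2+C$, whence $\int_{X\times\mathbb{T}^3}|\mathbf{Rm}|^2\le C\int_{X\times\mathbb{T}^3}|\mathbf{Ric}|^2+C\int_{X\times\mathbb{T}^3}|\hat\nabla\diff Q|_Q^2+C$; in particular, once the $\mathbf{Ric}$ bounds of the theorem are in hand, the $L^2$-in-spacetime control of $\mathbf{Rm}$ (hence of everything needed later in the paper) follows.

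Now the main estimate. Following Simon I would first consider $\mathcal F_1=\int_{X\times\mathbb{T}^3}|\mathbf{Ric}|^2$ and $\mathcal F_2=\int_{X\times\mathbb{T}^3}\tfrac{|\mathbf{Ric}|^2}{\mathbf R+\beta}$, where $\mathbf R+\beta\in(\tfrac\beta2,\beta]$ since $\mathbf R=-|\mathbf T|^2$ and $|\mathbf T|^2<\beta/2$. From \eqref{full-G2-metric-flow} (a Bochner computation plus the torsion corrections, the latter controlled via Lemma~\ref{nabla-T-bound}) one obtains $\mathcal F_1'\le\int\bigl(-|\boldsymbol\nabla\mathbf{Ric}|^2+C|\mathbf{Rm}|\,|\mathbf{Ric}|^2+\mathcal B\bigr)$ and $\mathcal F_2'\le\int\bigl(-c|\mathbf{Ric}|^4+|\boldsymbol\nabla\mathbf{Ric}|^2+C|\mathbf{Rm}|\,|\mathbf{Ric}|^2+\mathcal B\bigr)$, where $c=c(\beta)>0$ arises exactly as in Simon from the term $2|\mathbf{Ric}|^2$ in $\del_t\mathbf R=\Delta\mathbf R+2|\mathbf{Ric}|^2+(\text{l.o.t.})$, and $\mathcal B$ collects the new bad terms (absent in pure Ricci flow), which after Peter--Paul reduce to small multiples of $|\mathbf{Ric}|^4$, $|\boldsymbol\nabla\mathbf{Ric}|^2$, $|\hat\nabla\diff Q|_Q^2$ and $|\mathbf{Ric}|^2|\hat\nabla\diff Q|_Q^2$, plus bounded terms. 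The curvature term is treated as in Simon: $C|\mathbf{Rm}|\,|\mathbf{Ric}|^2\le\epsilon|\mathbf{Ric}|^4+K|\mathbf{Rm}|^2\le\epsilon|\mathbf{Ric}|^4+K\bigl(|\Rm|^2+|\hat\nabla\diff Q|_Q^2+1\bigr)$, and $|\Rm|^2=I_{\mathrm{CGB}}+4|\Ric|^2-R^2\le I_{\mathrm{CGB}}+C|\mathbf{Ric}|^2+C$ with $\int_X I_{\mathrm{CGB}}=32\pi^2\chi(X)$ constant. So relative to Simon the only genuinely new obstructions are leftover multiples of $|\hat\nabla\diff Q|_Q^2$ and of $|\mathbf{Ric}|^2|\hat\nabla\diff Q|_Q^2$. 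The first I would absorb using $\mathcal F_3=\int_{X\times\mathbb{T}^3}|\diff Q|_Q^2$, whose heat inequality (Proposition~\ref{dQ-mp}) produces the good terms $-\int|\hat\nabla\diff Q|_Q^2-\tfrac1{16}\int|\diff Q|_Q^4$. The cross term I would absorb with a fourth, Simon-weighted function $\mathcal F_4=\int_{X\times\mathbb{T}^3}\tfrac{|\hat\nabla\diff Q|_Q^2}{\mathbf R+\beta}$: its evolution (the more involved calculation alluded to in the introduction, a version of \eqref{Lap-norm-sq-dQ} for the Hessian of $Q$) has leading good term $-\int|\boldsymbol\nabla\hat\nabla\diff Q|_Q^2$ and, through the same $2|\mathbf{Ric}|^2$ in $\del_t\mathbf R$, the good term $-c\int|\mathbf{Ric}|^2|\hat\nabla\diff Q|_Q^2$. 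Forming $\mathcal F=\mathcal F_1+a_2\mathcal F_2+a_3\mathcal F_3+a_4\mathcal F_4$ with $0<a_4\ll a_3\ll a_2\ll1$ chosen so that each function's good terms dominate all bad terms produced below it, one gets $\mathcal F'\le -c\int_{X\times\mathbb{T}^3}\bigl(|\mathbf{Ric}|^4+|\boldsymbol\nabla\mathbf{Ric}|^2+|\mathbf{Ric}|^2|\hat\nabla\diff Q|_Q^2+|\hat\nabla\diff Q|_Q^2\bigr)+C$. Since the initial data is smooth, $\mathcal F(0)<\infty$, and since $\mathcal F\ge\mathcal F_1\ge0$, integrating over $[0,s)$ gives $\mathcal F(t)\le\mathcal F(0)+Cs$ — which is part~(1) — and the spacetime bound of part~(2).

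\emph{Main obstacle.} As the introduction warns, the delicate part is the bookkeeping of all the ``bad'' terms that do not occur in Simon's proof: these come from the harmonic-map/torsion coupling in \eqref{full-G2-metric-flow} and, crucially, from replacing the unavailable seven-dimensional Gauss--Bonnet by the four-dimensional one, which forces one to peel $|\hat\nabla\diff Q|_Q^2$ off $|\mathbf{Rm}|^2$ and so makes $|\hat\nabla\diff Q|_Q^2$ and the cross term $|\mathbf{Ric}|^2|\hat\nabla\diff Q|_Q^2$ unavoidable; this is precisely what necessitates the two extra functions $\mathcal F_3,\mathcal F_4$ and their carefully ordered small coefficients. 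Within $\mathcal F_4'$ one meets cubic and quartic terms in $|\hat\nabla\diff Q|_Q$ (from pairing $\boldsymbol\nabla\mathbf R$ and $\mathbf{Rm}$ against $\hat\nabla\diff Q$), and controlling these by the four-dimensional Gagliardo--Nirenberg inequality, against the good term $-\int|\boldsymbol\nabla\hat\nabla\diff Q|_Q^2$, without re-introducing an uncontrolled quadratic-in-$\mathcal F$ dependence, is the decisive technical point.
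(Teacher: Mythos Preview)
Your overall architecture is exactly the paper's: Simon's two functions $\mathcal F_1=\int|\mathbf{Ric}|^2$ and $\mathcal F_2=\int\frac{|\mathbf{Ric}|^2}{\mathbf R+\beta}$, the Chern--Gauss--Bonnet trick pushed down to four dimensions via the identities of \S\ref{identities}, and two extra functions to absorb the new bad terms $|\hat\nabla\diff Q|_Q^2$ and $|\mathbf{Ric}|^2|\hat\nabla\diff Q|_Q^2$. Your $\mathcal F_3=\int|\diff Q|_Q^2$ is also the paper's choice. The one substantive divergence is your fourth function.

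You propose $\mathcal F_4=\int\frac{|\hat\nabla\diff Q|_Q^2}{\mathbf R+\beta}$, hoping the $2|\mathbf{Ric}|^2$ in $\partial_t\mathbf R$ will generate $-c\int|\mathbf{Ric}|^2|\hat\nabla\diff Q|_Q^2$. This forces you to compute the heat operator on the \emph{Hessian} energy $|\hat\nabla\diff Q|_Q^2$, which brings in third derivatives of $Q$, cubic and quartic terms in $|\hat\nabla\diff Q|_Q$, and (as you yourself flag) a Gagliardo--Nirenberg step whose closure you leave open. The paper sidesteps all of this with a much simpler choice: $\mathcal F_4^{\text{paper}}=\int|\mathbf{Ric}|^2\,|\diff Q|_Q^2$. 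The point is that the good term you want already lives inside the heat inequality for $|\diff Q|_Q^2$ (Proposition~\ref{dQ-mp}): multiplying $(\partial_t-\Delta)|\diff Q|_Q^2\le -|\hat\nabla\diff Q|_Q^2+C$ by $|\mathbf{Ric}|^2$ produces $-|\mathbf{Ric}|^2|\hat\nabla\diff Q|_Q^2$ directly from the product rule, with no higher derivatives of $Q$ and no interpolation inequality. Since $|\diff Q|_Q$ is already uniformly bounded, the cross term $-2\langle\boldsymbol\nabla|\mathbf{Ric}|^2,\boldsymbol\nabla|\diff Q|_Q^2\rangle$ and the factor $|\diff Q|_Q^2\,(\partial_t-\boldsymbol\Delta)|\mathbf{Ric}|^2$ only throw off terms of the types you have already catalogued ($|\mathbf{Rm}||\mathbf{Ric}|^2$, $|\boldsymbol\nabla\mathbf{Ric}|^2$, $|\hat\nabla\diff Q|_Q^2$, $|\mathbf{Ric}|^2$, constants), all absorbable by $\mathcal F_1,\mathcal F_2,\mathcal F_3$. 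So the ``decisive technical point'' you anticipate simply does not arise with the paper's choice; the Simon weighting $\frac{1}{\mathbf R+\beta}$ is needed only once, on $|\mathbf{Ric}|^2$, not on any $Q$-quantity.
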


Using the relationship between the 4- and 7-dimensional curvature tensors (Lemma~\ref{Ricci-splitting}), we have
\[
|\Ric|^2 \leq |\mathbf{Ric}|^2 + \frac{1}{8}|\diff Q|^4_Q
\]
Now $|\diff Q|_Q$ is bounded along the flow (by Proposition~\ref{dQ-mp}), and the total volume is bounded by a constant depending only on the cohomology classes of the $\omega_i$:
\[
\int \mu \leq \frac{1}{3}\int \tr Q \mu = \frac{1}{6}  \int \omega_1^2+\omega_2^2+\omega_3^2
\]
It follows that the $L^2$-bound on $\mathbf{Ric}$ gives an $L^2$-bound on the 4-manifold Ricci curvature $\Ric$ and hence, by Chern--Gauss--Bonnet, on $\Rm$ as well. We can similarly translate the seond part of Theorem~\ref{Ricci-L2-bound} in to 4-dimensional quantities. The result is as follows.

\begin{corollary}[4-dimensional Energy bound]\label{4D-energy-bound}
Under the same assumptions as in Theorem~\ref{Ricci-L2-bound}, there exists a constant $C$, again depending only on $\beta$, $s$ and the initial data such that
\begin{enumerate}
\item
For all $t \in [0,s)$, 
\[
\int_X |\Rm|^2 \leq C
\]
\item
\[
\int_0^s \diff t \int_X
 |\Ric|^4 + |\hat{\Delta}Q|^4_Q + |\nabla \Ric|^2 + |\hat{\nabla}\diff Q|^2_Q
+
|\Ric|^2 |\hat{\nabla}\diff Q|^2_Q + |\hat{\Delta}Q|^2_Q|\hat{\nabla}\diff Q|^2_Q
 \mu \leq C
\]
\end{enumerate}
\end{corollary}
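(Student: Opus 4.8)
The plan is to deduce Corollary~\ref{4D-energy-bound} from Theorem~\ref{Ricci-L2-bound} purely by bookkeeping: I would translate each seven-dimensional quantity appearing in Theorem~\ref{Ricci-L2-bound} into a four-dimensional one, using the block structure of $g_\phi$ and of $\mathbf{Ric}$ from Lemma~\ref{Ricci-splitting}, the pointwise bounds on $\tr Q$ and $|\diff Q|_Q$ (which, under the standing hypothesis $|\mathbf{T}|^2<\beta/2$, are bounded by a constant $C$ depending only on $\beta$, $s$ and the initial data, by Propositions~\ref{trQ-mp} and~\ref{dQ-mp}), the cohomological volume bound $\int_X\mu\le\frac16\int_X(\omega_1^2+\omega_2^2+\omega_3^2)$, the identity $\boldsymbol{\mu}=\mu\wedge\diff t^{123}$ (so that $\int_{X\times\mathbb{T}^3}f\,\boldsymbol{\mu}=(2\pi)^3\int_X f\,\mu$ for $\mathbb{T}^3$-invariant $f$), and, for the first item only, the Chern--Gauss--Bonnet formula \eqref{CGB}.

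For the first item I would start from the pointwise inequality $|\Ric|^2\le|\mathbf{Ric}|^2+\frac18|\diff Q|^4_Q$ already recorded above. Integrating over $X$, the first term is controlled by Theorem~\ref{Ricci-L2-bound}(1) and the second by the $L^\infty$-bound on $|\diff Q|_Q$ together with the volume bound, giving $\int_X|\Ric|^2\le C$ for all $t$. Feeding this into Chern--Gauss--Bonnet, $\int_X|\Rm|^2=32\pi^2\chi(X)+\int_X\big(4|\Ric|^2-R^2\big)\le 32\pi^2\chi(X)+4\int_X|\Ric|^2\le C$, which is the first claim.

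For the second item I would handle the six integrands one at a time. The term $|\hat{\nabla}\diff Q|^2_Q$ is literally one of the integrands in Theorem~\ref{Ricci-L2-bound}(2). From $\mathbf{Ric}_{ij}=-\frac12(\hat\Delta Q)_{ij}$ and the block-diagonality of $g_\phi$ one gets $|\hat\Delta Q|^2_Q=4|\mathbf{Ric}_{ij}|^2_Q\le 4|\mathbf{Ric}|^2$, hence $|\hat\Delta Q|^4_Q\le 16|\mathbf{Ric}|^4$ and $|\hat\Delta Q|^2_Q|\hat{\nabla}\diff Q|^2_Q\le 4|\mathbf{Ric}|^2|\hat{\nabla}\diff Q|^2_Q$, both of which reduce to integrands of Theorem~\ref{Ricci-L2-bound}(2). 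From $|\Ric|^2\le|\mathbf{Ric}|^2+\frac18|\diff Q|^4_Q$ one gets $|\Ric|^4\lesssim|\mathbf{Ric}|^4+|\diff Q|^8_Q$ and $|\Ric|^2|\hat{\nabla}\diff Q|^2_Q\le|\mathbf{Ric}|^2|\hat{\nabla}\diff Q|^2_Q+\frac18|\diff Q|^4_Q|\hat{\nabla}\diff Q|^2_Q$, where the $|\diff Q|$-factors are absorbed using the $L^\infty$-bound on $|\diff Q|_Q$ (and, for the $|\diff Q|^8_Q$ term, also the volume bound and finiteness of $s$). Finally, for $|\nabla\Ric|^2$ I would differentiate $R_{\alpha\beta}=\mathbf{Ric}_{\alpha\beta}+\frac14\langle\nabla_\alpha Q,\nabla_\beta Q\rangle_Q$ from Lemma~\ref{Ricci-splitting}: the Christoffel formulae for $g_\phi$ in \S\ref{identities} (in particular $\boldsymbol{\Gamma}^k_{\alpha\beta}=0$ and $\boldsymbol{\Gamma}^\gamma_{\alpha\beta}=\Gamma^\gamma_{\alpha\beta}$) together with $\mathbb{T}^3$-invariance show that the seven-dimensional covariant derivative of the $X$-block of $\mathbf{Ric}$ agrees with its four-dimensional covariant derivative, so $|\nabla(\mathbf{Ric}_{\alpha\beta})|^2\le|\boldsymbol{\nabla}\mathbf{Ric}|^2$; and since $\hat\nabla$ preserves the metric of $\mathcal{P}$, $|\nabla\langle\nabla Q,\nabla Q\rangle_Q|\le 2|\hat{\nabla}\diff Q|_Q|\diff Q|_Q$, whence $|\nabla\Ric|^2\le C\big(|\boldsymbol{\nabla}\mathbf{Ric}|^2+|\hat{\nabla}\diff Q|^2_Q\big)$ after absorbing the bounded factor $|\diff Q|^2_Q$. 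Integrating every one of these over $X\times[0,s)$ and passing to $X\times\mathbb{T}^3$ via $\boldsymbol{\mu}=\mu\wedge\diff t^{123}$, Theorem~\ref{Ricci-L2-bound}(2) delivers the stated bound.

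All the genuine analytic content sits in Theorem~\ref{Ricci-L2-bound}, which we take as given; the only points in the translation that need a little care are (i) checking that the seven-dimensional covariant derivative of the purely-$X$ block of $\mathbf{Ric}$ coincides with the four-dimensional one, so that the $L^2$-bound on $\boldsymbol{\nabla}\mathbf{Ric}$ really dominates that of $\nabla\Ric$ up to the $\hat{\nabla}\diff Q$-correction, and (ii) remembering that the $L^\infty$-bounds on $\tr Q$ and $|\diff Q|_Q$—hence the boundedness of every coefficient function used above—are exactly what was established in \S\ref{evolution-equations} under the hypothesis $|\mathbf{T}|^2<\beta/2$. Neither is an obstacle; the corollary is essentially immediate once Theorem~\ref{Ricci-L2-bound} is in hand.
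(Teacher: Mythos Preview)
Your proposal is correct and follows exactly the paper's approach: the paper records the key pointwise inequality $|\Ric|^2\le|\mathbf{Ric}|^2+\tfrac18|\diff Q|^4_Q$, the cohomological volume bound, and Chern--Gauss--Bonnet to obtain part~(1), and then simply says ``we can similarly translate the second part of Theorem~\ref{Ricci-L2-bound} into 4-dimensional quantities'' for part~(2). You have carried out that translation in detail, and the checks you flag as needing care (that $\boldsymbol{\nabla}_\gamma\mathbf{R}_{\alpha\beta}=\nabla_\gamma\mathbf{R}_{\alpha\beta}$ via $\boldsymbol{\Gamma}^k_{\alpha\beta}=0$ and $\mathbf{R}_{i\alpha}=0$, and the $L^\infty$-bounds on $\tr Q$, $|\diff Q|_Q$ from Propositions~\ref{trQ-mp} and~\ref{dQ-mp}) are exactly the right ones.
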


We now give the proof of Theorem~\ref{Ricci-L2-bound}, along the lines of Simon's argument. The crux is to find a series of differential inequalities which play the role of \eqref{RF-L2-Ricci} and \eqref{RF-Ricc4}. We will state the inequalities here and defer their proofs until the subsequent sections. Note that in the following Proposition, all integrals are over the 7-manifold $X \times \mathbb{T}^3$, and we have suppressed the volume form~$\boldsymbol{\mu}$ (which must be remembered when taking the time derivative, of course!).

\begin{proposition}\label{differential-inequalities}
Let $\underline{\omega}(t)$ be a solution to the hypersymplectic flow on a compact 4-manifold and time interval $t\in [0,s)$. Suppose moreover that $|\mathbf{T}|^2 \leq \beta/2$ for all $t$. Then there exists a constant $C$, depending only on $\beta$, $s$ and the initial data, such that for all $t \in [0,s)$,
\begin{enumerate}
\item
$
\frac{\diff}{\diff t} \int |\mathbf{Ric}|^2
	\leq
		\int - |\boldsymbol{\nabla}\mathbf{Ric}|^2
			+ C\left( 
			|\mathbf{Rm}||\mathbf{Ric}|^2 
			+ |\mathbf{Ric}|^2 
			+ |\hat{\nabla}\diff Q|^2_Q
			+ 1
			\right)
$
\item
$
\frac{\diff}{\diff t} \int \frac{|\mathbf{Ric}|^2}{\mathbf{R}+\beta}
	\leq
		\int - \frac{4}{\beta^2} |\mathbf{Ric}|^4 
			+|\boldsymbol{\nabla}\mathbf{Ric}|^2
			+ C\left( 
			|\mathbf{Rm}||\mathbf{Ric}|^2 
			+ |\mathbf{Ric}|^2|\hat{\nabla}\diff Q|^2_Q
			+ |\mathbf{Ric}|^2 
			\right)
$
\item
$
\frac{\diff}{\diff t} \int |\mathbf{Ric}|^2|\diff Q|^2_Q
	\leq
		\int
		-\frac{1}{2}|\mathbf{Ric}|^2|\hat{\nabla}\diff Q|^2_Q
		+C\left(  
		|\mathbf{Rm}||\mathbf{Ric}|^2 
		+|\hat{\nabla}\diff Q|^2_Q
		+|\boldsymbol{\nabla}\mathbf{Ric}|^2
		+|\mathbf{Ric}|^2
		+ 1
		\right)
$
\item
$
\frac{\diff}{\diff t} \int |\diff Q|^2_Q
	\leq 
		\int -|\hat{\nabla}\diff Q|^2_Q + C
$
\end{enumerate}
\end{proposition}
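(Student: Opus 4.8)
The plan is to prove the four differential inequalities in Proposition~\ref{differential-inequalities} by combining the known evolution equations for the $G_2$-Laplacian flow with the 4-dimensional control of torsion and its derivatives established in Section~\ref{identities}. The unifying principle is that along the hypersymplectic flow the 7-dimensional metric $\boldsymbol{g}$ solves \eqref{full-G2-metric-flow}, which is Ricci flow plus lower-order terms that are algebraic or first-order in $\mathbf{T}$; consequently the standard Ricci-flow evolution of $|\mathbf{Ric}|^2$ acquires correction terms that I will estimate using $|\mathbf{T}|^2 \le \beta/2$ together with Lemmas~\ref{torsion-bounds},~\ref{nabla-T-bound}, and the bounds on $\tr Q$ and $|\diff Q|_Q$ from Propositions~\ref{trQ-mp} and~\ref{dQ-mp} (so that all powers of $\tr Q$ are absorbed into the constant $C$).

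For part~(1): I would start from Bochner-type evolution $\del_t |\mathbf{Ric}|^2 = \Delta|\mathbf{Ric}|^2 - 2|\boldsymbol{\nabla}\mathbf{Ric}|^2 + (\text{Rm}\!*\!\text{Ric}\!*\!\text{Ric}) + (\text{correction terms})$, where the correction terms come from differentiating the extra pieces $-\tfrac23|\mathbf{T}|^2\boldsymbol{g} - 4\mathbf{T}*\mathbf{T}$ in \eqref{full-G2-metric-flow}; these involve $\boldsymbol{\nabla}\mathbf{T}$ and $\mathbf{T}$, which by Lemma~\ref{nabla-T-bound} and boundedness of $\tr Q$, $|\diff Q|_Q$ are controlled by $|\hat{\nabla}\diff Q|_Q^2 + 1$ after a Cauchy--Schwarz/Peter--Paul split (the terms quadratic in $\boldsymbol{\nabla}\mathbf{Ric}$ paired with $\mathbf{T}$ get a fraction of $-|\boldsymbol{\nabla}\mathbf{Ric}|^2$ and a multiple of $|\mathbf{Ric}|^2$). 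Integrating over $X\times\mathbb{T}^3$, the Laplacian term drops, and after the Peter--Paul absorption one is left with exactly the RHS of~(1). Part~(4) is the simplest: integrate the heat inequality of Proposition~\ref{dQ-mp}, $(\del_t - \Delta)|\diff Q|_Q^2 \le -|\hat\nabla\diff Q|_Q^2 - \tfrac1{16}|\diff Q|_Q^4 + C(\tr Q)^{21}|\mathbf{T}|^2|\diff Q|_Q^2$; the Laplacian integrates to zero (up to the time-dependence of $\boldsymbol\mu$, whose $\del_t \boldsymbol\mu$ contributes a bounded multiple of $|\diff Q|_Q^2$), and the last term is bounded by a constant since $|\mathbf{T}|^2 \le \beta/2$ and $\tr Q, |\diff Q|_Q$ are bounded. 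Discarding the good $-\tfrac1{16}|\diff Q|_Q^4$ gives~(4).

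Part~(2) is the analogue of Simon's \eqref{RF-Ricc4}: differentiate $\int |\mathbf{Ric}|^2/(\mathbf{R}+\beta)$. Using $\mathbf{R} = -|\mathbf{T}|^2$ (Bryant) so that $\mathbf{R}+\beta \in [\beta/2,\beta]$ is bounded above and below, the denominator contributes $\del_t(\mathbf{R}+\beta)^{-1} = -(\mathbf{R}+\beta)^{-2}\del_t\mathbf{R}$, and one needs the evolution of $\mathbf{R}$ along the flow; here $\del_t\mathbf{R}$ is again Ricci-flow-like ($\Delta\mathbf{R} + 2|\mathbf{Ric}|^2 + \dots$) plus torsion corrections, and the key good term $+\tfrac{2}{(\mathbf{R}+\beta)^2}|\mathbf{Ric}|^2 \cdot 2|\mathbf{Ric}|^2 \ge \tfrac{4}{\beta^2}|\mathbf{Ric}|^4$ is what produces the $-\tfrac{4}{\beta^2}|\mathbf{Ric}|^4$ on the RHS (the sign working out because this term has the favourable direction — exactly as in Simon). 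The cross terms from integrating $\Delta$ against $(\mathbf{R}+\beta)^{-1}$ produce $+|\boldsymbol\nabla\mathbf{Ric}|^2$-type terms (note this appears with a $+$ on the RHS, to be killed later by~(1)), and all remaining torsion corrections are absorbed into $C(|\mathbf{Rm}||\mathbf{Ric}|^2 + |\mathbf{Ric}|^2|\hat\nabla\diff Q|_Q^2 + |\mathbf{Ric}|^2)$ via Cauchy--Schwarz and the $C^1$-bounds on $Q$. Part~(3) mixes the $|\mathbf{Ric}|^2$ and $|\diff Q|_Q^2$ evolutions: expand $\del_t\big(|\mathbf{Ric}|^2|\diff Q|_Q^2\big) = (\del_t|\mathbf{Ric}|^2)|\diff Q|_Q^2 + |\mathbf{Ric}|^2(\del_t|\diff Q|_Q^2)$, use the two heat inequalities, integrate by parts the resulting $\Delta$'s (which generates the cross term $\langle\nabla|\mathbf{Ric}|^2, \nabla|\diff Q|_Q^2\rangle$, bounded by $\epsilon|\boldsymbol\nabla\mathbf{Ric}|^2|\diff Q|_Q^2 + \epsilon^{-1}|\mathbf{Ric}|^2|\hat\nabla\diff Q|_Q^2$ after Kato), and the harvested good term is $-|\mathbf{Ric}|^2|\hat\nabla\diff Q|_Q^2$ from the $|\diff Q|_Q^2$ evolution, retained at strength $\tfrac12$ after absorbing the $\epsilon$-cross-term.

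The main obstacle I expect is bookkeeping in parts~(2) and~(3): one must keep careful track of which terms are ``good'' (negative, to be kept) versus ``bad'' (to be absorbed), and ensure that the bad $+|\boldsymbol\nabla\mathbf{Ric}|^2$ appearing in~(2) and~(3) is genuinely dominated by the good $-|\boldsymbol\nabla\mathbf{Ric}|^2$ in~(1) with a favourable constant when the four inequalities are later combined — this is the structural reason the constants are not meant to be sharp. A secondary subtlety is that, unlike Simon's setting, $\boldsymbol\mu$ evolves and $\del_t\boldsymbol\mu = -\big(\tfrac13|\mathbf{T}|^2\cdot 7 + \dots\big)\boldsymbol\mu$ (trace of \eqref{full-G2-metric-flow}) contributes extra terms to every $\tfrac{\diff}{\diff t}\int$; these are all bounded multiples of the integrand by the torsion bound, hence harmless, but must be included. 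Finally, the pointwise evolution equations for $|\mathbf{Ric}|^2$ and $\mathbf{R}$ along the $G_2$-Laplacian flow are needed in a reasonably explicit form — either cited from Lotay--Wei or rederived by plugging \eqref{full-G2-metric-flow} into the general variation formulae for curvature — and getting the torsion-dependent correction terms right is where the real care lies.
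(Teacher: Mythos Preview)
Your proposal is correct and follows essentially the same route as the paper: derive the pointwise heat equations for $|\mathbf{Ric}|^2$ and $\mathbf{R}$ under the $G_2$-Laplacian flow (the paper cites these from Lotay--Wei as \eqref{scalar-heat} and \eqref{Ricci-heat}), integrate, move all second derivatives of torsion off via integration by parts and the contracted Bianchi identity, and then absorb every appearance of $|\mathbf{T}|$ and $|\boldsymbol{\nabla}\mathbf{T}|$ using the bounds from Propositions~\ref{trQ-mp}, \ref{dQ-mp} and Lemma~\ref{nabla-T-bound}. Two small corrections to your sketch: the volume form evolves as $\del_t\boldsymbol{\mu}=\tfrac{2}{3}|\mathbf{T}|^2\boldsymbol{\mu}$ (positive, not the expression you wrote), and in part~(2) the good quartic term comes out as $-\tfrac{2|\mathbf{Ric}|^4}{(\mathbf{R}+\beta)^2}$ from the $2|\mathbf{Ric}|^2$ piece of $\del_t\mathbf{R}$, half of which is then spent absorbing a stray $|\mathbf{Ric}|^3$ term via Cauchy--Schwarz before the denominator bound is applied---but neither point affects the structure of your argument.
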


\begin{proof}[Proof of Theorem~\ref{Ricci-L2-bound}, assuming Proposition~\ref{differential-inequalities}]
Throughout, $C$ denotes a constant which depends only on $\beta$, $s$ and the initial data, but which may change from line to line. Arguing exactly as in Simon's proof, to deal with $|\mathbf{Rm}||\mathbf{Ric}|^2$ via Chern--Gauss--Bonnet, we have 
\[
\frac{\diff}{\diff t} \int \frac{|\mathbf{Ric}|^2}{\mathbf{R}+\beta}
	\leq
		\int 
		-\frac{3}{\beta^2}|\mathbf{Ric}|^4 
		+ C\left( 
		|\mathbf{Ric}|^2|\hat{\nabla}\diff Q|^2 
		+ |\boldsymbol{\nabla}\mathbf{Ric}|^2
		+1
		 \right)
\]
It follows that for a suitable choice of $A_1>0$ (and again using the Chern--Gauss--Bonnet trick) we have
\[
\frac{\diff}{\diff t} \int
\frac{|\mathbf{Ric}|^2}{\mathbf{R}+\beta}
+ A_1 |\mathbf{Ric}|^2|\diff Q|^2_Q
	\leq	
		\int
		-\frac{2}{\beta^2}|\mathbf{Ric}|^4 
		- |\mathbf{Ric}|^2|\hat{\nabla}\diff Q|^2_Q
		+C\left(  
		|\boldsymbol{\nabla}\mathbf{Ric}|^2 + |\hat{\nabla}\diff Q|^2_Q
		+1
		\right)
\]
Now for a suitable choice of $A_2>0$ we have
\begin{multline*}
\frac{\diff}{\diff t} \int
\frac{|\mathbf{Ric}|^2}{\mathbf{R}+\beta}
+ A_1 |\mathbf{Ric}|^2|\diff Q|^2_Q
+ A_2 |\mathbf{Ric}|^2\\
	\leq
		\int
		-\frac{1}{\beta^2}|\mathbf{Ric}|^4 
		- |\mathbf{Ric}|^2|\hat{\nabla}\diff Q|^2_Q
		- |\boldsymbol{\nabla}\mathbf{Ric}|^2
		+C\left( 
		|\hat{\nabla}\diff Q|^2_Q + 1 \right)
\end{multline*}
Finally, for a suitable choice of $A_3>0$,
\begin{multline}\label{big-differential-inequality}
\frac{\diff}{\diff t} \int
\frac{|\mathbf{Ric}|^2}{\mathbf{R}+\beta}
+ A_1 |\mathbf{Ric}|^2|\diff Q|^2_Q
+ A_2 |\mathbf{Ric}|^2
+ A_3 |\diff Q|^2_Q\\
	\leq
		\int
		-\frac{1}{\beta^2}|\mathbf{Ric}|^4 
		- |\mathbf{Ric}|^2|\hat{\nabla}\diff Q|^2_Q
		- |\boldsymbol{\nabla}\mathbf{Ric}|^2
		- |\diff Q|^2_Q
		+C
\end{multline}
Theorem~\ref{Ricci-L2-bound} follows by integrating \eqref{big-differential-inequality} over $t \in [0,s)$, together with the fact that both the volume and $|\diff Q|_Q$ are uniformly bounded. 
\end{proof}

We now give the proofs of the inequalities in Proposition~\ref{differential-inequalities}. At times the formulae involved may appear intimidating on the page, but the arguments involve nothing more than integration by parts and careful bookkeeping.

\subsection{Evolution of Ricci curvature under the $G_2$-Laplacian flow}

The first two inequalities in Proposition~\ref{differential-inequalities} will follow from more general inequalities which hold for an arbitrary $G_2$-Laplacian flow of closed $G_2$-structures. We begin by recalling the 7-dimensional evolution equations for the volume $\boldsymbol{\mu}$, scalar curvuatre $\mathbf{R}$, and Ricci tensor $\mathbf{Ric}$ derived by Lotay--Wei \cite{Lotay--Wei}.
\begin{align}
\del_t \boldsymbol{\mu}
	&=
		\frac{2}{3} |\mathbf{T}|^2 \boldsymbol{\mu}\label{vol-evolution}\\
\del_t \mathbf{R}
	&=
		\boldsymbol{\Delta}\mathbf{R}
		-
		4\boldsymbol{\nabla}^b\boldsymbol{\nabla}^a
		(\mathbf{T}_a^{\phantom{a}c}\mathbf{T}_{cb})
		+
		2|\mathbf{Ric}|^2
		-
		\frac{2}{3}\mathbf{R}^2
		+
		4 \mathbf{R}^{ab}\mathbf{T}_a^{\phantom{a}c}\mathbf{T}_{cb}\label{scalar-heat}\\
\del_t \mathbf{R}_{ab}
	&=
		\boldsymbol{\Delta}_L \left( 
		\mathbf{R}_ab 
		+ 
		\frac{1}{3} |\mathbf{T}|^2 
		+ 
		2 \mathbf{T}_a^{\phantom{a}c}\mathbf{T}_{cb}
		\right)
		\nonumber\\
	&\quad\quad		
		-\frac{1}{3} 
		\boldsymbol{\nabla}_a\boldsymbol{\nabla}_b |\mathbf{T}|^2
		-2\left( 
		 \boldsymbol{\nabla}_a\boldsymbol{\nabla}^c
		 \left( \mathbf{T}_c^{\phantom{c}d}\mathbf{T}_{db} \right)
		 +
		 \boldsymbol{\nabla}_b\boldsymbol{\nabla}^c
		 \left( \mathbf{T}_c^{\phantom{c}d}\mathbf{T}_{da} \right)
		 \right)\label{7D-Ricci-evolution}
\end{align}
where $\boldsymbol{\Delta}_L \eta_{ab} = \boldsymbol{\Delta} \eta_{ab} - \mathbf{R}_a^{\phantom{a}c}\eta_{cb} - \mathbf{R}_b^{\phantom{b}c}\eta_{ca} + 2 \mathbf{R}_{cabd} \eta^{dc}$ is the Lichnerowicz Laplacian. 

Using \eqref{7D-Ricci-evolution} one finds the heat equation satisfied by $|\mathbf{Ric}|^2$:
\begin{multline}
(\del_t - \boldsymbol{\Delta}) |\mathbf{Ric}|^2
	=
		- 2 |\boldsymbol{\nabla}\mathbf{Ric}|^2
		+ 4 \mathbf{R}_{cabd}\mathbf{R}^{ab}\mathbf{R}^{dc}
		+ 4 \mathbf{R}^{ab}\boldsymbol{\Delta}\left(
		\mathbf{T}_a^{\phantom{a}c}\mathbf{T}_{cb}
		\right)\\
		+\frac{2}{3}\mathbf{R}\boldsymbol{\Delta}|\mathbf{T}|^2	
		- 4\mathbf{R}^{ab}\left(	
		 \boldsymbol{\nabla}_a\boldsymbol{\nabla}^c
		 \left( \mathbf{T}_c^{\phantom{c}d}\mathbf{T}_{db} \right)
		 +
		 \boldsymbol{\nabla}_b\boldsymbol{\nabla}^c
		 \left( \mathbf{T}_c^{\phantom{c}d}\mathbf{T}_{da} \right)
		 \right)\\
		 -\frac{2}{3}\mathbf{R}^{ab}
		 \boldsymbol{\nabla}_a\boldsymbol{\nabla}_b |\mathbf{T}|^2
		 + 8 \mathbf{R}_{cabd}\mathbf{R}^{ab}
		 \mathbf{T}^{ce}\mathbf{T}_e^{\phantom{e}d}
		 +
		 \frac{4}{3}|\mathbf{Ric}|^2|\mathbf{T}|^2
\label{Ricci-heat}
\end{multline}
We will also need the following inequality, satisfied by any closed $G_2$-structure. Since $|\mathbf{Ric}|^2 \geq \frac{1}{7}|\mathbf{R}|^2$ we have
\begin{equation}\label{torsion-bounded-by-Ric}
|\mathbf{T}|^2 \leq \sqrt{7} |\mathbf{Ric}|
\end{equation}

We now turn to our differential inequalities. Note that in the Lemmas below, the volume form $\boldsymbol{\mu}$ is omitted to ease the notation, but during the calculation of the time derivatives we have to take it into consideration. 

\begin{lemma}
Let $\phi(t)$ be a path of closed $G_2$-forms solving the $G_2$-Laplacian flow. Then
\begin{equation}
\frac{\diff}{\diff t}
\int |\mathbf{Ric}|^2
	\leq
		\int\left( 
		-|\boldsymbol{\nabla}\mathbf{Ric}|^2
		+ 28 |\mathbf{Rm}||\mathbf{Ric}|^2
		+ 18 |\mathbf{T}|^2
		|\boldsymbol{\nabla}\mathbf{T}|^2
		+ 2|\mathbf{Ric}|^2|\mathbf{T}|^2
		\right)
\label{7D-Ricci-diff-ineq}
\end{equation}
\end{lemma}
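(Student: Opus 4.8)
The plan is to start from the heat equation \eqref{Ricci-heat} for $|\mathbf{Ric}|^2$, integrate over $X \times \mathbb{T}^3$, and bound each term on the right-hand side by the quantities appearing in \eqref{7D-Ricci-diff-ineq}. First I would note that when taking $\frac{\diff}{\diff t}\int |\mathbf{Ric}|^2 \boldsymbol{\mu}$ one picks up, besides $\int (\del_t - \boldsymbol{\Delta})|\mathbf{Ric}|^2 \boldsymbol{\mu}$, the extra contribution $\int |\mathbf{Ric}|^2 \del_t\boldsymbol{\mu} = \frac{2}{3}\int |\mathbf{Ric}|^2 |\mathbf{T}|^2 \boldsymbol{\mu}$ coming from \eqref{vol-evolution}, while $\int \boldsymbol{\Delta}|\mathbf{Ric}|^2 \boldsymbol{\mu} = 0$ on the closed manifold. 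So the task reduces to controlling the right-hand side of \eqref{Ricci-heat}.

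The term $-2|\boldsymbol{\nabla}\mathbf{Ric}|^2$ is already good (it more than accounts for the $-|\boldsymbol{\nabla}\mathbf{Ric}|^2$ on the right of \eqref{7D-Ricci-diff-ineq}, leaving a $-|\boldsymbol{\nabla}\mathbf{Ric}|^2$ to spare). The curvature term $4\mathbf{R}_{cabd}\mathbf{R}^{ab}\mathbf{R}^{dc}$ is bounded pointwise by $C|\mathbf{Rm}||\mathbf{Ric}|^2$, as is the term $8\mathbf{R}_{cabd}\mathbf{R}^{ab}\mathbf{T}^{ce}\mathbf{T}_e^{\phantom{e}d}$ once one uses \eqref{torsion-bounded-by-Ric} (which converts $|\mathbf{T}|^2$ into $|\mathbf{Ric}|$, giving $|\mathbf{Rm}||\mathbf{Ric}|^2$), and $\frac{4}{3}|\mathbf{Ric}|^2|\mathbf{T}|^2$ contributes directly to the $|\mathbf{Ric}|^2|\mathbf{T}|^2$ term. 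The real work is with the terms involving two derivatives of $\mathbf{T}$-quadratics, namely $4\mathbf{R}^{ab}\boldsymbol{\Delta}(\mathbf{T}_a^{\phantom{a}c}\mathbf{T}_{cb})$, $\frac{2}{3}\mathbf{R}\boldsymbol{\Delta}|\mathbf{T}|^2$, the two $\mathbf{R}^{ab}\boldsymbol{\nabla}_a\boldsymbol{\nabla}^c(\mathbf{T}\mathbf{T})$ terms, and $-\frac{2}{3}\mathbf{R}^{ab}\boldsymbol{\nabla}_a\boldsymbol{\nabla}_b|\mathbf{T}|^2$. For each of these I would integrate by parts \emph{once}, moving one derivative off the $\mathbf{T}$-quadratic and onto $\mathbf{R}^{ab}$ (resp.\ $\mathbf{R}$). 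A typical resulting term looks like $\int \boldsymbol{\nabla}\mathbf{Ric} \ast \mathbf{T} \ast \boldsymbol{\nabla}\mathbf{T}$ (schematically), which by Cauchy--Schwarz is $\leq \epsilon\int|\boldsymbol{\nabla}\mathbf{Ric}|^2 + \epsilon^{-1}\int|\mathbf{T}|^2|\boldsymbol{\nabla}\mathbf{T}|^2$. Choosing $\epsilon$ small enough that all these $|\boldsymbol{\nabla}\mathbf{Ric}|^2$ contributions are absorbed into the spare $-|\boldsymbol{\nabla}\mathbf{Ric}|^2$ leaves exactly a multiple of $\int |\mathbf{T}|^2|\boldsymbol{\nabla}\mathbf{T}|^2$, as in \eqref{7D-Ricci-diff-ineq}.

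The main obstacle — and the step requiring care rather than cleverness — is the bookkeeping of these integrations by parts: one must verify that after moving a derivative onto $\mathbf{Ric}$, no term with \emph{two} derivatives on $\mathbf{T}$ survives uncancelled (otherwise a $|\boldsymbol{\nabla}^2\mathbf{T}|$ would appear, which is not among the allowed quantities), and one must correctly use the contracted second Bianchi identity $\boldsymbol{\nabla}^a\mathbf{R}_{ab} = \frac{1}{2}\boldsymbol{\nabla}_b\mathbf{R}$ to keep the divergence terms expressed through $\boldsymbol{\nabla}\mathbf{Ric}$. Since each $\mathbf{T}\mathbf{T}$-quadratic is acted on by at most a Laplacian or a double divergence, a single integration by parts always suffices to land on the pattern $\boldsymbol{\nabla}\mathbf{Ric}\ast\mathbf{T}\ast\boldsymbol{\nabla}\mathbf{T}$ or $\mathbf{Ric}\ast\boldsymbol{\nabla}\mathbf{T}\ast\boldsymbol{\nabla}\mathbf{T}$ (the latter bounded by $|\mathbf{Ric}||\boldsymbol{\nabla}\mathbf{T}|^2 \leq |\mathbf{Ric}|^2|\mathbf{T}|^2 + |\boldsymbol{\nabla}\mathbf{T}|^4$, and since $|\boldsymbol{\nabla}\mathbf{T}|^2$ is bounded along the flow by Lotay--Wei's estimate on $\Lambda(\phi)$, this is $\leq |\mathbf{Ric}|^2|\mathbf{T}|^2 + C|\boldsymbol{\nabla}\mathbf{T}|^2$, absorbed into the $|\mathbf{T}|^2|\boldsymbol{\nabla}\mathbf{T}|^2$ term up to constants). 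Collecting all the constants gives the coefficients $28$ and $18$ in \eqref{7D-Ricci-diff-ineq}; I would not attempt to optimize them.
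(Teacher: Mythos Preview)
Your approach is essentially the paper's: integrate \eqref{Ricci-heat} against the evolving volume form, integrate by parts once on each second-derivative-of-torsion term, invoke the contracted Bianchi identity, and then use Cauchy--Schwarz together with \eqref{torsion-bounded-by-Ric}. Two points of your sketch deserve correction, however.

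First, after a single integration by parts there are \emph{no} terms of the form $\mathbf{Ric}\ast\boldsymbol{\nabla}\mathbf{T}\ast\boldsymbol{\nabla}\mathbf{T}$; every such term carries a derivative on $\mathbf{Ric}$ (or on $\mathbf{R}$). Your proposed way of handling these phantom terms---invoking a bound on $|\boldsymbol{\nabla}\mathbf{T}|$ ``along the flow by Lotay--Wei's estimate on $\Lambda(\phi)$''---is illegitimate here: the lemma is stated for an arbitrary closed $G_2$-Laplacian flow with universal constants $28,18,2$, and no a~priori bound on $\Lambda(\phi)$ is available (indeed such a bound is what the paper is ultimately trying to establish). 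Fortunately this is moot since the terms do not occur.

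Second, the paper does not treat all the $\boldsymbol{\nabla}\mathbf{R}$ terms by bounding $|\boldsymbol{\nabla}\mathbf{R}|\leq C|\boldsymbol{\nabla}\mathbf{Ric}|$ and then Cauchy--Schwarz. Instead it uses the identity $\mathbf{R}=-|\mathbf{T}|^2$, hence $\boldsymbol{\nabla}\mathbf{R}=-2|\mathbf{T}|\boldsymbol{\nabla}|\mathbf{T}|$, together with Kato's inequality, to convert these terms \emph{directly} into multiples of $|\mathbf{T}|^2|\boldsymbol{\nabla}\mathbf{T}|^2$. Only the single genuine $\boldsymbol{\nabla}\mathbf{Ric}\ast\mathbf{T}\ast\boldsymbol{\nabla}\mathbf{T}$ term (coming from $\mathbf{R}^{ab}\boldsymbol{\Delta}(\mathbf{T}_a^{\phantom{a}c}\mathbf{T}_{cb})$) is absorbed via Cauchy--Schwarz into the spare $-|\boldsymbol{\nabla}\mathbf{Ric}|^2$. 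Your route (Cauchy--Schwarz on everything) would still close, but would produce a larger coefficient than $18$; to recover the paper's explicit constants you need this identity.
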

\begin{proof}
From \eqref{vol-evolution}, \eqref{Ricci-heat}, integration by parts and the contracted Bianchi identity we get
\begin{align*}
\frac{\diff}{\diff t} \int |\mathbf{Ric}|^2
	&=
		\int \left( \del_t - \boldsymbol{\Delta} \right)|\mathbf{Ric}|^2
		+ \frac{2}{3} |\mathbf{Ric}|^2|\mathbf{T}|^2\\
	&=
		\int - 2 |\boldsymbol{\nabla}\mathbf{Ric}|^2
		+ 4 \mathbf{R}_{cabd}\mathbf{R}^{ab}\mathbf{R}^{dc}
		- 4 \boldsymbol{\nabla}^c\mathbf{R}^{ab}
		\boldsymbol{\nabla}_c\left(
		\mathbf{T}_a^{\phantom{a}d}\mathbf{T}_{db}
		\right)\\
	&\quad\quad
		-\frac{1}{3}\boldsymbol{\nabla}^c\mathbf{R}
		\boldsymbol{\nabla}_c|\mathbf{T}|^2	
		+4\boldsymbol{\nabla}^b\mathbf{R}	
		\boldsymbol{\nabla}^c
		( \mathbf{T}_c^{\phantom{c}d}\mathbf{T}_{db})
		 + 8 \mathbf{R}_{cabd}\mathbf{R}^{ab}
		 \mathbf{T}^{ce}\mathbf{T}_e^{\phantom{e}d}
		 +
		 2|\mathbf{Ric}|^2|\mathbf{T}|^2
\end{align*}
Next we apply Kato's inequality, $|\boldsymbol{\nabla}|\mathbf{T}|| \leq | \boldsymbol{\nabla}\mathbf{T}|$ and the identity $\nabla \mathbf{R} = -2 |\mathbf{T}| \boldsymbol{\nabla}|\mathbf{T}|$ to obtain
\begin{align*}
\frac{\diff}{\diff t} \int |\mathbf{Ric}|^2
	&\leq
		\int - 2 |\boldsymbol{\nabla}\mathbf{Ric}|^2
		+ 4|\mathbf{Rm}||\mathbf{Ric}|^2
		+8|\boldsymbol{\nabla}\mathbf{Ric}||\mathbf{T}|
		|\boldsymbol{\nabla}\mathbf{T}|
		+\frac{4}{3} |\mathbf{T}|^2|\boldsymbol{\nabla}\mathbf{T}|^2\\
	&\quad\quad
		+8|\mathbf{T}|^2|\boldsymbol{\nabla}\mathbf{T}|^2
		+ 8 |\mathbf{Rm}||\mathbf{Ric}||\mathbf{T}|^2
		+ 2 |\mathbf{Ric}|^2|\mathbf{T}|^2\\
	&\leq	
		\int - |\boldsymbol{\nabla}\mathbf{Ric}|^2
		+ (4 + 8 \sqrt{7})|\mathbf{Rm}||\mathbf{Ric}|^2
		+ \frac{52}{3} |\mathbf{T}|^2|\boldsymbol{\nabla}\mathbf{T}|^2
		+ 2 |\mathbf{Ric}|^2|\mathbf{T}|^2
\end{align*}
where in the last step we have used Cauchy--Schwarz and \eqref{torsion-bounded-by-Ric}. The stated inequalitiy now follows (by replacing coefficients by larger integers). 
\end{proof}

\begin{corollary}
Let $\underline{\omega}(t)$ be a solution to the hypersymplectic flow on a compact 4-manifold and time interval $t\in [0,s)$. Suppose moreover that $|\mathbf{T}|^2 \leq \beta/2$ for all $t$. Then there exists a constant $C$, depending only on $\beta$, $s$ and the initial data, such that for all $t \in [0,s)$
\[
\frac{\diff}{\diff t} \int |\mathbf{Ric}|^2
	\leq
		\int - |\boldsymbol{\nabla}\mathbf{Ric}|^2
			+ C\left( 
			|\mathbf{Rm}||\mathbf{Ric}|^2 
			+ |\mathbf{Ric}|^2 
			+ |\hat{\nabla}\diff Q|^2_Q
			+ 1
			\right)
\]
\end{corollary}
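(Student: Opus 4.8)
The plan is to deduce this corollary directly from inequality~\eqref{7D-Ricci-diff-ineq} of the preceding lemma, which holds for any closed $G_2$-Laplacian flow and hence in particular for the flow $\phi(t)$ on $X\times\mathbb{T}^3$ associated to the hypersymplectic flow. Starting from
\[
\frac{\diff}{\diff t}\int|\mathbf{Ric}|^2
\le
\int\left(-|\boldsymbol{\nabla}\mathbf{Ric}|^2+28|\mathbf{Rm}||\mathbf{Ric}|^2+18|\mathbf{T}|^2|\boldsymbol{\nabla}\mathbf{T}|^2+2|\mathbf{Ric}|^2|\mathbf{T}|^2\right),
\]
there are exactly two terms on the right-hand side that are not already of the desired shape, and the whole argument consists of absorbing them using the hypothesis $|\mathbf{T}|^2\le\beta/2$ together with the a~priori bounds on $\tr Q$ and $|\diff Q|_Q$ established earlier.

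The term $2|\mathbf{Ric}|^2|\mathbf{T}|^2$ is at once bounded by $\beta|\mathbf{Ric}|^2$, which is of the allowed form. For $18|\mathbf{T}|^2|\boldsymbol{\nabla}\mathbf{T}|^2$ I would again use $|\mathbf{T}|^2\le\beta/2$ to reduce matters to a pointwise bound on $|\boldsymbol{\nabla}\mathbf{T}|^2$, and here I would invoke Lemma~\ref{nabla-T-bound}, which gives $|\boldsymbol{\nabla}\mathbf{T}|\le C(\tr Q)^{8}\bigl(|\hat{\nabla}\diff Q|_Q+|\diff Q|_Q^2\bigr)$. Since $|\mathbf{T}|$ is bounded on $[0,s)$, Propositions~\ref{trQ-mp} and~\ref{dQ-mp} yield uniform bounds on $\tr Q$ and on $|\diff Q|_Q$ over $[0,s)$ depending only on $\beta$, $s$ and the initial data; feeding these into Lemma~\ref{nabla-T-bound} gives $|\boldsymbol{\nabla}\mathbf{T}|^2\le C\bigl(|\hat{\nabla}\diff Q|_Q^2+1\bigr)$ pointwise, and hence $18|\mathbf{T}|^2|\boldsymbol{\nabla}\mathbf{T}|^2\le C\bigl(|\hat{\nabla}\diff Q|_Q^2+1\bigr)$.

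Substituting these two estimates back in and integrating over $X\times\mathbb{T}^3$, the constant term contributes only a bounded quantity, since $\int\boldsymbol{\mu}=(2\pi)^3\int_X\mu$ and $\int_X\mu\le\frac16\int(\omega_1^2+\omega_2^2+\omega_3^2)$ is fixed by the cohomology classes of the $\omega_i$; enlarging $C$ if necessary then produces the stated inequality. I do not expect any genuinely hard analytic point here: the substantive work has already been done in Lemma~\ref{nabla-T-bound} and in the maximum-principle Propositions~\ref{trQ-mp} and~\ref{dQ-mp}, and the only thing requiring care is to verify at each stage that every constant that appears depends on nothing beyond $\beta$, $s$ and the initial data.
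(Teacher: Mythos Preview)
Your proposal is correct and follows essentially the same approach as the paper: start from \eqref{7D-Ricci-diff-ineq}, use Propositions~\ref{trQ-mp} and~\ref{dQ-mp} to bound $\tr Q$ and $|\diff Q|_Q$, then invoke Lemma~\ref{nabla-T-bound} to obtain $|\boldsymbol{\nabla}\mathbf{T}|^2\le C(|\hat\nabla\diff Q|_Q^2+1)$ and substitute. Your write-up is in fact somewhat more careful than the paper's in tracking the dependence of the constants.
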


\begin{proof}
By Proposition~\ref{trQ-mp}, $\tr Q$ is bounded. By Proposition~\ref{dQ-mp}, $|\diff Q|_Q$ is bounded, so $|\mathbf{T}|$ is bounded. It then follows from Lemma~\ref{nabla-T-bound} that
\[
|\boldsymbol{\nabla}\mathbf{T}| \leq C \left( |\hat{\nabla}\diff Q|^2_Q + 1 \right)
\]
The result now follows by substituting these bounds in to \eqref{7D-Ricci-diff-ineq}.
\end{proof}

We now turn to the second inequality in Proposition~\ref{differential-inequalities}.

\begin{lemma}
Let $\phi(t)$ be a path of closed $G_2$-forms solving the $G_2$-Laplacian flow and assume moreover that there exists $\beta >0$ such that for all $t \in [0,s)$ we have $\mathbf{R}+\beta >0$. Then
\begin{align}
\frac{\diff}{\diff t} \int \frac{|\mathbf{Ric}|^2}{\mathbf{R}+\beta}
	&\leq
		\int -\frac{|\mathbf{Ric}|^4}{(\mathbf{R}+\beta)^2}
		+\frac{28}{\mathbf{R}+\beta} |\mathbf{Rm}||\mathbf{Ric}|^2
		+\frac{3|\mathbf{T}|^4+2\beta|\mathbf{T}|^2}{(\mathbf{R}+\beta)^2}
		|\mathbf{Ric}|^2\nonumber\\
	&\quad\quad\quad\quad\quad\quad\quad\quad
		+640(1+\beta)
		\frac{\mathbf{R}^2+\beta^2+1}{(\mathbf{R}+\beta)^4}
		|\mathbf{Ric}|^2|\boldsymbol{\nabla} T|^2
		+ |\boldsymbol{\nabla}\mathbf{Ric}|^2
		\label{7D-Ricc4-bound}
\end{align}
\end{lemma}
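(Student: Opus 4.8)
\emph{Strategy.} The plan is to imitate Simon's derivation of \eqref{RF-Ricc4}, with $u=|\mathbf{Ric}|^2$ and $v=\mathbf{R}+\beta$, replacing the Ricci-flow evolution equations by Lotay--Wei's equations \eqref{vol-evolution}, \eqref{scalar-heat} and \eqref{Ricci-heat}. Two a-priori facts make this possible. First, since $\mathbf{R}=-|\mathbf{T}|^2\le 0$ (Bryant's formula), the hypothesis $v>0$ forces $0<v\le\beta$, and $|\mathbf{T}|^2=\beta-v<\beta$, so torsion is automatically bounded and $v$ is squeezed between $0$ and $\beta$. Second, \eqref{torsion-bounded-by-Ric} gives $|\mathbf{T}|^2\le\sqrt7\,|\mathbf{Ric}|$, hence $|\mathbf{T}|^4\le 7|\mathbf{Ric}|^2$; this is the device that lets every term of the shape $|\mathbf{Rm}|\,|\mathbf{Ric}|\,|\mathbf{T}|^2$ or $\mathbf{Ric}*\mathbf{T}^2*|\boldsymbol{\nabla}\mathbf{T}|^2$ be promoted to one carrying a full $|\mathbf{Ric}|^2$, so that no bare $|\mathbf{Rm}|$- and no bare $|\boldsymbol{\nabla}\mathbf{T}|^2$-term is left over.

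\emph{Setting up the quotient.} First I would use the pointwise identity
\[
(\del_t-\boldsymbol{\Delta})\!\left(\frac uv\right)=\frac{(\del_t-\boldsymbol{\Delta})u}{v}-\frac{u\,(\del_t-\boldsymbol{\Delta})v}{v^2}+\frac{2\,\boldsymbol{\nabla}u\cdot\boldsymbol{\nabla}v}{v^2}-\frac{2u\,|\boldsymbol{\nabla}v|^2}{v^3},
\]
together with $\int\boldsymbol{\Delta}(u/v)=0$ and the volume evolution $\del_t\boldsymbol{\mu}=\tfrac23|\mathbf{T}|^2\boldsymbol{\mu}$, to write $\tfrac{\diff}{\diff t}\int u/v$ as the integral of
\[
\frac{(\del_t-\boldsymbol{\Delta})|\mathbf{Ric}|^2}{v}-\frac{|\mathbf{Ric}|^2(\del_t-\boldsymbol{\Delta})\mathbf{R}}{v^2}+\frac{2\,\boldsymbol{\nabla}|\mathbf{Ric}|^2\cdot\boldsymbol{\nabla}\mathbf{R}}{v^2}-\frac{2|\mathbf{Ric}|^2|\boldsymbol{\nabla}\mathbf{R}|^2}{v^3}+\frac23\frac{|\mathbf{Ric}|^2|\mathbf{T}|^2}{v}.
\]
The fourth term is $\le 0$ and is discarded. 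Substituting \eqref{Ricci-heat} for $(\del_t-\boldsymbol{\Delta})|\mathbf{Ric}|^2$ and \eqref{scalar-heat} for $(\del_t-\boldsymbol{\Delta})\mathbf{R}$, the decisive observation is that \eqref{scalar-heat} contains the reaction term $+2|\mathbf{Ric}|^2$, which in the second term above produces $-\tfrac{2|\mathbf{Ric}|^4}{v^2}$; since the Lemma only asks for $-|\mathbf{Ric}|^4/v^2$, a full unit of $|\mathbf{Ric}|^4/v^2$ is available to absorb bad terms by Young's inequality.

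\emph{The estimates.} The zeroth-order curvature terms $\tfrac4v\mathbf{R}_{cabd}\mathbf{R}^{ab}\mathbf{R}^{dc}$ and $\tfrac8v\mathbf{R}_{cabd}\mathbf{R}^{ab}\mathbf{T}^{ce}\mathbf{T}_e{}^d$ of \eqref{Ricci-heat}, the second handled by $|\mathbf{T}|^2\le\sqrt7|\mathbf{Ric}|$, give at most $\tfrac{(4+8\sqrt7)}{v}|\mathbf{Rm}||\mathbf{Ric}|^2\le\tfrac{28}{v}|\mathbf{Rm}||\mathbf{Ric}|^2$ (accounting for the coefficient $28$, together with a curvature-commutator contribution below). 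The remaining zeroth-order pieces $\tfrac23\tfrac{\mathbf{R}^2}{v^2}|\mathbf{Ric}|^2$, $-\tfrac4{v^2}|\mathbf{Ric}|^2\mathbf{R}^{ab}\mathbf{T}_a{}^c\mathbf{T}_{cb}$, $\tfrac43 v^{-1}|\mathbf{Ric}|^2|\mathbf{T}|^2$ and the volume term $\tfrac23 v^{-1}|\mathbf{Ric}|^2|\mathbf{T}|^2$ are estimated via $|\mathbf{R}^{ab}\mathbf{T}_a{}^c\mathbf{T}_{cb}|\le|\mathbf{Ric}|\,|\mathbf{T}|^2$, $\mathbf{R}^2=|\mathbf{T}|^4$ and $|\mathbf{T}|^2=\beta-v$; one Young inequality pushes the cubic-in-$|\mathbf{Ric}|$ part into $\tfrac{|\mathbf{Ric}|^4}{v^2}$ and the quadratic remainder collapses to exactly $\tfrac{3|\mathbf{T}|^4+2\beta|\mathbf{T}|^2}{v^2}|\mathbf{Ric}|^2$. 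The cross term $\tfrac2{v^2}\boldsymbol{\nabla}|\mathbf{Ric}|^2\cdot\boldsymbol{\nabla}\mathbf{R}$ is bounded using $|\boldsymbol{\nabla}|\mathbf{Ric}|^2|\le 2|\mathbf{Ric}|\,|\boldsymbol{\nabla}\mathbf{Ric}|$ and $|\boldsymbol{\nabla}\mathbf{R}|=|\boldsymbol{\nabla}|\mathbf{T}|^2|\le 2|\mathbf{T}|\,|\boldsymbol{\nabla}\mathbf{T}|$ (Kato), then by Young by $|\boldsymbol{\nabla}\mathbf{Ric}|^2+\mathrm{const}\cdot v^{-4}|\mathbf{T}|^2|\mathbf{Ric}|^2|\boldsymbol{\nabla}\mathbf{T}|^2$. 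Finally the second-order torsion terms $-4\boldsymbol{\nabla}^b\boldsymbol{\nabla}^a(\mathbf{T}_a{}^c\mathbf{T}_{cb})$ (inside $(\del_t-\boldsymbol{\Delta})\mathbf{R}$) and $4\mathbf{R}^{ab}\boldsymbol{\Delta}(\mathbf{T}_a{}^c\mathbf{T}_{cb})$, $\tfrac23\mathbf{R}\boldsymbol{\Delta}|\mathbf{T}|^2$, $-4\mathbf{R}^{ab}(\boldsymbol{\nabla}_a\boldsymbol{\nabla}^c(\cdots)+\boldsymbol{\nabla}_b\boldsymbol{\nabla}^c(\cdots))$, $-\tfrac23\mathbf{R}^{ab}\boldsymbol{\nabla}_a\boldsymbol{\nabla}_b|\mathbf{T}|^2$ (inside $(\del_t-\boldsymbol{\Delta})|\mathbf{Ric}|^2$) are each integrated by parts once against the weight $|\mathbf{Ric}|^2/v$ or $|\mathbf{Ric}|^2/v^2$, so that one derivative lands on a torsion quadratic and one on the weight. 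Differentiating the weight yields $\boldsymbol{\nabla}|\mathbf{Ric}|^2$ and $\boldsymbol{\nabla}v=-\boldsymbol{\nabla}|\mathbf{T}|^2$ (bounded by $2|\boldsymbol{\nabla}\mathbf{Ric}|$ via the contracted Bianchi identity, or by $2|\mathbf{T}|\,|\boldsymbol{\nabla}\mathbf{T}|$ via Kato), while $|\boldsymbol{\nabla}(\mathbf{T}\mathbf{T})|\le 2|\mathbf{T}|\,|\boldsymbol{\nabla}\mathbf{T}|$ (using $\boldsymbol{\nabla}^a\mathbf{T}_{ab}=0$, as $\mathbf{T}=-\tfrac12\diff^*\phi$ is co-closed). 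Every resulting product is split by Young's inequality into: a small multiple of $|\boldsymbol{\nabla}\mathbf{Ric}|^2$ (kept, with help from the $-2v^{-1}|\boldsymbol{\nabla}\mathbf{Ric}|^2$ already present in \eqref{Ricci-heat}, under the $+|\boldsymbol{\nabla}\mathbf{Ric}|^2$ budget); a multiple of $|\mathbf{Ric}|^2|\boldsymbol{\nabla}\mathbf{T}|^2$ with a power of $v^{-1}$ no worse than $v^{-4}$ — for which $|\mathbf{T}|^2\le\sqrt7|\mathbf{Ric}|$ is used systematically to turn $\mathbf{R}^{ab}$- and $\mathbf{R}$-prefactored terms into ones carrying $|\mathbf{Ric}|^2$, after which $|\mathbf{T}|^2<\beta$, $0<v\le\beta$ and $\mathbf{R}^2=|\mathbf{T}|^4\le 7|\mathbf{Ric}|^2$ show all of them are dominated by the single crude term $640(1+\beta)\tfrac{\mathbf{R}^2+\beta^2+1}{v^4}|\mathbf{Ric}|^2|\boldsymbol{\nabla}\mathbf{T}|^2$; and a piece $\le C|\mathbf{Ric}|^4/v^2$ absorbed into the good term. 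Curvature commutators arising when $\boldsymbol{\nabla}_a\boldsymbol{\nabla}^c$ is reordered contribute $\mathbf{Rm}*\mathbf{T}*\mathbf{T}$, which is $\le\sqrt7|\mathbf{Rm}||\mathbf{Ric}|^2$.

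\emph{Main obstacle.} I expect the difficulty to be organisational rather than conceptual, and concentrated in the $\boldsymbol{\Delta}$-of-torsion terms (especially $\tfrac23\mathbf{R}\boldsymbol{\Delta}|\mathbf{T}|^2$): there the $\mathbf{R}$-prefactor equals $-|\mathbf{T}|^2$, so after integration by parts one is left with $|\boldsymbol{\nabla}\mathbf{R}|^2$-type quantities weighted by high negative powers of $v$, and one must spend the extra $|\mathbf{T}|$-factors — via $|\mathbf{T}|^2\le\sqrt7|\mathbf{Ric}|$ and $|\mathbf{T}|^2<\beta$ — exactly so that every surviving $|\boldsymbol{\nabla}\mathbf{T}|^2$-contribution acquires a full $|\mathbf{Ric}|^2$ and no weighted $|\boldsymbol{\nabla}\mathbf{Ric}|^2$ remains. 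The bulk of the work is then tracking the precise power of $(\mathbf{R}+\beta)^{-1}$ attached to each $|\mathbf{Ric}|^2|\boldsymbol{\nabla}\mathbf{T}|^2$- and $|\mathbf{Ric}|^4$-term through all the integrations by parts, checking the reserves $-|\mathbf{Ric}|^4/v^2$ and $+|\boldsymbol{\nabla}\mathbf{Ric}|^2$ are not overspent, and verifying that the deliberately generous coefficient $640(1+\beta)(\mathbf{R}^2+\beta^2+1)/(\mathbf{R}+\beta)^4$ genuinely dominates the finite sum of all the $|\mathbf{Ric}|^2|\boldsymbol{\nabla}\mathbf{T}|^2$-contributions. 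All other ingredients — the quotient rule for the heat operator, integration by parts, Kato's inequality, the contracted Bianchi identity, Young's inequality, co-closedness of $\mathbf{T}$, and the two torsion bounds — are routine.
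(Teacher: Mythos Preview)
Your proposal is correct and follows essentially the same route as the paper: set up the quotient identity for the heat operator, substitute the Lotay--Wei evolution equations \eqref{scalar-heat} and \eqref{Ricci-heat}, integrate each second-order torsion term by parts once, then estimate everything using Kato, the contracted Bianchi identity, $|\mathbf{T}|^2\le\sqrt7\,|\mathbf{Ric}|$, and Young/Cauchy--Schwarz, finally absorbing the stray $|\mathbf{Ric}|^3$ into the good $-2|\mathbf{Ric}|^4/v^2$.

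Two minor remarks. First, rather than discarding $-2u|\boldsymbol{\nabla}v|^2/v^3$ and then spending Young's inequality on the cross term, the paper combines these two with the $-2v^{-1}|\boldsymbol{\nabla}\mathbf{Ric}|^2$ from \eqref{Ricci-heat} into the single perfect square
\[
-\frac{2}{(\mathbf{R}+\beta)^3}\bigl|(\mathbf{R}+\beta)\boldsymbol{\nabla}\mathbf{Ric}-\mathbf{Ric}\otimes\boldsymbol{\nabla}\mathbf{R}\bigr|^2\le 0,
\]
which is discarded; this is marginally tidier but entirely equivalent to your treatment. Second, no curvature commutator ever arises: each $\boldsymbol{\nabla}_a\boldsymbol{\nabla}^c(\mathbf{T}\mathbf{T})$ is integrated by parts exactly once (moving $\boldsymbol{\nabla}_a$ onto the weight), so there is no need to reorder derivatives, and the coefficient $28$ is simply the round-up of $4+8\sqrt7$. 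Likewise the co-closedness of $\mathbf{T}$ is correct but not actually used.
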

\begin{proof}
We compute:
\begin{align*}
\frac{\diff}{\diff t} \int \frac{|\mathbf{Ric}|^2}{\mathbf{R}+ \beta}
	&=
		\int \left( \del_t - \boldsymbol{\Delta} \right)
		\frac{|\mathbf{Ric}|^2}{\mathbf{R}+ \beta}
		+
		\frac{2|\mathbf{Ric}|^2|\mathbf{T}|^2}{3(\mathbf{R}+ \beta)}\\
	&=
		\int 
		\frac{\left( \del_t - \boldsymbol{\Delta} \right)|\mathbf{Ric}|^2}
		{\mathbf{R}+\beta}
		+
		\frac{2 \left\langle \boldsymbol{\nabla}|\mathbf{Ric}|^2,
		 \boldsymbol{\nabla}R\right\rangle}
		 {(\mathbf{R} +\beta)^2}
		 -
		 \frac{2 |\mathbf{Ric}|^2|\boldsymbol{\nabla}\mathbf{R}|^2}
		 {(\mathbf{R}+\beta)^3}\\
	&\quad\quad\quad\quad
		-
		\frac{|\mathbf{Ric}|^2 (\del_t-\boldsymbol{\Delta})\mathbf{R}}
		{(\mathbf{R}+\beta)^2}
		+
		\frac{2|\mathbf{Ric}|^2|\mathbf{T}|^2}
		{3(\mathbf{R}+\beta)}
\end{align*}
Now substituting in the evolution equations~\eqref{scalar-heat} and~\eqref{Ricci-heat}, and completing the square on the resulting $|\boldsymbol{\nabla}\mathbf{Ric}|^2$ term, this is equal to
\begin{align*}
	&=
		\int
		-\frac{2\left|
		(\mathbf{R}+\beta)\boldsymbol{\nabla}\mathbf{Ric}
		-\mathbf{Ric}\otimes \boldsymbol{\nabla}R\right|^2}
		{(\mathbf{R}+\beta)^3}
		+
		\frac{2|\mathbf{Ric}|^2|\mathbf{T}|^2}
		{3(\mathbf{R}+\beta)}\\
	&
		+\frac{1}{\mathbf{R}+\beta}
		\Big\{
		4 \mathbf{R}_{cabd}\mathbf{R}^{ab}\mathbf{R}^{dc}
		+ 4 \mathbf{R}^{ab}\boldsymbol{\Delta}\left(
		\mathbf{T}_a^{\phantom{a}c}\mathbf{T}_{cb}
		\right)
		+\frac{2}{3}\mathbf{R}\boldsymbol{\Delta}|\mathbf{T}|^2
		+ 8 \mathbf{R}_{cabd}\mathbf{R}^{ab}
		 \mathbf{T}^{ce}\mathbf{T}_e^{\phantom{e}d}
		 +
		 \frac{4}{3}|\mathbf{Ric}|^2|\mathbf{T}|^2\\
	&\quad\quad\quad\quad\quad\quad\quad
		- 4\mathbf{R}^{ab}(	
		 \boldsymbol{\nabla}_a\boldsymbol{\nabla}^c
		 ( \mathbf{T}_c^{\phantom{c}d}\mathbf{T}_{db} )
		 +
		 \boldsymbol{\nabla}_b\boldsymbol{\nabla}^c
		 ( \mathbf{T}_c^{\phantom{c}d}\mathbf{T}_{da} )
		 )
		 -\frac{2}{3}\mathbf{R}^{ab}
		 \boldsymbol{\nabla}_a\boldsymbol{\nabla}_b |\mathbf{T}|^2
		 \Big\}\\
	&
		-\frac{|\mathbf{Ric}|^2}{(\mathbf{R}+\beta)^2}\Big\{
		\boldsymbol{\Delta}\mathbf{R}
		-
		4\boldsymbol{\nabla}^b\boldsymbol{\nabla}^a
		(\mathbf{T}_a^{\phantom{a}c}\mathbf{T}_{cb})
		+
		2|\mathbf{Ric}|^2
		-
		\frac{2}{3}\mathbf{R}^2
		+
		4 \mathbf{R}^{ab}\mathbf{T}_a^{\phantom{a}c}\mathbf{T}_{cb}
		\Big\}
\end{align*}
Now we gather terms and integrate by parts:
\begin{align*}
&=
	\int
	-\frac{2\left|
	(\mathbf{R}+\beta)\boldsymbol{\nabla}\mathbf{Ric}
	-\mathbf{Ric}\otimes \boldsymbol{\nabla}R\right|^2}
	{(\mathbf{R}+\beta)^3}
	-
	\frac{2|\mathbf{Ric}|^4}{(\mathbf{R}+\beta)^2}
	+
	\frac{2|\mathbf{Ric}|^2|\mathbf{T}|^2}
	{\mathbf{R}+\beta}
	+
	\frac{2|\mathbf{Ric}|^2\mathbf{R}^2}
	{3(\mathbf{R}+\beta)^2}\\
&\quad
	+
	\frac{4\mathbf{R}_{cabd}\mathbf{R}^{ab}\mathbf{R}^{dc}}
	{\mathbf{R}+\beta}
	+
	\frac{8 \mathbf{R}_{cabd}\mathbf{R}^{ab}
		 \mathbf{T}^{ce}\mathbf{T}_e^{\phantom{e}d}}
	{\mathbf{R}+\beta}	 
	-
	\frac{4 |\mathbf{Ric}|^2
	\mathbf{R}^{ab}\mathbf{T}_a^{\phantom{a}c}\mathbf{T}_{cb}}
	{(\mathbf{R}+\beta)^2}\\
&\quad\quad
	-4\boldsymbol{\nabla}^c\left(
	\frac{\mathbf{R}^{ab}}{\mathbf{R}+\beta}  
	\right)
	\boldsymbol{\nabla}_c(\mathbf{T}_{a}^{\phantom{a}d}\mathbf{T}_{db})
	-\frac{2}{3}
	\boldsymbol{\nabla}_c\left( 
		\frac{\mathbf{R}}{\mathbf{R}+\beta}
	\right)
	\boldsymbol{\nabla}^c|\mathbf{T}|^2
	+
	8\boldsymbol{\nabla}_a\left( 
		\frac{\mathbf{R}^{ab}}{\mathbf{R}+\beta}
		\right)
	\boldsymbol{\nabla}^c(\mathbf{T}_{c}^{\phantom{c}d}\mathbf{T}_{db})\\
&\quad\quad\quad
	+\frac{2}{3}\boldsymbol{\nabla}_a\left(  
		\frac{\mathbf{R}^{ab}}{\mathbf{R}+\beta}
	\right)
	\boldsymbol{\nabla}_b|\mathbf{T}|^2
	-4\boldsymbol{\nabla}^b\left( 
	 	\frac{|\mathbf{Ric}|^2}{(\mathbf{R}+\beta)^2}
	 \right)
	 \boldsymbol{\nabla}^a(\mathbf{T}_{a}^{\phantom{a}c}\mathbf{T}_{cb})
\end{align*}
Next we bound this by the norms of various terms, also applying the contracted Bianchi identity, Kato's inequality, and \eqref{torsion-bounded-by-Ric} as appropriate. This means the left-hand side is bounded by
\begin{align*}
&\leq
	\int
	-\frac{2|\mathbf{Ric}|^4}{(\mathbf{R}+\beta)^2}
	+\frac{2|\mathbf{Ric}|^2|\mathbf{T}|^2}{\mathbf{R}+\beta}
	+\frac{2|\mathbf{Ric}|^2\mathbf{R}^2}{3(\mathbf{R}+\beta)^2}
	+\frac{(4+8\sqrt{7})|\mathbf{Rm}||\mathbf{Ric}|^2}
		{\mathbf{R}+\beta}
	+\frac{4|\mathbf{Ric}|^3|\mathbf{T}|^2}{(\mathbf{R}+\beta)^2}\\
&\quad
	\left( 
		\frac{8|\boldsymbol{\nabla}\mathbf{Ric}|}{\mathbf{R}+\beta}
		+
		\frac{8|\mathbf{Ric}||\boldsymbol{\nabla}\mathbf{R}|}
			{(\mathbf{R}+\beta)^2}
	\right)|\mathbf{T}||\boldsymbol{\nabla}\mathbf{T}|
	+\frac{8\beta|\mathbf{T}|^2|\boldsymbol{\nabla}\mathbf{T}|^2}
		{3(\mathbf{R}+\beta)^2}
	+\left( 
	\frac{4|\boldsymbol{\nabla}\mathbf{R}|}{\mathbf{R}+\beta}
	+
	\frac{8|\mathbf{Ric}||\boldsymbol{\nabla}\mathbf{R}|}
		{(\mathbf{R}+\beta)^2}
	\right)|\mathbf{T}||\boldsymbol{\nabla}\mathbf{T}|\\
&\quad
	+\left(  
	\frac{|\boldsymbol{\nabla}\mathbf{R}|}{3(\mathbf{R}+\beta)}
	+
	\frac{2|\mathbf{Ric}||\boldsymbol{\nabla}\mathbf{R}|}
		{3(\mathbf{R}+\beta)^2}
	\right)|\boldsymbol{\nabla}|\mathbf{T}|^2|
	+
	\left(  
	\frac{8|\mathbf{Ric}||\boldsymbol{\nabla}\mathbf{Ric}|}
		{(\mathbf{R}+\beta)^2}
	+
	\frac{8|\mathbf{Ric}|^2|\boldsymbol{\nabla}\mathbf{R}|}
		{(\mathbf{R}+\beta)^3}
	\right)|\mathbf{T}||\boldsymbol{\nabla}\mathbf{T}|\\
&\leq
	\int
	-\frac{2|\mathbf{Ric}|^4}{(\mathbf{R}+\beta)^2}
	+\frac{2|\mathbf{Ric}|^2|\mathbf{T}|^2}{\mathbf{R}+\beta}
	+\frac{2|\mathbf{Ric}|^2\mathbf{R}^2}{3(\mathbf{R}+\beta)^2}
	+\frac{(4+8\sqrt{7})|\mathbf{Rm}||\mathbf{Ric}|^2}
		{\mathbf{R}+\beta}
	+\frac{4|\mathbf{Ric}|^3|\mathbf{T}|^2}{(\mathbf{R}+\beta)^2}\\
&\quad
	+\frac{8|\boldsymbol{\nabla}\mathbf{Ric}|
		|\mathbf{T}||\boldsymbol{\nabla}\mathbf{T}|}
		{\mathbf{R}+\beta}
	+\frac{104|\mathbf{Ric}|
		|\mathbf{T}||\boldsymbol{\nabla}\mathbf{T}|}
		{3(\mathbf{R}+\beta)^2}
	+\frac{8\beta|\mathbf{T}|^2|\boldsymbol{\nabla}\mathbf{T}|^2}
		{3(\mathbf{R}+\beta)^2}
	+\frac{28|\mathbf{T}|^2|\boldsymbol{\nabla}\mathbf{T}|^2}
		{3(\mathbf{R}+\beta)}\\
&\quad
	+\frac{8|\mathbf{Ric}||\boldsymbol{\nabla}\mathbf{Ric}|
		|\mathbf{T}||\boldsymbol{\nabla}\mathbf{T}|}
		{(\mathbf{R}+\beta)^2}
	+\frac{16|\mathbf{Ric}|^2|\mathbf{T}|^2
		|\boldsymbol{\nabla}\mathbf{T}|^2}
		{(\mathbf{R}+\beta)^3}
\end{align*}
We now use Cauchy--Schwarz to isolate the $|\boldsymbol{\nabla}\mathbf{Ric}|$ term. This implies the left-hand side is bounded by
\begin{align*}
&\leq
	\int
	-\frac{2|\mathbf{Ric}|^4}{(\mathbf{R}+\beta)^2}
	+\frac{2|\mathbf{Ric}|^2|\mathbf{T}|^2}{\mathbf{R}+\beta}
	+\frac{2|\mathbf{Ric}|^2\mathbf{R}^2}{3(\mathbf{R}+\beta)^2}
	+\frac{(4+8\sqrt{7})|\mathbf{Rm}||\mathbf{Ric}|^2}
		{\mathbf{R}+\beta}
	+\frac{4|\mathbf{Ric}|^3|\mathbf{T}|^2}{(\mathbf{R}+\beta)^2}\\
&\quad
	+|\boldsymbol{\nabla}\mathbf{Ric}|^2
	+\left(  
	\frac{32}{(\mathbf{R}+\beta)^2}
	+\frac{8\beta}{3(\mathbf{R}+\beta)^2}
	+\frac{28}{3(\mathbf{R}+\beta)}
	\right)|\mathbf{T}|^2|\boldsymbol{\nabla}\mathbf{T}|^2\\
&\quad
	+\left(  
	\frac{104\sqrt{7}}{3(\mathbf{R}+\beta)^2}
	+\frac{32|\mathbf{T}|^2}{(\mathbf{R}+\beta)^4}
	+\frac{16|\mathbf{T}|^2}{(\mathbf{R}+\beta)^3}
	\right)|\mathbf{Ric}|^2|\boldsymbol{\nabla}\mathbf{T}|^2
\end{align*}
Finally, we absorb the positive $|\mathbf{Ric}|^3$ term into the negative $|\mathbf{Ric}|^4$ term using Cauchy--Schwarz, at the expense of introducing a term with $|\Ric|^2$ in it. This gives the bound
\begin{align*}
&\leq
	\int
	-\frac{|\mathbf{Ric}|^4}{(\mathbf{R}+\beta)^2}
	+\frac{(4+8\sqrt{7})|\mathbf{Rm}||\mathbf{Ric}|^2}{\mathbf{R}+\beta}
	+\left( 
	 \frac{2|\mathbf{T}|^2}{\mathbf{R}+\beta}
	 +\frac{14|\mathbf{T}|^4}{3(\mathbf{R}+\beta)^2}
	 \right)|\mathbf{Ric}|^2\\
&\quad
	+\left( 
		\sqrt{7}\left(  
				\frac{32 + 8\beta/3}{(\mathbf{R}+\beta)^2}
				+\frac{28}{3(\mathbf{R}+\beta)}
				\right) 
		+\frac{104\sqrt{7}}{3(\mathbf{R}+\beta)^2}
		+\frac{32|\mathbf{T}|^2}{(\mathbf{R}+\beta)^4}
		+\frac{16|\mathbf{T}|^2}{(\mathbf{R}+\beta)^3}
	\right)|\mathbf{Ric}|^2|\boldsymbol{\nabla}\mathbf{T}|^2\\
&\quad\quad\quad\quad
	+|\boldsymbol{\nabla}\mathbf{Ric}|^2
\end{align*}
A little manipulation, putting coefficients over common denominators, gives the result.
\end{proof}

\begin{corollary}
Let $\underline{\omega}(t)$ be a solution to the hypersymplectic flow on a compact 4-manifold and time interval $t\in [0,s)$. Suppose moreover that $|\mathbf{T}|^2 \leq \beta/2$ for all $t$. Then there exists a constant $C$, depending only on $\beta$, $s$ and the initial data, such that for all $t \in [0,s)$
\[
\frac{\diff}{\diff t} \int \frac{|\mathbf{Ric}|^2}{\mathbf{R}+\beta}
	\leq
		\int - \frac{4}{\beta^2} |\mathbf{Ric}|^4 
			+|\boldsymbol{\nabla}\mathbf{Ric}|^2
			+ C\left( 
			|\mathbf{Rm}||\mathbf{Ric}|^2 
			+ |\mathbf{Ric}|^2|\hat{\nabla}\diff Q|^2_Q
			+ |\mathbf{Ric}|^2 
			\right)
\]
\end{corollary}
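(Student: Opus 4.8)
The plan is to obtain the corollary by specializing the general inequality \eqref{7D-Ricc4-bound} of the preceding Lemma to the hypersymplectic setting. First one checks that the Lemma's hypothesis holds: Bryant's identity $\mathbf{R}=-|\mathbf{T}|^2$ together with $|\mathbf{T}|^2\le\beta/2$ forces $\mathbf{R}\in[-\beta/2,0]$, hence $\mathbf{R}+\beta\in[\beta/2,\beta]$. In particular $\mathbf{R}+\beta>0$, so \eqref{7D-Ricc4-bound} applies; and moreover every power $(\mathbf{R}+\beta)^{-k}$ occurring on its right-hand side is squeezed between $\beta^{-k}$ and $2^{k}\beta^{-k}$, which is exactly what is needed to turn the $(\mathbf{R}+\beta)$-dependent coefficients of \eqref{7D-Ricc4-bound} into constants depending only on $\beta$.

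Next one assembles the a priori control coming from \S\ref{evolution-equations}: by Propositions~\ref{trQ-mp} and~\ref{dQ-mp}, $\tr Q$ and $|\diff Q|_Q$ are bounded on $[0,s)$ by a constant depending only on $\beta$, $s$ and the initial data; substituting this into Lemma~\ref{nabla-T-bound} gives $|\boldsymbol{\nabla}\mathbf{T}|\le C(|\hat{\nabla}\diff Q|_Q+1)$, and hence $|\boldsymbol{\nabla}\mathbf{T}|^2\le C(|\hat{\nabla}\diff Q|^2_Q+1)$. The hypothesis itself supplies $|\mathbf{T}|^2\le\beta/2$ and $\mathbf{R}^2=|\mathbf{T}|^4\le\beta^2/4$. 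Now one runs through \eqref{7D-Ricc4-bound} term by term: the leading negative term is kept, its denominator bounded below by a multiple of $\beta^2$, producing the negative $|\mathbf{Ric}|^4$ contribution in the statement; the $|\boldsymbol{\nabla}\mathbf{Ric}|^2$ term is carried over unchanged (to be absorbed later, in the proof of Theorem~\ref{Ricci-L2-bound}); $\tfrac{28}{\mathbf{R}+\beta}|\mathbf{Rm}||\mathbf{Ric}|^2$ becomes $C|\mathbf{Rm}||\mathbf{Ric}|^2$; $\tfrac{3|\mathbf{T}|^4+2\beta|\mathbf{T}|^2}{(\mathbf{R}+\beta)^2}|\mathbf{Ric}|^2$ becomes $C|\mathbf{Ric}|^2$; and since the coefficient of $|\mathbf{Ric}|^2|\boldsymbol{\nabla}\mathbf{T}|^2$ is at most $C(\beta)$, that term is at most $C|\mathbf{Ric}|^2(|\hat{\nabla}\diff Q|^2_Q+1)=C(|\mathbf{Ric}|^2|\hat{\nabla}\diff Q|^2_Q+|\mathbf{Ric}|^2)$. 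Summing yields the asserted inequality.

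There is essentially no obstacle here beyond careful bookkeeping; the one step with genuine content---and the main thing to watch---is the conversion $|\boldsymbol{\nabla}\mathbf{T}|^2\le C(|\hat{\nabla}\diff Q|^2_Q+1)$ via Lemma~\ref{nabla-T-bound}, which is legitimate only because $\tr Q$ and $|\diff Q|_Q$ have already been shown bounded. In this sense the corollary rests on the maximum-principle arguments of Propositions~\ref{trQ-mp} and~\ref{dQ-mp}, and on Bryant's identity $\mathbf{R}=-|\mathbf{T}|^2$, which is what lets the torsion hypothesis be read as two-sided control of $\mathbf{R}+\beta$.
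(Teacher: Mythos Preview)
Your proposal is correct and follows essentially the same approach as the paper's own proof, which reads in its entirety: ``This follows from \eqref{7D-Ricc4-bound} by substituting the bounds $\mathbf{R}+\beta\ge\beta/2$, $|\mathbf{T}|\le C$, and $|\boldsymbol{\nabla}\mathbf{T}|^2\le C(|\hat{\nabla}\diff Q|^2_Q+1)$.'' You have simply unpacked these substitutions in more detail, correctly tracing the bound on $|\boldsymbol{\nabla}\mathbf{T}|$ back through Lemma~\ref{nabla-T-bound} to the maximum-principle control of $\tr Q$ and $|\diff Q|_Q$ from Propositions~\ref{trQ-mp} and~\ref{dQ-mp}.
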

\begin{proof}
This follows from \eqref{7D-Ricc4-bound} by substituting the bounds $\mathbf{R}+ \beta \geq \beta/2$, $|\mathbf{T}|\leq C$, and $|\boldsymbol{\nabla}\mathbf{T}|^2 \leq C (|\hat{\nabla}\diff Q|^2_Q+1)$.
\end{proof}

\subsection{Evolution of integral quantities in 4-dimensions}

We move now to the evolution of $\int |\diff Q|^2_Q$. Proposition~\ref{dQ-mp} shows that there is a constant $C$ (depending only on $\tr Q$) such that
\begin{equation}\label{dQ-heat}
(\del_t - \Delta) |\diff Q|^2_Q
\leq
-|\hat{\nabla}\diff Q|^2_Q + C|\mathbf{T}|^2|\diff Q|^2_Q
\end{equation}

\begin{lemma}
Let $\underline{\omega}(t)$ be a solution to the hypersymplectic flow and suppose that $\tr Q < c$ is bounded for all $t \in [0,s)$. Then there is a constant $C$ depending only on $c$ such that
\[
\frac{\diff} {\diff t} \int |\diff Q|^2_Q 
	\leq
	\int \left(- |\hat{\nabla}\diff Q|^2_Q + C |\mathbf{T}|^2|\diff Q|^2_Q\right)
\]
In particular, this gives the fourth inequality of Proposition~\ref{differential-inequalities}.
\end{lemma}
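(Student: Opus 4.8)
The plan is to integrate the pointwise heat inequality \eqref{dQ-heat} over the closed 7-manifold $X\times\mathbb{T}^3$; the only subtlety is the time-dependence of the volume form $\boldsymbol{\mu}$.

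Write $u=|\diff Q|^2_Q$, and recall that $Q$, hence $u$, is pulled back from $X$. The first point is that the Laplacian term integrates to zero: since $\boldsymbol{\mu}=\mu\wedge\diff t^{123}$,
\[
\int_{X\times\mathbb{T}^3}(\Delta u)\,\boldsymbol{\mu}=(2\pi)^3\int_X(\Delta u)\,\mu=0
\]
by the divergence theorem on the compact manifold $(X,g_{\underline{\omega}})$. (Equivalently, $\Delta u=\boldsymbol{\Delta}u$ on $\mathbb{T}^3$-invariant functions — the putative vertical contribution is proportional to $Q^{ij}\del_\alpha Q_{ij}=\del_\alpha\log\det Q=0$ — and $\int\boldsymbol{\Delta}u\,\boldsymbol{\mu}=0$ on the closed 7-manifold.) Hence $\int(\del_t u)\,\boldsymbol{\mu}=\int(\del_t-\Delta)u\,\boldsymbol{\mu}$.

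Next I would differentiate under the integral sign, using the evolution \eqref{vol-evolution} of the 7-dimensional volume form, $\del_t\boldsymbol{\mu}=\tfrac23|\mathbf{T}|^2\boldsymbol{\mu}$:
\[
\frac{\diff}{\diff t}\int|\diff Q|^2_Q
=\int(\del_t u)\,\boldsymbol{\mu}+\frac23\int|\mathbf{T}|^2|\diff Q|^2_Q\,\boldsymbol{\mu}
=\int(\del_t-\Delta)u\,\boldsymbol{\mu}+\frac23\int|\mathbf{T}|^2|\diff Q|^2_Q\,\boldsymbol{\mu}.
\]
Substituting \eqref{dQ-heat} — whose constant depends only on the bound $c$ for $\tr Q$ — and enlarging $C$ to absorb the extra $\tfrac23$ yields the stated inequality.

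For the final assertion: under the standing hypothesis $|\mathbf{T}|^2\le\beta/2$, Proposition~\ref{trQ-mp} bounds $\tr Q$ and Proposition~\ref{dQ-mp} bounds $|\diff Q|^2_Q$, both uniformly on $[0,s)$ in terms of $\beta$, $s$ and the initial data; thus $C|\mathbf{T}|^2|\diff Q|^2_Q$ is bounded by a constant of the same type, and the fourth inequality of Proposition~\ref{differential-inequalities} follows. I do not expect any genuine obstacle here: the substance is already contained in \eqref{dQ-heat}, and what remains is careful bookkeeping with the evolving volume form and the reduction of the Laplacian integral to zero.
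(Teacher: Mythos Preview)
Your proof is correct and follows essentially the same approach as the paper: differentiate under the integral, use $\del_t\boldsymbol{\mu}=\tfrac23|\mathbf{T}|^2\boldsymbol{\mu}$, observe that the Laplacian integrates to zero, and substitute \eqref{dQ-heat}. You are simply more explicit than the paper about why the 4-dimensional Laplacian of a $\mathbb{T}^3$-invariant function integrates to zero against $\boldsymbol{\mu}$, which is a harmless elaboration.
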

\begin{proof}
Directly from \eqref{dQ-heat}, we have
\[
\frac{\diff}{\diff t} \int|\diff Q|^2_Q
	=
		\int\left( \del_t - \Delta \right)|\diff Q|^2_Q 
		+ 
		\frac{2}{3} |\diff Q|^2_Q|\mathbf{T}|^2
	\leq
		\int  -|\hat{\nabla}\diff Q|^2 
		+ 
		\left(C + \frac{2}{3}\right)|\diff Q|^2_Q
\qedhere\]
\end{proof}

It remains to prove the third inequality of Proposition~\ref{differential-inequalities}:

\begin{lemma}
Let $\underline{\omega}(t)$ be a solution to the hypersymplectic flow on a compact 4-manifold and time interval $t\in [0,s)$. Suppose moreover that $|\mathbf{T}|^2 \leq \beta/2$ for all $t$. Then there exists a constant $C$, depending only on $\beta$, $s$ and the initial data, such that for all $t \in [0,s)$
\[
\frac{\diff}{\diff t} \int |\mathbf{Ric}|^2|\diff Q|^2_Q
	\leq
		\int
		-\frac{1}{2}|\mathbf{Ric}|^2|\hat{\nabla}\diff Q|^2_Q
		+C\left(  
		|\mathbf{Rm}||\mathbf{Ric}|^2 
		+|\hat{\nabla}\diff Q|^2_Q
		+|\boldsymbol{\nabla}\mathbf{Ric}|^2
		+|\mathbf{Ric}|^2
		+ 1
		\right)
\]
\end{lemma}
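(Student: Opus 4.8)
The plan is to differentiate $\int_{X\times\mathbb{T}^3}|\mathbf{Ric}|^2|\diff Q|^2_Q\,\boldsymbol\mu$ by the product rule and feed in the evolution equations already at our disposal: the $G_2$-Laplacian heat equation \eqref{Ricci-heat} for $|\mathbf{Ric}|^2$, the $4$-dimensional heat inequality \eqref{dQ-heat} for $|\diff Q|^2_Q$, and the volume evolution \eqref{vol-evolution}. Write $f=|\mathbf{Ric}|^2$ and $h=|\diff Q|^2_Q$. Both are $\mathbb{T}^3$-invariant, hence functions on $X$, and for any such function $u$ one has $\boldsymbol{\Delta}u=\Delta u$, since the only extra contribution in the $7$-dimensional Laplacian is $\tfrac12 g^{\gamma\alpha}(Q^{ij}\partial_\alpha Q_{ij})\partial_\gamma u$ and $Q^{ij}\partial_\alpha Q_{ij}=\partial_\alpha\log\det Q=0$. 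So, after one integration by parts on the two Laplacian terms,
\[
\frac{\diff}{\diff t}\int fh\,\boldsymbol\mu
=-2\int\langle\boldsymbol\nabla f,\boldsymbol\nabla h\rangle
+\int h\,(\del_t-\boldsymbol\Delta)f
+\int f\,(\del_t-\Delta)h
+\tfrac23\int|\mathbf{T}|^2 fh .
\]
Throughout $C$ is a constant of the kind allowed in the statement. The inputs we rely on: $\tr Q$, $|\diff Q|_Q$ and $|\mathbf{T}|$ are bounded along the flow (Propositions~\ref{trQ-mp} and~\ref{dQ-mp}) — in particular $h\le C$, so any bounded quantity times $h$ is again bounded; $|\boldsymbol\nabla\mathbf{T}|\le C(|\hat\nabla\diff Q|_Q+1)$ (Lemma~\ref{nabla-T-bound}); $|\boldsymbol\nabla h|=|\nabla h|_g\le 2|\diff Q|_Q|\hat\nabla\diff Q|_Q\le C|\hat\nabla\diff Q|_Q$; and $|\boldsymbol\nabla f|\le 2|\mathbf{Ric}||\boldsymbol\nabla\mathbf{Ric}|$.

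The single negative term we keep comes from $\int f(\del_t-\Delta)h$: since $f\ge 0$ and, by \eqref{dQ-heat} together with $|\mathbf{T}|^2|\diff Q|^2_Q\le C$, $(\del_t-\Delta)h\le-|\hat\nabla\diff Q|^2_Q+C$, we get
\[
\int f(\del_t-\Delta)h\le-\int|\mathbf{Ric}|^2|\hat\nabla\diff Q|^2_Q+C\int|\mathbf{Ric}|^2 .
\]
We shall spend a fixed fraction of the coefficient $1$ in front of $|\mathbf{Ric}|^2|\hat\nabla\diff Q|^2_Q$ on Peter--Paul absorptions and retain $-\tfrac12$. The volume term is already $\le C\int|\mathbf{Ric}|^2$, and the cross term is, for any $\delta>0$,
\[
-2\int\langle\boldsymbol\nabla f,\boldsymbol\nabla h\rangle\le C\int|\mathbf{Ric}||\boldsymbol\nabla\mathbf{Ric}||\hat\nabla\diff Q|_Q\le C_\delta\int|\boldsymbol\nabla\mathbf{Ric}|^2+\delta\int|\mathbf{Ric}|^2|\hat\nabla\diff Q|^2_Q ,
\]
the last term charged to the budget.

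It remains to estimate $\int h\,(\del_t-\boldsymbol\Delta)f$, i.e.\ $\int h$ times the right-hand side of \eqref{Ricci-heat}. The purely algebraic pieces are routine: $-2\int h|\boldsymbol\nabla\mathbf{Ric}|^2\le 0$ is dropped; the curvature cubics $4\mathbf{R}_{cabd}\mathbf{R}^{ab}\mathbf{R}^{dc}$ and $8\mathbf{R}_{cabd}\mathbf{R}^{ab}\mathbf{T}^{ce}\mathbf{T}_e{}^{d}$ contribute $\le C\int|\mathbf{Rm}||\mathbf{Ric}|^2$ (for the second using $h\le C$ and $|\mathbf{T}|^2\le\sqrt7|\mathbf{Ric}|$ from \eqref{torsion-bounded-by-Ric}); and $\tfrac43\int h|\mathbf{Ric}|^2|\mathbf{T}|^2\le C\int|\mathbf{Ric}|^2$. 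For the four terms carrying two derivatives of torsion — $4\mathbf{R}^{ab}\boldsymbol\Delta(\mathbf{T}_a{}^{c}\mathbf{T}_{cb})$, $\tfrac23\mathbf{R}\,\boldsymbol\Delta|\mathbf{T}|^2$, $-4\mathbf{R}^{ab}(\boldsymbol\nabla_a\boldsymbol\nabla^c(\mathbf{T}_c{}^{d}\mathbf{T}_{db})+\boldsymbol\nabla_b\boldsymbol\nabla^c(\mathbf{T}_c{}^{d}\mathbf{T}_{da}))$ and $-\tfrac23\mathbf{R}^{ab}\boldsymbol\nabla_a\boldsymbol\nabla_b|\mathbf{T}|^2$ — I integrate by parts once, moving one derivative onto the product of $h$ with the curvature factor. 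Using $|\boldsymbol\nabla(\mathbf{R}^{ab}h)|\le C(|\boldsymbol\nabla\mathbf{Ric}|+|\mathbf{Ric}||\hat\nabla\diff Q|_Q)$ and likewise for $\mathbf{R}h$ (recalling $|\boldsymbol\nabla\mathbf{R}|=2|\mathbf{T}||\boldsymbol\nabla\mathbf{T}|\le C(|\hat\nabla\diff Q|_Q+1)$ and $|\mathbf{R}|\le C$), together with $|\boldsymbol\nabla(\mathbf{T}\!\cdot\!\mathbf{T})|\le C|\mathbf{T}||\boldsymbol\nabla\mathbf{T}|\le C(|\hat\nabla\diff Q|_Q+1)$, each such term is bounded by
\[
C\int\bigl(|\boldsymbol\nabla\mathbf{Ric}|+|\mathbf{Ric}||\hat\nabla\diff Q|_Q\bigr)\bigl(|\hat\nabla\diff Q|_Q+1\bigr).
\]
Expanding and using $2ab\le\epsilon a^2+\epsilon^{-1}b^2$: the products $|\boldsymbol\nabla\mathbf{Ric}||\hat\nabla\diff Q|_Q$, $|\boldsymbol\nabla\mathbf{Ric}|$ and $|\mathbf{Ric}||\hat\nabla\diff Q|_Q$ split into quantities from the allowed list $\{|\boldsymbol\nabla\mathbf{Ric}|^2,|\hat\nabla\diff Q|^2_Q,|\mathbf{Ric}|^2,1\}$, while the only dangerous product, $|\mathbf{Ric}||\hat\nabla\diff Q|^2_Q\le\epsilon|\mathbf{Ric}|^2|\hat\nabla\diff Q|^2_Q+\tfrac1{4\epsilon}|\hat\nabla\diff Q|^2_Q$, again charges $\epsilon$ to the budget. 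Since there are only finitely many absorptions, taking all the $\delta,\epsilon$ small enough leaves the coefficient of $-|\mathbf{Ric}|^2|\hat\nabla\diff Q|^2_Q$ equal to $-\tfrac12$ and collects everything else into $C(|\mathbf{Rm}||\mathbf{Ric}|^2+|\hat\nabla\diff Q|^2_Q+|\boldsymbol\nabla\mathbf{Ric}|^2+|\mathbf{Ric}|^2+1)$, which is the assertion. The real work, and the only obstacle, is bookkeeping: carrying the four second-derivative-of-torsion terms through the integration by parts and checking that the single genuinely harmful interaction, $|\mathbf{Ric}|^2|\hat\nabla\diff Q|^2_Q$, always surfaces with a coefficient we are free to shrink, so that the good term furnished by \eqref{dQ-heat} is enough to absorb it.
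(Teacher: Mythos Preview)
Your argument is correct and follows essentially the same route as the paper's proof: the same product-rule decomposition of $\frac{\diff}{\diff t}\int fh$, the same substitution of \eqref{Ricci-heat} and \eqref{dQ-heat}, the same single integration by parts on the four second-derivative-of-torsion terms, and the same Peter--Paul absorptions of the resulting cross terms into the good $-|\mathbf{Ric}|^2|\hat\nabla\diff Q|^2_Q$ contribution. The paper writes out the intermediate inequalities explicitly (tracking the numerical coefficients $\tfrac14+\tfrac14$ that eat half of the good term), whereas you work with generic small parameters, but the content is the same.
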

\begin{proof}
We being the computation by substituting \eqref{Ricci-heat} and \eqref{dQ-heat}:
\begin{align*}
\frac{\diff}{\diff t}\int |\mathbf{Ric}|^2|\diff Q|^2_Q
	&=
		\int 
		|\diff Q|^2_Q(\del_t -\Delta)|\mathbf{Ric}|^2
		+|\mathbf{Ric}|^2(\del_t - \Delta)|\diff Q|^2_Q\\
	&\quad\quad
		-2\left\langle \boldsymbol{\nabla}|\mathbf{Ric}|^2,
		\boldsymbol{\nabla}|\diff Q|^2_Q \right\rangle
		+\frac{2}{3}|\mathbf{Ric}|^2|\diff Q|^2_Q|\mathbf{T}|^2\\
	&\leq
		\int
		|\diff Q|^2_Q\Big\{
			- 2 |\boldsymbol{\nabla}\mathbf{Ric}|^2
			+ 4 \mathbf{R}_{cabd}\mathbf{R}^{ab}\mathbf{R}^{dc}
			+ 4 \mathbf{R}^{ab}\boldsymbol{\Delta}\left(
			\mathbf{T}_a^{\phantom{a}c}\mathbf{T}_{cb}
			\right)\\
	&\quad\quad\quad\quad\quad\quad
			+\frac{2}{3}\mathbf{R}\boldsymbol{\Delta}|\mathbf{T}|^2	
			- 4\mathbf{R}^{ab}\left(	
			 \boldsymbol{\nabla}_a\boldsymbol{\nabla}^c
			 \left( \mathbf{T}_c^{\phantom{c}d}\mathbf{T}_{db} \right)
			 +
			 \boldsymbol{\nabla}_b\boldsymbol{\nabla}^c
			 \left( \mathbf{T}_c^{\phantom{c}d}\mathbf{T}_{da} \right)
			 \right)\\
	&\quad\quad\quad\quad\quad\quad
			 -\frac{2}{3}\mathbf{R}^{ab}
			 \boldsymbol{\nabla}_a\boldsymbol{\nabla}_b |\mathbf{T}|^2
			 + 8 \mathbf{R}_{cabd}\mathbf{R}^{ab}
			 \mathbf{T}^{ce}\mathbf{T}_e^{\phantom{e}d}
			 +
			 \frac{4}{3}|\mathbf{Ric}|^2|\mathbf{T}|^2
			 \Big\}\\
	&\quad\quad\quad\quad\quad\quad\quad\quad\quad\quad
		+|\mathbf{Ric}|^2
		(-|\hat{\nabla}\diff Q|^2_Q + C|\mathbf{T}|^2|\diff Q|^2_Q)\\
	&\quad\quad\quad\quad\quad\quad\quad\quad\quad\quad
		+ 8|\diff Q|_Q|\boldsymbol{\nabla}\mathbf{Ric}|
			|\mathbf{Ric}||\hat{\nabla}\diff Q|_Q
		+\frac{2}{3}|\mathbf{Ric}|^2|\diff Q|^2_Q|\mathbf{T}|^2
\end{align*}	
Then we integrate by parts to bound this by
\begin{align*}
&\leq
	\int
	(4+8\sqrt{7})|\diff Q|^2_Q|\mathbf{Rm}||\mathbf{Ric}|^2
	+(2+C)|\mathbf{Ric}|^2|\diff Q|^2_Q|\mathbf{T}|^2\\
&\quad\quad
	+8\Big(  
		|\boldsymbol{\nabla}|\diff Q|^2_Q| |\mathbf{Ric}|
			+
		|\diff Q|^2_Q |\boldsymbol{\nabla}\mathbf{Ric}|
	\Big) |\mathbf{T}||\boldsymbol{\nabla}\mathbf{T}|
	+\frac{2}{3}\Big(
		|\boldsymbol{\nabla}|\diff Q|^2_Q| |\mathbf{R}|
			+
		|\diff Q|^2_Q |\boldsymbol{\nabla}\mathbf{R}|
	\Big)|\boldsymbol{\nabla}|\mathbf{T}|^2|\\
&\quad\quad
	+8\Big(  
		|\boldsymbol{\nabla}|\diff Q|^2_Q| |\mathbf{Ric}|
			+
		\frac{1}{2}|\diff Q|^2_Q |\boldsymbol{\nabla}\mathbf{R}|
	\Big) |\mathbf{T}||\boldsymbol{\nabla}\mathbf{T}|	
	+\frac{2}{3}\Big(
		|\boldsymbol{\nabla}|\diff Q|^2_Q| |\mathbf{Ric}|
			+
		\frac{1}{2}|\diff Q|^2_Q |\boldsymbol{\nabla}\mathbf{R}|
	\Big)|\boldsymbol{\nabla}|\mathbf{T}|^2|\\
&\quad\quad\quad\quad\quad
	+8 |\diff Q|_Q|\boldsymbol{\nabla}\mathbf{Ric}|
		|\mathbf{Ric}| |\hat{\nabla}\diff Q|_Q
	-|\mathbf{Ric}|^2 |\hat{\nabla}\diff Q|^2_Q
	-2|\diff Q|^2_Q|\boldsymbol{\nabla}\mathbf{Ric}|^2
\end{align*}
\begin{align*}
&\leq
	\int
	(4+8\sqrt{7})|\diff Q|^2_Q|\mathbf{Rm}||\mathbf{Ric}|^2
	+(2+C)|\mathbf{Ric}|^2|\diff Q|^2_Q|\mathbf{T}|^2\\
&\quad\quad
	+\frac{104}{3} |\diff Q|_Q|\mathbf{T}||\hat{\nabla}\diff Q|_Q
		|\mathbf{Ric}||\boldsymbol{\nabla}\mathbf{T}|
	+\frac{8}{3}|\diff Q|_Q |\mathbf{T}|^3|\hat{\nabla}\diff Q|_Q
		|\boldsymbol{\nabla}\mathbf{T}|\\
&\quad\quad
	+12|\diff Q|^2_Q|\mathbf{T}|^2|\boldsymbol{\nabla}\mathbf{T}|^2
	+8|\diff Q|^2_Q|\boldsymbol{\nabla}\mathbf{Ric}|
			|\mathbf{T}||\boldsymbol{\nabla}\mathbf{T}|\\
&\quad\quad\quad\quad\quad
	+8 |\diff Q|_Q|\boldsymbol{\nabla}\mathbf{Ric}|
		|\mathbf{Ric}| |\hat{\nabla}\diff Q|_Q
	-|\mathbf{Ric}|^2 |\hat{\nabla}\diff Q|^2_Q
	-2|\diff Q|^2_Q|\boldsymbol{\nabla}\mathbf{Ric}|^2
\end{align*}
Next we apply Cauchy--Schwarz in various places, giving
\begin{align*}
&\leq
	\int
	(4+8\sqrt{7})|\diff Q|^2_Q|\mathbf{Rm}||\mathbf{Ric}|^2
	+(2+C)|\mathbf{Ric}|^2|\diff Q|^2_Q|\mathbf{T}|^2\\
&\quad\quad
	+\left(  
		\frac{1}{4} |\mathbf{Ric}|^2|\hat{\nabla}\diff Q|^2_Q
		+
		\left( \frac{104}{3} \right)^2
			|\mathbf{T}|^2|\diff Q|^2_Q|\boldsymbol{\nabla}\mathbf{T}|^2
	\right)
	+\left(  
		\frac{4}{3}|\mathbf{T}|^4|\hat{\nabla}\diff Q|^2_Q
		+
		\frac{4}{3}|\diff Q|^2_Q|\mathbf{T}|^2
			|\boldsymbol{\nabla}\mathbf{T}|^2
	\right)\\
&\quad\quad
	+12|\diff Q|^2_Q|\mathbf{T}|^2|\boldsymbol{\nabla}\mathbf{T}|^2
	+\left(  
		2|\diff Q|^2_Q|\boldsymbol{\nabla}\mathbf{Ric}|^2
		+
		8|\diff Q|^2_Q|\mathbf{T}|^2|\boldsymbol{\nabla}\mathbf{T}|^2
	\right)\\
&\quad\quad
	+\left(  
	\frac{1}{4}|\mathbf{Ric}|^2|\hat{\nabla}\diff Q|^2_Q
	+
	64|\diff Q|^2_Q|\boldsymbol{\nabla}\mathbf{Ric}|^2
	\right)
	-|\mathbf{Ric}|^2 |\hat{\nabla}\diff Q|^2_Q
	-2|\diff Q|^2_Q|\boldsymbol{\nabla}\mathbf{Ric}|^2
\end{align*}
Collecting the pieces, we see we have isolated the negative term $-\frac{1}{2}|\mathbf{Ric}|^2 |\hat{\nabla}\diff Q|^2_Q$. The result now follows from the bounds $|\diff Q|_Q < C$ and $|\boldsymbol{\nabla}\mathbf{T}|^2 \leq C (|\hat{\nabla}\diff Q|^2_Q+1)$ which follow from the assumed bound on~$|\mathbf{T}|$. 
\end{proof}
This Lemma completes the proof of Proposition~\ref{differential-inequalities} and hence the proof of Theorem~\ref{Ricci-L2-bound}.

\section{The hypersymplectic flow extends as long as torsion is bounded}
\label{proof-of-main-result}

We now turn to the proof of our main result, Theorem~\ref{main-theorem}. A key ingredient is the recent result of Gao Chen \cite[Thm. 1.1]{Chen}, generalising Perelman's noncollapsing theorem for Ricci flow \cite{perelman} (see also the discussion in \cite[section 13]{KL})  We begin by recalling the notion of $\kappa$-noncollapsing introduced by Perelman.  

\begin{definition} 
An $n$-dimensional Riemmanian manifold $(M,g)$ is said to be \emph{$\kappa$-noncollapsed relative to an upper bound of scalar curvature on the scale $\rho$} if for all $B_g(x,r) \subset M$ with $r < \rho$, 
\[
\text{if }\sup_{B(x,r)} R \leq \frac{1}{r^2}\quad
\text{then }
\vol_g\left( B_{g}(x,r) \right) \geq \kappa r^n
\]
\end{definition}

We now consider an arbitrary path of Riemannian metrics $g(t)$ and write 
\begin{equation}
\del_t g = -2 \Ric + P
\label{general-flow}
\end{equation}
where $P$ indicates the difference between $g(t)$ and a solution of Ricci flow.

\begin{theorem}[Chen \cite{Chen}, cf.\ Perelman \cite{KL}]\label{noncollapse}
If $|P|$ is bounded along the flow \eqref{general-flow} for $t\in [0,s)$ with $s<\infty$, then there exists $\kappa>0$ such that for all $t \in [0,s)$, $g(t)$ is $\kappa$-noncollapsed relative to an upper bound of scalar curvature on the scale $\rho = \sqrt{s}$.
\end{theorem}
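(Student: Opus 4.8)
The plan is to adapt Perelman's proof of $\kappa$-noncollapsing by means of his $\mathcal{W}$-entropy \cite{perelman} (see also \cite{KL}), the one genuinely new point being that a bound on $|P|$ renders the entropy \emph{almost} monotone instead of exactly monotone. Recall that for a metric $g$ on $M$ with $n=\dim M$, a function $f$ and a parameter $\tau>0$ one sets
\[
\mathcal{W}(g,f,\tau) = \int_M \bigl( \tau(|\nabla f|^2 + R) + f - n \bigr)(4\pi\tau)^{-n/2} e^{-f}\,\diff V_g
\]
subject to the constraint $\int_M (4\pi\tau)^{-n/2} e^{-f}\,\diff V_g = 1$, and writes $\mu(g,\tau) = \inf_f \mathcal{W}(g,f,\tau)$ for Perelman's $\mu$-invariant, the infimum being attained for each $\tau$. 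The first step is a modified monotonicity: coupling \eqref{general-flow} with $\del_t\tau = -1$ and the $P$-corrected conjugate heat equation for $f$ (chosen so that the constraint is preserved), a computation following Perelman gives
\[
\frac{\diff}{\diff t}\,\mathcal{W}\bigl(g(t),f(t),\tau(t)\bigr) \geq -C_1,
\]
where $C_1$ depends only on $n$ and $\sup|P|$; the extra terms produced by $P$ in the evolutions of the measure, of $R$ and of $f$ are absorbed into the nonnegative square term of Perelman's formula and controlled by Cauchy--Schwarz against the normalised Gaussian weight, using the constraint. Integrating, $\mu\bigl(g(t_2),\tau(t_2)\bigr) \geq \mu\bigl(g(t_1),\tau(t_1)\bigr) - C_1(t_2-t_1)$ whenever $0 \leq t_1 \leq t_2 < s$ and $\tau(t) = \tau(t_1)-(t-t_1)$ stays positive on $[t_1,t_2]$.

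Next I would argue by contradiction. If the conclusion fails, then for every $\kappa>0$ there exist $t_0 \in [0,s)$ and a ball $B = B_{g(t_0)}(x_0,r)$ with $r < \rho = \sqrt{s}$, $\sup_B R \leq r^{-2}$ and $\vol_{g(t_0)}(B) < \kappa r^n$. Put $\tau(t) = (t_0+r^2)-t$, so that $\tau(t_0)=r^2$ while $\tau(0)=t_0+r^2 \in (0,2s)$ (here $r^2<s$ since $r<\rho$). Using a logarithmic cutoff $\varphi$ supported in $B$ and the test function $f$ with $e^{-f} = c\,\varphi^2(4\pi r^2)^{n/2}$, $c$ being chosen to satisfy the constraint, the bound $R \leq r^{-2}$ on $\mathrm{supp}\,\varphi$ yields the standard estimate
\[
\mu\bigl(g(t_0),r^2\bigr) \leq \mathcal{W}\bigl(g(t_0),f,r^2\bigr) \leq \log\bigl(\vol_{g(t_0)}(B)/r^n\bigr) + C_2,
\]
with $C_2 = C_2(n)$, whose right-hand side tends to $-\infty$ as $\kappa \to 0$. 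On the other hand, applying the modified monotonicity from $t_1 = 0$ to $t_2 = t_0$ gives
\[
\mu\bigl(g(t_0),r^2\bigr) \geq \mu\bigl(g(0),t_0+r^2\bigr) - C_1 s \geq \inf_{\tau\in(0,2s]} \mu\bigl(g(0),\tau\bigr) - C_1 s,
\]
and the right-hand side is a finite constant depending only on $g(0)$, $s$ and $\sup|P|$, because $\tau \mapsto \mu(g(0),\tau)$ is continuous and bounded below on $(0,2s]$. Choosing $\kappa$ small enough to violate this inequality then proves the theorem, the resulting $\kappa$ depending only on $g(0)$, $s$ and $\sup|P|$.

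The hard part is the modified monotonicity inequality. One must go through Perelman's computation of $\tfrac{\diff}{\diff t}\mathcal{W}$ step by step, identify exactly where the correction $P$ enters, and verify that \emph{because it is $|P|$ itself, and not any derivative of $P$, that is assumed bounded}, every resulting error term can be estimated --- after integrating against the normalised Gaussian weight and using the constraint, and after absorbing the pieces involving $\nabla^2 f$ into the good square term --- by a constant depending only on $n$ and $\sup|P|$. This modified monotonicity is the central new ingredient in Gao Chen's argument \cite{Chen}, to which we refer for its proof; the sketch above is recorded only in order to make transparent why a bound on the scalar curvature --- equivalently, on the torsion $\mathbf{T}$ --- suffices to preclude collapsing in finite time.
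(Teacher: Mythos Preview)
The paper does not prove this theorem at all: it is simply quoted as \cite[Theorem~1.1]{Chen} and then invoked. So there is no ``paper's own proof'' to compare against; your proposal supplies what the paper deliberately outsources.

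That said, your sketch is an accurate summary of Chen's argument, which is indeed the Perelman $\mathcal{W}$-entropy proof with an almost-monotonicity replacing exact monotonicity. The overall architecture --- evolve $f$ by a $P$-corrected conjugate heat equation preserving the constraint, obtain $\tfrac{\diff}{\diff t}\mathcal{W} \geq -C_1$, then run the standard contradiction with a cutoff test function on a collapsing ball and compare with $\mu(g(0),\tau)$ over $\tau \in (0,2s]$ --- is correct. Two small remarks. First, the phrase ``logarithmic cutoff'' is slightly off: the standard test function is built from a smooth bump supported in $B(x_0,r)$, not a logarithmic one; the estimate $\mathcal{W} \leq \log(\vol(B)/r^n) + C_2(n)$ you quote is nonetheless the right conclusion. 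Second, your claim that the right-hand side is finite because $\tau \mapsto \mu(g(0),\tau)$ is continuous and bounded below on $(0,2s]$ tacitly uses compactness of $M$ (on a compact manifold $\mu(g,\tau) \to 0$ as $\tau \to 0^+$), which is the setting of the paper but is worth making explicit.

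You are right that the genuine work lies in the modified monotonicity, specifically in checking that the extra terms generated by $P$ in the evolutions of $\diff V_g$, $R$ and $f$ --- including second-order terms $\boldsymbol{\nabla}^a\boldsymbol{\nabla}^b P_{ab}$ and $\Delta(\tr P)$ in $\del_t R$ --- integrate by parts against the weight $(4\pi\tau)^{-n/2}e^{-f}$ to quantities controlled by $\sup|P|$ alone, with the Hessian pieces absorbed into Perelman's good square $2\tau|\Ric+\nabla^2 f - g/2\tau|^2$. You correctly flag this as the crux and defer it to \cite{Chen}, which is exactly what the paper itself does.
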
 
 
With this in hand, we now give the proof of Theorem~\ref{main-theorem}. We assume that $\underline{\omega}(t)$ is a solution of the hypersymplectic flow for $t \in [0,s)$, with $s<\infty$, that $|\mathbf{T}| < C$ for all $t\in [0,s)$ and that, for a contradiction, the flow does not extend to $t=s$. 

Write $g(t) = g_{\underline{\omega}(t)}$. By Propositions~\ref{trQ-mp} and~\ref{dQ-mp}, $\tr Q$ and $|\diff Q|_Q$ are bounded along the flow, and hence $|\underline{\tau}|$ is also bounded. From this and the evolution equation for $g_{\underline{\omega}}$ we see that the ``error term''
\[
P 
	:= 
		\del_t g + 2\Ric
	= 
		\frac{1}{2} \left\langle \diff Q,\diff Q \right\rangle_Q 
		+ \tr (Q^{-1}\tau \otimes \tau)
		-\frac{2}{3}|\mathbf{T}|^2 g
\]
is  bounded along the flow. Moreover, by Lemma~\ref{Ricci-splitting}, $R = \frac{1}{4}|\diff Q|^2_Q - |\mathbf{T}|^2$ and so $R$ is also uniformly bounded.  Theorem~\ref{noncollapse} then implies that there exists $\kappa>0$ and $\rho>0$ such that for any $p \in X$,  $t \in [0,s)$ and $r \leq \rho$,
\begin{equation}
\vol_{g(t)}B_{g(t)}(p,r) \geq \kappa r^4
\label{vol-lower-bound}
\end{equation}

Write
\[
\Lambda(x,t)
	=
		\left( |\mathbf{Rm}|^2(x,t) 
		+ |\boldsymbol{\nabla}\mathbf{T}|^2(x,t) 
		\right)^{1/2}
\]
Since we have assumed that the flow does not extend to $t=s$, it follows from Lotay--Wei's extension result \cite[Theorem~1.3]{Lotay--Wei} that there is a sequence $(p_k,t_k) \in M \times [0,s)$ such that $t_k \to s$ and $\Lambda(p_k, t_k) \to \infty$. Moreover, we can choose $(p_k,t_k)$ so that 
\[
\Lambda(p_k,t_k) = \sup \{ \Lambda(x,t) : x\in M, t\in [0,t_k]\}
\]

To lighten the notation we write $\Lambda_k:=\Lambda(p_k,t_k)$. We define a sequence of flows by parabolic rescaling; for $t\in [-\Lambda_t t_k, 0]$ we set
\begin{align*}
\phi^{(k)}(t)
	&=
		\Lambda_k \phi \left( 
			\Lambda_k^{-1} t + t_k 
		\right)\\
\underline{\omega}^{(k)}(t)
	&=
		\Lambda_k \underline{\omega}\left( 
			\Lambda_k^{-1} t + t_k 
		\right)\\
\boldsymbol{g}^{(k)}(t)
	&=
		\Lambda_k \boldsymbol{g}\left( 
			\Lambda_k^{-1} t + t_k 
		\right)		
\end{align*}
Here $\phi^{(k)}(t)$ is a sequence of $G_2$-Laplacian flows corresponding to the sequence $\underline{\omega}^{(k)}(t)$ of hypersymplectic flows, with induced metrics $\boldsymbol{g}^{(k)}(t)$ on $X \times \mathbb{T}^3$. Write $\Lambda^{(k)}(x,t)$ for the quantity analogous to $\Lambda(x,t)$, but defined using $\phi^{(k)}(t)$. By construction, 
\[
\Lambda^{(k)}(x,t)
	=
\Lambda_k^{-1} \Lambda(x,\Lambda_k^{-1}t + t_k)
	\leq 1
\]
for any $(x,t) \in X \times [-\Lambda_k t_k, 0]$. Now the Bando--Bernstein--Shi  estimates proved by Lotay--Wei \cite{Lotay--Wei} tell us that for any $A>0$, $l \in \N$, there exists $c_{l,A}$ such that
\[
\sup_{X \times [-A,0]} 
\left( |\boldsymbol{\nabla}^{l+1} \mathbf{T}^{(k)}|
+
|\boldsymbol{\nabla}^l \mathbf{Rm}(\boldsymbol{g}^{(k)})| \right)
\leq c_{l,A}
\]
It follows from Corollary~\ref{all-derivs-Rm-bound} and Lemmas~\ref{nabla-omega-bound} and~\ref{nabla2-omega-bound}, and Lemma~\ref{HessianQ-bound} that $|\nabla \omega_i^{(k)}|$, $|\nabla^2\omega_i^{(k)}|$ and $|\nabla^l\Rm(g^{(k)})|$ are uniformly bounded for any choice of $l$.

By Cheeger--Gromov--Taylor \cite[Theorem~4.7]{CGT}, the volume lower bound \eqref{vol-lower-bound} and uniform curvature bound imply that the injectivity radius of $g^{(k)}(0)$ is uniformly bounded below away from zero. It then follows from Cheeger--Gromov compactness that we can take a limit: there exists a pointed Riemmanian 4-manifold $(X^\infty, g^\infty, p_\infty)$ such that
\[
(X, g^{(k)}(0),p_k) 
	\xrightarrow{\text{Cheeger--Gromov}} 
		(X^\infty,g^{(\infty)}(0),p_\infty)
\]
In other words, there is an exhaustion $X^\infty = \bigcup \Omega_k$ by nested neighbourhoods of $p_\infty$ and diffeomorphic injections $f_k \colon \Omega_k \to X$ with $f_k(p_\infty) = p_k$ such that on any fixed compact subset $K \subset X^\infty$, we have $f_k^* g^{(k)}(0) \to g^{\infty}$.

It follows from Proposition~\ref{trQ-mp} and Lemmas~\ref{HessianQ-bound}, \ref{nabla-omega-bound} and~\ref{nabla2-omega-bound} that we have a uniform $C^2$-bound on $\underline{\omega}$. By Arzela--Ascoli, we pass to a subsequence for which $f_k^*\underline{\omega}^{(k)}(0)$ converges in $C^{1,\alpha}$ to a limiting triple of closed self-dual 2-forms $\underline{\omega}^{\infty}$. Write $Q^\infty_{ij} = \frac{1}{2}\left\langle \underline{\omega}^\infty_i , \underline{\omega}^\infty_j\right\rangle$ for the matrix of pointwise innerproducts. Now $Q^\infty$ is the limit (under the diffeomorphisms $f_k$) of the sequence $Q^{(k)}(t)$ of matrices defined by $\underline{\omega}^{(k)}(t)$. But $Q$ is a scale-invariant quantity and so $Q^{(k)}(t) = Q(t)$, the lowest eigenvalue of which is uniformly bounded below away from zero. It follows that $Q^\infty$ is positive definite and so $\underline{\omega}^{\infty}$ is a hypersymplectic structure determining the metric $g_\infty$. 

We now compute 
\[
|\diff Q^{\infty}|^2_{Q^\infty}
	=
		\lim_{k\to \infty} |\diff Q^{(k)}|^2_{Q^{(k)}}(0)
	=
		\lim_{k\to \infty} \Lambda^{-1}_k |\diff Q|^2_{Q}(t_k)
	=
		0
\]
where the middle norm is taken with respect to $g^{(k)}(0)$, the right-hand norm with $g(t_k)$ and we have used the facts that $Q^{(k)} = Q$ and that $|\diff Q|^2_{Q}$ is uniformly bounded. It follows that $g^\infty$ is hyperk\"ahler and $\underline{\omega}^\infty$ is a constant rotation of a hyperk\"ahler triple of 2-forms.

The uniform bound of the scale invariant energy $\int |\Rm|^2$ along the flow (Corollary~\ref{4D-energy-bound}) shows that
\[
\int_{X^\infty} |\Rm(g^{\infty})|^2 < \infty
\]
which means that $(X^\infty, g^\infty)$ is a gravitational instanton. Moreover, since 
\[
|\Rm(g^\infty)|(p_\infty,0) 
	= \Lambda_{g^\infty} 
	=\lim_{k \to \infty} \Lambda^{-1}_k \Lambda_{g(t_k)}(p_k, t_k)
	= 1
\]
we see that $(X^\infty, g^\infty)$ is a \emph{non-trivial} gravitational instanton. 

The volume ratio lower bound \eqref{vol-lower-bound} implies $(X^\infty, g^\infty)$ has Euclidean volume growth and so, by a theorem of Bando--Kasue--Nakajima \cite{BKN}, $(X^\infty, g^\infty)$ is asymptotically locally Euclidean. We now invoke Kronheimer's classification of asymptotically locally Euclidean gravitational instantons \cite{Kron}, which implies that there is at least one 2-sphere $S \subset X^\infty$ which is holomorphic for one of the hyperk\"ahler complex structures $J$. Without loss of generality we assume this $J$ corresponds to the K\"ahler form $\omega^\infty_1$ (otherwise we apply a linear transformation, constant in space and time, to the triples). We also note that such a 2-sphere automatically has $[S]^2 = -2$. (This follows from the adjunction formula and the fact that the canonical bundle of a hyperk\"ahler manifold is trivial.)

Recall the diffeomorphisms $f_k \colon \Omega_k \to X$ in the Cheeger--Gromov convergence. For any $i=1,2,3$, we have
\[
\int_S \omega^\infty_i 
=
\lim_{k\to\infty} \int_S f^*_k \omega^{(k)}_i(0) 
=
\lim_{k\to \infty} \int_{f_k(S)} \omega^{(k)}_i(0)
=
\lim_{k\to \infty }\Lambda_k \left\langle [\omega_i], [f_k(S)] \right\rangle
\]
It follows that $\left\langle [\omega_i], [f_k(S)] \right\rangle \to 0$. Meanwhile, since $S$ is a symplectic submanifold for $\omega_1^\infty$, it follows that it is symplectic for $f_k^*\omega_1^{(k)}(0)$ for all large $k$. For these values of $k$ we have $\left\langle [\omega_1], [f_k(S)] \right\rangle >0$.

We will now deduce a contradiction. Note that $b_+=3$ and the $[\omega_i]$'s span a maximal positive-definite subspace of $H^2(X,\R)$ for the cup product (as is explained in \S\ref{intro}, straight after Conjecture~\ref{skd-conjecture}). Set
\[
H^-_2 = \{ Z \in H_2(X,\R) : \left\langle [\omega_i],Z \right\rangle=0 
\text{ for }i=1,2,3\}
\]
By Poincar\'e duality this is a maximal negative-definite subspace of $H_2(X,\R)$ for the intersection form. Its orthogonal complement $H^+_2\subset H^2(X,\R)$ (defined via the intersection form) is a maximal positive-definite subspace. Now write $[f_k(S)] = P_k + N_k$ with $P_k \in H^+_2$ and $N_k \in H^-_2$. Since $\left\langle [\omega_i], [f_k(S)] \right\rangle \to 0$ we see that $P_k \to 0$. Since $[f_k(S)]^2=-2$ we deduce that $N_k^2 \to -2$ and so $N_k$ is bounded. It follows that $[f_k(S)]$ lies in a bounded set in $H_2(X,\R)$, but it also lies in the integral lattice $H_2(X,\Z)$ and hence it can take at most \emph{finitely} many different values. Now $[\omega_1]$ has a smallest non-zero value on this finite set, $v>0$ say. It follows that for all large $k$, $\left\langle [\omega_1], [f_k(S)] \right\rangle \geq v >0$ which contradicts the fact that this same sequence converges to zero. This contradiction completes the proof of Theorem~\ref{main-theorem}.\\

We finish the article with an application of Theorem~\ref{main-theorem}, giving an interesting estimate for the maximal existence time of a hypersymplectic flow, in terms of some quite weak bounds on the initial data (compared, for example, with the  ``doubling-time estimate'' of \cite[Proposition~4.1]{Lotay--Wei}). 

\begin{theorem}\label{lower-bound-existence-time}
There exists a constant $C>0$ such that any hypersymplectic flow $\underline{\omega}(t)$ exists on the interval $[0,s)$ for 
\[
s \geq \frac{1}{2CK^{21}A}
\]
where $K = \sup \tr Q(0)$ and $A= \sup|\diff Q|_Q^2(0)$. 
\end{theorem}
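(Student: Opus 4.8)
The plan is to deduce the estimate directly from the two maximum-principle inequalities of \S\ref{evolution-equations} together with the extension theorem, Theorem~\ref{main-theorem}. Combining Proposition~\ref{dQ-mp} with the pointwise bound $|\mathbf{T}|^2 \le \tfrac{3}{2}|\diff Q|_Q^2$ of Lemma~\ref{torsion-bounds}, and discarding the two favourable negative terms, gives an absolute constant $C_0$ with
\[
(\del_t - \Delta)\,|\diff Q|_Q^2 \;\le\; C_0\,(\tr Q)^{21}\,|\diff Q|_Q^4 ,
\]
while Proposition~\ref{trQ-mp} together with the same torsion bound yields
\[
(\del_t - \Delta)\,\tr Q \;\le\; \tfrac{5}{2}\,|\diff Q|_Q^2\,\tr Q .
\]
Thus $\tr Q$ and $|\diff Q|_Q^2$ satisfy a coupled system of reaction--diffusion inequalities, and the idea is to run a \emph{simultaneous} continuity (bootstrap) argument on the two functions $F(t) = \sup_X \tr Q(\cdot,t)$ and $H(t) = \sup_X |\diff Q|_Q^2(\cdot,t)$, with $F(0) = K$ and $H(0) = A$.

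Fix $C = 2^{21}C_0$ (enlarging it if necessary), so that $T_0 := \tfrac{1}{2CK^{21}A} = \tfrac{1}{2\,C_0(2K)^{21}A}$, and suppose for contradiction that the maximal existence time $T_{\max}$ of the flow satisfies $T_{\max} < T_0$. Let $T^* \in (0,T_{\max}]$ be the supremum of times $t$ for which $\tr Q < 2K$ and $|\diff Q|_Q^2 < 2A$ hold on $[0,t)$; by short-time existence (Proposition~\ref{short-time-existence-theorem}) and continuity of these quantities in $(x,t)$, we have $T^* > 0$. On $[0,T^*]$ we may use $\tr Q \le 2K$, so the parabolic maximum principle (Hamilton's trick for $\del_t\sup_X$) applied to the scalar inequality $(\del_t-\Delta)|\diff Q|_Q^2 \le C_0(2K)^{21}|\diff Q|_Q^4$ yields $H'(t) \le C_0(2K)^{21}H(t)^2$ in the barrier sense, and comparison with the corresponding ODE gives
\[
H(t) \;\le\; \frac{A}{1 - C_0(2K)^{21}A\,t}\,, \qquad t\in[0,T^*].
\]
Since $T^* < T_{\max} < T_0$, we get $C_0(2K)^{21}A\,T^* < \tfrac12$, hence $H(T^*) < 2A$. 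Feeding $H \le 2A$ back into the inequality for $\tr Q$ gives $(\del_t-\Delta)\tr Q \le 5A\,\tr Q$, whence $F(t) \le K e^{5At}$; as $\det Q = 1$ forces $\tr Q \ge 3$ (Lemma~\ref{sd-detQ}) we have $K \ge 3$, so $5AT_0 = \tfrac{5}{2CK^{21}}$ lies far below $\log 2$, and therefore $F(T^*) \le K e^{5AT^*} < 2K$.

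Thus at $t = T^*$ both strict inequalities $\tr Q < 2K$ and $|\diff Q|_Q^2 < 2A$ hold everywhere, and two cases remain. If $T^* < T_{\max}$, then by continuity these strict bounds persist on $[0,T^*+\varepsilon)$ for some $\varepsilon>0$, contradicting the definition of $T^*$. If $T^* = T_{\max}$, then $|\diff Q|_Q^2 \le 2A$ on all of $[0,T_{\max})$, so $|\mathbf{T}|^2 \le 3A$ is bounded there, and Theorem~\ref{main-theorem} extends the flow past $T_{\max}$, contradicting the maximality of $T_{\max}$. Either way we reach a contradiction, so $T_{\max} \ge T_0$, which is exactly the assertion (with $s = T_0$). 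The only genuine difficulty is the coupling of $\tr Q$ and $|\diff Q|_Q$: controlling either one requires a bound on the other, so the two estimates must be bootstrapped simultaneously rather than in sequence; once this is arranged, everything else is the standard parabolic maximum principle together with the two elementary scalar ODE comparisons above.
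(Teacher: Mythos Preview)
Your argument is correct and follows the same overall strategy as the paper: derive coupled differential inequalities for $\sup\tr Q$ and $\sup|\diff Q|_Q^2$ from Propositions~\ref{trQ-mp} and~\ref{dQ-mp}, close them on a short time interval, and invoke Theorem~\ref{main-theorem} for the contradiction. The only real difference is in how the coupling is handled. You run a bootstrap/continuity argument, positing the a~priori bounds $\tr Q < 2K$, $|\diff Q|_Q^2 < 2A$ and showing they improve themselves. The paper instead observes that the \emph{product} $f_1^{21}f$ (with $f_1 = \sup\tr Q$, $f = \sup|\diff Q|_Q^2$) satisfies a single closed Riccati-type inequality $(f_1^{21}f)' \le C'(f_1^{21}f)^2$, which integrates immediately to give the blow-up time $\ge 1/(C'K^{21}A)$ without any case analysis. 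This combined-quantity trick is slicker and explains directly why the bound has the shape $1/(CK^{21}A)$, whereas your approach recovers the same exponent~$21$ only because you chose it when fixing $C = 2^{21}C_0$. Both arguments are valid; yours is the more generic reaction--diffusion bootstrap, the paper's exploits the specific algebraic structure of the two inequalities. (One small slip: you write ``$T^* < T_{\max} < T_0$'' where you mean $T^* \le T_{\max} < T_0$; the conclusion $T^* < T_0$ is unaffected.)
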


\begin{proof}
By Propositions~\ref{trQ-mp} and~\ref{dQ-mp} there is a constant $C$ such that
\begin{align*}
(\del_t - \Delta)|\diff Q|^2_Q 
	&\leq 
		C (\tr Q) ^{21}|\mathbf{T}|^2|\diff Q|^2_Q\\
(\del_t - \Delta)\tr Q
	& \leq 
		\frac{5}{3}|\mathbf{T}|^2 \tr Q
\end{align*}
Let
\begin{align*}
f(t) & = \sup |\diff Q|^2_Q(t)\\
f_1(t)& = \sup \tr Q(t)\\
f_2(t) &= \sup |\mathbf{T}|^2(t)
\end{align*}
The heat inequalities and the maximum principle give
\begin{align*}
f'
	&\leq
		C f_1^{21}f_2 f\\
(f_1^{21})' 
	&
		\leq 35 f_2f_1^{21}
\end{align*}
from which it follows that
\[
(f_1^{21}f)' = (f_1^{21})' f + f_1^{21}f'
\leq 35 f_1^{21}f_2f + Cf_1^{42}f_2f \leq C'(f_1^{21}f)^2
\]
The consequence is that
\[
f_1^{21}(t)f(t) \leq
\frac{1}{(f_1^{21}f)^{-1}(0) - C't}
\]

Now, assume for a contradiction that the maximal time of the flow is $s\leq \frac{1}{2C'K^{21}A}$. Then the above bound shows that
\[
\sup_{[0,s)} f(t) < \infty
\]
and hence $|\mathbf{T}|^2 \leq \frac{3}{2}|\diff Q|^2_Q$ is uniformly bounded on $[0,s)$. But, by Theorem~\ref{main-theorem}, this contradicts the fact that $s$ is maximal.
\end{proof}

{\small

}

\bigskip
{ \footnotesize
\noindent\textsc{J.~Fine and C.~Yao},\\ D\'epartement de math\'ematiques, Universit\'e libre de Bruxelles, Brussels 1050, Belgium.\\ \textit{E-mail addresses:} \texttt{\href{mailto:joel.fine@ulb.ac.be}{joel.fine@ulb.ac.be}}, \texttt{\href{mailto:chengjian.yao@gmail.com}{chengjian.yao@gmail.com}}
}

\end{document}